\theoremstyle{plain}
\newtheorem{theorem}{Theorem}[section]
\newtheorem{lemma}[theorem]{Lemma}
\newtheorem{claim}[theorem]{Claim}
\newtheorem{corollary}[theorem]{Corollary}
\newtheorem{proposition}[theorem]{Proposition}
\newtheorem*{theorem*}{Theorem}
\newtheorem*{r-conjecture}{The Ramos conjecture}
\newtheorem*{claim*}{Claim}
\theoremstyle{definition}
\newtheorem{example}[theorem]{Example}
\newtheorem{remark}[theorem]{Remark}
\newcommand{\R}{\mathbb{R}}
\newcommand{\N}{\mathbb{N}}
\newcommand{\Z}{\mathbb{Z}}
\newcommand{\F}{\mathbb{F}}
\newcommand{\B}{\mathrm{B}}
\newcommand{\E}{\mathrm{V}}
\newcommand{\PP}{\mathbb{P}}
\newcommand{\pt}{\mathrm{pt}}
\newcommand{\M}{\mathcal{M}}
\newcommand{\HH}{\mathcal{H}}
\newcommand{\II}{\mathcal{I}}
\newcommand{\JJ}{\mathcal{J}}
\newcommand\Sym{\mathfrak{S}}
\newcommand{\flag}{\operatorname{Flag}}
\newcommand{\id}{\operatorname{id}}
\newcommand{\spann}{\operatorname{span}}
\newcommand{\card}{\operatorname{card}}
\newcommand{\map}{\operatorname{Map}}
\newcommand{\e}{\operatorname{e}}
\newcommand{\OO}{\operatorname{O}}
\newcommand{\RP}[1]{\mathbb{R}\mathrm{P}^{#1}} % real projective space 
\newcommand{\Gr}{\mathrm{G}}   % Grassmannian manifold 
\newcounter{commentcounter}
\newcommand{\commentJ}[1] % Comment by Jaime
{\stepcounter{commentcounter} {  
\textcolor{blue}{ \bf Comment~\arabic{commentcounter}(by Jaime):}  { \ttfamily #1}}}
\def\gg{\mathfrak{g}}
\def\hh{\mathfrak{h}}
\def\GL{{\rm GL}}
\def\Hom{{\rm Hom}}
\def\End{{\rm End}}
\def\Ff{\mathbb{F}}
\title[Many partitions of  mass assignments]
{Many partitions of  mass assignments}
\author[Blagojevi\'c]{Pavle V. M. Blagojevi\'{c}} 
\thanks{The research by Pavle V. M. Blagojevi\'{c} leading to these results has
        received funding from the Serbian Ministry of Science, Technological development and Innovations.}
\address{Inst. Math., FU Berlin, Arnimallee 2, 14195 Berlin, Germany\hfill\break
\mbox{\hspace{4mm}}Mat. Institut SANU, Knez Mihailova 36, 11001 Beograd, Serbia}
\email{blagojevic@math.fu-berlin.de} 
\author[Crabb]{Michael C. Crabb} 
\address{Institute of Mathematics, University of Aberdeen, Aberdeen AB24 3UE, UK}
\email{m.crabb@abdn.ac.uk}
\begin{document}

%------------------------------------  TITLE ------------------------------------%
\date{}
\dedicatory{Dedicated to the memory of  Frederick R. Cohen an exceptional mathematician\\ and an amazing human being}
\maketitle
%\tableofcontents
%------------------------------------ ABSTRACT ------------------------------------%
\begin{abstract}
In this paper, extending the recent work of authors with Calles Loperena \& Dimitrijevi\'c Blagojevi\'c, we give a general and complete treatment of problems of partition of mass assignments with prescribed arrangements of hyperplanes on Euclidean vector bundles.
Using a new configuration test map scheme, as well as an alternative topological framework, we are able to reprove known results, extend them to arbitrary bundles as well as to put various types of constraints on the solutions.
Moreover, the developed topological methods allow us to give new proofs and extend results of Guth \& Katz, Schnider and Sober\'on \& Takahashi.
In this way we place all these results under one ``roof''.

\end{abstract}
%--------------------------------------------------------------------------------------%

%--------------------------------------------------------------------------------------%
%--------------------------------------------------------------------------------------%
%--------------------------------------------------------------------------------------%
%--------------------------------------------------------------------------------------%
\section{Introduction}
\label{sec : Introduction and statement of main results}
%--------------------------------------------------------------------------------------%
%--------------------------------------------------------------------------------------%
%--------------------------------------------------------------------------------------%
%--------------------------------------------------------------------------------------%

Problems of the existence of mass partitions by affine hyperplanes in a Euclidean space have a long and exciting history since the 1930's ham-sandwich theorem of Hugo Steinhaus  and Karol Borsuk  \cite[Problem 123]{Mauldin-1981}.
The ham-sandwich theorem claims the existence of a hyperplane which equiparts $d$ given masses in a $d$-dimensional Euclidean space. 
For more details about the history on the ham-sandwich theorem and its interconnection with the non-existence of antipodal maps between spheres  consult  \cite{Beyer-Zardecki-2004} or \cite{Matousek-2003}.
The followup work by Branko Gr\"unbaum \cite{Grunbaum-1960}, Hugo Hadwiger \cite{Hadwiger-1966}, David Avis \cite{Avis-1984}, and a bit later by Edgar Ramos \cite{Ramos-1996}, demonstrated how an increase in complexity of mass partition questions naturally creates even more complicated problems related to the non-existence of equivariant maps. 
Topological challenges of the Gr\"unbaum--Hadwiger--Ramos hyperplane mass partition problems were discussed recently in \cite{Blagojevic-Frick-Haase-Ziegler-2018}.
For more information about various types of mass partition problems see the recent survey by Edgardo Rold\'{a}n-Pensado and Pablo Sober\'{o}n \cite{RoldanPensadoSoberon-2022}.

\medskip
In order to  motivate a study of partitions of mass assignments over Euclidean vector bundles as a natural extension of classical studies we first briefly recall the original problem.
For brevities  sake, from now on, we write ``GHR'' for ``Gr\"unbaum--Hadwiger--Ramos''.

\subsection{What is the GHR problem for masses?}
A {\em mass} in a Euclidean space $\E$ is assumed to be a finite Borel measure on $\E$  which vanishes on every affine hyperplane.  

\medskip
An {\em oriented affine hyperplane} $H(u;a)$ in $\E$ is given by
\begin{compactitem}[\quad --]
\item  a unit vector $u\in \E$, the unit normal to the associated affine hyperplane $H_{u;a}$, which in addition determines the orientation of the hyperplane, and by
\item a scalar $a\in\R$ which determines the distance of the associated affine hyperplane $H_{u;a}$ from the origin in direction $u$.
\end{compactitem}
The associated affine hyperplane is defined by $H_{u;a}:=\{x\in\E \colon \langle x,u\rangle =a\}$. 
Furthermore, the oriented affine hyperplane $H(u;a)$ defines two closed half-spaces by 
\[
H_{u;a}^{u}:=\{x\in \E \colon\langle x,u\rangle -a \geq 0\}\quad\text{and}\quad
H_{u;a}^{-u}:=\{x\in\E \colon \langle x,-u\rangle +a\geq 0\}.
\]
In other words, an oriented affine hyperplane is a triple $H(u;a)=(H_{u;a},H_{u;a}^{u},H_{u;a}^{-u})$. 

\medskip
An {\em arrangement} of $k$ (oriented) {\em affine hyperplanes}  $\HH$ in $\E$ is an ordered collection $\HH=\big( H(u_1;a_1), \ldots, H(u_k;a_k) \big)$ of $k$ oriented affine hyperplanes in $\E$.
Such an arrangement $\HH$ and a collection of unit normal vectors $(v_1,\dots,v_k)\in \{u_1,-u_1\}\times\dots\times \{u_k,-u_k\}$ to the elements of the arrangement $\HH$  determine an \emph{orthant}  as the intersection of the corresponding closed half-spaces:
\[
\mathcal{O}_{(v_1,\dots,v_k)}^{\HH}:=H_{u_1;a_1}^{v_{1}}\cap\cdots\cap H_{u_k;a_k}^{v_{k}}.
\]
There are $2^k=\card\big( \{u_1,-u_1\}\times\dots\times \{u_k,-u_k\}\big)$ orthants determined by the arrangement $\HH$.
The orthants are not necessarily distinct or non-empty. 
The arrangement of hyperplanes $\HH=\big( H(u_1;a_1), \ldots, H(u_k;a_k) \big)$ is {\em orthogonal} if $u_r\perp u_s$ for every $1\leq r<s\leq k$. 

\begin{figure}
\includegraphics[scale=0.8]{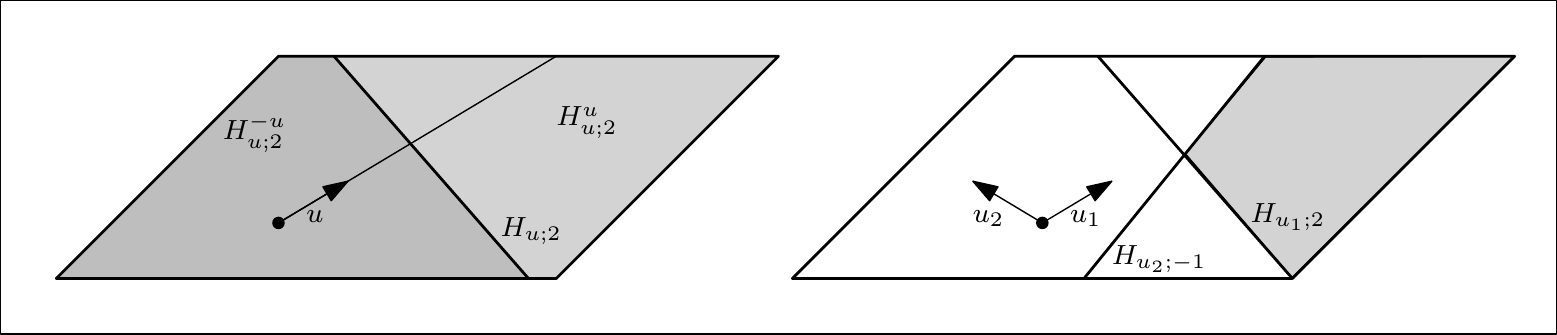}
\caption{\small An illustration of an oriented affine hyperplane, associated half-spaces, arrangement of two affine hyperplanes, and an orthant $\mathcal{O}_{(u_1,-u_2)}^{\HH}$ where $\HH=(H(u_1,2),H(u_2,-1))$.}	
\end{figure}
 
\medskip
Now, we say that an arrangement $\HH=\big( H(u_1;a_1), \ldots, H(u_k;a_k) \big)$ in $\E$ \emph{equiparts} a collection of masses $\M$ in $\E$ if and only if for every mass $\mu\in\M$ and every $(v_1,\dots,v_k)\in \{u_1,-u_1\}\times\dots\times \{u_k,-u_k\}$ holds:
\[
\mu(\mathcal{O}_{(v_1,\dots,v_k)}^{\HH})=\frac{1}{2^{k}}\,\mu(V).
\]
Furthermore, a collection of masses $\M$ in $\E$ {\em can be equiparted by an arrangement} of $k$ affine hyperplanes if there exists an arrangement $\HH$ of $k$ oriented affine hyperplanes in $\E$ which equiparts $\M$.

\medskip
The GHR problem for masses asks for {\em the minimal dimension $d=\Delta (j,k)$ of a Euclidean space $\E$ in which every collection $\M$ of $j$ masses can be equiparted by an arrangement of $k$  affine hyperplanes}. 

\medskip
The first few values of the function $\Delta$ can be derived from classical results.
Indeed, the ham-sandwich theorem \cite{Beyer-Zardecki-2004} implies that  $\Delta(d,1)=d$, an argument of Gr\"unbaum \cite{Grunbaum-1960} says that $\Delta(1,2)=2$, while the seminal work of Hadwiger \cite{Hadwiger-1966} yields that $\Delta(2,2)=3$, $\Delta(1,3)=3$.
Furthermore, Avis and Ramos showed that $\tfrac{2^k-1}{k}j\leq \Delta(j,k)$, while Peter Mani-Levitska, Sini\v{s}a Vre\'{c}ica \& Rade \v{Z}ivaljevi\'c in \cite[Thm.\,39]{ManiLevitska-Vrecica-Zivaljevic-2006} proved that $ \Delta(j,k)\ \le\ j + (2^{k-1}-1)2^{\lfloor\log_2j\rfloor}$.
The list of known values of the function $\Delta$ is given in \cite{Blagojevic-Frick-Haase-Ziegler-2016}.

\medskip
In our recent paper with Calles Loperena and Dimitrijevi\'c Blagojevi\'c \cite{BlagojevicCallesCrabbDimitrijevic}, motivated by the work of Patrick Schnider \cite{Schnider-2019} and  Ilani Axelrod-Freed \& Sober\'on \cite{Soberon}, we studied an extension of the GHR problem for masses to the problem for mass assignments over Grassmann manifolds.

%--------------------------------------------------------------------------------------%
%--------------------------------------------------------------------------------------%
\subsection{What is the GHR problem for mass assignments?} 
\label{subsec : What is the GHR problem for mass assignments?}
%--------------------------------------------------------------------------------------%
%--------------------------------------------------------------------------------------%
Let $M_+(X)$ be the space of all finite Borel measures on a topological space $X$ equipped with the weak topology.
That is the minimal topology on $M_+(X)$ with the property that for every bounded and upper semi-continuous function $f \colon X \longrightarrow \R$, the induced function $M_+(X)\longrightarrow\R$ given by $\nu \longmapsto \int f d\nu$, is upper semi-continuous.
For $X=\E$, a Euclidean space, the subspace of all masses is denoted by $M_+'(\E)\subseteq M_+(\E)$.

\medskip
Let $E$ be a Euclidean vector bundle over a path-connected space $B$ with fibre $E_b$ at $b\in B$.
Consider the associated fibre bundle
\begin{equation}
\label{eq: mass_assignment}	
\xymatrix{
 M_+'(E) := \{ (b, \nu) \mid b \in B ,\;   \nu \in M_+'(E_b) \}\ar[r] & B,\quad &(b,\nu)\ar@{|->}[r] &b.
}
\end{equation}
The topology on $M_+'(E)$ is defined using the local triviality of $E$ and the topology on fibres we chose. 
Any cross-section $\mu\colon B\longrightarrow M_+'(E)$ of the fibre bundle \eqref{eq: mass_assignment} is called {\em mass assignment} on the Euclidean vector bundle $E$.
In particular, $\mu(b)$ is a mass on $E_b$ for every $b\in B$.

\medskip
More generally, let us now write $M_+(E)\longrightarrow B$ for the locally trivial bundle with fibre at $b\in B$ the space $M_+(E_b)$ of finite Borel measures on the sphere $S(E_b)$.
A continuous section $\mu$ will be called a {\it family of (probability) measures} on $E$ if $\mu_b\in M_+(E_b)$ is a (probability)
measure for each $b\in B$. 
In the following we give an illustrative example of a family of probability measures on $E$.

\begin{example}
Let $E$ be a Euclidean vector bundle over a path-connected space $B$.
Suppose that $X\longrightarrow B$ is a finite cover embedded fibrewise in $E$, and suppose that $p: X\to [0,1]$
is a continuous function such that, for each $b\in B$,
$\sum_{x\in X_b} p(x)=1$.
For a Borel subset $A\subseteq E_b$, define $\mu_b(A):=\sum_{x\in A\cap X_b} p(x)$.
Then $\mu$ defines a family of probability measures on $S(E)$.	
\end{example}

\medskip
The GHR problem for mass assignments on a Euclidean vector bundle $E$ over $B$ asks {\em for all pairs of positive integers $(j,k)$ with the property that for every collection of $j$ mass assignments $\M=( \mu_1, \ldots , \mu_j )$ on $E$ there exists a point $b\in B$ such that the collection of $j$ masses $\M(b):=( \mu_1(b), \ldots , \mu_j(b) )$ on $E_b$ can be equiparted by an arrangement of $k$ affine hyperplanes in $E_b$}.
If we denote by $\Delta(E)$ the set of such pairs $(j,k)$, then the  GHR problem for mass assignments on $E$ is a question of describing the set $\Delta(E)\subseteq\N^2$.

\medskip
Recently, with Calles Loperena and Dimitrijevi\'c Blagojevi\'c \cite{BlagojevicCallesCrabbDimitrijevic}, we studied the GHR problem for mass assignment over tautological vector bundles over Grassmann manifolds.
In particular, with appropriate reformulation, the result \cite[Thm.\,1.5]{BlagojevicCallesCrabbDimitrijevic} can be stated as follows.

\begin{theorem}
\label{th : original result reformulated}
Let $d\geq 2$ and $\ell\geq 1$ be integers where $1\leq\ell\leq d$, and let $E_{\ell}^{d}$ be the tautological vector bundle over the Grassmann manifold $\Gr_{\ell}(\R^d)$ of all $\ell$-dimensional linear subspaces in $\R^d$.
Then
\[
\big\{(j,k)\in\N^2\, :\, 1\leq k\leq\ell,\, 2^{\lfloor\log_2j\rfloor}(2^{k-1}-1)+j\leq d\big\}\subseteq \Delta(E_{\ell}^{d}).
\]
\end{theorem}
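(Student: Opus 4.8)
\emph{Strategy.} I would attack this by a configuration space/test map scheme set up fibrewise over the Grassmann manifold $B:=\Gr_\ell(\R^d)$, and then reduce it to a single non-vanishing statement for a Fadell--Husseini index in $\F_2$-cohomology. An oriented affine hyperplane in an $\ell$-dimensional Euclidean space $L$ is encoded, allowing the degenerate ``hyperplane at infinity'', by a unit vector in $L\oplus\R$; carried out fibrewise, arrangements of $k$ oriented affine hyperplanes in the fibres of $E_\ell^d$ are parametrised (with multiplicities along the degenerate locus) by the $k$-fold fibrewise join $C:=S(E_\ell^d\oplus\underline{\R})^{\ast_B k}$, a bundle over $B$ with fibre $(S^\ell)^{\ast k}$, where $\underline{\R}$ is the trivial line bundle. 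The group $\Sympm_k$ acts on $C$ over $B$: the $r$-th copy of $\Z/2$ is the antipodal map on the $r$-th join coordinate, reversing the orientation of the $r$-th hyperplane, while $\Sym_k$ permutes the hyperplanes. For the cohomological computation it will suffice to restrict the action to the elementary abelian subgroup $(\Z/2)^k$.

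\emph{Test map.} Given mass assignments $\M=(\mu_1,\dots,\mu_j)$ on $E_\ell^d$, for a configuration $(L,\HH)\in C$, an index $i\in\{1,\dots,j\}$ and a nonempty subset $S\subseteq\{1,\dots,k\}$ I would set
\[
f_{i,S}(L,\HH)\ :=\ \sum_{\varepsilon\in\{\pm1\}^{k}}\Bigl(\prod_{r\in S}\varepsilon_r\Bigr)\,\mu_i(L)\bigl(\mathcal{O}^{\HH}_{\varepsilon}\bigr),
\]
with $\mathcal{O}^{\HH}_{\varepsilon}$ the orthant of $\HH$ determined by the sign vector $\varepsilon$. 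These functions assemble into a continuous $\Sympm_k$-equivariant fibrewise map $f\colon C\to W_k^{\oplus j}$, where $W_k\subset\R[\{\pm1\}^k]$ is the $(2^k-1)$-dimensional real $\Sympm_k$-representation orthogonal to the constants; its characters are the $\chi_S(\varepsilon)=\prod_{r\in S}\varepsilon_r$ for $\emptyset\ne S\subseteq\{1,\dots,k\}$, and the $r$-th $\Z/2$ acts on the line spanned by $\chi_S$ by $-1$ if $r\in S$ and trivially otherwise. By inclusion--exclusion, $f(L,\HH)=0$ if and only if $\HH$ equiparts $\M(L)$, so the theorem follows once one shows $f^{-1}(0)\ne\emptyset$ for every $\M$.

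\emph{Obstruction.} Assume for contradiction that $f$ is nowhere zero; then $f/\|f\|$ is a $(\Z/2)^k$-map $C\to S(W_k^{\oplus j})$, and passing to Borel constructions forces the Euler class $e(W_k)^{j}=e(W_k^{\oplus j})\in H^{j(2^k-1)}_{(\Z/2)^k}(\pt;\F_2)$ to map to zero in $H^*_{(\Z/2)^k}(C;\F_2)$. An iterated Gysin computation identifies $H^*_{(\Z/2)^k}(C;\F_2)$ (the cohomology of the projectivised-bundle tower built from $E_\ell^d\oplus\underline{\R}$) with the quotient of $H^*(B;\F_2)\otimes\F_2[t_1,\dots,t_k]$, $t_r$ the degree-one generator coming from the $r$-th $\Z/2$-factor, by the ideal generated by the mod-$2$ Euler class of $\bigoplus_{r=1}^{k}\rho_r\otimes(E_\ell^d\oplus\underline{\R})$, namely by
\[
\prod_{r=1}^{k}\bigl(t_r^{\,\ell+1}+t_r^{\,\ell}w_1(E_\ell^d)+\cdots+t_r\,w_\ell(E_\ell^d)\bigr);
\]
under this identification $e(W_k)$ is the product, over all nonempty $S\subseteq\{1,\dots,k\}$, of the nonzero $\F_2$-linear forms $\sum_{r\in S}t_r$. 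Thus the whole problem collapses to proving that $e(W_k)^{j}$ is nonzero in this quotient ring, which contradicts the vanishing above; and the content of the theorem is exactly that the inequality $2^{\lfloor\log_2 j\rfloor}(2^{k-1}-1)+j\le d$ is what makes $e(W_k)^{j}$ survive. I would establish this by writing $j=2^n+m$ with $0\le m<2^n$, peeling off the $m$ surplus masses one dimension at a time (a constraint-method step), reducing to the power-of-two case $j=2^n$ in dimension $2^{n+k-1}$, and there carrying out the degree count — which, when $B$ is a point, is the classical Mani-Levitska--Vre\'{c}ica--\v{Z}ivaljevi\'{c} computation. The hypothesis $k\le\ell$ is precisely what keeps the relevant monomial from being annihilated by the truncating relations of $H^*(\Gr_\ell(\R^d);\F_2)$; it is in any case necessary, since $k$ hyperplanes cannot cut an $\ell$-dimensional fibre into $2^k$ nonempty orthants.

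\emph{Main difficulty.} The fibrewise join model, the inclusion--exclusion identity behind the test map, and the Gysin description of $H^*_{(\Z/2)^k}(C;\F_2)$ are routine. The real work is the final non-vanishing: making the characteristic-class count uniform over the whole Grassmann manifold rather than over a single fibre, tracking how each factor of $e(W_k)^{j}$ is absorbed partly by the Stiefel--Whitney classes of $E_\ell^d$ in the base and partly by the fibre, and extracting precisely the bound $2^{\lfloor\log_2 j\rfloor}(2^{k-1}-1)+j\le d$. A parallel route, closer to the paper's ``alternative topological framework'', replaces the ideal-valued index by fibrewise homotopy theory and a parametrised Euler-class obstruction; the analytic content is identical.
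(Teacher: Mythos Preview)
Your strategy—fibrewise CS/TM over $\Gr_\ell(\R^d)$, reduction to a $(\Z/2)^k$-Euler-class non-vanishing, then the Mani-Levitska--Vre\'cica--\v Zivaljevi\'c count—is the right shape and is close to how Theorem~\ref{th : original result reformulated} was originally obtained in \cite{BlagojevicCallesCrabbDimitrijevic}. The present paper, however, recasts everything in the spherical picture of Section~\ref{Sec 1.3}: linear (not affine) hyperplanes in $E_b$, configuration space the fibre \emph{product} $\PP(E)\times_B\cdots\times_B\PP(E)$, and test bundle $A_k(E)$ as in Section~\ref{subsec : CSTM for mass assignements}. The obstruction then lives in $R_k(B)/\II_k(E)$ with $\II_k(E)=(f_1,\ldots,f_k)$ generated by $k$ separate Leray--Hirsch relations (Lemma~\ref{claim : 00}, Claim~\ref{claim : 01}); Corollary~\ref{cor : A} reduces the tautological case to the purely numerical $\iota_k(d,\ldots,d)$, and the bound is read off \cite[Lem.\,4.2]{BlagojevicCallesCrabbDimitrijevic} (Corollary~\ref{cor : D}). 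The payoff of this route is that Theorem~\ref{th : main result 01} and its constrained version Theorem~\ref{th : main result 02} are stated and proved for arbitrary bundles, with the tautological case falling out as a corollary.

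One point in your write-up needs repair. You take the configuration space to be the $k$-fold fibrewise \emph{join} $S(E_\ell^d\oplus\underline{\R})^{*_B k}$ but in the same sentence call its Borel quotient a ``projectivised-bundle tower'' and present the obstruction ideal as principal, generated by the single product $\prod_{r} f_r$. These are two different schemes. The tower of projectivised bundles is the \emph{product} $\PP(E_\ell^d\oplus\underline{\R})^{\times_B k}$, and its cohomology is $R_k(B)/(f_1,\ldots,f_k)$, a quotient by a larger ideal than the one you wrote. If you genuinely intend the join, then $C\cong S\bigl(\bigoplus_r\rho_r\otimes(E_\ell^d\oplus\underline{\R})\bigr)$ and its Fadell--Husseini index is indeed $(\prod_r f_r)$, but arrangements of $k$ hyperplanes are not parametrised by the join—points of the join are formal convex combinations—so you would owe an explanation of how the test map is defined on the degenerate strata and why a zero there still yields $k$ honest hyperplanes equipartitioning the masses. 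The product scheme, which is what the paper uses and what your phrase ``projectivised-bundle tower'' actually describes, avoids this issue; with the product in place of the join and the ideal $(f_1,\ldots,f_k)$ in place of $(\prod_r f_r)$, your outline goes through.
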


\medskip
It is important to observe that the result of Theorem \ref{th : original result reformulated} does not really depend on the value of the parameter $\ell$.
In particular, for $\ell=d$ it recovers the upper bound of Mani-Levitska--Vre\'{c}ica--\v{Z}ivaljevi\'c  \cite[Thm.\,39]{ManiLevitska-Vrecica-Zivaljevic-2006} for the function $\Delta$.	

\medskip
In this paper, following the ideas of B\'ar\'any and Matou\v{s}ek \cite{BaranyMatousek2001} and Crabb \cite{Crabb2020}, we extend mass assignment partition problems in a Euclidean space by affine hyperplane arrangements to mass assignment partition problems on the unit Euclidean sphere by arrangements of equatorial spheres.
Additionally, we will restrict, and therefore simplify, our notions of mass and mass assignment.

\subsection{What are the GHR problems on spheres and sphere bundles?}\label{Sec 1.3}
First, we show how the GHR problem for masses in $\R^d$ induces the corresponding mass partition problem on the unit sphere in $\R^{d+1}$.

\medskip
Let $d\geq 1$ be an integer.
Embed $\R^d$ into $\R^{d+1}$ via the embedding $x\longmapsto (x,-1)$.
In this way $\R^d$ coincides with the tangent space to the unit sphere $S(\R^{d+1})\cong S^d$ at the point $y_0:=(0,\dots,0,-1)$.
Let $p\colon \R^d\longrightarrow \Lambda$ be the homeomorphism, between $\R^d$ and the open lower hemisphere $\Lambda:=\{y\in S(\R^{d+1}) \,:\, \langle y,y_0\rangle >0\}$ of the sphere $S(\R^{d+1})$, given by
$
x\longmapsto \frac{1}{\sqrt{\|x\|^2+1}}(x,-1)
$
for $x\in\R^d$.

\begin{figure}
\includegraphics[scale=0.8]{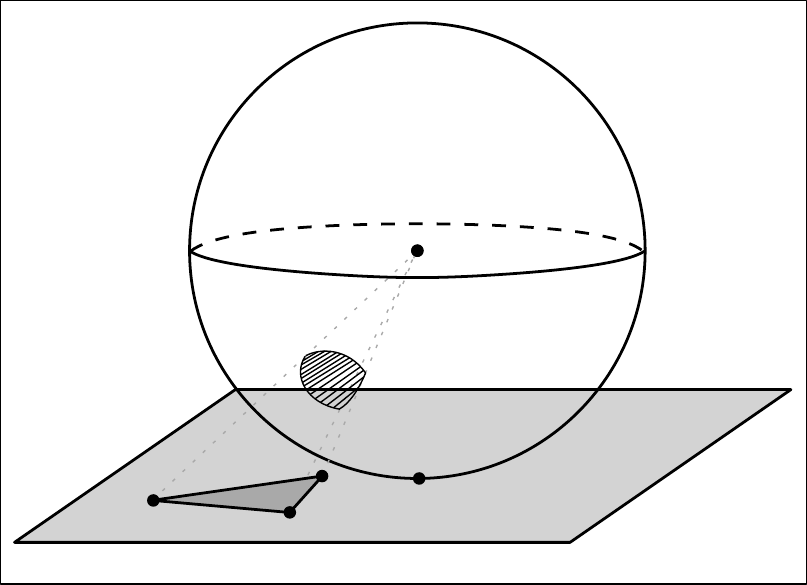}
\caption{\small An illustration of a transition of a mass on $\R^2$ into a measure on $S^2$.}	
\end{figure}

\medskip
Now, every mass $\mu$ on the Euclidean space $\R^d$ induces a measure (mass) $\mu'$ on $S(\R^{d+1})$ defined by $\mu'(A):=\mu(p^{-1}(A\cap\Lambda))$, where $A\subseteq S(\R^{d+1})$ is an element of the Borel $\sigma$-algebra on  $S(\R^{d+1})$.
In particular, measure $\mu'$ vanishes on each equatorial sphere of $S(\R^{d+1})$.
Here, an equatorial sphere of $S(\R^{d+1})$ can be always presented as an intersection of $S(\R^{d+1})$ and a unique linear hyperplane in $\R^{d+1}$.

\medskip
Furthermore, every affine hyperplane $H$ in $\R^d$ is mapped via $p$ to a part of an equatorial sphere of $S(\R^{d+1})$.
More precisely, 
\[
p(H)=\spann (H)\cap\Lambda=\{\lambda\cdot(x,-1) \,:\, \lambda\in\R,\,x\in H \}\cap\Lambda,
\] 
where $\spann$ denotes linear span in $\R^{d+1}$.

\medskip
Using the transition of masses on $\R^d$ into measures on $S^d$, and affine hyperplanes in  $\R^d$ into equatorial spheres on $S^d$, we can formulate the GHR problem for masses on sphere as follows:
{\em determine the minimal dimension $d=\Delta_S(j,k)$ of a unit Euclidean sphere $S^d$ in which every collection of $j$ masses can be equiparted by an arrangement of $k$ equatorial spheres.}
Here, the notions of masses and  equipartition of masses are naturally extended from the affine to the spherical setup. 
 
\medskip
Motivated by this spherical extension of the classical problem and with a desire to simplify the treatment of the mass assignments, we restate the GHR problem for mass assignments in the following way.

\medskip
Let $E$ be a Euclidean vector bundle over path-connected space $B$, and let $S(E)$ denote the unit sphere bundle associated to $E$.
Now, {\em we are looking for all pairs of positive integers $(j,k)$ with the property that for every collection of $j$ continuous real valued functions $\varphi_1,\dots,\varphi_j\colon S(E)\longrightarrow\R$, there exists a point $b\in B$ and an arrangement $\HH^b=(H_1^b,\dots,H_k^b)$ of $k$ linear hyperplanes in the fibre $E_b$ of $E$  such that for every pair of connected components $(\mathcal{O}',\mathcal{O}'')$ of the arrangement complement $E_b-(H_1^b\cup\dots\cup H_k^b)$ the following statement holds}
\[
\int_{\mathcal{O}'\cap S(E_b)}\varphi_1=\int_{\mathcal{O}''\cap S(E_b)}\varphi_1
\quad , \dots , \quad 
\int_{\mathcal{O}'\cap S(E_b)}\varphi_j=\int_{\mathcal{O}''\cap S(E_b)}\varphi_j.
\]
Here integration is assumed to be with the respect to the measure on the sphere $S(E_b)$ induced by the metric.
Once again, $\Delta_S(E)$ denotes the set of all such pairs $(j,k)$.
Since Euclidean and spherical partition problems are tightly related, we will not make a particular distinction between them. 
From now on instead of a mass assignment we consider a real valued continuous function from the sphere bundle, and instead of an affine hyperplane we take linear hyperplane which induces an equatorial sphere.

%--------------------------------------------------------------------------------------%
%--------------------------------------------------------------------------------------%
%--------------------------------------------------------------------------------------%
%--------------------------------------------------------------------------------------%
\section{Statements of the main results}
\label{sec : Statements of the main results}
%--------------------------------------------------------------------------------------%
%--------------------------------------------------------------------------------------%
%--------------------------------------------------------------------------------------%
%--------------------------------------------------------------------------------------%

After collecting the first family of results for tautological bundles (Theorem \ref{th : original result reformulated}) it is natural to ask various followup questions:
\begin{compactitem}[\quad --]
\item Why not consider partitions of mass assignments on arbitrary vector bundles instead of only tautological vector bundles?
\item Can we constrain our choice of desired partitions on the given vector bundle by forcing normals of hyperplanes into chosen fixed vector subbundles of the ambient vector bundle?
\item What about partitions with pairwise orthogonal hyperplanes, as it was considered in classical case?	
\item  And finally, how can we fit all these questions into a common framework?
\end{compactitem}
In the following we present multiple answers to the question we just asked. 
Interconnection between the main results of the paper is given in Figure \ref{diagram}. 

\begin{figure}[h]
\includegraphics[scale=0.615]{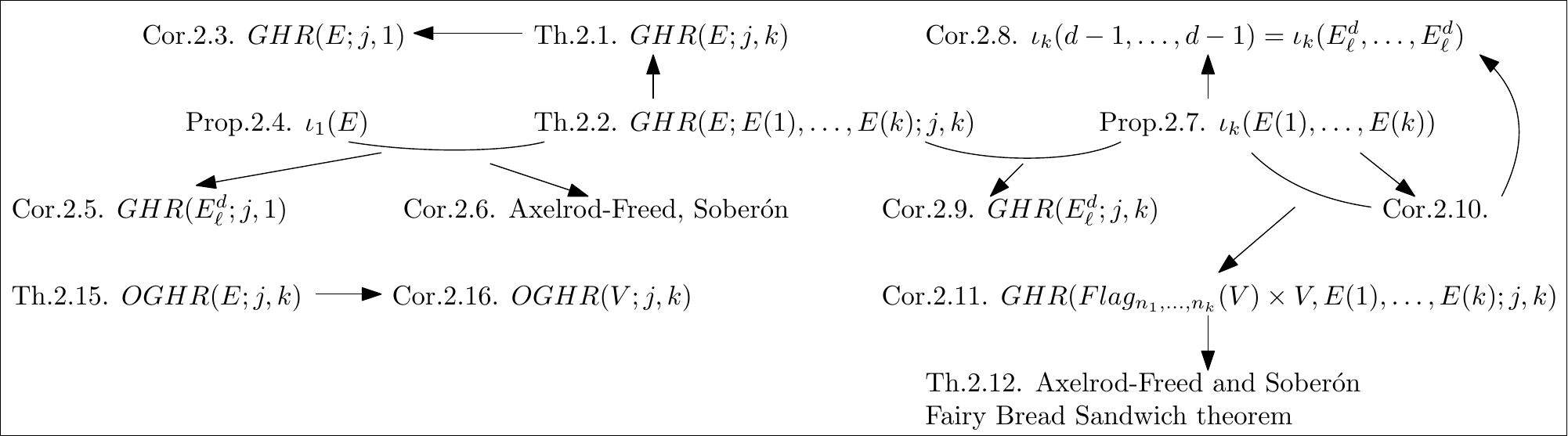}
\caption{\small The main results of the paper and connections between them.} 
\label{diagram}
\end{figure}

\medskip
We begin the list of our results with the full generalisation of \cite[Thm.\,1.1]{BlagojevicCallesCrabbDimitrijevic}.
In other words, the old result becomes a special case of the next theorem in the case of tautological vector bundles. 
For an $n$-dimensional  Euclidean vector bundle $E$ over a compact and connected ENR\footnote{Euclidean Neighbourhoods Retract} $B$ and an integer $k\geq 1$ we denote by 
\[
R_k(B):=H^*(B;\F_2)[x_1,\dots,x_k]
\]
the ring of polynomials in $k$ variables $x_1,\dots,x_k$ of degree $1$ with coefficients in the cohomology ring of the base space $H^*(B;\F_2)$.
Note that by definition an ENR is locally path-connected and so the assumption of connectedness for ENR is equivalent with the assumption of being path-connected.
Classically, we denote by   $w_i(E)$, $i\geq 0$, the Stiefel--Whitney classes of the vector bundle $E$.
In addition, we consider the ideal 
\[
\II_{k}(E):=\Big( \sum_{s = 0}^{n}   w_{n-s}(E)\,x_{r}^{s}  	\: : \:	1 \leq r \leq k \Big) \ \subseteq \ R_{k}(B),
\]
and the element
\[
e_{k}(B):=\prod_{(\alpha_{1},\dots, \alpha_{k})\in\F_2^k-\{ 0\}} (\alpha_{1}x_1 + \cdots + \alpha_{k}x_k) \ \in \ R_{k}(B).
\] 
Now a generalisation of Theorem \ref{th : original result reformulated}, which is proved in Section \ref{subsec : proof of main result 01}, can be stated as follows.

\begin{theorem}
\label{th : main result 01}	
Let $E$ be a Euclidean vector bundle of dimension $n$ over a compact and connected ENR $B$, and let $k\geq 1$ and $j\geq 1$ be integers.

\smallskip\noindent
If the element $e_{k}(B)^j$ does not belong to the ideal $\II_{k}(E)$, then $(j,k)\in \Delta_S(E)$, or in other words, for every collection of $j$ continuous real valued functions $\varphi_1,\dots,\varphi_j\colon\allowbreak S(E)\longrightarrow\R$, there exists a point $b\in B$ and an arrangement $\HH^b=(H_1^b,\dots,H_k^b)$ of $k$ linear hyperplanes in the fibre $E_b$ of $E$ such that for every pair of connected components $(\mathcal{O}',\mathcal{O}'')$ of the arrangement complement $E_b-(H_1^b\cup\dots\cup H_k^b)$ the following statement holds
\[
\int_{\mathcal{O}'\cap S(E_b)}\varphi_1=\int_{\mathcal{O}''\cap S(E_b)}\varphi_1
\quad , \dots , \quad 
\int_{\mathcal{O}'\cap S(E_b)}\varphi_j=\int_{\mathcal{O}''\cap S(E_b)}\varphi_j.
\]
\end{theorem}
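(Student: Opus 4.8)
The plan is to set up a configuration–test–map (CS/TM) scheme and then apply an Euler-class / ideal-membership obstruction argument. First I would build the configuration space of arrangements of $k$ oriented linear hyperplanes in the fibres of $E$. Fibrewise, an oriented linear hyperplane in $E_b$ is a unit vector $u \in S(E_b)$ (its normal), so an arrangement of $k$ such hyperplanes over $b$ is a point of $S(E_b)^{\times k}$; letting $b$ vary we get the $k$-fold fibrewise join/product sphere bundle $S(E)^{\times_B k} \to B$, which carries a natural action of the Weyl-type group $W_k = (\Z/2)^k \rtimes \Sym_k$ — or, for the present purpose, just of $(\Z/2)^k$, acting by flipping the orientations $u_r \mapsto -u_r$. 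I would then define the test map: given $\varphi_1,\dots,\varphi_j\colon S(E)\to\R$, and given an arrangement $(u_1,\dots,u_k)$ over $b$, form for each $r$ and each $\varphi_i$ the signed mass of the half-sphere, $T_{i,r}(u_1,\dots,u_k) := \int_{\{x\in S(E_b)\,:\,\langle x,u_r\rangle\ge 0\}}\varphi_i$ minus the integral over the complementary hemisphere, and more generally the $2^k$ orthant integrals assembled into the standard "Radon/Fourier" form. Following the classical GHR reduction, equipartition of all $j$ functions is equivalent to the vanishing of a map into a sum of $j$ copies of the reduced regular representation-type bundle over $B$, and a zero of this fibrewise $(\Z/2)^k$-equivariant map yields exactly the arrangement and point $b$ we want. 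So it remains to show this equivariant bundle map has a zero.

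Second, I would translate non-existence of a zero into the non-vanishing of a fibrewise equivariant Euler class, and then identify that Euler class. The configuration bundle $S(E)^{\times_B k} \to B$, after passing to the Borel construction with respect to $(\Z/2)^k$, has mod $2$ cohomology computed by the projective-bundle / Leray–Hirsch theorem: each factor $S(E)\to B$ contributes a relation $\sum_{s=0}^{n} w_{n-s}(E)\,x_r^{s}=0$, where $x_r \in H^1$ is the pullback of the generator from the $r$-th $B\Z/2$ factor. This is exactly how the ring $R_k(B)$ and the ideal $\II_k(E)$ arise: the equivariant cohomology of the configuration space is $R_k(B)/\II_k(E)$. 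The target bundle, a sum of $j$ copies of the representation that encodes "difference of orthant masses", has equivariant Euler class equal to $e_k(B)^j$, since one copy of the reduced regular representation of $(\Z/2)^k$ over $B$ has Euler class $\prod_{0\ne\alpha\in\F_2^k}(\alpha_1 x_1+\cdots+\alpha_k x_k) = e_k(B)$. If a zero-free equivariant section existed, the Euler class would have to be zero in $H^*$ of the configuration space, i.e. $e_k(B)^j \in \II_k(E)$; contrapositively, if $e_k(B)^j \notin \II_k(E)$ then a zero exists, which is the theorem.

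The steps in order: (1) identify the configuration space $S(E)^{\times_B k}$ with its $(\Z/2)^k$-action and check it is a closed fibrewise manifold (or at least an ENR-bundle, using that $B$ is a compact ENR); (2) construct the test map from the $\varphi_i$ and verify, via the standard Radon/Fourier-expansion argument, that zeros correspond to the desired equiparting arrangements — here one must be careful that continuity of $\varphi_i$ gives continuity of the orthant integrals, and that the spherical measure behaves well under the fibrewise trivialisations, which is where the choice of weak topology and compactness of $B$ is used; (3) set up the fibrewise Borel construction and compute $H^*_{(\Z/2)^k}$ of the configuration bundle as $R_k(B)/\II_k(E)$ by iterated Leray–Hirsch, and (4) identify the relevant Euler class as the image of $e_k(B)^j$ and invoke the standard fact that a nowhere-zero equivariant section forces the fibrewise Euler class to vanish.

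The main obstacle I expect is step (3)–(4): making the Euler-class obstruction argument rigorous in the \emph{fibrewise} equivariant setting over a general compact ENR $B$, rather than over a point. One has to be sure the relevant vector bundle over the Borel construction is genuinely a (finite-rank) vector bundle with a well-defined mod $2$ Euler class, that Leray–Hirsch applies (the fibre cohomology being free over $H^*(B;\F_2)$ with the Stiefel–Whitney relations exactly as in a projective bundle), and that "Euler class $\ne 0$ $\Rightarrow$ section has a zero" survives in this relative context — this is precisely the "alternative topological framework" alluded to in the abstract, and is the technical heart of the proof. The CS/TM bookkeeping of step (2), while classical for masses in $\R^d$, also needs the mild extra care that we are now integrating against the round measure on $S(E_b)$ rather than a genuine mass, but this does not affect the degree-theoretic content.
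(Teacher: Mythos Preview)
Your proposal is correct and is essentially the same argument as the paper's. The one simplification the paper makes, which dissolves the obstacle you flag in steps~(3)--(4), is to note that the $(\Z/2)^k$-action on $S(E)^{\times_B k}$ is \emph{free}: instead of a Borel construction, one passes directly to the quotient $\PP(E)\times_B\cdots\times_B\PP(E)$ and works with the ordinary (non-equivariant) vector bundle $A_k(E)\cong\bigotimes_{r=1}^k\Theta_r^*(H(E)\oplus\underline{\R})$, whose quotient by the constant-function line has mod~$2$ Euler class $e_k(B)^j$; the cohomology of the base is computed by an iterated projective-bundle (Leray--Hirsch) formula, yielding exactly $R_k(B)/\II_k(E)$, and the standard ``nonzero Euler class $\Rightarrow$ every section has a zero'' applies with no equivariant subtleties.
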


\medskip
The first generalisation of Theorem \ref{th : main result 01} is obtained by a restriction of the family of the arrangements in which we are looking for our partition.  
Concretely, we ask for $i$-th hyperplane in the arrangement to have its normal vector in a specific vector subbundle.
For that we modify our setup as follows.

\medskip
Let $k\geq 1$ be an integer, and let  $E(1),\dots,E(k)$ be Euclidean vector bundles over a compact and connected ENR $B$.
Denote by $n_i$ the dimension of the vector bundle $E(i)$ for $1\leq i\leq k$. 
We consider the ideal in $R_{k}(B)$:
\[
\II_k(E(1),\dots,E(k)):=\Big( \sum_{s = 0}^{n_r}   w_{n_r-s}(E(r))\,x_{r}^{s}  	\: : \:	1 \leq r \leq k \Big) \ \subseteq \ R_{k}(B),
\]
and set
\[
\iota_k (E(1),\dots,E(k)):=\max\big\{j :  e_{k}(B)^j\notin \II_k(E(1),\dots,E(k))\big\}.
\]
Finally, we say that an arrangement of $k$ linear hyperplanes $\HH^b=(H_1^b,\dots,H_k^b)$ in the fibre  $E_b$ is determined by the collection of vector subbundles $E(1),\dots,E(k)$ if a unit normal of the linear hyperplane $H_i^b$ belongs to the fibre $E(i)_b$, for every $1\leq i\leq k$.

\medskip
Now, the generalisation, proved in Section \ref{subsec : proof of main result 02}, says the following.

\begin{theorem}
\label{th : main result 02}	
Let $E$ be a Euclidean vector bundle of dimension $n$ over a compact and connected ENR $B$, $k\geq 1$ and $j\geq 1$ integers, and let $E(1),\dots,E(k)$ be vector subbundles of $E$ of dimensions $n_1,\dots,n_k$, respectively.

\smallskip\noindent
If $j\leq \iota_k (E(1),\dots,E(k))$, then for every collection of $j$ continuous real valued functions $\varphi_1,\dots,\varphi_j\colon S(E)\longrightarrow\R$, there exists a point $b\in B$ and an arrangement $\HH^b=(H_1^b,\dots,H_k^b)$ of $k$ linear hyperplanes in fibre $E_b$ of $E$ determined by the collection of vector subbundles $E(1),\dots,E(k)$ such that for every pair of connected components $(\mathcal{O}',\mathcal{O}'')$ of the arrangement complement $E_b-(H_1^b\cup\dots\cup H_k^b)$ the following statement holds
\[
\int_{\mathcal{O}'\cap S(E_b)}\varphi_1=\int_{\mathcal{O}''\cap S(E_b)}\varphi_1
\quad , \dots , \quad 
\int_{\mathcal{O}'\cap S(E_b)}\varphi_j=\int_{\mathcal{O}''\cap S(E_b)}\varphi_j.
\]
\end{theorem}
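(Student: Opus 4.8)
The plan is to run a configuration--test--map argument over a configuration space tailored to the constraint on the normals, in complete parallel with the proof of Theorem \ref{th : main result 01}; the only change is that the $r$-th coordinate records a direction inside $E(r)$ rather than inside $E$. Put $G:=(\Z/2)^k$, and given continuous functions $\varphi_1,\dots,\varphi_j\colon S(E)\longrightarrow\R$ consider the configuration space
\[
X:=S(E(1))\times_B S(E(2))\times_B\cdots\times_B S(E(k)),
\]
the $k$-fold fibre product over $B$ of the unit sphere bundles of $E(1),\dots,E(k)$, on which $G$ acts fibrewise, the $r$-th factor $\Z/2$ acting by $v_r\mapsto -v_r$. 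A point of $X$ over $b\in B$ is a tuple $(v_1,\dots,v_k)$ with $v_r\in S(E(r)_b)$, hence precisely a co-oriented arrangement $\HH^b=(H_1^b,\dots,H_k^b)$ of $k$ linear hyperplanes in $E_b$ determined by $E(1),\dots,E(k)$. Let $W:=\bigoplus_{0\ne\alpha\in\F_2^k}\R_\alpha$ be the sum of the non-trivial irreducible real $G$-representations, so $\dim_\R W=2^k-1$ and $\R[G]\cong\R\oplus W$, and define the test map $f\colon X\longrightarrow W^{\oplus j}$ by sending $(b,v_1,\dots,v_k)$ to the image of the vector $\big(\int_{\mathcal{O}_\varepsilon\cap S(E_b)}\varphi_\ell\big)_{\varepsilon\in G,\ 1\le\ell\le j}\in\R[G]^{\oplus j}$, where $\mathcal{O}_\varepsilon$ runs over the $2^k$ orthants of $\HH^b$, under the coordinatewise projection $\R[G]^{\oplus j}\to W^{\oplus j}$ killing the trivial summand in each factor. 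Because equatorial spheres are null for the Riemannian volume on $S(E_b)$, these integrals depend continuously on $(b,v_1,\dots,v_k)$, so $f$ is continuous, and $f$ is $G$-equivariant since $G$ permutes the orthants by the regular representation. Moreover $f$ vanishes at $(b,v_1,\dots,v_k)$ exactly when, for each $\ell$, the quantity $\int_{\mathcal{O}_\varepsilon\cap S(E_b)}\varphi_\ell$ is independent of $\varepsilon$; as the connected components of $E_b\setminus(H_1^b\cup\cdots\cup H_k^b)$ are precisely the interiors of the non-empty orthants, such a zero produces exactly the equipartition asserted in the theorem. Hence it suffices to prove that $f$ has a zero.

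The zero of $f$ will be extracted from the cohomological hypothesis via the mod-$2$ Euler class. A nowhere-zero $f$ would be a $G$-map $X\to W^{\oplus j}\setminus\{0\}$, hence would descend to a nowhere-zero section of the vector bundle $\xi:=\mathrm{E}G\times_G W^{\oplus j}$ over the Borel construction $X_G:=\mathrm{E}G\times_G X$, forcing $w_{\mathrm{top}}(\xi)=0$ in $H^*(X_G;\F_2)$. Since $\xi=\bigoplus_{0\ne\alpha\in\F_2^k}(\mathrm{E}G\times_G\R_\alpha)^{\oplus j}$ and the line bundle $\mathrm{E}G\times_G\R_\alpha$ has first Stiefel--Whitney class $\alpha_1x_1+\cdots+\alpha_kx_k$, where $x_1,\dots,x_k\in H^1(\mathrm{B}G;\F_2)$ are the standard generators pulled back to $X_G$, one gets $w_{\mathrm{top}}(\xi)=e_k(B)^j$, identifying polynomials with their images in $H^*(X_G;\F_2)$. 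Finally, using that $B$ is a compact ENR and iterating the Gysin sequence of the sphere bundle $S(E(r))$ — exactly the computation underlying the proof of Theorem \ref{th : main result 01} — one obtains the ring isomorphism
\[
H^*(X_G;\F_2)\ \cong\ R_k(B)\big/\,\II_k(E(1),\dots,E(k)),
\]
the relation coming from the $r$-th factor being precisely $\sum_{s=0}^{n_r}w_{n_r-s}(E(r))\,x_r^{s}=0$. Therefore, if $e_k(B)^j\notin\II_k(E(1),\dots,E(k))$, then $w_{\mathrm{top}}(\xi)\ne0$, no nowhere-zero $G$-equivariant test map exists, $f$ has a zero, and the desired $b$ and $\HH^b$ follow.

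It then remains to observe that the stated hypothesis $j\le\iota_k(E(1),\dots,E(k))$ suffices: if $e_k(B)^{j}$ lay in $\II_k(E(1),\dots,E(k))$ then so would $e_k(B)^{j'}=e_k(B)^{j}\cdot e_k(B)^{j'-j}$ for every $j'\ge j$, so the set of $j'$ with $e_k(B)^{j'}\notin\II_k(E(1),\dots,E(k))$ is an initial segment of $\N$ containing its maximum $\iota_k(E(1),\dots,E(k))$, hence containing every $j\le\iota_k(E(1),\dots,E(k))$. Thus $e_k(B)^j\notin\II_k(E(1),\dots,E(k))$ and the previous paragraph applies. The whole argument is structurally the proof of Theorem \ref{th : main result 01} (the case $E(1)=\cdots=E(k)=E$), with $S(E)$ in the $r$-th factor replaced by $S(E(r))$, which is exactly what turns $\II_k(E)$ into $\II_k(E(1),\dots,E(k))$.

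The step I expect to be the main obstacle is making this Euler-class passage and the cohomology computation rigorous in the present fibrewise, non-free setting, rather than any new geometric input: one must verify that $f$ is continuous and $G$-equivariant on all of $X$, including degenerate tuples $(v_1,\dots,v_k)$ for which fewer than $2^k$ orthants are non-empty, that a zero of $f$ at such a point still equiparts every pair of connected components of the complement, and that a nowhere-zero $G$-equivariant map genuinely descends to a nowhere-zero section of $\xi$ while $H^*(X_G;\F_2)$ is the quotient ring claimed above — which is precisely where the hypothesis that $B$ is a compact, connected ENR is used, and which is presumably carried out once and for all in the proof of Theorem \ref{th : main result 01}.
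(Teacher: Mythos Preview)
Your proposal is correct and follows essentially the same route as the paper: since the $(\Z/2)^k$-action on $X=S(E(1))\times_B\cdots\times_B S(E(k))$ is free, your Borel construction $X_G$ is exactly the paper's base space $\PP(E(1))\times_B\cdots\times_B\PP(E(k))$, your bundle $\xi$ is the paper's $(A_k(E(1),\dots,E(k))/\underline{\R})^{\oplus j}$, and the cohomology computation you cite is precisely Claim~\ref{claim : 01} (Leray--Hirsch rather than Gysin) iterated over the factors $E(1),\dots,E(k)$. The paper phrases the argument as ``section of a vector bundle over the projective-bundle product'' while you phrase it as ``equivariant map and Borel construction,'' but these are two names for the same obstruction.
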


\medskip
After a generalisation and an extension of \cite[Thm.\,1.1]{BlagojevicCallesCrabbDimitrijevic}, it is natural to ask whether the algebraic criteria from Theorem \ref{th : main result 01} and Theorem  \ref{th : main result 02} can be substituted by appropriate numerical criteria.
In other words, is there an appropriate  generalisation of Theorem \ref{th : original result reformulated} in the case of an arbitrary vector bundle.
We start our discussion from the case $k=1$, the ham-sandwich. 

\medskip
Let $E$ be a Euclidean vector bundle of dimension $n$ over a compact and connected ENR $B$ and let $k=1$. 
Since the ideal 
$
\II_{1}(E)=\Big( \sum_{s = 0}^{n}   w_{n-s}(E)\,x_{1}^{s}  	 \Big)
$
and $e_1(B)^{n-1}=x_1^{n-1}\notin \II_{1}(E)$ we conclude that 
\[\iota_1(E)=\max\big\{j:x_1^j\notin \II_{1}(E)\big\}\geq n-1.\]
The equality $\iota_1(E)=n-1$ is attained in the case when the base space $B$ is a point. 
Indeed, when $B=\pt$ then the vector bundle $E$ is a trivial, $w(E)=1$, and so $\II_{1}(E)=(x_1^n)$ implying that 
$\iota_1(E)=\max\big\{j:x_1^j\notin (x_1^n)\big\}=n-1$.
We just proved the following ham-sandwich type result for Euclidean vector bundles.

\begin{corollary}
Let $E$ be a Euclidean vector bundle of dimension $n$ over a compact and connected ENR $B$.

\smallskip\noindent
If $j=n-1$ then for every collection $\varphi_1,\dots,\varphi_j\colon S(E)\longrightarrow\R$ of $j$ continuous real valued functions, there exists a point $b\in B$ and a hyperplane $H^b$ in $E_b$ such that for the connected components $\mathcal{O}'$ and $\mathcal{O}''$ of the  complement $E_b-H^b$ the following statement holds
\[
\int_{\mathcal{O}'\cap S(E_b)}\varphi_1=\int_{\mathcal{O}''\cap S(E_b)}\varphi_1
\quad , \dots , \quad 
\int_{\mathcal{O}'\cap S(E_b)}\varphi_j=\int_{\mathcal{O}''\cap S(E_b)}\varphi_j.
\]	
\end{corollary}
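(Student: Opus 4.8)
The plan is to obtain this statement as the special case $k=1$ of Theorem~\ref{th : main result 02}, applied with the single subbundle $E(1):=E\subseteq E$. First I would note that in this case the constraint that the arrangement $\HH^b=(H_1^b)$ be determined by the collection $(E(1))=(E)$ is vacuous: a unit normal of the linear hyperplane $H_1^b\subseteq E_b$ lies automatically in $E_b=E(1)_b$. The arrangement complement $E_b-H_1^b$ has exactly two connected components, the two open half-spaces $\mathcal{O}'$ and $\mathcal{O}''$, so the conclusion of Theorem~\ref{th : main result 02} reads precisely as the conclusion of the corollary.

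It remains to verify the numerical hypothesis $j\le\iota_1(E(1))=\iota_1(E)$. As recorded in the paragraph preceding the statement, $\II_1(E)=\bigl(\sum_{s=0}^{n}w_{n-s}(E)\,x_1^{s}\bigr)$ is the principal ideal of $R_1(B)=H^*(B;\F_2)[x_1]$ generated by a polynomial which is monic of degree $n$ in $x_1$, since its leading coefficient $w_0(E)$ equals $1$. Hence the ideal $\II_1(E)$ contains no nonzero element of degree less than $n$ in the variable $x_1$; in particular $e_1(B)^{\,n-1}=x_1^{\,n-1}\notin\II_1(E)$, whence $\iota_1(E)\ge n-1$. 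Since $j=n-1$, the hypothesis $j\le\iota_1(E(1))$ is satisfied.

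Finally I would invoke Theorem~\ref{th : main result 02}: for every collection $\varphi_1,\dots,\varphi_j\colon S(E)\longrightarrow\R$ of continuous real-valued functions it produces a point $b\in B$ and a hyperplane $H^b=H_1^b$ in $E_b$ for which the integral of each $\varphi_i$ over $\mathcal{O}'\cap S(E_b)$ agrees with its integral over $\mathcal{O}''\cap S(E_b)$, which is exactly the asserted equipartition. There is no genuine difficulty in this deduction; the only point needing care is the elementary ring-theoretic observation that the ideal generated by a monic degree-$n$ polynomial over the (in general non-trivial) coefficient ring $H^*(B;\F_2)$ contains no nonzero element of smaller $x_1$-degree — the same fact that forces the equality $\iota_1(E)=n-1$ in the case $B=\pt$, $w(E)=1$.
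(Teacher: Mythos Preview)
Your proof is correct and follows essentially the same route as the paper: the paper establishes $x_1^{n-1}\notin\II_1(E)$ by observing that the generator $\sum_{s=0}^{n}w_{n-s}(E)\,x_1^s$ is monic of degree $n$, deduces $\iota_1(E)\ge n-1$, and then reads off the corollary from the main theorem in the case $k=1$. Your version is slightly more explicit in naming Theorem~\ref{th : main result 02} and in remarking that the constraint $E(1)=E$ is vacuous, but the argument is the same.
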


\medskip
The previous result is general and holds for all vector bundles and therefore rather crude because it must contain the classical ham-sandwich theorem.
It is natural to ask how the topology of the vector bundle $E$ affects the upper bound for the number of functions we can equipart in a fibre.
In other words can we say more about the number $\iota_1(E)$.

\medskip
Indeed, the following proposition, proved in Section \ref{subsec : prop : ham-sandwich}, explains a connection between the topology of $E$ and the number $\iota_1(E)$.

\begin{proposition}
\label{prop : ham-sandwich}
Let $E$ be a Euclidean vector bundle of dimension $n$ over a compact and connected ENR $B$.
Then
\[
 \iota_1(E)=\max\big\{j : 0\neq w_{j-n+1}(-E)\in H^{j-n+1}(B;\F_2)\big\}.
\]	
\end{proposition}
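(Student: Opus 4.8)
The plan is to analyse the quotient ring $Q := R_1(B)/\II_1(E)$ as a module over $R := H^*(B;\F_2)$. First I would observe that $\II_1(E)$ is the \emph{principal} ideal generated by $P(x_1):=\sum_{s=0}^{n}w_{n-s}(E)x_1^{s}=\sum_{i=0}^{n}w_i(E)x_1^{\,n-i}$, a monic polynomial of degree $n$ in $x_1$; hence $Q$ is a free graded $R$-module on $1,x_1,\dots,x_1^{\,n-1}$ (I write $x_1$ also for its image in $Q$), every nonzero element of $\II_1(E)$ has $x_1$-degree $\ge n$, and
\[
\iota_1(E)=\max\{\,j:x_1^{\,j}\neq 0\text{ in }Q\,\},\qquad \iota_1(E)\ge n-1.
\]
One recognises $Q$ as $H^*(\PP(E);\F_2)$, the relation $P(x_1)=0$ being the projective bundle relation, but this identification is only for intuition. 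Let $\lambda\colon Q\to R$ be the $R$-linear ``top coefficient'' functional $\lambda\bigl(\sum_{i=0}^{n-1}b_ix_1^{\,i}\bigr):=b_{n-1}$ (geometrically, the Gysin map of $\PP(E)\to B$), write $\bar w_m:=w_m(-E)\in H^m(B;\F_2)$ (so $\bar w_0=1$ and $\sum_{a+b=m}w_a(E)\bar w_b=0$ for $m\ge 1$), and set $M:=\max\{\,m:\bar w_m\neq 0\,\}$, which is finite since $B$ is compact.

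The first real step is the identity $\lambda\bigl(x_1^{\,n-1+m}\bigr)=\bar w_m$ for all $m\ge 0$ — equivalently, that the Segre classes of $E$ are its dual Stiefel--Whitney classes; one may simply quote this, but to be self-contained I would prove it by induction on $m$. Using $x_1^{n}=\sum_{i=1}^{n}w_i(E)x_1^{\,n-i}$ in $Q$ and writing $x_1^{\,n-1+m}\equiv\sum_{i=0}^{n-1}b_{m,i}x_1^{\,i}$, multiplication by $x_1$ gives the recursion $b_{m+1,k}=b_{m,k-1}+b_{m,n-1}w_{n-k}(E)$ (with $b_{m,-1}:=0$, $b_{0,i}=\delta_{i,n-1}$); one then verifies $b_{m,i}=\sum_{t\ge 0}w_{n-i+t}(E)\,\bar w_{m-1-t}$ for $m\ge 1$, where both the base case $m=1$ and the inductive step follow directly from the recursion defining $\bar w$. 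In particular $b_{m,n-1}=\bar w_m$, which is the claim; note also that $\lambda(x_1^{\,i+j})$ is $0$ for $i+j<n-1$ and $\bar w_0=1$ for $i+j=n-1$.

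Next come the two bounds. For $\iota_1(E)\ge(n-1)+M$: since $\bar w_M\neq 0$ and $\lambda(x_1^{\,n-1+M})=\bar w_M$, the class $x_1^{\,n-1+M}$ is nonzero in $Q$. For $\iota_1(E)\le(n-1)+M$ I would invoke the perfectness of the $R$-bilinear form $(\xi,\eta)\mapsto\lambda(\xi\eta)$ on $Q$ (this is fibrewise Poincar\'e duality for $\PP(E)\to B$): its Gram matrix $\bigl(\lambda(x_1^{\,i+j})\bigr)_{0\le i,j\le n-1}$ is anti-triangular with $1$'s on the anti-diagonal $i+j=n-1$, hence invertible over $R$. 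Writing $J:=\iota_1(E)$, which is $\ge n-1$, from $x_1^{\,J}\neq 0$ and perfectness we obtain an index $i\in\{0,\dots,n-1\}$ with $\lambda(x_1^{\,J}\cdot x_1^{\,i})\neq 0$, i.e.\ $\bar w_{\,J+i-n+1}\neq 0$; hence $J+i-n+1\le M$, and since $i\ge 0$ this gives $J\le(n-1)+M$. Combining, $\iota_1(E)=(n-1)+M=\max\{\,j:0\neq w_{j-n+1}(-E)\in H^{j-n+1}(B;\F_2)\,\}$.

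The hard part is the upper bound, and the reason $\lambda$ alone does not suffice: $x_1^{\,j}$ may be nonzero even when its $x_1^{\,n-1}$-coefficient $\lambda(x_1^{\,j})=\bar w_{\,j-n+1}$ vanishes, because of cancellation among the lower-order terms $\bar w_{\,j-n},\bar w_{\,j-n-1},\dots$ occurring in the other coefficients $b_{\,j-n+1,i}$. What closes this gap is precisely the perfectness of the pairing $\lambda(\xi\eta)$: it forces some power $x_1^{\,J+i}$ with $0\le i\le n-1$ to be $\lambda$-detected, and that is exactly the input needed to turn ``$x_1^{\,J}\neq 0$'' into the numerical inequality $J\le(n-1)+M$. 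A lesser point still requiring care is pinning down the Segre-class identity $\lambda(x_1^{\,n-1+m})=w_m(-E)$ with the \emph{dual} Stiefel--Whitney classes and no extraneous degree shift.
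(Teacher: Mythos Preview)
Your proof is correct. Both your argument and the paper's rest on the same core identity---that the remainder of $x_1^{\,n-1+m}$ upon division by the monic generator $P(x_1)$ has coefficients expressible through the dual Stiefel--Whitney classes $\bar w_m=w_m(-E)$---but the packaging differs. The paper (citing \cite{Crabb2013}) writes down the full remainder formula $d_i=\sum_{s=0}^{n-i-1} w_s(E)\,\bar w_{\,j-i-s}$ and, for the upper bound, checks directly that \emph{all} coefficients $d_i$ vanish once $j>(n-1)+M$, using that every $\bar w_r$ appearing has $r\ge M+1$. You instead isolate only the top coefficient via the $R$-linear functional $\lambda$ and then invoke the perfectness of the bilinear form $(\xi,\eta)\mapsto\lambda(\xi\eta)$ on $Q$ (fibrewise Poincar\'e duality for $\PP(E)\to B$, made concrete by the anti-triangular Gram matrix) to force some $\lambda(x_1^{\,J+i})=\bar w_{\,J+i-n+1}$ to be nonzero. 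Your route is more conceptual and makes the geometric content (Gysin map, duality) explicit; the paper's is more hands-on, needing only polynomial division. The two remainder formulas are in fact complementary halves of the relation $w(E)\,w(-E)=1$, so neither computation is doing more work than the other---the genuine difference is that your perfectness argument reduces the upper bound to the single sequence $\lambda(x_1^{\,k})=\bar w_{\,k-n+1}$, whereas the paper verifies vanishing coordinate by coordinate.
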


\medskip
Here, $-E$ denotes an inverse vector bundle of $E$.
This is a vector bundle  $E'$, over the base space of $E$, having the property that the Whitney sum $E\oplus E'$ is a trivial vector bundle over $B$.
In particular, the inverse vector bundle is not uniquely defined.
On the other hand the Stiefel--Whitney classes of all inverse vector bundles, of a given vector bundle, do coincide. 
Finally, for example, the compactness of the base space guarantees the existence of an inverse bundle of a given vector bundle. 
(Alternatively, we can take $-E$ to be a virtual bundle representing the negative of the class of $E$ in the Grothendieck $K$-group $KO^0(B)$. 
Precisely, a virtual bundle is a pair $(E_0,E_1)$ of vector bundles over $B$, in an appropriate category, and $-E$ is the pair $(0,E)$. 
The set of isomorphism classes of virtual bundles is precisely the Grothendieck group $KO^0(B)$.
Hence, the dimension $\dim (-E) =- \dim E$, and in the $K$-group $[-E]=-[E] = [E'] - \dim (E\oplus E')$.)

\medskip
As a special case of the previous result we recover the ham-sandwich result for the tautological vector bundle \cite[Cor.\,1.2]{BlagojevicCallesCrabbDimitrijevic}.

\begin{corollary}\label{cor : ham-sandwich}
Let $d\geq 2$ and $\ell\geq 1$ be integers where $1\leq\ell\leq d$, and let $E_{\ell}^{d}$ be the tautological vector bundle over the Grassmann manifold $\Gr_{\ell}(\R^d)$ of all $\ell$-dimensional linear subspaces in $\R^d$.
Then
\[
\iota_1 (E_{\ell}^{d})=d-1.
\]
\end{corollary}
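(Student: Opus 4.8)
The plan is to apply Proposition~\ref{prop : ham-sandwich} to the tautological bundle $E = E_\ell^d$ over $B = \Gr_\ell(\R^d)$ and compute the right-hand side, i.e.\ the largest $j$ for which $w_{j-n+1}(-E_\ell^d) \neq 0$ in $H^*(\Gr_\ell(\R^d);\F_2)$, where $n = \ell$. Equivalently, setting $m := j - \ell + 1$, I must find the top nonvanishing Stiefel--Whitney class of the inverse bundle $-E_\ell^d$: if $w_m(-E_\ell^d)$ is the last nonzero one, then $\iota_1(E_\ell^d) = m + \ell - 1$, and the claim $\iota_1(E_\ell^d) = d-1$ amounts to showing $m = d - \ell$.

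The key computation is classical. Over $\Gr_\ell(\R^d) = \Gr_\ell(\R^d)$ one has the tautological $\ell$-plane bundle $E_\ell^d$ together with its orthogonal complement bundle $(E_\ell^d)^\perp$ of dimension $d - \ell$, and $E_\ell^d \oplus (E_\ell^d)^\perp$ is the trivial bundle of rank $d$. Hence $(E_\ell^d)^\perp$ is an inverse bundle of $E_\ell^d$, so by the remark after the proposition the Stiefel--Whitney classes of $-E_\ell^d$ coincide with those of $(E_\ell^d)^\perp$, and I may simply take $-E_\ell^d = (E_\ell^d)^\perp$. Thus I need the top nonzero class $w_m((E_\ell^d)^\perp)$. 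Since $\dim (E_\ell^d)^\perp = d - \ell$, certainly $m \leq d - \ell$, and it remains to show $w_{d-\ell}((E_\ell^d)^\perp) \neq 0$, i.e.\ that the top Stiefel--Whitney class (the mod $2$ Euler class) of the complement bundle is nonzero in $H^{d-\ell}(\Gr_\ell(\R^d);\F_2)$.

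To see $w_{d-\ell}((E_\ell^d)^\perp) \neq 0$ I would invoke the standard structure of $H^*(\Gr_\ell(\R^d);\F_2)$: writing $w_i = w_i(E_\ell^d)$ and $\bar w_j = w_j((E_\ell^d)^\perp)$, the cohomology ring is $\F_2[w_1,\dots,w_\ell,\bar w_1,\dots,\bar w_{d-\ell}]$ modulo the relations coming from $w(E_\ell^d)\,w((E_\ell^d)^\perp) = 1$, and it is a free module over $\F_2[\bar w_1,\dots,\bar w_{d-\ell}]$ of rank... — more simply, there is the well-known duality/Poincaré-duality statement that the monomial $\bar w_{d-\ell}^{\,\ell}$ is the generator of the top cohomology $H^{\ell(d-\ell)}(\Gr_\ell(\R^d);\F_2) \cong \F_2$, which in particular forces $\bar w_{d-\ell} \neq 0$. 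Alternatively one can argue by restriction to a sub-Grassmannian $\Gr_1(\R^{d-\ell+1}) = \RP{d-\ell}$, under which $(E_\ell^d)^\perp$ pulls back to the tautological line bundle $\gamma$ over $\RP{d-\ell}$ summed with a trivial bundle... — actually the cleaner choice is restriction along an inclusion $\Gr_{d-\ell}(\R^d) \hookrightarrow \Gr_\ell(\R^d)$ (via orthogonal complements) or directly exhibit a map from $\RP{d-\ell}$; in any case, on $\RP{d-\ell}$ the complement of a line inside $\R^{d-\ell+1}$ is a rank-$(d-\ell)$ bundle whose top Stiefel--Whitney class is the nonzero element of $H^{d-\ell}(\RP{d-\ell};\F_2)$, so $\bar w_{d-\ell}$ restricts nontrivially and hence is itself nonzero.

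The main obstacle is purely bookkeeping: correctly matching the index shift in Proposition~\ref{prop : ham-sandwich} (with $n = \ell$) and making sure that the ``top class is nonzero'' statement is quoted in the precise form needed, namely $w_{d-\ell}((E_\ell^d)^\perp) \neq 0$ while $w_i((E_\ell^d)^\perp) = 0$ for $i > d-\ell$ trivially since the bundle has rank $d-\ell$. Putting $m = d-\ell$ into $\iota_1(E_\ell^d) = m + \ell - 1$ gives $\iota_1(E_\ell^d) = d - 1$, as claimed. I expect the whole argument to be about a paragraph once Proposition~\ref{prop : ham-sandwich} is in hand.
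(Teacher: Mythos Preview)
Your proposal is correct and follows essentially the same route as the paper: apply Proposition~\ref{prop : ham-sandwich} with $-E_\ell^d$ realised as the orthogonal complement bundle $(E_\ell^d)^\perp$, note that $w_i((E_\ell^d)^\perp)=0$ for $i>d-\ell$ by rank reasons, and verify $w_{d-\ell}((E_\ell^d)^\perp)\neq 0$. The only difference is that the paper simply cites Hiller \cite[p.\,523]{Hiller-1980} for the nonvanishing, whereas you sketch two direct arguments (Poincar\'e duality via $\bar w_{d-\ell}^{\,\ell}$ generating the top class, or restriction to a projective subspace); your restriction sketch is slightly garbled but the map $\PP(\R^{d-\ell+1})\to\Gr_\ell(\R^d)$, $L\mapsto L\oplus\R^{\ell-1}$, makes it work cleanly.
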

\begin{proof}
For the proof we use the fact that the inverse bundle $-E_{\ell}^{d}$  can be realised as the orthogonal complement vector bundle $(E_{\ell}^{d})^{\perp}$.
In particular, we have that 
\[
w(-E_{\ell}^{d})=1+w_1((E_{\ell}^{d})^{\perp})+\dots+w_{d-\ell}((E_{\ell}^{d})^{\perp}),
\]
where the orthogonal complement is considered inside the trivial vector bundle $\Gr_{\ell}(\R^d)\times\R^d$.
Since $w_{d-\ell}(-E_{\ell}^{d})=w_{d-\ell}((E_{\ell}^{d})^{\perp})\neq 0$ and $w_{i}(-E_{\ell}^{d})=w_{i}((E_{\ell}^{d})^{\perp})=0$ for $i\geq d-\ell+1$, consult for example \cite[p.\,523]{Hiller-1980}, we have from Proposition \ref{prop : ham-sandwich} that 
\[
\iota_1 (E_{\ell}^{d})=\max\big\{j : 0\neq w_{j-\ell+1}(-E_{\ell}^{d})\big\}= d-1.
\]
\end{proof}

\medskip
The following spherical version of the result of Axelrod-Freed and Sober\'on \cite[Thm.\,1.3]{Soberon}, which was previously conjectured by Schnider \cite[Conj.\,2.4]{Schnider-2020}, is a direct consequence of our Theorem \ref{th : main result 02} and Corollary \ref{cor : ham-sandwich}.

\begin{corollary}\label{cor : Axelrod-Freed-Soberon-2}
Let $d\geq 2$ and $\ell\geq 1$ be integers where $1\leq\ell\leq d$, and let $W$ be an arbitrary $(\ell-1)$-dimensional vector subspace of $\R^d$.

\smallskip\noindent
If $j= d-1$ then for any collection of continuous functions $\varphi_1,\dots,\varphi_j\colon S(E^d_l)\longrightarrow\R$, there exists:
\begin{compactenum}[\quad --]
	\item $V\in G_{\ell}(\R^d)$ which contains $W$, and
	\item $U\in G_{\ell-1}(\R^d)$, which is contained in $V$
\end{compactenum}
such that for the connected components $\mathcal{O}'$ and $\mathcal{O}''$ of the  complement $V-U$ the following statement holds
\[
\int_{\mathcal{O}'\cap S(V)}\varphi_1=\int_{\mathcal{O}''\cap S(V)}\varphi_1
\quad , \dots , \quad 
\int_{\mathcal{O}'\cap S(V)}\varphi_j=\int_{\mathcal{O}''\cap S(V)}\varphi_j.
\]		
\end{corollary}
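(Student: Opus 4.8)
The plan is to deduce Corollary~\ref{cor : Axelrod-Freed-Soberon-2} from Theorem~\ref{th : main result 02} by a careful choice of the ambient vector bundle, the base space, and the single subbundle (here $k=1$). First I would take $B:=\{V\in\Gr_\ell(\R^d) : W\subseteq V\}$, the sub-Grassmannian of $\ell$-planes containing the fixed $(\ell-1)$-plane $W$; since $V=W\oplus L$ for a line $L$ in the $(d-\ell+1)$-dimensional space $W^\perp$, this base is canonically homeomorphic to $\RP{d-\ell}$, which is a compact connected ENR, so the hypotheses of Theorem~\ref{th : main result 02} are met. As the ambient Euclidean vector bundle I would take $E$ to be the restriction $E^d_\ell|_B$ of the tautological bundle, whose fibre over $V$ is exactly $V$; the functions $\varphi_1,\dots,\varphi_j$ restrict to continuous functions on $S(E)$. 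The point of passing to the fixed plane $U\in\Gr_{\ell-1}(\R^d)$ with $U\subseteq V$ in the statement is that a linear hyperplane $U$ of $V$ is determined by its unit normal in $V$; so I would let $E(1)$ be the subbundle $E:=E^d_\ell|_B$ itself (i.e. no constraint on the normal within the fibre), apply the $k=1$ case, and read off that the resulting hyperplane $H^b\subseteq E_b=V$ equiparts the integrals of all $\varphi_i$ over $S(V)$, with $U:=H^b$.

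The arithmetic core is to check $j=d-1\le \iota_1(E)$ where $E=E^d_\ell|_B$. By Corollary~\ref{cor : ham-sandwich} we have $\iota_1(E^d_\ell)=d-1$ for the tautological bundle over the \emph{full} Grassmannian; the issue is that restricting to the subspace $B=\RP{d-\ell}\subseteq \Gr_\ell(\R^d)$ could in principle only decrease the number, since $\iota_1$ is governed by a top nonvanishing Stiefel--Whitney class (Proposition~\ref{prop : ham-sandwich}) and restriction is a ring homomorphism on cohomology. So the real task is to verify that the relevant Stiefel--Whitney class of $-E$ survives the restriction. Concretely, over $B\cong\RP{d-\ell}$ one has a splitting $E=E^d_\ell|_B\cong (\underline{W})\oplus\gamma$, where $\underline W$ is the trivial $(\ell-1)$-plane bundle with fibre $W$ and $\gamma$ is the tautological line bundle on $\RP{d-\ell}$ sitting inside the trivial bundle $\RP{d-\ell}\times W^\perp$. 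Hence an inverse bundle is $-E\cong \underline{\R^{\ell-1}}^{-1}\oplus(-\gamma)$, so $w(-E)=w(-\gamma)=(1+t)^{-1}=1+t+t^2+\cdots+t^{d-\ell}$ in $H^*(\RP{d-\ell};\F_2)=\F_2[t]/(t^{d-\ell+1})$, with $t=w_1(\gamma)$. Thus $w_{d-\ell}(-E)=t^{d-\ell}\neq 0$ while $w_i(-E)=0$ for $i>d-\ell$, and Proposition~\ref{prop : ham-sandwich} (with $n=\dim E=\ell$) gives $\iota_1(E)=\max\{j : 0\neq w_{j-\ell+1}(-E)\}=(d-\ell)+(\ell-1)=d-1$, exactly as needed.

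The main obstacle I anticipate is purely bookkeeping rather than conceptual: making sure the identification $B\cong\RP{d-\ell}$ and the bundle splitting $E^d_\ell|_B\cong\underline W\oplus\gamma$ are set up cleanly, and then translating the conclusion of Theorem~\ref{th : main result 02} (stated for hyperplanes with unit normal in a subbundle, and for pairs of connected components of the complement of the arrangement) into the two-plane flag language $W\subseteq V$, $U\subseteq V$ used in the corollary, checking that a single linear hyperplane $H^b$ in $E_b=V$ has complement with exactly two components $\mathcal O',\mathcal O''$ and that "equiparts all $\varphi_i$" coincides with the displayed equalities of integrals over $\mathcal O'\cap S(V)$ and $\mathcal O''\cap S(V)$. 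Once the dictionary between the fibre $E_b$ and the plane $V$, and between the normal subbundle $E(1)=E$ and "$U$ arbitrary hyperplane of $V$", is fixed, the corollary follows immediately from Theorem~\ref{th : main result 02} together with the computation $\iota_1(E^d_\ell|_B)=d-1$ above.
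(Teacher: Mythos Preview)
Your proposal is correct and follows essentially the same route as the paper: the paper also takes the base $\PP(W^\perp)\cong\RP{d-\ell}$, the bundle $E=E(1)=H(W^\perp)\oplus\underline{W}$ (which is exactly your $\gamma\oplus\underline{W}=E^d_\ell|_B$), and applies Theorem~\ref{th : main result 02} with $k=1$ after checking $\iota_1(E)=d-1$. The only cosmetic difference is that you compute $\iota_1(E)$ directly from Proposition~\ref{prop : ham-sandwich} via $w(-E)=(1+t)^{-1}$, whereas the paper routes the same computation through Corollary~\ref{cor : ham-sandwich}.
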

\begin{proof}
Consider the vector bundle $E=E(1)=H(W^{\perp})\oplus\underline{W}$ over $\PP(W^{\perp})$ where $\underline{W}=\PP(W^{\perp})\times W$ is the trivial vector bundle over $\PP(W^{\perp})$.
Recall that here $H(W^{\perp})$ is the canonical Hopf line bundle over the projective space $\PP(W^{\perp})$.
According to Theorem \ref{th : main result 02} in the case $k=1$ we have: if $j\leq \iota_1(E)$, then for any $j$ continuous functions $\varphi_1,\dots,\varphi_j\colon S(E)\longrightarrow\R$ there exists a line $L\in \PP(W^{\perp})$ and a linear hyperplane $U$ in $V:=L\oplus W$ such that for the connected components $\mathcal{O}'$ and $\mathcal{O}''$ of the  complement $V-U$ the following equalities  hold:
\[
\int_{\mathcal{O}'\cap S(V)}\varphi_1=\int_{\mathcal{O}''\cap S(V)}\varphi_1
\quad , \dots , \quad 
\int_{\mathcal{O}'\cap S(V)}\varphi_j=\int_{\mathcal{O}''\cap S(V)}\varphi_j.
\]		

\medskip
Since $w(E)=w(H(W^{\perp})\oplus\underline{W})=w(H(W^{\perp}))$ and $H(W^{\perp})\cong E_1^{d-\ell+1}$ the Corollary \ref{cor : ham-sandwich} implies that $\iota_1(E)=\iota_1(E_1^{d-\ell+1})=d-1$.
With the assumption $j=d-1\leq  \iota_1(E)$ we conclude the proof of the corollary.
\end{proof}

\medskip
Further on, if $E(1)$ is an $n_1$ dimensional vector subbundle of the vector bundle $E$ then
\[
\iota_1(E(1))\leq \iota_1(E).
\]
Indeed, if $E(1)^{\perp}$ is the orthogonal complement vector bundle of $E(1)$ in $E$ then
\begin{multline*}
	x_1^n+w_1(E)x_1^{n-1}+\dots +w_n(E)=\\
	\big(x_1^{n_1}+w_1(E(1))x_1^{n_1-1}+\dots +w_{n_1}(E(1))\big)\\ 
	\big(x_1^{n-n_1}+w_1(E(1)^{\perp})x_1^{n-n_1-1}+\dots +w_{n-n_1}(E(1)^{\perp})\big).
\end{multline*}
Consequently, $x_1^j\notin\II_1(E(1))$ implies $x_1^j\notin\II_1(E)$.

 \medskip
Recall, that
\[
e_k(\pt)=\prod_{(\alpha_{1},\dots, \alpha_{k})\in\F_2^k-\{ 0\}} (\alpha_{1}x_1 + \cdots + \alpha_{k}x_k) \ \in \ R_{k}(\pt)\cong \F_2[x_1,\dots,x_k].
\]
Now, for positive integers $m_1,\dots,m_k$ we define
\[
\iota_k(m_1,\dots,m_k):=\max\big\{ j : e_k(\pt)^j\notin (x_1^{m_1},\dots,x_k^{m_k})\big\}.
\]

\medskip
For example, if $E=\underline{\R^{n}}$ is a trivial $n$ dimensional real vector bundle over $B=\pt$, then
\[
\iota_k(\underline{\R^{n_1}},\dots, \underline{\R^{n_k}})=\iota_k(n_1,\dots,n_k).
\]
Notice that the equality holds for all integers  $n\geq\max\{n_1,\dots,n_k\}$.
Indeed, since $w(\underline{\R^{n_1}})=\dots=w(\underline{\R^{n_k}})=1$, it follows that 
\[(x_1^{n_1},\dots,x_k^{n_k})=\II_k( \underline{\R^{n_1}},\dots, \underline{\R^{n_k}}).\]
 
\medskip
In general, the following inequality always holds
\[
\iota_k(n_1,\dots,n_k)\leq \iota_k(E(1),\dots,E(k)).
\] 
In fact, the condition $e_k(\pt)^j\notin (x_1^{n_1},\dots,x_k^{n_k})$, for some integer $j$, implies the existence of a monomial $x_1^{m_1}\cdots x_k^{m_k}$, in the additive presentation of $e_k(\pt)^j$ with respect to the monomial base of $\F_2[x_1,\dots,x_k]$, with the property that $m_1\leq n_1-1,\dots, m_k\leq n_k-1$.
Since the ideal $\II_k(E(1),\dots,E(k))$ is generated by polynomials $x_1^{n_i}+w_1(E(i))x_1^{n_i-1}+\dots +w_{n_i}(E(i))$, $1\leq i\leq k$, the existence of the monomial $x_1^{m_1}\cdots x_k^{m_k}$ in the presentation of $e_k(\pt)^j$ implies that $e_k(B)^j\notin \II_k(E(1),\dots,E(k))$.

\medskip
Actually, we can say more, as the following proposition illustrates.
For the proof see Section \ref{subsec : prop : for k>1}.

\begin{proposition}
\label{prop : for k >1}
Let $k\geq 1$ be an integer, and let  $E(1),\dots,E(k)$ be Euclidean vector bundles over a compact and connected ENR $B$.
Denote by $n_i$ the dimension of the vector bundle $E(i)$ for $1\leq i\leq k$. 
If
\[
0\neq  w_{\iota_1(E(1))-n_1+1}(-E(1))\cdots  w_{\iota_1(E(k))-n_k+1}(-E(k))\in H^*(B;\F_2),
\]
then 
\[
\iota_k(\iota_1(E(1))+1,\dots,\iota_1(E(k))+1)=\iota_k(E(1),\dots,E(k)).
\]	
\end{proposition}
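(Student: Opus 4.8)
The plan is to prove the two inequalities separately, the ``$\leq$'' direction being the general fact established just before the proposition, and the ``$\geq$'' direction being the place where the hypothesis on the product of Stiefel--Whitney classes is used. Write $j_0 := \iota_k(\iota_1(E(1))+1,\dots,\iota_1(E(k))+1)$ and $m_i := \iota_1(E(i))+1$, so that by Proposition~\ref{prop : ham-sandwich} we have $m_i = \max\{m : 0\neq w_{m-n_i}(-E(i))\in H^*(B;\F_2)\}+1$, i.e.\ $w_{m_i-n_i}(-E(i))\neq 0$ while $w_{s}(-E(i))=0$ for all $s > m_i-n_i$. The inequality $j_0\le \iota_k(E(1),\dots,E(k))$ is exactly the general estimate $\iota_k(m_1,\dots,m_k)\le\iota_k(E(1),\dots,E(k))$ already proved in the paragraph preceding the proposition (applied with the particular values $m_i$), so nothing new is needed there.

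For the reverse inequality $\iota_k(E(1),\dots,E(k))\le j_0$ I would argue contrapositively: suppose $e_k(B)^{j}\notin\II_k(E(1),\dots,E(k))$ for some $j$; I must show $e_k(\pt)^{j}\notin(x_1^{m_1},\dots,x_k^{m_k})$, i.e.\ $j\le j_0$. The key algebraic device is a ring homomorphism from the quotient $R_k(B)/\II_k(E(1),\dots,E(k))$ that detects non-vanishing. Concretely, in $R_k(B)=H^*(B;\F_2)[x_1,\dots,x_k]$ the relation $\sum_{s=0}^{n_r}w_{n_r-s}(E(r))x_r^{s}=0$ lets one rewrite $x_r^{n_r}$, and iterating, $x_r^{m}$ for any $m\ge n_r$, as a polynomial in $x_r$ of degree $<n_r$ with coefficients the Stiefel--Whitney classes of $-E(r)$; the top such coefficient, the one multiplying $x_r^{m-?}$ appropriately, involves $w_{m-n_r}(-E(r))$. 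Thus reduction modulo $\II_k$ followed by extracting the coefficient of the monomial $x_1^{n_1-1}\cdots x_k^{n_k-1}$ (the ``socle'' monomial of the truncated polynomial algebra in each variable) defines an $H^*(B;\F_2)$-linear map $R_k(B)/\II_k \to H^*(B;\F_2)$; explicitly this map sends a class to a sum of products $w_{\bullet}(-E(1))\cdots w_{\bullet}(-E(k))$ times coefficients coming from the $x$-monomials. Applying this to $e_k(B)^{j}$: because $e_k(B)=e_k(\pt)$ has coefficients in $\F_2$ (it is the product over $\F_2^k\setminus\{0\}$ of linear forms in the $x_i$ alone), its powers have the same $\F_2$-coefficients as $e_k(\pt)^j$, and one checks that the image of $e_k(B)^j$ under this map equals $\sum c_{(a_1,\dots,a_k)}\,w_{a_1}(-E(1))\cdots w_{a_k}(-E(k))$, where the sum is over exponent tuples and $c$ is the $\F_2$-coefficient of the corresponding monomial of $e_k(\pt)^j$ after reducing each $x_r$-power modulo $x_r^{m_r}$ in the ``generic'' truncated polynomial ring. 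If $e_k(\pt)^j\in(x_1^{m_1},\dots,x_k^{m_k})$ then every surviving monomial has some exponent $a_r\ge m_r$, forcing $w_{a_r}(-E(r))=0$ since $a_r - n_r \ge m_r - n_r$ and $w_s(-E(r))=0$ for $s>m_r-n_r$; hence the image is $0$ and in particular the product $w_{\iota_1(E(1))-n_1+1}(-E(1))\cdots w_{\iota_1(E(k))-n_k+1}(-E(k))$, which is the coefficient attached to the tuple $(m_1-n_1,\dots,m_k-n_k)$, cannot be picked out --- but that product is nonzero by hypothesis and, by the maximality defining $j_0$, it does appear with coefficient $1$ in the image of $e_k(B)^{j_0}$. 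Tracking this carefully shows that once $j>j_0$ the reduction of $e_k(\pt)^j$ modulo $(x_1^{m_1},\dots,x_k^{m_k})$ is $0$, hence $e_k(B)^j$ maps to $0$ under the detecting homomorphism and therefore, because the detecting homomorphism applied to $e_k(B)^{j_0}$ yields precisely the nonzero class in the hypothesis, $e_k(B)^j\in\II_k(E(1),\dots,E(k))$ --- contradiction. Thus $\iota_k(E(1),\dots,E(k))\le j_0$, completing the proof.

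The step I expect to be the main obstacle is making the ``detecting homomorphism'' argument rigorous: one must verify that the coefficient-extraction map $R_k(B)/\II_k \to H^*(B;\F_2)$ is well defined (i.e.\ respects the relations generating $\II_k$) and, crucially, that reducing $e_k(B)^j$ through $\II_k$ and reducing $e_k(\pt)^j$ through $(x_1^{m_1},\dots,x_k^{m_k})$ are compatible in the precise sense that the nonzero class in the hypothesis is exactly the obstruction to $e_k(B)^{j_0+1}$ lying outside the ideal. Concretely, I would set this up by observing that $\II_k(E(1),\dots,E(k))$ contains $\II_k(\underline{\R^{n_1}},\dots,\underline{\R^{n_k}}) = (x_1^{n_1},\dots,x_k^{n_k})$ modulo lower-order (in the $x_i$) corrections, so that working in the associated graded (with respect to the $x$-filtration) recovers the trivial-bundle computation $\iota_k(n_1,\dots,n_k)$; the passage from $n_i$ to $m_i=\iota_1(E(i))+1$ is then exactly the ``thickening'' of each single-variable truncation recorded by the nonvanishing of $w_{m_i-n_i}(-E(i))$, and the product hypothesis guarantees these thickenings are simultaneously witnessed by a single nonzero cohomology class of $B$, which is what makes the socle-type functional nonzero on $e_k(B)^{j_0}$. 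Everything else (the formula $e_k(B)=e_k(\pt)$, the reduction of $x_r$-powers via the defining relation, the behaviour of $w(-E(i))$) is routine and already available from the earlier parts of the paper and from Proposition~\ref{prop : ham-sandwich}.
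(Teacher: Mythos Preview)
You have the two inequalities reversed with respect to which one needs the hypothesis and which one is routine.

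The inequality $\iota_k(E(1),\dots,E(k))\le j_0$ is the easy direction and does not use the product hypothesis at all. By Proposition~\ref{prop : ham-sandwich}, for each $i$ one has $w_r(-E(i))=0$ whenever $r\ge m_i-n_i+1$. The reduction formula you quote from \cite[Proof of Prop.\,4.1]{Crabb2013} then shows that every coefficient $d_s$ in the division of $x_i^{m_i}$ by the $i$th generator of $\II_k$ vanishes, so $x_i^{m_i}\in\II_k(E(1),\dots,E(k))$. Hence $(x_1^{m_1},\dots,x_k^{m_k})\subseteq\ker h$, where $h\colon\F_2[x_1,\dots,x_k]\to R_k(B)/\II_k$ is the coefficient inclusion; since $e_k(\pt)^{j_0+1}\in(x_1^{m_1},\dots,x_k^{m_k})$ by definition of $j_0$, we get $e_k(B)^{j_0+1}\in\II_k$. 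Your detecting-homomorphism machinery is unnecessary here.

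The inequality $j_0\le\iota_k(E(1),\dots,E(k))$ is \emph{not} the general estimate from the paragraph preceding the proposition. That estimate reads $\iota_k(n_1,\dots,n_k)\le\iota_k(E(1),\dots,E(k))$ with $n_i=\dim E(i)$; its proof uses that a monomial with all exponents $<n_i$ is already in normal form modulo $\II_k$. Since $m_i\ge n_i$, a monomial with all exponents $<m_i$ may still need reduction, and the argument breaks. Concretely, take $B=\RP{2}$, $k=2$, $E(1)=E(2)=H$ the Hopf line bundle. Then $m_1=m_2=\iota_1(H)+1=3$, and $\iota_2(3,3)=1$; but $\II_2=(x_1+t,x_2+t)$ forces $x_1\equiv x_2\equiv t$, so $e_2(B)=x_1x_2(x_1+x_2)\equiv 0$ and $\iota_2(H,H)=0<1$. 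The hypothesis fails here (since $w_2(-H)^2=t^4=0$), and so does the inequality you claim is automatic.

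The paper handles the genuine direction $j_0\le\iota_k(E(1),\dots,E(k))$ by proving that in fact $\ker h=(x_1^{m_1},\dots,x_k^{m_k})$ as ideals of $\F_2[x_1,\dots,x_k]$. The containment $\supseteq$ is the easy step above; for $\subseteq$ one first shows, using the product hypothesis, that $x_1^{m_1-1}\cdots x_k^{m_k-1}\notin\II_k$ (its reduction has $x_1^{n_1-1}\cdots x_k^{n_k-1}$-coefficient exactly $\prod_i w_{m_i-n_i}(-E(i))\neq 0$), and then argues that any nonzero $p\in\ker h$ with exponents bounded by $m_i-1$ could be multiplied up to give $x_1^{m_1-1}\cdots x_k^{m_k-1}\in\ker h$, a contradiction. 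Your ``socle functional'' idea is in the spirit of the first step, but you have deployed it on the wrong inequality and have not supplied the second step at all.
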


\medskip
A direct consequence of the previous proposition, in the case when $E$ is a tautological vector bundle, is the following corollary \cite[Lem.\,4.1]{BlagojevicCallesCrabbDimitrijevic}.
For a proof see Section \ref{sub : proof of cor: A}.

\begin{corollary}\label{cor : A}
Let $d\geq 2$, $k\geq 1$, and $\ell\geq 1$ be integers where $1\leq k\leq\ell\leq d$, and let $E_{\ell}^{d}$ be the tautological vector bundle over the Grassmann manifold $\Gr_{\ell}(\R^d)$ of all $\ell$-dimensional linear subspaces in $\R^d$. 	
Then
\[
\iota_k(d,\dots,d)=\iota_k(E_{\ell}^{d},\dots,E_{\ell}^{d}) .
\]	
\end{corollary}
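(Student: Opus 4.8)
The plan is to obtain Corollary~\ref{cor : A} as a direct application of Proposition~\ref{prop : for k >1} to the constant $k$-tuple $E(1)=\cdots=E(k)=E_{\ell}^{d}$, with base $B=\Gr_{\ell}(\R^d)$ and $n_1=\cdots=n_k=\ell$. By Corollary~\ref{cor : ham-sandwich} we know $\iota_1(E_{\ell}^{d})=d-1$, so $\iota_1(E_{\ell}^{d})+1=d$ and $\iota_1(E_{\ell}^{d})-n_i+1=d-\ell$ for every $i$. Thus the conclusion of Proposition~\ref{prop : for k >1} becomes precisely $\iota_k(d,\dots,d)=\iota_k(E_{\ell}^{d},\dots,E_{\ell}^{d})$, and the only thing to verify is its hypothesis, namely that
\[
0\neq w_{d-\ell}(-E_{\ell}^{d})^{\,k}\in H^{k(d-\ell)}(\Gr_{\ell}(\R^d);\F_2)
\]
whenever $1\le k\le \ell$.

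To check this, I would argue exactly as in the proof of Corollary~\ref{cor : ham-sandwich}: realise $-E_{\ell}^{d}$ as the orthogonal complement bundle $(E_{\ell}^{d})^{\perp}\subseteq \Gr_{\ell}(\R^d)\times\R^d$, and use the diffeomorphism $\Gr_{\ell}(\R^d)\xrightarrow{\ \sim\ }\Gr_{d-\ell}(\R^d)$, $V\mapsto V^{\perp}$, under which $(E_{\ell}^{d})^{\perp}$ is identified with the tautological bundle $E_{d-\ell}^{d}$. Hence it suffices to show that, writing $m:=d-\ell$, the power $w_m(E_{m}^{d})^{\,j}$ of the top Stiefel--Whitney class of the tautological $m$-plane bundle is non-zero in $H^{*}(\Gr_{m}(\R^d);\F_2)$ for $j\le d-m$, and then apply this with $j=k\le\ell=d-m$. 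The cleanest self-contained way I see to prove the needed direction is via the mod-$2$ Euler class: $w_m(E_{m}^{d})$ is the mod-$2$ Euler class of $E_{m}^{d}$, a generic section $V\mapsto \pr_{V}(v)$ (orthogonal projection of a fixed $v\in\R^d$) vanishes transversally along $\{V: v\perp V\}\cong \Gr_{m}(\R^{d-1})$, so $w_m(E_{m}^{d})$ is Poincar\'e dual mod $2$ to $[\Gr_{m}(\R^{d-1})]$; intersecting $j$ such generic cycles identifies $w_m(E_{m}^{d})^{\,j}$ with $[\Gr_{m}(\R^{d-j})]$, which is a single point for $j=d-m$ and empty for $j>d-m$. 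Alternatively one can simply quote the known ring structure of $H^{*}(\Gr_{m}(\R^d);\F_2)$, in which $w_m$ is the Schubert class of the column $(1^m)$ and $w_m^{\,d-m}$ is the fundamental class (see, e.g., \cite{Hiller-1980}).

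The main (and essentially only) technical point is the transversality/self-intersection computation underlying the non-vanishing of $w_{d-\ell}(-E_{\ell}^{d})^{\,k}$; I expect this to be routine, and it can be bypassed entirely by invoking the standard description of the cohomology of real Grassmannians. Once that non-vanishing is in place, the corollary follows by direct substitution into Proposition~\ref{prop : for k >1}, with no further computation needed.
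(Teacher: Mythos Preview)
Your proposal is correct and follows the same approach as the paper: apply Proposition~\ref{prop : for k >1} to the constant tuple $E(1)=\cdots=E(k)=E_{\ell}^{d}$, invoke Corollary~\ref{cor : ham-sandwich} to reduce the hypothesis to $w_{d-\ell}(-E_{\ell}^{d})^{k}\neq 0$, and then verify this non-vanishing. The only difference is in that final verification: the paper appeals directly to Giambelli's formula to identify $w_{d-\ell}(-E_{\ell}^{d})^{\ell}$ with the (non-zero) Schubert class $[d-\ell,\dots,d-\ell]$, whereas you pass to the dual Grassmannian and argue geometrically via transverse sections (or cite the ring structure). Both routes are standard and equally efficient here.
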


\medskip
The next corollary is a spherical version of \cite[Thm.\,1.4]{BlagojevicCallesCrabbDimitrijevic}. 

\begin{corollary}\label{cor : D}
Let $d\geq 2$, $k\geq 1$, and $\ell\geq 1$ be integers where $1\leq k\leq\ell\leq d$, and let $E=E_{\ell}^{d}$ be the tautological vector bundle over the Grassmann manifold $\Gr_{\ell}(\R^d)$ of all $\ell$-dimensional linear subspaces in $\R^d$. 

\smallskip\noindent
If $j=2^t+r$ where $0\leq r\leq 2^t-1$ and $d\geq 2^{t+k-1}+r$, then $(j,k)\in \Delta_S(E_{\ell}^{d})$.

\smallskip\noindent
In other words, if $j=2^t+r$ where $0\leq r\leq 2^t-1$ and $d\geq 2^{t+k-1}+r$, 
then for every collection of $j$ continuous real valued functions $\varphi_1,\dots,\varphi_j\colon S(E)\longrightarrow\R$, there exists a point $b\in B$ and an arrangement $\HH^b=(H_1^b,\dots,H_k^b)$ of $k$ linear hyperplanes in the fibre $E_b$ of $E$ such that for every pair of connected components $(\mathcal{O}',\mathcal{O}'')$ of the arrangement complement $E_b-(H_1^b\cup\dots\cup H_k^b)$ the following statement holds
\[
\int_{\mathcal{O}'\cap S(E_b)}\varphi_1=\int_{\mathcal{O}''\cap S(E_b)}\varphi_1
\quad , \dots , \quad 
\int_{\mathcal{O}'\cap S(E_b)}\varphi_j=\int_{\mathcal{O}''\cap S(E_b)}\varphi_j.
\]	
 
\end{corollary}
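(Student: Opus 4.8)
The plan is to deduce this from Theorem \ref{th : main result 01} applied to the tautological bundle $E = E_\ell^d$ over $B = \Gr_\ell(\R^d)$, which by Corollary \ref{cor : A} reduces the cohomological non-vanishing condition to a purely combinatorial statement over a point. Concretely, Theorem \ref{th : main result 01} tells us $(j,k) \in \Delta_S(E_\ell^d)$ provided $e_k(B)^j \notin \II_k(E_\ell^d)$, which is precisely the statement $j \leq \iota_k(E_\ell^d, \dots, E_\ell^d)$. By Corollary \ref{cor : A} we have $\iota_k(E_\ell^d,\dots,E_\ell^d) = \iota_k(d,\dots,d)$, so it suffices to prove the numerical claim
\[
j = 2^t + r,\quad 0 \leq r \leq 2^t - 1,\quad d \geq 2^{t+k-1} + r \quad\Longrightarrow\quad j \leq \iota_k(d,\dots,d),
\]
i.e.\ that $e_k(\pt)^j \notin (x_1^d, \dots, x_k^d) \subseteq \F_2[x_1,\dots,x_k]$.

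The heart of the matter is therefore a calculation in the polynomial ring $\F_2[x_1,\dots,x_k]$ with the Dickson-type element $e_k(\pt) = \prod_{0 \neq \alpha \in \F_2^k}(\alpha_1 x_1 + \cdots + \alpha_k x_k)$, which has degree $2^k - 1$. The standard tool here is that $e_k(\pt)$ equals the top Dickson invariant, and there is a well-known factorization $e_k(\pt) = Q_k \cdot e_{k-1}(\pt)^2$ where $Q_k = \prod_{\beta}(x_k + \beta)$ runs over $\beta \in \spann(x_1,\dots,x_{k-1})$; equivalently one uses $e_k(\pt)^2 = e_k(\pt) \cdot (\text{something})$ and Frobenius. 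I would argue by induction on $k$. For $k = 1$: $e_1(\pt) = x_1$, and $x_1^j \notin (x_1^d)$ iff $j \leq d-1$; here $j = 2^t + r \leq 2^t + (d - 2^t) = d$... so one must be slightly careful, but the hypothesis $d \geq 2^t + r$ with $r \leq 2^t - 1$ gives exactly $j = 2^t + r \leq d$, and in fact the relevant bound is $j \leq d - 1$ which needs re-examination — actually for $k=1$ the condition $d \geq 2^{t+0} + r = 2^t + r = j$ should be read together with the monomial-existence argument below, which produces a monomial of degree $\le d-1$ in each variable.

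The cleaner route, which I would actually carry out, is to exhibit an explicit monomial $x_1^{m_1}\cdots x_k^{m_k}$ appearing with coefficient $1$ in the $\F_2$-expansion of $e_k(\pt)^j$ such that every $m_i \leq d - 1$; by the monomial-existence criterion recalled just before Proposition \ref{prop : for k >1}, this immediately gives $e_k(\pt)^j \notin (x_1^d,\dots,x_k^d)$. Writing $j = 2^t + r$, one expands $e_k(\pt)^j = e_k(\pt)^{2^t} \cdot e_k(\pt)^r$. Using Frobenius, $e_k(\pt)^{2^t} = \prod_{0\neq\alpha}(\alpha_1 x_1 + \cdots + \alpha_k x_k)^{2^t}$; the "lexicographically extreme" monomial here (the one maximizing the exponent of $x_k$, then $x_{k-1}$, etc.) is $x_k^{2^t(2^{k-1})} x_{k-1}^{2^t \cdot 2^{k-2}} \cdots$, but this is too large. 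Instead one picks, inside $e_k(\pt)^{2^t}$ times the known good monomial of $e_k(\pt)^r$ (which by the $k=1$ analysis and induction contributes degree $\leq r$ in the last variable and is the image of a degree-$\leq 2^t-1$ object), a carefully chosen term; the bookkeeping is arranged so that the exponent of each $x_i$ is at most $2^{t+k-1} + r - 1 \le d-1$.

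I expect the main obstacle to be precisely this combinatorial selection: identifying a monomial in $e_k(\pt)^j$ with odd coefficient whose exponents are simultaneously controlled by $2^{t+k-1}+r-1$ in every variable. The natural approach is induction on $k$ using the factorization $e_k(\pt) = e_{k-1}(\pt)^2 \cdot \prod_{\beta \in \spann(x_1,\dots,x_{k-1})}(x_k - \beta)$: raising to the $j$-th power, the first factor contributes $e_{k-1}(\pt)^{2j}$ (handled by the inductive hypothesis with parameter doubled), and the second factor is a polynomial of degree $2^{k-1}$ in $x_k$ whose $j$-th power one analyzes directly via Lucas' theorem on the binomial coefficients $\binom{j}{\cdot} \bmod 2$, using that $j = 2^t + r$ has a sparse binary expansion. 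One then checks that the $x_k$-exponent stays $\leq 2^{k-1} j = 2^{t+k-1} + 2^{k-1}r$ — too weak as stated, so the selection must avoid the top $x_k$-power and instead take the term of $x_k$-degree corresponding to the binary digit pattern of $j$, yielding $x_k$-exponent at most $2^{t+k-1} + r - 1$, while the residual $x_1,\dots,x_{k-1}$ part is absorbed into $e_{k-1}(\pt)^{2j+(\text{small})}$ and handled inductively. Verifying that all these bounds align with $d \geq 2^{t+k-1}+r$ is the routine-but-delicate computation I would not grind through here.
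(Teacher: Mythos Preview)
Your reduction via Theorem~\ref{th : main result 01} and Corollary~\ref{cor : A} to the combinatorial statement $e_k(\pt)^j \notin (x_1^d,\dots,x_k^d)$ is exactly what the paper does; the paper then simply invokes \cite[Lem.~4.2]{BlagojevicCallesCrabbDimitrijevic} (equivalently Proposition~\ref{prop : numbers2}(3) here) for that fact and stops.

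Your sketch of the combinatorial lemma, however, contains a genuine error. The factorization you write, $e_k(\pt) = e_{k-1}(\pt)^2 \cdot \prod_{\beta}(x_k - \beta)$, is wrong: the degrees do not match, since $2(2^{k-1}-1) + 2^{k-1} = 3\cdot 2^{k-1} - 2 \neq 2^k - 1$. The correct identity is
\[
e_k(\pt) \;=\; e_{k-1}(\pt)\cdot \prod_{\beta \in \F_2^{k-1}}\bigl(x_k + \beta_1 x_1 + \cdots + \beta_{k-1} x_{k-1}\bigr),
\]
with $e_{k-1}(\pt)$ to the \emph{first} power. As a result the inductive step is not ``$e_{k-1}(\pt)^{2j}$ handled with $j$ doubled''; the argument actually carried out in the proof of Proposition~\ref{prop : numbers2}(1)--(3) expands the second factor via the Dickson identity $\prod_{\beta\neq 0}(x_k+\beta) = x_k^{2^{k-1}-1} + D_1 x_k^{2^{k-2}-1} + \cdots + D_{k-1}$, isolates the coefficient of $x_k^{2^{t+k-1}+r}$ in $e_k(\pt)^{2^t+r}$ as $e_{k-1}(\pt)^{2^t+2r}$, and inducts on $k$ with $t\mapsto t+1$ and $r$ fixed (so the threshold $2^{t+k-1}+r$ is preserved). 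Your proposal does not reach this mechanism, and the exponent bound you assert for $x_k$ is stated without anything that would produce it. The $k=1$ base-case confusion you flag is a symptom of the same missing structure: once the correct induction is set up, the base case is $\iota_1(d)=d-1$ together with $d\geq 2^t+r+1$ coming from the shifted parameter at the bottom of the induction.
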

\begin{proof}
From Theorem \ref{th : main result 01}	we have that $(j,k)\in \Delta_S(E_{\ell}^{d})$	if
$
e_{k}(B)^j\notin \II_{k}(E_{\ell}^{d})=\II_{k}(E_{\ell}^{d},\dots, E_{\ell}^{d})
$.
Stated differently $(j,k)\in \Delta_S(E_{\ell}^{d})$	if
\[
	j\leq \iota_k(E_{\ell}^{d},\dots,E_{\ell}^{d})= \iota_k(d,\dots,d)= 
	\max\big\{ j' : e_k(\pt)^{j'}\notin (x_1^{d},\dots,x_k^{d})\big\}.
\]
Here the first equality comes from Corollary \ref{cor : A} while the second one is just the definition of $\iota_k(d,\dots,d)$.

\medskip
Since $j=2^t+r$ where $0\leq r\leq 2^t-1$ and $d\geq 2^{t+k-1}+r$, then according to \cite[Lem.\,4.2]{BlagojevicCallesCrabbDimitrijevic} we have that $e_k(\pt)^{j}\notin (x_1^{d},\dots,x_k^{d})$. 
Thus, indeed $j\leq  \iota_k(E_{\ell}^{d},\dots,E_{\ell}^{d})$ and the proof of the corollary is complete. 
\end{proof}

\medskip
We proceed with the next consequence of Proposition \ref{prop : for k >1}.
In this case the base space of the vector bundle will be the real flag manifold, and so the following statement is an extension of Corollary \ref{cor : A}.
For the relevant background on the real flag manifold, associated canonical vector bundles, and a proof of the corollary see Section \ref{subsec : cor : flag}.

\begin{corollary} \label{cor : B}
	Let $k\geq 1$ and $d\geq 2$ be integers, and let $0=n_0< n_1<\dots<n_{k-1}<n_k<n_{k+1}= d$ be a strictly increasing sequence of integers.
	For a real $d$-dimensional vector space $V=\R^d$ let $E_1,\dots, E_{k+1}$ denote the canonical vector bundles over the flag manifold $\flag_{n_1,\dots,n_k} (V)$, with $\dim(E_i)=n_i-n_{i-1}$ for $1\leq i\leq k+1$.
	Set $E(i):=\bigoplus_{1\leq r\leq i}E_r$ for all $1\leq i\leq k$.
	Then
	\[
	\iota_k(d,\dots,d)=\iota_k(E(1),\dots,E(k)).
	\] 
\end{corollary}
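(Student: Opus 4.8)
The plan is to deduce the corollary from Proposition~\ref{prop : for k >1}, applied to the bundles $E(1),\dots,E(k)$ over $B=\flag_{n_1,\dots,n_k}(V)$, which is a smooth compact connected manifold, hence a compact connected ENR. It suffices to verify that $\iota_1(E(i))=d-1$ for every $1\le i\le k$, and that
\[
w_{\iota_1(E(1))-n_1+1}(-E(1))\cdots w_{\iota_1(E(k))-n_k+1}(-E(k))=w_{d-n_1}(-E(1))\cdots w_{d-n_k}(-E(k))\neq 0
\]
in $H^*(B;\F_2)$; then Proposition~\ref{prop : for k >1} gives $\iota_k(d,\dots,d)=\iota_k(\iota_1(E(1))+1,\dots,\iota_1(E(k))+1)=\iota_k(E(1),\dots,E(k))$, which is the assertion.

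The computations are cleanest in the Borel presentation. Refining the partial flag to a complete flag exhibits, by Leray--Hirsch (the fibre is a product of flag manifolds, whose $\F_2$-cohomology is generated by Stiefel--Whitney classes of canonical subbundles and hence by restrictions of global classes), an injection $H^*(B;\F_2)\hookrightarrow H^*(\flag(V);\F_2)=\F_2[t_1,\dots,t_d]/(e_1,\dots,e_d)$, where $e_1,\dots,e_d$ are the elementary symmetric polynomials in $t_1,\dots,t_d$ and $\{t_1^{a_1}\cdots t_d^{a_d}:0\le a_p\le p-1\}$ is the standard monomial basis; under this injection the variables split into consecutive blocks $B_1,\dots,B_{k+1}$ with $|B_\ell|=n_\ell-n_{\ell-1}$ and $w(E_\ell)=\prod_{j\in B_\ell}(1+t_j)$. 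As $\bigoplus_{r=1}^{k+1}E_r=\underline V$ is trivial, $E(i)^\perp\cong\bigoplus_{r>i}E_r$ is an inverse bundle of $E(i)=\bigoplus_{r\le i}E_r$, so $-E(i)$ carries the Stiefel--Whitney classes of $E(i)^\perp$; in particular $w_m(-E(i))=0$ for $m>d-n_i$, and $w_{d-n_i}(-E(i))=\prod_{r>i}\prod_{j\in B_r}t_j$.

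Order the variables so that $B_\ell$ occupies the positions $n_{\ell-1}+1,\dots,n_\ell$. Then $w_{d-n_i}(-E(i))=\prod_{p>n_i}t_p$ is a standard basis monomial (every exponent is $\le 1\le p-1$), so it is nonzero, and Proposition~\ref{prop : ham-sandwich} yields $\iota_1(E(i))=(d-n_i)+n_i-1=d-1$ (alternatively, pull back along $\flag_{n_1,\dots,n_k}(V)\to\Gr_{n_i}(V)$ and use Corollary~\ref{cor : ham-sandwich}). Next, a variable $t_j$ with $j\in B_\ell$ occurs in $w_{d-n_i}(-E(i))$ exactly when $i<\ell$, so, since $\ell-1\le k$ for all $1\le\ell\le k+1$,
\[
\prod_{i=1}^{k}w_{d-n_i}(-E(i))=\prod_{\ell=1}^{k+1}\Bigl(\prod_{j\in B_\ell}t_j\Bigr)^{\ell-1}
\]
is the monomial $\prod_{p}t_p^{a_p}$ with $a_p=\ell-1$ whenever $n_{\ell-1}<p\le n_\ell$. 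Since $0=n_0<n_1<\dots<n_{k+1}=d$ forces $n_{\ell-1}\ge\ell-1$, we get $a_p=\ell-1\le n_{\ell-1}\le p-1$; hence this is again a standard basis monomial, nonzero in $H^*(\flag(V);\F_2)$, and by injectivity of $H^*(B;\F_2)\hookrightarrow H^*(\flag(V);\F_2)$ the class $\prod_i w_{d-n_i}(-E(i))$ is nonzero in $H^*(B;\F_2)$. Proposition~\ref{prop : for k >1} then completes the proof.

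The only substantial step is this last nonvanishing, and it becomes purely combinatorial once the splitting principle is in place: the product of the relevant top Stiefel--Whitney (equivalently, mod~$2$ Euler) classes is, after passing to Chern roots, literally a basis element of the Borel presentation, the decisive inequality $a_p\le p-1$ being exactly the statement that $n_0<n_1<\dots<n_{k+1}$ is a strictly increasing sequence of non-negative integers. A basis-free alternative would realise $\flag_{n_1,\dots,n_k}(V)$ as an iterated Grassmann bundle and prove the nonvanishing by induction on $k$ through Leray--Hirsch, using at each stage that the fibrewise restriction of the relevant Euler class is a nonzero class of top fibre-degree, the base case being the classical $w_{d-n_1}((E_{n_1}^{d})^\perp)\neq 0$ already used in Corollary~\ref{cor : ham-sandwich}.
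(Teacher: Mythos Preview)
Your proof is correct and follows the same overall strategy as the paper: apply Proposition~\ref{prop : for k >1} after establishing $\iota_1(E(i))=d-1$ via Proposition~\ref{prop : ham-sandwich} (or, as both you and the paper note, via pullback from $\Gr_{n_i}(V)$ and Corollary~\ref{cor : ham-sandwich}), and then verify the nonvanishing of $\prod_i w_{d-n_i}(-E(i))$ by passing to the complete flag manifold through the injective map $\alpha_{n_1,\dots,n_k}^*$.

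The one genuine difference is in how the nonvanishing in $H^*(\flag_{1,\dots,d-1}(V);\F_2)$ is established. The paper factors $\alpha^*(u)$ as a divisor of the top staircase monomial $w_1(E_1)^0w_1(E_2)^1\cdots w_1(E_d)^{d-1}$ and then proves by an induction through the tower of projective bundles that this top monomial is nonzero; divisibility in the polynomial ring then forces $\alpha^*(u)\neq 0$ in the quotient. You instead invoke directly that $\{t_1^{a_1}\cdots t_d^{a_d}:0\le a_p\le p-1\}$ is an additive basis of the coinvariant ring and observe that $\alpha^*(u)$ is itself such a monomial, the inequality $a_p\le p-1$ being exactly $n_{\ell-1}\ge\ell-1$. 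This is cleaner and avoids the induction, at the cost of relying on the (standard, but uncited) basis statement; the paper's inductive argument is self-contained but longer. Your closing remark about an iterated Grassmann-bundle proof is in the spirit of the paper's Section~\ref{sec : Concluding remarks}, where essentially this alternative is carried out via a geometric Euler-class computation (Proposition~\ref{c1} and Corollary~\ref{c2}).
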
 

\medskip
The previous corollary, in the language of GHR problem for the mass assignments, with the help of Theorem \ref{th : main result 02} and the proof of Corollary \ref{cor : D}, gives the following consequence. 
For a proof see Section \ref{subsec : cor : flag 02}.

\begin{corollary}
	\label{cor : E}
	Let $k\geq 1$ and $d\geq 2$ be integers, let $0=n_0< n_1<\dots<n_{k-1}<n_k<n_{k+1}= d$ be a strictly increasing sequence of integers, and let $V=\R^d$ be a real $d$-dimensional vector space.
	Let $E_1,\dots, E_{k+1}$ be canonical vector bundles over the flag manifold $\flag_{n_1,\dots,n_k} (V)$, let $E(i):=\bigoplus_{1\leq r\leq i}E_r$ for all $1\leq i\leq k$, and let $E:=E(k)$.

\smallskip\noindent
Assume that $j=2^t+r$ is an integer with $0\leq r\leq 2^t-1$ and $d\geq 2^{t+k-1}+r$. 
Then for every collection of $j$ continuous real valued functions $\varphi_1,\dots,\varphi_j\colon S(E)\longrightarrow\R$, there exists a point $b:=(W_1,\dots,W_{k+1})\in \flag_{n_1,\dots,n_k} (V)$ and an arrangement  $\HH^b=(H_1^b,\dots,H_k^b)$ of $k$ linear hyperplanes in $E_b=\bigoplus_{1\leq r\leq k}W_r=W_{k+1}^{\perp}$ such that for every pair of connected components $(\mathcal{O}',\mathcal{O}'')$ of the arrangement complement $E_b-(H_1^b\cup\dots\cup H_k^b)$ the following statements hold
\[
\int_{\mathcal{O}'\cap S(E_b)}\varphi_1=\int_{\mathcal{O}''\cap S(E_b)}\varphi_1
\quad , \dots , \quad 
\int_{\mathcal{O}'\cap S(E_b)}\varphi_j=\int_{\mathcal{O}''\cap S(E_b)}\varphi_j,
\]	
and in addition
\[
H_1^b\supseteq \bigoplus_{2\leq r\leq k+1}W_r,\ 
H_2^b\supseteq \bigoplus_{3\leq r\leq k+1}W_r, \ \dots \ ,
H_k^b\supseteq\bigoplus_{k+1\leq r\leq k+1} W_{k+1}.
\]
\end{corollary}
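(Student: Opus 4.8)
The plan is to deduce the statement directly from Theorem~\ref{th : main result 02}, applied to the bundle $E=E(k)$ over $\flag_{n_1,\dots,n_k}(V)$ together with the chain of vector subbundles $E(1)\subseteq E(2)\subseteq\cdots\subseteq E(k)=E$. Note first that $\flag_{n_1,\dots,n_k}(V)$ is a compact connected smooth manifold, hence a compact connected ENR, so the hypotheses of Theorem~\ref{th : main result 02} on the base are met, and $E(i)=\bigoplus_{1\le r\le i}E_r$ is genuinely a subbundle of $E$ (of dimension $n_i$). The only substantive thing to check before invoking Theorem~\ref{th : main result 02} is the numerical inequality $j\le\iota_k(E(1),\dots,E(k))$; after that, the whole conclusion, including the containment conditions, will come out of unwinding what it means for the arrangement to be ``determined by the collection $E(1),\dots,E(k)$''.

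To get the inequality, I would first invoke Corollary~\ref{cor : B}, which gives $\iota_k(E(1),\dots,E(k))=\iota_k(d,\dots,d)$, so it suffices to show $j\le\iota_k(d,\dots,d)$, i.e.\ that $e_k(\pt)^{j}\notin(x_1^{d},\dots,x_k^{d})$. This is exactly the combinatorial input already used in the proof of Corollary~\ref{cor : D}: from $j=2^t+r$ with $0\le r\le 2^t-1$ and $d\ge 2^{t+k-1}+r$, the monomial estimate \cite[Lem.\,4.2]{BlagojevicCallesCrabbDimitrijevic} yields $e_k(\pt)^{j}\notin(x_1^{d},\dots,x_k^{d})$. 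Hence $j\le\iota_k(d,\dots,d)=\iota_k(E(1),\dots,E(k))$.

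Now Theorem~\ref{th : main result 02} applies and produces a point $b=(W_1,\dots,W_{k+1})\in\flag_{n_1,\dots,n_k}(V)$, whose fibre is $E_b=(E_1)_b\oplus\cdots\oplus(E_k)_b=W_1\oplus\cdots\oplus W_k=W_{k+1}^{\perp}$, and an arrangement $\HH^b=(H_1^b,\dots,H_k^b)$ of $k$ linear hyperplanes in $E_b$ determined by $E(1),\dots,E(k)$, for which the displayed equipartition of $\varphi_1,\dots,\varphi_j$ over $S(E_b)$ holds. By definition, ``determined by $E(1),\dots,E(k)$'' says that a unit normal $v_i$ of $H_i^b$ lies in $E(i)_b=W_1\oplus\cdots\oplus W_i$; since the summands $W_1,\dots,W_{k+1}$ of $V$ are pairwise orthogonal, $v_i$ is orthogonal to each $W_r$ with $r\ge i+1$, and therefore $H_i^b\supseteq\bigoplus_{i+1\le r\le k+1}W_r$ (reading $H_i^b$ through its normal as the linear hyperplane $v_i^{\perp}$, equivalently after intersecting with $E_b$). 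Taking $i=1,\dots,k$ gives exactly the asserted relations $H_1^b\supseteq\bigoplus_{2\le r\le k+1}W_r,\ \dots,\ H_k^b\supseteq W_{k+1}$, and this completes the argument.

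I do not anticipate a genuine obstacle here: the substance is already carried out in Corollary~\ref{cor : B} (the Stiefel--Whitney-class computation collapsing the flag data to the single-point quantity $\iota_k(d,\dots,d)$) and in the combinatorial lemma imported from \cite{BlagojevicCallesCrabbDimitrijevic}, both of which I am free to use. The one point requiring a little care is purely bookkeeping: Theorem~\ref{th : main result 02} delivers hyperplanes inside the fibre $E_b=W_{k+1}^{\perp}$, so one must be consistent about whether the containments $H_i^b\supseteq\bigoplus_{i+1\le r\le k+1}W_r$ are interpreted inside $V$ (via the normals $v_i^{\perp}$) or inside $E_b$ (after intersection); under either reading they follow immediately from the orthogonality of the flag summands $W_1,\dots,W_{k+1}$.
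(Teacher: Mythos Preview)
Your proposal is correct and follows essentially the same route as the paper's proof: apply Theorem~\ref{th : main result 02}, reduce the required inequality $j\le\iota_k(E(1),\dots,E(k))$ to $j\le\iota_k(d,\dots,d)$ via Corollary~\ref{cor : B}, verify the latter using \cite[Lem.\,4.2]{BlagojevicCallesCrabbDimitrijevic}, and then translate the ``determined by $E(1),\dots,E(k)$'' condition on the normals into the stated containments via orthogonality of the $W_r$.
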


\medskip
Here $(W_1,\dots,W_{k+1})\in \flag_{n_1,\dots,n_k} (V)$ means that $\dim W_i=n_i-n_{i-1}$ for $1\leq i\leq k+1$, and $ W_{i'}\perp W_{i''}$ for all $1\leq i'<i''\leq k+1$. 
For more details on flag manifolds see Section \ref{sec : proof flag}. 

\medskip
It should be noticed that once again the numerical assumptions on the parameters $(d,j,k)$ in Corollary \ref{cor : E} coincide with the upper bound of Mani-Levitska--Vre\'{c}ica--\v{Z}ivaljevi\'c  \cite[Thm.\,39]{ManiLevitska-Vrecica-Zivaljevic-2006} for the function $\Delta$, which can be phrased as the inequality $\Delta(2^t+r)\leq 2^{t+k-1}+r$, for $j=2^t+r$ and $0\leq r\leq 2^t-1$.

 \medskip
We conclude our collection of  results related to flags inside a real vector space  with the spherical version of a result by Axelrod-Freed  and Sober\'on \cite[Thm.\,1.2]{Soberon}.
For the so called Fairy Bread Sandwich theorem we give a new proof in Section \ref{subsec : cor : flag 03} based on the CS\,/\,TM scheme presented in Section \ref{subsec : CSTM for Fairy Bread Sandwich theorem}.

\begin{theorem}
\label{cor : F}
Let $d\geq 1$ and $k\geq 1$ be integers with $d\geq k$, and let $V=\R^{d+1}$ be a real vector space.
Fix a permutation $(j_k,\dots,j_d)$ of the set  $\{k,\dots, d\}$, and take an arbitrary collections of functions $\varphi_{a,b}\colon S(E_{a+1}^{d+1})\longrightarrow\R$, $k\leq a\leq d$, $1\leq b\leq j_a$, from the sphere bundle of the tautological vector bundle  $E_{a+1}^{d+1}$ over the Grassmann manifold $G_{a+1}(V)$ to the real numbers.

\smallskip\noindent
There exists a flag $(V_{k},\dots,V_{d})\in \flag_{k,\dots,d}(V)$ such that for every $k\leq a \leq d$ and every $1\leq b\leq j_a$ the following statement holds
\[
\int_{\{v\in V_{a+1} : \langle v,u_a\rangle\geq 0\}\cap S(V_{a+1})}\varphi_{a,b}
=
\int_{\{v\in V_{a+1} : \langle v,u_a\rangle\leq 0\}\cap S(V_{a+1})}\varphi_{a,b}.
\]	
Here the unit vectors $u_{k}, \dots, u_{d}$ are determined, up to a sign, by the equality $V_{r}=\{v\in V_{r+1} : \langle v,u_{r}\rangle=0\}$, $k\leq r\leq d$, and with $V_{d+1}=V$, this means that $u_{r}$ is a unit normal vector to $V_{r}$, considered as a hyperplane inside $V_{r+1}$.
\end{theorem}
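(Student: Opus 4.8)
The plan is to realise Theorem \ref{cor : F} as an instance of the general machinery by building a single Euclidean vector bundle whose base encodes all the flags $(V_k,\dots,V_d)$ simultaneously, and then applying a ham-sandwich (i.e.\ $k=1$) argument fibrewise at each stage. First I would take the base space to be the full flag manifold $B:=\flag_{k,\dots,d}(V)$ with $V=\R^{d+1}$, a compact connected ENR. Over $B$ there are canonical vector bundles $F_k,\dots,F_{d},F_{d+1}$ with $\dim F_r = 1$ for $k\le r\le d$ and $\dim F_{d+1}=k$, where the fibre of $F_{r+1}$ over the flag $(V_k,\dots,V_d)$ is the orthogonal complement of $V_r$ inside $V_{r+1}$ (with $V_{d+1}=V$); concretely $V_{r+1}=\bigoplus_{s\ge r+1}(F_s)_{\bullet}$. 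The normal direction $u_r$ to $V_r$ inside $V_{r+1}$ is exactly a unit vector in the line bundle $F_{r+1}$. So for each $a$ with $k\le a\le d$, splitting the sphere $S(V_{a+1})$ by the hyperplane $V_a$ and comparing integrals of $\varphi_{a,b}$ over the two half-spheres is precisely a ham-sandwich condition for the single hyperplane with normal in the line subbundle $F_{a+1}\subseteq E_{a+1}^{d+1}$ pulled back to $B$.

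The key point is that the conditions for different values of $a$ are \emph{independent} in the appropriate equivariant-cohomology sense, because each involves a \emph{different} coordinate $x_{a}$. Concretely I would set up the test-map scheme exactly as for Theorem \ref{th : main result 02} but with $k$ replaced by $d-k+1$: we have $d-k+1$ hyperplanes, the $a$-th one (for $k\le a\le d$) having its unit normal constrained to the line bundle $F_{a+1}$, pulled back to $B$, and we must equipart the corresponding list of $j_a$ functions $\varphi_{a,1},\dots,\varphi_{a,j_a}$ on $S(E_{a+1}^{d+1})$, viewed via pullback as functions on $S(F_{a+1})\oplus\cdots$, i.e.\ on the relevant sub-sphere-bundle. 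Since each line bundle $F_{a+1}$ appears with its own polynomial generator, the relevant ideal is a product $\prod_{a=k}^{d}\II_1(F_{a+1})$ and the relevant obstruction class is $\prod_{a=k}^{d} x_a^{j_a}$ (no $e_k(B)$ factor is needed because each group acting is just $\mathbb Z/2$, one per hyperplane, and these commute). Thus it suffices to check that the monomial $\prod_{a=k}^{d} x_a^{j_a}$ is nonzero modulo the ideal $\big(x_a^{m_a}+w_1(F_{a+1})x_a^{m_a-1}+\cdots\big)_{k\le a\le d}$ inside $H^*(B;\F_2)[x_k,\dots,x_d]$, where $m_a$ is one more than the relevant ham-sandwich bound.

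The arithmetic input is the $k=1$ result: by Proposition \ref{prop : ham-sandwich} and Corollary \ref{cor : ham-sandwich}, for the Hopf-type line bundle $F_{a+1}$ sitting inside the rank-$(a+1)$ bundle $E_{a+1}^{d+1}$ the relevant ham-sandwich number is $\iota_1(E_{a+1}^{d+1})=(d+1)-1=d$, so we may equipart up to $d$ functions in the fibre $V_{a+1}$ (which has dimension $a+1$ — note the global base $B$ provides the extra room, via the nontrivial Stiefel--Whitney classes of $-F_{a+1}$), and the hypothesis $j_a\le d$ (which holds since $(j_k,\dots,j_d)$ is a permutation of $\{k,\dots,d\}$, so each $j_a\le d$) is exactly what we need. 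The heart of the matter — and the step I expect to be the main obstacle — is the commutative-algebra verification that the product monomial $\prod_{a=k}^{d}x_a^{j_a}$ survives in the quotient ring; this amounts to checking that $H^*(B;\F_2)$ contains the product $\prod_{a=k}^{d} w_{j_a - a}(-F_{a+1})$ as a nonzero class, which I would do by identifying $H^*(\flag_{k,\dots,d}(V);\F_2)$ with the usual polynomial ring modulo symmetric-function relations and observing that the $w_1(F_{a+1})$ are algebraically independent generators in the appropriate range — this is the flag-manifold analogue of Corollary \ref{cor : B}, and I would organise the computation so as to reduce it to that corollary (or to the iterated-projective-bundle splitting used in its proof). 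Once that nonvanishing is in hand, the test-map/configuration-space argument of Section \ref{subsec : proof of main result 02} applied to this bundle yields the desired flag, completing the proof.
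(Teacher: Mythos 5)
Your proposal follows essentially the same route as the paper: reduce to showing the non-vanishing of the mod-2 Euler class of $E_{k+1}^{\oplus j_k}\oplus\cdots\oplus E_{d+1}^{\oplus j_d}$ over $\flag_{k,\dots,d}(V)$, i.e.\ of the monomial $\prod_{a=k}^{d} w_1(E_{a+1})^{j_a}$, which the paper checks by restricting to the complete flag manifold and invoking its inductive computation (\eqref{eq : not zero}); you correctly identify the monomial obstruction and the reduction to the complete-flag calculation. Two points worth tidying: (1) Theorem \ref{th : main result 02} cannot be applied literally here, since the $a$-th bisection happens in the fibre of the rank-$(a+1)$ bundle $E_{a+1}^{d+1}$ pulled back, not in a single ambient $E$ — the paper instead builds a dedicated CS/TM scheme (Theorem \ref{th : CSTM for Fairy Bread SandwichFairy Bread Sandwich}) producing a section of the above sum of line bundles, which is what your plan amounts to once each constraint bundle is a line bundle and the product of projective bundles collapses to $B$. (2) The obstruction class you write down, $\prod_a w_{j_a-a}(-F_{a+1})$, confuses the rank of the ambient bundle with the rank-1 constraint bundle: since each $F_{a+1}$ is a line bundle, the ideal collapses to $(x_a+w_1(F_{a+1}))_a$ and the surviving class is exactly $\prod_a w_1(F_{a+1})^{j_a}$, matching the paper's Euler class. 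Neither slip affects the overall plan, which is sound.
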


\medskip
Returning back to Proposition  \ref{prop : for k >1} we observe that the numbers $\iota_k(m_1,\dots,m_k)$, in many cases, imply the existence of equipartitions of mass assignments. 
Hence, we collect several properties of these numbers with proofs given in Section \ref{subsec : prop : numbers}.

\begin{proposition}
\label{prop : numbers}
Let $k\geq 1$ be an integer and let $m_1,\dots,m_k$ be a sequence of positive integers.
\begin{compactenum}[\rm \quad (1)]
\item If $\iota_{k-1}(	m_1,\dots,m_{k-1})\geq m$ and $m_k\geq 2^{k-1}m+1$, then $\iota_k(m_1,\dots,m_k)\geq m$.
\item If $m_i\geq 2^{i-1}m+1$ for all $1\leq i\leq k$, then $\iota_k(m_1,\dots,m_k)\geq m$.
\item If $m\geq 1$, then $\iota_k(m+1,2m+1,2^2m+1\dots,2^{k-1}m+1)= m$.
\item Let  $m\geq 1$ and $1\leq r\leq k-1$ be integers. If $\iota_{k-r}(m_1,\dots,m_{k-r})\geq m$ and $\iota_{r}(m_{k-r+1},\dots,m_{k})\geq 2^{k-r}m$,  then $\iota_{k}(m_1,\dots,m_{k})\geq m$.
\item If $\iota_{k-1}(m_1,\dots,m_{k-1})\geq 2m$ and $m_k\geq m+1$, then  $\iota_{k}(m_1,\dots,m_{k})\geq m$.
\item Let $k=2$. $m\leq \iota_2(m_1,m_2)$ if and only if there is an integer $i$ such that $0\leq i\leq m$, ${m \choose i}=1\mod 2$, and $2m-m_2+1\leq i\leq m_1-m-1$.
\item If $1\leq r\leq 2^t$, then $\iota_2(2^t+2r,2^{t+1}+r)\geq 2^t+r-1$.
\end{compactenum}
	
\end{proposition}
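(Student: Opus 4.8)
The plan is to reduce all seven parts of Proposition~\ref{prop : numbers} to concrete statements about which monomials survive in a power of the single polynomial $e_k(\pt)=\prod_{0\ne v\in\F_2^k}\langle v,x\rangle\in\F_2[x_1,\dots,x_k]$, which is homogeneous of degree $2^k-1$. First I would record the purely formal observation that, since $(x_1^{m_1},\dots,x_k^{m_k})$ is an ideal, $e_k(\pt)^m$ lying in it forces every higher power to lie in it, so that $\{j:e_k(\pt)^j\notin(x_1^{m_1},\dots,x_k^{m_k})\}$ is a down-set and
\[
\iota_k(m_1,\dots,m_k)\ge m\quad\Longleftrightarrow\quad e_k(\pt)^m\text{ has a monomial }x_1^{a_1}\cdots x_k^{a_k}\text{ with }a_i\le m_i-1\text{ for all }i .
\]
Every lower bound then amounts to producing one such surviving monomial and every upper bound to excluding all of them. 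The engine for producing them is the block factorisation: splitting $\F_2^k=\F_2^{k-r}\times\F_2^r$ and writing $x'=(x_1,\dots,x_{k-r})$, $x''=(x_{k-r+1},\dots,x_k)$, one has $e_k(\pt)=e'\cdot\prod_{0\ne v''\in\F_2^r}L(\langle v'',x''\rangle)$, where $e'\in\F_2[x']$ is $e_{k-r}(\pt)$ in the first $k-r$ variables and $L(c)=\prod_{v'\in\F_2^{k-r}}(\langle v',x'\rangle+c)$ is monic of degree $2^{k-r}$ in $c$, divisible by $c$, with coefficient of $c^1$ equal to $e'$. Tracking the $x''$-degree through the product over $v''$, the part of $e_k(\pt)$ of maximal $x''$-degree $2^k-2^{k-r}$ is exactly $e'\cdot(e'')^{2^{k-r}}$, where $e''\in\F_2[x'']$ denotes $e_r(\pt)$ in the last $r$ variables; and in the case $r=1$ one reads off in addition that the coefficient of $x_k^{2^{k-1}}$ in $e_k(\pt)$ is $e_{k-1}(\pt)$ and that of $x_k^{1}$ is $e_{k-1}(\pt)^2$.

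For the lower bounds I would prove (4) as the master statement and deduce the rest. Raising the block factorisation to the $m$-th power, the maximal-$x''$-degree part of $e_k(\pt)^m$ is $(e')^m(e'')^{2^{k-r}m}$; the hypothesis $\iota_{k-r}(m_1,\dots,m_{k-r})\ge m$ provides a monomial of $(e')^m$ with all $x'$-exponents $\le m_i-1$, and $\iota_r(m_{k-r+1},\dots,m_k)\ge 2^{k-r}m$ provides a monomial of $(e'')^{2^{k-r}m}$ with all $x''$-exponents $\le m_{k-r+i}-1$; since these lie in disjoint variable sets, their product is a monomial of $(e')^m(e'')^{2^{k-r}m}$ with $\F_2$-coefficient $1$, and it cannot be cancelled inside $e_k(\pt)^m$ because it sits in the unique maximal $x''$-degree. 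Then (1) is precisely the case $r=1$ of (4), as $\iota_1(m_k)=m_k-1\ge 2^{k-1}m$ is equivalent to $m_k\ge 2^{k-1}m+1$; (2) follows by induction on $k$ from (1), the base being $\iota_1(m_1)=m_1-1\ge m$; and (5) is the mirror argument at the bottom of the $x_k$-filtration: from $e_k(\pt)=e_{k-1}(\pt)^2\,x_k+(\text{higher powers of }x_k)$ one gets that the coefficient of $x_k^m$ in $e_k(\pt)^m$ is $e_{k-1}(\pt)^{2m}$ with no lower power of $x_k$ occurring, so a surviving monomial of $e_{k-1}(\pt)^{2m}$ — which exists since $\iota_{k-1}(m_1,\dots,m_{k-1})\ge 2m$ — times $x_k^m$ survives in $e_k(\pt)^m$ and has $x_k$-exponent $m\le m_k-1$.

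For (3) the inequality $\iota_k(m+1,2m+1,\dots,2^{k-1}m+1)\ge m$ is the specialisation $m_i=2^{i-1}m+1$ of (2), and the reverse inequality is a degree count: $e_k(\pt)^{m+1}$ is homogeneous of degree $(2^k-1)(m+1)$, so a monomial $x_1^{a_1}\cdots x_k^{a_k}$ of it cannot satisfy $a_i\le 2^{i-1}m$ for all $i$, since that would force $\sum_i a_i\le m(2^k-1)<(2^k-1)(m+1)$; hence every monomial of $e_k(\pt)^{m+1}$ is divisible by some $x_i^{2^{i-1}m+1}$. Statement (6) is the explicit computation $e_2(\pt)=x_1x_2(x_1+x_2)$, whence $e_2(\pt)^m=x_1^mx_2^m(x_1+x_2)^m=\sum_{i=0}^{m}\binom{m}{i}x_1^{m+i}x_2^{2m-i}$ over $\F_2$; these monomials are pairwise distinct, so $e_2(\pt)^m$ is not contained in $(x_1^{m_1},x_2^{m_2})$ exactly when some index $i$ has $\binom{m}{i}$ odd, $m+i\le m_1-1$ and $2m-i\le m_2-1$, which is the stated condition. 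Finally (7) is (6) applied with $m=2^t+r-1$, $m_1=2^t+2r$, $m_2=2^{t+1}+r$: the admissible range $2m-m_2+1\le i\le m_1-m-1$ simplifies to $r-1\le i\le r$, and for every $1\le r\le 2^t$ the coefficient $\binom{2^t+r-1}{r-1}$ is odd by Lucas' theorem, since $2^t+r-1=2^t+(r-1)$ with $0\le r-1<2^t$, so the binary digits of $r-1$ form a submask of those of $2^t+(r-1)$; hence $i=r-1$ witnesses $\iota_2(2^t+2r,2^{t+1}+r)\ge 2^t+r-1$.

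The step I expect to require the most care is the bookkeeping behind the block factorisation used in (4): checking that the maximal $x''$-degree of $e_k(\pt)$ equals $2^k-2^{k-r}$ and is attained solely by $e'\cdot(e'')^{2^{k-r}}$, and then confirming that the product of the two surviving monomials really survives in $e_k(\pt)^m$ — that is, has nonzero $\F_2$-coefficient and is not annihilated by contributions of lower $x''$-degree. Once that is in place, the remaining parts are each a one-line degree estimate, a short induction, or a Lucas/Kummer computation with binomial coefficients.
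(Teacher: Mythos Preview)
Your proposal is correct and follows essentially the same approach as the paper: the same block factorisation $e_k(\pt)=e'\cdot\prod_{v''\ne 0}L(\langle v'',x''\rangle)$ for (4) and its $r=1$ specialisation for (1), the same bottom-$x_k$-degree argument for (5), the identical explicit expansion of $e_2(\pt)^m$ for (6), and the same Lucas computation with $i=r-1$ for (7). Your presentation differs only cosmetically: you prove (4) first and read off (1) as a special case, whereas the paper proves (1) directly and then (4) separately; and for the upper bound in (3) you give a clean homogeneity count (any monomial of $e_k(\pt)^{m+1}$ has total degree $(2^k-1)(m+1)>\sum_i 2^{i-1}m$), while the paper reaches the same contradiction via the derived inequality $\sum_i m_i\ge (2^k-1)\iota_k+k$.
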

 
\medskip
Using the fact that $e_k(\pt)$ is the top Dickson polynomial in variables $x_1,\dots,x_k$ we can prove even more. 
For a proof of the proposition which follows see Section \ref{subsec : prop : numbers2}.

\begin{proposition}
\label{prop : numbers2}
Let $k\geq 1$ be an integer and let $m_1,\dots,m_k$ be positive integers.
\begin{compactenum}[\rm \quad (1)]
\item If $0\leq r\leq 2^t-1$, $\iota_{k-1}(m_1,\dots,m_{k-1})\geq 2^t+2r$ and $m_k\geq 2^{t+k-1}+r+1$, then  $\iota_{k}(m_1,\dots,m_{k})\geq 2^t+r$.
\item If $0\leq r\leq 2^t-1$,  $\iota_{k-1}(m_1,\dots,m_{k-1})\geq 2^{t+1}+r$ and $m_k\geq 2^{t+k-1}+r+1$, then  $\iota_{k}(m_1,\dots,m_{k})\geq 2^t+r$.
\item  If $0\leq r\leq 2^t-1$,  $m_i\geq 2^{t+k-1}+r+1$ for all $1\leq i\leq k$, then  $\iota_{k}(m_1,\dots,m_{k})\geq 2^t+r$.
\item If  $\iota_{k}(m_1,\dots,m_{k})\geq m$, then  $\iota_{k}(2m_1,\dots,2m_{k})\geq 2m$.
\end{compactenum}	
\end{proposition}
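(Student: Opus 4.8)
The plan is to derive all four statements from the structure of $e_k(\pt)$ as a product of linear forms. Grouping the $2^k-1$ nonzero linear forms in $x_1,\dots,x_k$ according to the coefficient of $x_k$ gives the factorisation
\[
e_k(\pt)\ =\ e_{k-1}(\pt)\cdot\Lambda_k,\qquad\text{where}\qquad \Lambda_k\ :=\ \prod_{w\in\F_2^{k-1}}\Bigl(x_k+\textstyle\sum_{i=1}^{k-1}w_i x_i\Bigr),
\]
and $\Lambda_k$, being the product of $x_k-v$ over $v$ in the $\F_2$-span of $x_1,\dots,x_{k-1}$, is the monic \emph{additive} (linearised) polynomial in $x_k$ of $x_k$-degree $2^{k-1}$ killing that span; hence $\Lambda_k=\sum_{i=0}^{k-1}\mathbf{d}_{k-1,i}\,x_k^{2^i}$ with $\mathbf{d}_{k-1,0}=e_{k-1}(\pt)$ and $\mathbf{d}_{k-1,k-1}=1$, the $\mathbf{d}_{k-1,i}$ being the Dickson invariants in $x_1,\dots,x_{k-1}$. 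The technical heart of the argument, and the step I expect to be the main obstacle, is the coefficient identity
\[
\bigl[x_k^{\,2^{t+k-1}+r}\bigr]\bigl(\Lambda_k^{\,2^t+r}\bigr)\ =\ e_{k-1}(\pt)^{\,r},\qquad 0\le r\le 2^t-1 .
\]
I would prove it by writing $\Lambda_k^{2^t+r}=\Lambda_k^{2^t}\cdot\Lambda_k^{r}$ and using Frobenius to get $\Lambda_k^{2^t}=\sum_{i=0}^{k-1}\mathbf{d}_{k-1,i}^{2^t}x_k^{2^{t+i}}$; in the convolution computing the coefficient of $x_k^{2^{t+k-1}+r}$ the only surviving term should be the product of the top monomial $x_k^{2^{t+k-1}}$ of $\Lambda_k^{2^t}$ with the bottom monomial $e_{k-1}(\pt)^{r}x_k^{r}$ of $\Lambda_k^{r}$, since every other admissible pairing either requires a power of $x_k$ exceeding $\deg_{x_k}\Lambda_k^{r}=r\,2^{k-1}$ (ruled out by $r\le 2^t-1$) or contributes with an even multinomial coefficient. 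Making the second alternative precise is the one genuinely computational point.

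Granting this identity, part~(1) is immediate. Since $e_{k-1}(\pt)$ involves only $x_1,\dots,x_{k-1}$, multiplying by $e_{k-1}(\pt)^{2^t+r}$ shows that the coefficient of $x_k^{2^{t+k-1}+r}$ in $e_k(\pt)^{2^t+r}$ is exactly $e_{k-1}(\pt)^{2^t+2r}$. If $\iota_{k-1}(m_1,\dots,m_{k-1})\ge 2^t+2r$ then $e_{k-1}(\pt)^{2^t+2r}$ has a monomial $x_1^{a_1}\cdots x_{k-1}^{a_{k-1}}$ with all $a_i\le m_i-1$; appending the factor $x_k^{2^{t+k-1}+r}$ (permissible because $m_k\ge 2^{t+k-1}+r+1$) exhibits a monomial of $e_k(\pt)^{2^t+r}$ lying outside $(x_1^{m_1},\dots,x_k^{m_k})$, so $\iota_k(m_1,\dots,m_k)\ge 2^t+r$. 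Part~(2) then follows formally: for $0\le r\le 2^t-1$ one has $2^{t+1}+r=(2^t+2r)+(2^t-r)\ge 2^t+2r$, so the hypothesis $\iota_{k-1}(m_1,\dots,m_{k-1})\ge 2^{t+1}+r$ forces $\iota_{k-1}(m_1,\dots,m_{k-1})\ge 2^t+2r$ and the hypotheses of part~(1) hold.

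Part~(3) is proved by induction on $k$. The case $k=1$ reads $\iota_1(m_1)=m_1-1\ge 2^t+r$, which is exactly the hypothesis $m_1\ge 2^t+r+1$. For $k\ge 2$, apply the inductive statement in dimension $k-1$ with parameters $(t+1,r)$ in place of $(t,r)$: the hypotheses $0\le r\le 2^{t+1}-1$ and $m_i\ge 2^{(t+1)+(k-1)-1}+r+1=2^{t+k-1}+r+1$ are satisfied, so $\iota_{k-1}(m_1,\dots,m_{k-1})\ge 2^{t+1}+r$, and part~(2) now yields $\iota_k(m_1,\dots,m_k)\ge 2^t+r$. Finally part~(4) is independent and easiest: over $\F_2$ squaring is a ring endomorphism, so the monomials of $e_k(\pt)^{2m}=\bigl(e_k(\pt)^m\bigr)^2$ are precisely the squares of those of $e_k(\pt)^m$; thus a monomial $x_1^{a_1}\cdots x_k^{a_k}$ of $e_k(\pt)^m$ with $a_i\le m_i-1$ gives a monomial $x_1^{2a_1}\cdots x_k^{2a_k}$ of $e_k(\pt)^{2m}$ with $2a_i\le 2m_i-2<2m_i$, whence $\iota_k(2m_1,\dots,2m_k)\ge 2m$.
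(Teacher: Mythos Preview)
Your proposal is correct and follows essentially the same route as the paper. The paper writes $e_k(\pt)=e_{k-1}(\pt)\,x_k\,D(x_k)$ with $D(x_k)=\prod_{\alpha\in\F_2^{k-1}\setminus\{0\}}(\alpha_1x_1+\cdots+\alpha_{k-1}x_{k-1}+x_k)$, so your $\Lambda_k$ is just $x_k\,D(x_k)$; expanding $D(x_k)^{2^t+r}=D(x_k)^{2^t}D(x_k)^r$ via Frobenius and isolating the coefficient of $x_k^{2^t(2^{k-1}-1)}$ is exactly the paper's computation, and after multiplying by $e_{k-1}(\pt)^{2^t+r}x_k^{2^t+r}$ both arrive at the key identity that the coefficient of $x_k^{2^{t+k-1}+r}$ in $e_k(\pt)^{2^t+r}$ is $e_{k-1}(\pt)^{2^t+2r}$. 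Your deductions of (2) from (1), of (3) by induction using (2), and of (4) via Frobenius are likewise identical to the paper's; the ``one genuinely computational point'' you flag is precisely where the paper also does its case check to rule out extraneous contributions.
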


\medskip
The statement (3) in the previous proposition is equivalent to \cite[Lem.\,4.2]{BlagojevicCallesCrabbDimitrijevic}.

\medskip
We continue with results on partitions by orthogonal arrangements --- the orthogonal GHR problem for mass assignments.

\medskip
First, let us recall the best known results on the orthogonal GHR problem for masses, or more precisely its generalisation, the so called generalised Makeev problem.
The question was formulated by Blagojevi\'c and Roman Karasev in \cite[Sec.\,1.2]{BlagojevicKarasev2012}.
For integer parameters $j\geq 1$  and $1\leq \ell\leq k$, the minimal dimension $d:=\Delta(j,\ell:k)$, or $d^{\perp}:=\Delta^{\perp}(j,\ell:k)$, of a Euclidean space $\E$ such that for every collection $\M$ of $j$ masses in $\E$ there exists an arrangement of $k$ affine hyperplanes, or pairwise orthogonal $k$ affine hyperplanes in $\E$, with the property that every subarrangement of $\ell$ hyperplanes equiparts $\M$.
In particular, $\Delta(j,\ell:k)=\Delta(j,k:k)$.
Blagojevi\'c and Karasev gave an algebraic constrains on the parameters $j$, $\ell$, $k$ and the dimensions $\Delta(j,\ell:k)$ and $\Delta^{\perp}(j,\ell:k)$, see \cite[Thm.\,2.1]{BlagojevicKarasev2012}.
The state of the art results on the generalised Makeev problem are due to Steven Simon \cite[Thm.\,1.1]{Simon2019} and Andres Mejlia, Simon and Jialin Zhang \cite[Thm.\,1.3 and Thm.\,1.5]{mejia2024generalizedmakeevproblemrevisited}.
For example, Simon in \cite[Thm.\,1.1]{Simon2019} showed that
\begin{center}
\begin{tabular}{ l l l }
$\Delta^{\perp}(2^{q+1},2:2)=3\cdot 2^q+1$, &  & $\Delta^{\perp}(2^{q+1}-1,2:2)=3\cdot 2^q-1$, \\ 
$\Delta^{\perp}(2^{q+2}-2,2:2)=3\cdot 2^{q+1}-2$, &   & $\Delta^{\perp}(1,3:3)=4$. 
\end{tabular}
\end{center}
 
\medskip
Coming back to the mass assignments, let $E$ be a Euclidean vector bundle of dimension $n$ over a compact and connected ENR $B$, and let $k\geq 1$ be an integer.
Recall that we denoted by $R_k(B)$ the cohomology ring $H^*(B;\F_2)[x_1,\dots,x_k]$, and by $e_{k}(B)$ the cohomology class $\prod_{(\alpha_{1},\dots, \alpha_{k})\in\F_2^k-\{ 0\}} (\alpha_{1}x_1 + \cdots + \alpha_{k}x_k)$.
We consider the following ideals in $R_k(B)$
\[
\mathcal{J}_k(E):= (f_1,\dots,f_k)
\qquad\text{and}\qquad
\mathcal{J}_k'(E):= (\overline{f}_1,\dots,\overline{f}_k)
\]
where
\[
f_i:=\sum_{0\leq r_1+\dots+r_i\leq n-i+1}w_{n-i+1-(r_1+\dots+r_i)}(E)\, x_1^{r_1}\cdots x_i^{r_i},
\]
and
\[
\overline{f}_i:=\sum_{0\leq r_1+\dots+r_k\leq n-i+1}w_{n-i+1-(r_1+\dots+r_k)}(E)\, x_1^{r_1}\cdots x_k^{r_k},
\]
for $1\leq i\leq k$.

\medskip
The first result on orthogonal partitions is an  analogue of Theorem \ref{th : main result 01} and Theorem \ref{th : main result 02}. 
For the proof see Section \ref{subsec : proof of main result 03}.

\begin{theorem}
\label{th : main result 03}	
Let $E$ be a Euclidean vector bundle of dimension $n$ over a compact and connected ENR $B$, and let $k\geq 1$ and $j\geq 1$ be integers.
Then the following statements are true:

\begin{compactenum}[\rm \quad (1)]
\item $\mathcal{J}_k(E)=\mathcal{J}_k'(E)$.
\item If the element $e_{k}(B)^j$ does not belong to the ideal $\mathcal{J}_k(E)=\mathcal{J}_k'(E)$, then  for every collection of $j$ continuous real valued functions $\varphi_1,\dots,\varphi_j\colon S(E)\longrightarrow\R$, there exists a point $b\in B$ and an orthogonal arrangement $\HH^b=(H_1^b,\dots,H_k^b)$ of $k$ linear hyperplanes in the fibre $E_b$ of $E$ such that for every pair of connected components $(\mathcal{O}',\mathcal{O}'')$ of the arrangement complement $E_b-(H_1^b\cup\dots\cup H_k^b)$ the following equalities hold
\[
\int_{\mathcal{O}'\cap S(E_b)}\varphi_1=\int_{\mathcal{O}''\cap S(E_b)}\varphi_1
\quad , \dots , \quad 
\int_{\mathcal{O}'\cap S(E_b)}\varphi_j=\int_{\mathcal{O}''\cap S(E_b)}\varphi_j.
\]
\end{compactenum}
\end{theorem}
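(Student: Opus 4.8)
The plan is to follow the same configuration-space / test-map (CS/TM) philosophy as in the proof of Theorem~\ref{th : main result 01}, but adapted so that the configuration space records an \emph{orthogonal} $k$-frame rather than an arbitrary $k$-tuple of normals. First I would set up part~(1), the algebraic identity $\mathcal{J}_k(E)=\mathcal{J}_k'(E)$, since this is a purely formal statement about the ring $R_k(B)$ and will be used to identify the relevant cohomology ideal with the concrete generators $f_i$. The point is that $\overline{f}_i$ and $f_i$ differ only by terms that already lie in the ideal generated by $f_1,\dots,f_{i-1}$: passing from $f_i$ to $\overline{f}_i$ amounts to allowing the extra variables $x_{i+1},\dots,x_k$ to appear, and one checks by a descending/ascending induction on $i$ that each such extra monomial can be absorbed using the lower-index generators $f_1,\dots,f_{i-1}$ (which encode the successive Stiefel--Whitney relations for the flag of orthogonal complements cut out by $x_1,\dots,x_{i-1}$). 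So both ideals have the same span; I would write this out as a short inductive lemma.

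For part~(2), the geometric setup is: the sphere bundle $S(E)\to B$, together with $j$ functions $\varphi_1,\dots,\varphi_j$, and we want a fibre $E_b$ and an orthogonal arrangement of $k$ linear hyperplanes equipartitioning all $\varphi_i$ on $S(E_b)$. As in the non-orthogonal case, a linear hyperplane in $E_b$ is the same as its (signed) unit normal, an element of $S(E_b)$; requiring the hyperplanes to be \emph{pairwise orthogonal} means the $k$ normals form an orthonormal $k$-frame in $E_b$. So the natural configuration space is the bundle $V_k(E)\to B$ of orthonormal $k$-frames in the fibres of $E$, carrying the free action of the Weyl group $\Wk=\Sym^\pm=(\Z/2)^k\rtimes\mathfrak{S}_k$ that permutes the frame vectors and flips their signs. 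I would build, from the $j$ functions, a $\Wk$-equivariant test map from (a suitable model of) $V_k(E)$ to a real representation $W^{\oplus j}$, where $W$ is the standard ``regular minus trivial'' type representation of $\Wk$ of dimension $2^k-1$ whose vanishing encodes the $2^k$ orthant-integrals of a single $\varphi_i$ being equal; a zero of this test map is exactly an orthogonal equipartition. The existence of a zero then reduces, via the standard Fadell--Husseini / ideal-valued index argument, to showing that the Euler class of the bundle $W^{\oplus j}$ pulled back over the Borel construction $E\Wk\times_{\Wk}V_k(E)$ is nonzero — and this Euler class is, up to identification of $H^*(E\Wk\times_{\Wk}V_k(E);\F_2)$ with $R_k(B)/\mathcal{J}_k(E)$, precisely the class $e_k(B)^j$. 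The cohomology of $V_k(E)$-Borel-construction is computed by the usual Leray--Hirsch / Gysin sequence argument: each successive frame vector lives in the sphere bundle of the orthogonal complement of the previous ones, contributing exactly the relation $f_i$, which is why the ideal $\mathcal{J}_k(E)=(f_1,\dots,f_k)$ appears.

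The main obstacle I expect is the cohomology computation of the Borel construction $E\Wk\times_{\Wk}V_k(E)$ and the identification of its ring structure with $R_k(B)/\mathcal{J}_k(E)$, together with checking that the Euler class of $W^{\oplus j}$ is $e_k(B)^j$ under this identification. One has to proceed fibrewise: over a point $b$, $V_k(E_b)$ is an iterated sphere bundle $S(E_b)\leftarrow S((\text{hyperplane bundle in }E_b))\leftarrow\cdots$, and the Borel construction of the first stage $S(E)\times_{\Z/2}E\Z/2$ introduces $x_1$ with the relation $\sum_s w_{n-s}(E)x_1^s=0$; the second stage is the sphere bundle of the tautological complement and introduces $x_2$ with a relation which, after rewriting using the first relation, becomes $f_2$; and so on. Keeping careful track of which representation-theoretic line bundles over $B\Wk$ the classes $x_1,\dots,x_k$ restrict to, and verifying that the product defining $e_k(B)$ is exactly the Euler class of $W$, is the delicate bookkeeping step — this is where part~(1) pays off, since it lets us freely pass between the ``staircase'' generators $f_i$ and the ``full'' generators $\overline{f}_i$ whichever is more convenient at each stage. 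Once the Euler class is identified and shown to be $e_k(B)^j$, the hypothesis $e_k(B)^j\notin\mathcal{J}_k(E)$ says precisely that this Euler class is nonzero, hence the test map has a zero, hence the desired orthogonal equipartition exists.
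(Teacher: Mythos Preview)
Your overall strategy for Part~(2) --- iterate sphere/projective bundles to build the configuration space, apply Leray--Hirsch to obtain the relations $f_i$, and identify the Euler class of the test bundle with $e_k(B)^j$ --- is exactly the paper's approach. But two concrete points are wrong as written.

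First, in Part~(1) you have the dependence reversed. The difference $\overline f_i-f_i$ lies in the ideal generated by $f_{i+1},\dots,f_k$, not by $f_1,\dots,f_{i-1}$: the extra variables $x_{i+1},\dots,x_k$ that appear in $\overline f_i$ cannot possibly be absorbed by $f_1,\dots,f_{i-1}$, which involve only $x_1,\dots,x_{i-1}$. The paper establishes the identity
\[
\overline f_i=\sum_{i\leq b\leq k}\Big(\sum_{s_b+\dots+s_k=b-i}x_b^{s_b}\cdots x_k^{s_k}\Big)f_b,
\]
giving $(\overline f_1,\dots,\overline f_k)\subseteq(f_1,\dots,f_k)$ directly, and then the reverse inclusion by descending induction starting from $f_k=\overline f_k$.

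Second, and more seriously, the group in Part~(2) is wrong. You work with the full Weyl group $\Wk=(\Z/2)^k\rtimes\mathfrak S_k$ and assert that $H^*(E\Wk\times_{\Wk}V_k(E);\F_2)\cong R_k(B)/\mathcal J_k(E)$. Since $\Wk$ acts freely on $V_k(E)$, this Borel construction is homotopy equivalent to $V_k(E)/\Wk$, the space of \emph{unordered} $k$-tuples of pairwise orthogonal lines; its $\F_2$-cohomology is not the polynomial ring $H^*(B;\F_2)[x_1,\dots,x_k]$ modulo $\mathcal J_k(E)$ (the distinguished variables $x_1,\dots,x_k$ already signal that no $\mathfrak S_k$-symmetrisation has taken place). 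The paper works instead with $Y_k(E)=V_k(E)/(\Z/2)^k$, the space of \emph{ordered} pairwise orthogonal lines, realised as the tower of projective bundles $\PP(E_k)\to\PP(E_{k-1})\to\cdots\to\PP(E_1)\to B$ where $E_{i+1}$ is the orthogonal complement of the accumulated Hopf line bundles; it is $H^*(Y_k(E);\F_2)$ that equals $R_k(B)/\mathcal J_k(E)$. Your own description of the iterated-sphere computation (``$S(E)\times_{\Z/2}E\Z/2$ introduces $x_1$ \dots'') already uses only $\Z/2$, so your proposal is internally inconsistent. Replace $\Wk$ by $(\Z/2)^k$ throughout and your argument becomes the paper's.
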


\medskip
The implication \cite[Thm.\,5.2]{Simon2019} of Simon, which says that 
\[
\Delta(j,l)\leq d \quad  \Longrightarrow\quad  \Delta^{\perp}(d-1;j)\leq d-1,
\]
has an analogue in the mass assignment world. 

\begin{proposition}\label{prop : main result 03}	
Let $E$ be a Euclidean vector bundle of dimension $n$ over a compact and connected ENR $B$, and let $k\geq 1$ and $j\geq 1$ be integers.
Then, if the element $e_k(B)^{j+1}$ does not belong to the ideal $\II_k(E\oplus\underline{\R})$, the  element $e_k(B)^j$ does not belong to the ideal $\mathcal{J}_k(E)$.	
\end{proposition}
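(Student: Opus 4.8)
The plan is to prove the contrapositive in the stronger, purely ideal-theoretic form
\[
e_k(B)\cdot \mathcal{J}_k(E)\ \subseteq\ \II_k(E\oplus\underline{\R}).
\]
Once this inclusion is available, $e_k(B)^j\in\mathcal{J}_k(E)$ forces $e_k(B)^{j+1}=e_k(B)\cdot e_k(B)^j\in\II_k(E\oplus\underline{\R})$, which is exactly the negation of the conclusion under the negation of the hypothesis. I would start with two bookkeeping reductions. Since $w(E\oplus\underline{\R})=w(E)$ and $w_{n+1}(E)=0$, the generator of $\II_k(E\oplus\underline{\R})$ attached to $x_r$ is $\sum_{s=0}^{n+1}w_{n+1-s}(E)\,x_r^s=x_r g_r$, where $g_r:=\sum_{s=0}^{n}w_{n-s}(E)\,x_r^s$; hence $\II_k(E\oplus\underline{\R})=(x_1g_1,\dots,x_kg_k)$. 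As $\mathcal{J}_k(E)=(f_1,\dots,f_k)$ by definition, it then suffices to prove that $e_k(B)\,f_i\in(x_1g_1,\dots,x_kg_k)$ for each $1\le i\le k$.

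The technical core is a closed expression for $f_i$ in terms of the $g_j$. Grouping the defining sum of $f_i$ by the value $b=r_1+\dots+r_i$ gives $f_i=\sum_{b\ge 0}w_{n-i+1-b}(E)\,h_b(x_1,\dots,x_i)$, where $h_b$ is the complete homogeneous symmetric polynomial of degree $b$ and $w_{<0}(E):=0$. Using the classical identity $\sum_{j=1}^{i}\frac{x_j^{\,c}}{\prod_{1\le l\le i,\,l\ne j}(x_j-x_l)}=h_{c-i+1}(x_1,\dots,x_i)$ (with $h_{<0}:=0$) together with $g_j=\sum_{c=0}^{n}w_{n-c}(E)\,x_j^{\,c}$, and interchanging the two summations, one obtains $f_i=\sum_{j=1}^{i}\frac{g_j}{\prod_{l\ne j}(x_j-x_l)}$. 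Over $\F_2$ we may clear denominators: put $V_i:=\prod_{1\le l<l'\le i}(x_l+x_{l'})$ and let $V_i^{(j)}$ be the same product taken over $\{x_1,\dots,x_i\}\setminus\{x_j\}$; since $\prod_{l\ne j}(x_j+x_l)=\frac{V_i}{V_i^{(j)}}$, multiplying through by $V_i$ yields the polynomial identity
\[
V_i\,f_i\ =\ \sum_{j=1}^{i}V_i^{(j)}\,g_j\qquad\text{in }R_k(B).
\]

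To finish, I would use that $e_k(B)=\prod_{0\ne\alpha\in\F_2^k}(\alpha_1x_1+\dots+\alpha_kx_k)$ has, among its pairwise distinct linear factors, all of $x_1,\dots,x_k$ (from the vectors of weight one) and all of $x_l+x_{l'}$ with $l<l'$ (from the vectors of weight two); consequently $x_1\cdots x_k\cdot V_k$, and hence also $x_1\cdots x_k\cdot V_i$ for every $i\le k$, divides $e_k(B)$ in $R_k(B)$. Writing $e_k(B)=x_1\cdots x_k\cdot V_i\cdot S_i$ with $S_i\in R_k(B)$ and substituting the displayed identity,
\[
e_k(B)\,f_i\ =\ x_1\cdots x_k\cdot S_i\cdot(V_i f_i)\ =\ \sum_{j=1}^{i}\big(x_1\cdots x_k\big)\,S_i\,V_i^{(j)}\,g_j\,;
\]
for each $1\le j\le i\le k$ the variable $x_j$ occurs in $x_1\cdots x_k$, so the $j$-th summand equals $\big(\prod_{r\ne j}x_r\big)S_iV_i^{(j)}\cdot(x_jg_j)\in(x_jg_j)\subseteq\II_k(E\oplus\underline{\R})$. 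This gives $e_k(B)\,f_i\in\II_k(E\oplus\underline{\R})$ for all $i$, as required.

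The main obstacle I anticipate is identifying and justifying the identity $V_i\,f_i=\sum_j V_i^{(j)}\,g_j$: one must recognize $f_i$ as the symmetric-function combination $\sum_j \frac{g_j}{\prod_{l\ne j}(x_j-x_l)}$ and keep track of the edge behaviour when $n-i+1$ is small (the identity persists, with $f_i=1$ for $i=n+1$ and $f_i=0$ for $i>n+1$, because complete homogeneous polynomials of negative degree vanish); everything afterwards is formal divisibility in $R_k(B)$. As an alternative to the symmetric-function identity one can instead derive $V_i f_i=\sum_j V_i^{(j)}g_j$ from the divided-difference recursion $f_i=\frac{f_{i-1}(x_1,\dots,x_{i-1})-f_{i-1}(x_1,\dots,x_{i-2},x_i)}{x_{i-1}-x_i}$, starting from $f_1=g_1$, and then clearing denominators. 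Conceptually, the extra factor $e_k(B)$ here is the algebraic manifestation of Simon's implication $\Delta(j,\ell)\le d\Rightarrow\Delta^{\perp}(d-1;j)\le d-1$: replacing a general arrangement in the $(n{+}1)$-dimensional bundle $E\oplus\underline{\R}$ by an orthogonal one in $E$ costs precisely one function.
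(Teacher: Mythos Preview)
Your proof is correct and proceeds by a genuinely different route from the paper. You establish the purely algebraic inclusion $e_k(B)\cdot\mathcal{J}_k(E)\subseteq\II_k(E\oplus\underline{\R})$ by writing the generators $f_i$ of $\mathcal{J}_k(E)$ explicitly in terms of the polynomials $g_r=\sum_{s=0}^n w_{n-s}(E)x_r^{s}$ via the symmetric-function identity $V_i\,f_i=\sum_{j=1}^i V_i^{(j)}\,g_j$, and then exploiting the divisibility of $e_k(B)$ by $x_1\cdots x_k\cdot V_i$. The paper instead argues geometrically: it embeds $Y_k(E)\hookrightarrow X_k(E\oplus\underline{\R})$ and observes that $Y_k(E)$ is exactly the zero-set of an explicit section of $A_k(E\oplus\underline{\R})/\underline{\R}$; the Euler-class localization principle (a class vanishing on the zero-set of a section must be annihilated by the Euler class of the bundle) then gives that if $e_k(B)^{j}$ dies in $H^*(Y_k(E);\F_2)\cong R_k(B)/\mathcal{J}_k(E)$, then $e_k(B)^{j+1}$ dies in $H^*(X_k(E\oplus\underline{\R});\F_2)\cong R_k(B)/\II_k(E\oplus\underline{\R})$.

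The two arguments are essentially dual: your ideal inclusion $e_k(B)\cdot\mathcal{J}_k(E)\subseteq\II_k(E\oplus\underline{\R})$ is precisely the algebraic shadow of the localization principle applied to that particular section. What your approach buys is self-containment---no reference to vector bundles or Euler-class localization is needed, and the explicit identity $V_i f_i=\sum_j V_i^{(j)} g_j$ (equivalently, your divided-difference recursion for the $f_i$) is of independent interest. What the paper's approach buys is a conceptual explanation for the extra factor $e_k(B)$: it is the Euler class of the bundle whose zero-set cuts $X_k(E\oplus\underline{\R})$ down to $Y_k(E)$, so the loss of one power is forced by the geometry, not discovered by an algebraic manipulation.
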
  

\medskip
In the previous proposition $\underline{\R}$ denotes the trivial line bundle $B\times\R$.
The proof of the statement is postponed to Section \ref{subsec : proof of main result 03}.

\medskip
 In the case  when $B=\pt$ the previous theorem  implies directly the result of Blagojevi\'c \& Roman Karasev \cite[Thm.\,2.1 and Prop.\,3.4]{BlagojevicKarasev2012} with a better description of the set of generators of the relevant ideal.
 
\begin{corollary}
\label{cor : orthogonal 01}
Let $V$ be a Euclidean vector space of dimension $n$, and let $k\geq 1$ and $j\geq 1$ be integers.	
If
\begin{multline*}
e_{k}(\pt):=\prod_{(\alpha_{1},\dots, \alpha_{k})\in\F_2^k-\{ 0\}} (\alpha_{1}x_1 + \cdots + \alpha_{k}x_k)\not\in \\
\Big( \sum_{ r_1+\dots+r_i= n-i+1}  x_1^{r_1}\cdots x_i^{r_i} \ : \ 1\leq i \leq k\Big)=\\
\Big( \sum_{ r_1+\dots+r_k= n-i+1}  x_1^{r_1}\cdots x_k^{r_k} \ : \ 1\leq i \leq k\Big),
\end{multline*}
then  for every collection of $j$ continuous functions $\varphi_1,\dots,\varphi_j\colon S(V)\longrightarrow\R$, there exists  an orthogonal arrangement $\HH=(H_1,\dots,H_k)$ of $k$ linear hyperplanes in $V$ such that for every pair of connected components $(\mathcal{O}',\mathcal{O}'')$ of the arrangement complement $V-(H_1\cup\dots\cup H_k)$ the following statement holds
\[
\int_{\mathcal{O}'\cap S(V)}\varphi_1=\int_{\mathcal{O}''\cap S(V)}\varphi_1
\quad , \dots , \quad 
\int_{\mathcal{O}'\cap S(V)}\varphi_j=\int_{\mathcal{O}''\cap S(V)}\varphi_j.
\]
\end{corollary}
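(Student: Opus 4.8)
\textbf{Proof proposal for Corollary \ref{cor : orthogonal 01}.}

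The plan is to derive the corollary directly from Theorem \ref{th : main result 03} by specialising to the case $B = \pt$. First I would observe that when $B$ is a point, the vector bundle $V$ is trivial, hence all Stiefel--Whitney classes vanish: $w_0(V) = 1$ and $w_i(V) = 0$ for $i \geq 1$. Consequently the generators $f_i$ of the ideal $\mathcal{J}_k(V)$ collapse: in $f_i = \sum_{0 \leq r_1 + \dots + r_i \leq n-i+1} w_{n-i+1-(r_1+\dots+r_i)}(V)\, x_1^{r_1}\cdots x_i^{r_i}$ only the term with $r_1 + \dots + r_i = n-i+1$ (so that $w_0 = 1$ appears) survives, giving $f_i = \sum_{r_1+\dots+r_i = n-i+1} x_1^{r_1}\cdots x_i^{r_i}$. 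This is exactly the first displayed generating set in the statement of the corollary. Similarly, $\overline{f}_i$ collapses to $\sum_{r_1+\dots+r_k = n-i+1} x_1^{r_1}\cdots x_k^{r_k}$, which is the second displayed generating set. The equality of the two ideals in the corollary is then the specialisation of part (1) of Theorem \ref{th : main result 03}, namely $\mathcal{J}_k(V) = \mathcal{J}_k'(V)$, to $B = \pt$.

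Next I would note that over $B = \pt$ we have $R_k(\pt) \cong \F_2[x_1,\dots,x_k]$ and $e_k(B) = e_k(\pt) = \prod_{(\alpha_1,\dots,\alpha_k) \in \F_2^k - \{0\}} (\alpha_1 x_1 + \dots + \alpha_k x_k)$, so the hypothesis ``$e_k(\pt) \notin \mathcal{J}_k(V)$'' in the corollary is literally the hypothesis ``$e_k(B)^j \notin \mathcal{J}_k(E)$'' of Theorem \ref{th : main result 03}(2) in the case $j = 1$. Applying that theorem with $E = V$, $B = \pt$, and $j = 1$ then yields: for every continuous $\varphi_1 \colon S(V) \to \R$ there exists a point of $\pt$ (which is vacuous) and an orthogonal arrangement $\HH = (H_1,\dots,H_k)$ of $k$ linear hyperplanes in $V$ so that every pair of connected components of $V - (H_1 \cup \dots \cup H_k)$ has equal $\varphi_1$-integral over the corresponding parts of $S(V)$. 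This gives the corollary for $j = 1$; the general-$j$ statement follows by the same application of Theorem \ref{th : main result 03}(2) with the collection $\varphi_1,\dots,\varphi_j$ — I should double-check whether the corollary as stated intends a single hypothesis $e_k(\pt) \notin \mathcal{J}_k(V)$ to yield the $j$-function conclusion, in which case one must invoke that $e_k(\pt) \notin \mathcal{J}_k(V)$ implies $e_k(\pt)^j \notin \mathcal{J}_k(V)$, or the statement is to be read with the exponent $j$ implicit; in the write-up I would phrase it to match Theorem \ref{th : main result 03}(2) precisely.

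The only genuinely non-routine point is the bookkeeping in the first paragraph: verifying that the vanishing of $w_i(V)$ for $i \geq 1$ forces each $f_i$ (respectively $\overline{f}_i$) to reduce to the single-degree symmetric sum claimed, and in particular that the index constraint ``$r_1 + \dots + r_i = n-i+1$'' (resp. ``$r_1 + \dots + r_k = n-i+1$'') is the correct surviving one. This is a direct substitution $w_0 = 1$, $w_{>0} = 0$ into the definitions, so it presents no real obstacle; everything else is a verbatim specialisation of Theorem \ref{th : main result 03}. I do not anticipate needing Proposition \ref{prop : main result 03} or any of the numerical propositions here. Thus the corollary is immediate once Theorem \ref{th : main result 03} is in hand, and the improved description of the generators advertised in the surrounding text is precisely the observation that over a point the ``mixed'' generators $f_i$ and $\overline{f}_i$ become the pure symmetric sums displayed above.
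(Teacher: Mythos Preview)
Your proposal is correct and matches the paper's approach: the corollary is obtained by specialising Theorem~\ref{th : main result 03} to $B=\pt$, where $w_i(V)=0$ for $i\geq 1$ forces each $f_i$ and $\overline{f}_i$ to collapse to the single-degree sums displayed. Your flag about the missing exponent $j$ on $e_k(\pt)$ in the hypothesis is well-taken---as written the corollary's hypothesis does not literally imply $e_k(\pt)^j\notin\mathcal{J}_k(V)$ for $j>1$, so the statement should be read with $e_k(\pt)^j$ in place of $e_k(\pt)$ to align with Theorem~\ref{th : main result 03}(2); this appears to be a typo in the paper rather than a substantive gap in the argument.
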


\medskip
In the case of a vector space we collect some numerical results.
For that we denote by
\[
\omega_k(n):=\max\Big\{ j : e_k(\pt)^j\notin \Big( \sum_{ r_1+\dots+r_k= n-i+1}  x_1^{r_1}\cdots x_k^{r_k} \ : \ 1\leq i \leq k\Big)\Big\} .
\]
Using a computer algebra system, like \texttt{Wolfram Mathematica}, we collect some concrete values of $\omega_k(n)$:
\begin{center}
\begin{tabular}{ l | r c c c c c c c c }
	$\omega_k(n)$ &  ${\bf n}$ & $3$ & $4$ & $5$  & $6$ & $7$ & $8$ & $9$ & $10$  \\ \hline
	${\bf k}$ &\\
$2$   &   & $0$ & $1$ &  $2$   &   $2$   & $3$    &  $4$   &  $4$   &    $5$      \\	
$3$ & &  $0$   &  $0$    & $0$ &  $1$   &  $1$   &   $2$  & $2$    &  $3$ \\
$4$ & &  $0$  &  $0$   &   $0$  &  $0$    &   $0$    & $1$   &  $1$   &  $1$ \\
\end{tabular}
\end{center}

\medskip
Using the result of Simon \cite[Thm.\,5.2]{Simon2019} or alternatively our extension, Proposition \ref{prop : main result 03} we get the following corollary.

\begin{corollary}
For all integers $k\geq 1$ and  $n\geq 1$ it holds that
\[
\omega_k(n)\geq \iota_k(n+1,\ldots ,n+1)-1.
\]
\end{corollary}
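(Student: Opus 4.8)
The corollary is an immediate specialisation of Proposition~\ref{prop : main result 03} to the trivial bundle over a point, and the plan is simply to make this specialisation explicit. First I would apply Proposition~\ref{prop : main result 03} with $B=\pt$ and $E=\underline{\R^{n}}$, the trivial $n$-dimensional Euclidean vector bundle over the point; then $R_k(\pt)=H^*(\pt;\F_2)[x_1,\dots,x_k]=\F_2[x_1,\dots,x_k]$ and $e_k(\pt)$ is precisely the polynomial occurring in the definitions of $\iota_k$ and of $\omega_k$.

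The only thing that needs checking is that the two ideals appearing in Proposition~\ref{prop : main result 03} specialise as expected. Since all Stiefel--Whitney classes of a trivial bundle vanish in positive degrees, $w(\underline{\R^{n}}\oplus\underline{\R})=1$, so $\II_k(\underline{\R^{n}}\oplus\underline{\R})=(x_1^{n+1},\dots,x_k^{n+1})$; hence ``$e_k(\pt)^{m}\notin\II_k(\underline{\R^{n}}\oplus\underline{\R})$'' is exactly the statement ``$m\le\iota_k(n+1,\dots,n+1)$''. Likewise $w(\underline{\R^{n}})=1$ makes each generator collapse to $f_i=\sum_{r_1+\dots+r_i=n-i+1}x_1^{r_1}\cdots x_i^{r_i}$, and therefore by part~(1) of Theorem~\ref{th : main result 03} --- equivalently, by the identification of ideals recorded in Corollary~\ref{cor : orthogonal 01} --- the ideal $\mathcal{J}_k(\underline{\R^{n}})$ coincides with the ideal used to define $\omega_k(n)$; hence ``$e_k(\pt)^{j}\notin\mathcal{J}_k(\underline{\R^{n}})$'' is exactly ``$j\le\omega_k(n)$''.

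Granting these two dictionary entries, the proof is one line. Put $m:=\iota_k(n+1,\dots,n+1)$; we may assume $m\ge1$, since for $m=0$ the asserted inequality is trivial. By definition of $m$ we have $e_k(\pt)^{m}\notin\II_k(\underline{\R^{n}}\oplus\underline{\R})$, so Proposition~\ref{prop : main result 03}, applied with $j:=m-1$, gives $e_k(\pt)^{m-1}\notin\mathcal{J}_k(\underline{\R^{n}})$, that is $m-1\le\omega_k(n)$, which is the claim. One could just as well route the argument through Simon's implication \cite[Thm.\,5.2]{Simon2019} in place of Proposition~\ref{prop : main result 03}. I do not expect any real obstacle here: all the topological substance is already contained in Proposition~\ref{prop : main result 03}, and the only care required is the purely algebraic bookkeeping of the previous paragraph --- in particular, matching the ``$f_i$'' presentation of $\mathcal{J}_k$ with the ``$\overline{f}_i$'' presentation via Theorem~\ref{th : main result 03}(1), and verifying that the Stiefel--Whitney classes of the trivial bundles involved all vanish.
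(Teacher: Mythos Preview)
Your proposal is correct and follows exactly the route the paper indicates: the corollary is obtained by specialising Proposition~\ref{prop : main result 03} (or, equivalently, Simon's \cite[Thm.\,5.2]{Simon2019}) to the trivial bundle $E=\underline{\R^n}$ over $B=\pt$, and your identification of $\II_k(\underline{\R^{n+1}})$ with $(x_1^{n+1},\dots,x_k^{n+1})$ and of $\mathcal{J}_k(\underline{\R^n})$ with the ideal defining $\omega_k(n)$ is precisely the required bookkeeping.
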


\medskip
For example, If $0\leq r\leq 2^t-1$, $n+1\geq 2^{t+k-1}+r+1$, then
$\omega_k(n)\geq 2^t+r-1$.

%--------------------------------------------------------------------------------------%
%--------------------------------------------------------------------------------------%
%--------------------------------------------------------------------------------------%
\section{From a partition problem to a topological question:\\ The CS\,/\,TM schemes}
\label{sec : From a partition problem to a topology question}
%--------------------------------------------------------------------------------------%
%--------------------------------------------------------------------------------------%
%--------------------------------------------------------------------------------------%

In this section, based on the work of Crabb \cite{Crabb2020}, we develop an alternative configuration test map scheme (CS\,/\,TM) to the one presented in \cite[Sec.\,2]{BlagojevicCallesCrabbDimitrijevic}.
This will be done in two steps, first for the classical GHR mass partition problem, and then for the mass assignment partition problem.
The new approach allows us a systematic study of mass assignment partition questions even with addition of constrains.

%--------------------------------------------------------------------------------------%
\subsection{The GHR problem for masses}
\label{subsec : CSTM classical}
%--------------------------------------------------------------------------------------%
In this part we reformulate the typical product CS\,/\,TM  scheme for the classical GHR problem.
The reformulation of the scheme naturally gives rise to a convenient CS\,/\,TM scheme for the GHR problem for mass assignments.

\medskip
Let $\E$ be a Euclidean vector space of dimension $d\geq 1$.
The unit sphere of the vector space $\E$ will be denoted by $S(\E):=\{v\in \E : \|v\| =1\}$ and the corresponding real projective space by $\PP(\E)$.
The associated Hopf line bundle is $H(V):=\{(L,v)\in \PP(\E)\times \E : v\in L\}$. 
In particular, $S(\E)\cong S^{d-1}$ is the space of all oriented $1$-dimensional vector subspaces of $V$ and $\PP(\E)\cong\RP{d-1}$ is the space of all  $1$-dimensional vector subspaces of $V$. 
The canonical homeomorphism $\PP(\E)=\Gr_1(\E)\cong \Gr_{d-1}(\E)$, $L\longmapsto L^{\perp}$, identifies the projective space $\PP(\E)$ with the space of all linear hyperplanes in $V$, the Grassmann manifold $\Gr_{d-1}(\E)$.

\medskip
The space of all arrangements  of $k$ linear hyperplanes in $\E$ can be identified with the product space $\PP(\E)^{\times k}=\PP(\E)\times\dots\times \PP(\E)$.
On the other hand, the space of all arrangements of $k$ oriented  linear hyperplanes in $\E$ is the $2^k$-fold covering $S(H(\E))^{\times k}=S(H(\E))\times\cdots\times S(H(\E))$ of $\PP(\E)^{\times k}$, whose total space, in particular, is just the product of spheres $S(\E)^{\times k}=S(\E)\times\dots\times S(\E)$.
In other words, we have a fibre bundle $S(H(\E))^{\times k}\longrightarrow  \PP(\E)^{\times k}$ with a discrete fibre 
\[
\big(S(H(\E))^{\times k}\big)_{(L_1,\dots,L_k)}= S(L_1)\times\dots\times S(L_k)
\]
at $(L_1,\dots,L_k)\in\PP(\E)^{\times k}$.
Here, $S(H(\E))$ denotes the sphere bundle of the Hopf line bundle $H(\E)$ with fibres homeomorphic to a zero dimensional sphere.  

\medskip
We denote by $A_k(\E)$ the  $2^k$-dimensional real vector bundle over $\PP(\E)^{\times k}$ with fibre at $(L_1,\dots,L_k)\in \PP(\E)^{\times k}$ defined to be the vector space  $\map\big(\prod_{i=1}^kS(L_i),\R\big)$  of all maps $\prod_{i=1}^kS(L_i)\longrightarrow \R$.   
Each vector space $\map \big(\prod_{i=1}^kS(L_i),\R\big)$ is equipped with the natural $(\Z/2)^k$-action given by the antipodal actions on the $0$-dimensional spheres $S(L_1),\dots, S(L_k)$.
The vector bundle $A_k(\E)$ is isomorphic to the vector bundle
\[
q_1^*\big(H(\E)\oplus\underline{\R}\big)
\otimes\dots\otimes
q_k^*\big(H(\E)\oplus\underline{\R}\big),
\]
where $q_i\colon \PP(\E)^{\times k}\longrightarrow \PP(\E)$ is the projection on the $i$-th factor, $\underline{\R}$  denotes the trivial line bundle, in this case, over $\PP(\E)$, and $q_i^*\big(H(\E)\oplus\underline{\R}\big)$ is the pullback vector bundle.
In particular, the vector bundle
\[
A_k(\E)\cong 
q_1^*\big(H(\E)\oplus\underline{\R}\big)
\otimes\dots\otimes
q_k^*\big(H(\E)\oplus\underline{\R}\big)
\]
has a trivial line subbundle given by all constant maps $\prod_{i=1}^kS(L_i)\longrightarrow \R$, which we also denote by $\underline{\R}$. 

\medskip
Next, let us consider a continuous function $\varphi\colon S(\E)\longrightarrow\R$ on the sphere $S(\E)$.
It induces a section $s_{\varphi}\colon\PP(\E)^{\times k}\longrightarrow A_k(\E)$ of the vector bundle $A_k(\E)$ which is given by 
\[
\xymatrix{
(L_1,\dots,L_k) \ar@{|->}[r]  & \ \big( s_{\varphi} (L_1,\dots,L_k)\colon \prod_{i=1}^kS(L_i) \longrightarrow \R  \big)
}
\]
for $(L_1,\dots,L_k)\in\PP(V)$, where
\[
s_{\varphi}  (L_1,\dots,L_k)(v_1,\dots,v_k):=\int_{\mathcal{O}_{v_1,\dots,v_k}\cap S(\E)} \varphi
\]
for $(v_1,\dots,v_k)\in \prod_{i=1}^kS(L_i)$. 
Here, $\mathcal{O}_{v_1,\dots,v_k}$ denotes the following intersection of open half-spaces in $\E$:
\[
\mathcal{O}_{v_1,\cdots,v_k}:=\{u\in \E\,:\, \langle u, v_1\rangle> 0\}\cap\dots\cap\{u\in \E\,:\, \langle u, v_k\rangle> 0\}.
\]
Here the integration is with the respect to the measure on the sphere $S(\E)$ induced by the metric.
Observe that each subset $\mathcal{O}_{v_1,\cdots,v_k}$ is actually a (path) connected component of the arrangement complement $\E-(L_1^{\perp}\cup\dots\cup L_k^{\perp})$.

\medskip
We have introduced all necessary notions to state and prove the CS\,/\,TM scheme theorem for the spherical version of the classical GHR problem.
This theorem relates to the similar results in    \cite[Prop.\,6]{ManiLevitska-Vrecica-Zivaljevic-2006}, \cite[Prop.\,2.2]{Blagojevic-Ziegler-2011}, \cite[Prop.\,2.1]{Blagojevic-Frick-Haase-Ziegler-2018}.

\begin{theorem}
\label{th : CSTM for classical problem}	
Let $\E$ be a Euclidean vector space, and let $k\geq 1$ and $j\geq 1$ be integers. 
If the Euler class of the vector bundle $\big(A_k(\E)/\underline{\R}\big)^{\oplus j}$ does not vanish, then for every collection of $j$ continuous functions $\varphi_1,\dots,\varphi_j\colon S(\E)\longrightarrow\R$ there exists an arrangement of $k$ linear hyperplanes $H_1, \dots, H_k$ in $\E$ with the property that for every pair of connected components $(\mathcal{O}',\mathcal{O}'')$ of the arrangement complement $\E-(H_1\cup\dots\cup H_k)$ the following statement holds
\[
\int_{\mathcal{O}'\cap S(\E)}\varphi_1=\int_{\mathcal{O}''\cap S(\E)}\varphi_1
\quad , \dots , \quad 
\int_{\mathcal{O}'\cap S(\E)}\varphi_j=\int_{\mathcal{O}''\cap S(\E)}\varphi_j.
\]
In other words,
\[
 \e \big(\big( A_k(\E)/\underline{\R}\big)^{\oplus j} \big)\neq 0
\quad\Longrightarrow\quad \Delta_S(j,k)\leq \dim(\E).
\]
\end{theorem}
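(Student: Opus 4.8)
The plan is to run the standard configuration-space/test-map argument, but in its \emph{fibrewise-over-$\PP(\E)^{\times k}$} form, with the Euler class of $(A_k(\E)/\underline{\R})^{\oplus j}$ playing the role of the obstruction. First I would set up the test map: given functions $\varphi_1,\dots,\varphi_j\colon S(\E)\longrightarrow\R$, combine the sections $s_{\varphi_1},\dots,s_{\varphi_j}$ of $A_k(\E)$ into a single section $s:=(s_{\varphi_1},\dots,s_{\varphi_j})$ of $A_k(\E)^{\oplus j}$ over $\PP(\E)^{\times k}$. I would then compose with the bundle projection $A_k(\E)^{\oplus j}\longrightarrow (A_k(\E)/\underline{\R})^{\oplus j}$ to obtain a section $\bar s$ of $(A_k(\E)/\underline{\R})^{\oplus j}$. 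The key observation to record is a zero-set translation: a point $(L_1,\dots,L_k)$ lies in the zero set of $\bar s$ if and only if, for each $r=1,\dots,j$, the function $s_{\varphi_r}(L_1,\dots,L_k)\colon\prod_i S(L_i)\longrightarrow\R$ is constant; and since the values of this function on the $2^k$ orthants sum in sign-compatible pairs, constancy is exactly the statement that $\int_{\mathcal O'\cap S(\E)}\varphi_r=\int_{\mathcal O''\cap S(\E)}\varphi_r$ for every pair of connected components $(\mathcal O',\mathcal O'')$ of the arrangement complement $\E-(L_1^\perp\cup\dots\cup L_k^\perp)$. So a zero of $\bar s$ yields precisely the desired equipartitioning arrangement $H_i=L_i^\perp$.

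Next I would invoke the elementary obstruction-theoretic fact that if the Euler class $\e\big((A_k(\E)/\underline{\R})^{\oplus j}\big)\in H^{2^k j - j}\big(\PP(\E)^{\times k};\F_2\big)$ (the rank of the bundle is $(2^k-1)j$) is nonzero, then \emph{every} continuous section of $(A_k(\E)/\underline{\R})^{\oplus j}$ has a zero. Indeed, if $\bar s$ were nowhere zero it would trivialise a line (or more) and force the Euler class to vanish; more precisely, a nonvanishing section of a real bundle $\xi$ over a finite complex gives a splitting $\xi\cong\xi'\oplus\underline{\R}$, whence $\e(\xi)=\e(\xi')\smile w_1(\underline\R)=0$ in $\F_2$-cohomology — this is the mod $2$ Euler class and it is the top Stiefel–Whitney class, so $\e(\xi)=0$ is immediate from $w_{\mathrm{top}}(\xi'\oplus\underline\R)=w_{\mathrm{top}}(\xi')\smile w_1(\underline\R)=0$. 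Hence under the hypothesis $\bar s$ has a zero, and the preceding paragraph converts that zero into the partition. Since $\dim\PP(\E)^{\times k}=k(d-1)$ with $d=\dim\E$, the machinery is housed entirely over a compact ENR and no convergence or measurability subtleties arise beyond the continuity of $s_\varphi$, which follows from the weak topology on measures together with the fact that $\varphi$ is continuous and the half-space boundaries move continuously with $(L_1,\dots,L_k)$.

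I expect the main obstacle to be the continuity of the section $s_\varphi$ — specifically, checking that $(L_1,\dots,L_k,v_1,\dots,v_k)\longmapsto \int_{\mathcal O_{v_1,\dots,v_k}\cap S(\E)}\varphi$ varies continuously, including as an orthant degenerates (becomes lower-dimensional) when some $v_i$ approaches a position making two half-space boundaries coincide. The resolution is that the spherical measure of the symmetric difference of two orthants goes to zero as the defining normals converge, because the measure $d\sigma$ on $S(\E)$ assigns zero mass to every equatorial subsphere (it is absolutely continuous with respect to the rotation-invariant measure), so the boundary strata are null; combined with boundedness of $\varphi$ this gives continuity of the integral. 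A secondary point worth a line is that the isomorphism $A_k(\E)\cong q_1^*(H(\E)\oplus\underline\R)\otimes\cdots\otimes q_k^*(H(\E)\oplus\underline\R)$ is what makes $\e\big((A_k(\E)/\underline{\R})^{\oplus j}\big)$ computable in $R_k(\PP(\E))=\F_2[x_1,\dots,x_k]/(\text{relations})$, tying this theorem to the algebraic criterion of Theorem~\ref{th : main result 01}; but for the present statement only nonvanishing of the Euler class is used, so I would defer that computation. Finally, the displayed corollary $\e\big((A_k(\E)/\underline{\R})^{\oplus j}\big)\neq 0\Rightarrow\Delta_S(j,k)\le\dim(\E)$ is then just the definition of $\Delta_S(j,k)$ applied to the conclusion just obtained.
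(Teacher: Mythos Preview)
Your proposal is correct and follows essentially the same route as the paper: build the section $s=(s_{\varphi_1},\dots,s_{\varphi_j})$ of $A_k(\E)^{\oplus j}$, project to $(A_k(\E)/\underline{\R})^{\oplus j}$, and use nonvanishing of the Euler class to force a zero, which unpacks to the equipartition. You supply more detail than the paper on continuity of $s_\varphi$ and on why a nowhere-zero section kills the Euler class, but the argument is the same.
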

\begin{proof}
Let us assume that the Euler class the vector bundle $\big(A_k(\E)/\underline{\R}\big)^{\oplus j}$ does not vanish.
Then, in particular, every section of the vector bundle $\big( A_k(\E)/\underline{\R}\big)^{\oplus j}$ has a zero.

\medskip
Let $\varphi_1,\dots,\varphi_j\colon S(\E)\longrightarrow\R$ be an arbitrary collection of $j$ continuous functions on the sphere $S(\E)$.
Such a collection induces a section $s\colon \PP(\E)^{\times k}\longrightarrow A_k(\E)^{\oplus j}$ of the vector bundle $A_k(\E)^{\oplus j}$ defined by
\[
\xymatrix{
(L_1,\dots,L_k) \ar@{|->}[r]  & \ \big( s_{\varphi_r} (L_1,\dots,L_k)\colon \prod_{i=1}^kS(L_i) \longrightarrow \R  \big)_{1\leq r\leq j}
}.
\]
Recall that we have already defined functions $s_{\varphi_r}$, for $1\leq r\leq j$,  by 
\[
s_{\varphi_r} (L_1,\dots,L_k)(v_1,\dots,v_k)=\int_{\mathcal{O}_{v_1,\dots,v_k}\cap S(\E)} \varphi_r
\]
for $(v_1,\dots,v_k)\in \prod_{i=1}^kS(L_i)$.

\medskip
Let $\Pi\colon A_k(\E)^{\oplus j}\longrightarrow \big( A_k(\E)/\underline{\R}\big)^{\oplus j}$ denote the map of vector bundles induced by the canonical projection(s).
Then the section $\Pi\circ s$ of the vector bundle $\big(A_k(\E)/\underline{\R}\big)^{\oplus j}$ has a zero.
Hence, there is a point $(L_1,\dots,L_k)\in \PP(\E)^{\times k}$ in the base space with the property that $s(L_1,\dots,L_k)$ belongs to the trivial subbundle $\underline{\R}^{\oplus j}$ of the bundle $A_k(\E)^{\oplus j}$.
In other words 
\[
\int_{\mathcal{O}'\cap S(\E)}\varphi_1=\int_{\mathcal{O}''\cap S(\E)}\varphi_1
\quad , \dots , \quad 
\int_{\mathcal{O}'\cap S(\E)}\varphi_j=\int_{\mathcal{O}''\cap S(\E)}\varphi_j.
\]
for all pairs of the connected components $(\mathcal{O}',\mathcal{O}'')$ of the arrangement complement $\E-(L_1^{\perp}\cup\dots\cup L_k^{\perp})$.
This completes the proof of the theorem.
\end{proof}

\medskip
The non-vanishing of the Euler class $\big( A_k(\E)/\underline{\R}\big)^{\oplus j}$ mod $2$ was studied over the years by many authors.
For example,  Mani-Levitska, Vre\'{c}ica \& \v{Z}ivaljevi\'c \cite[Thm.\,39]{ManiLevitska-Vrecica-Zivaljevic-2006} gave a sufficient condition for the non-vanishing of the mod $2$ Euler class of $\big( A_k(\E)/\underline{\R}\big)^{\oplus j}$, with a complete proof of this result given only now in \cite[Lem.\,4.3]{BlagojevicCallesCrabbDimitrijevic}.
It says that: {\em If $ \dim(\E)\ \le\ j + (2^{k-1}-1)2^{\lfloor\log_2j\rfloor}$, then the top Stiefel--Whitney class of the vector bundle $\big( A_k(\E)/\underline{\R}\big)^{\oplus j}$ does not vanish.}

\medskip
Now we focus our attention to the partition problems of mass assignments and the corresponding solution schemes.

%--------------------------------------------------------------------------------------%
\subsection{The GHR problem for mass assignments}
\label{subsec : CSTM for mass assignements}
%--------------------------------------------------------------------------------------%
The scheme we give in this section is derived from the scheme for the spherical version of the classical problem presented in Section \ref{subsec : CSTM classical}. 
Due to a transition from a Euclidean space to a sphere, the new scheme differs from the one used in \cite[Sec.\,2]{BlagojevicCallesCrabbDimitrijevic}.

\medskip
Let $E$ be a Euclidean vector bundle over a compact and connected ENR base space $B$.
The associated unit sphere bundle of $E$ is 
\[
S(E)=\{(b,v) : b\in B,\, v\in S(E_b)\}.
\]
Next, let $\PP(E)$ denote the projective bundle of $E$, that is
\[
\PP(E)=\{(b,L) : b\in B,\, L\in \PP(E_b)\}.
\]
In particular, $S(E)/(\Z/2)\cong \PP(E)$.
Here the fibrewise antipodal action of the sphere bundle is assumed.
Further on, let $H(E)$ be the Hopf bundle associated to the vector bundle $E$.
That is the line bundle  
\[
H(E):=\{(b,L,v) : b\in B,\, L\in \PP(E_b),\,v\in L\}
\]
over the projective bundle $\PP(E)$.

\medskip
The space of all arrangements  of $k$ linear hyperplanes which belong to one fibre of $E$ is the total space of the pullback 
\[
\PP(E)\times_{B}\cdots\times_{B}\PP(E):=d^*(\PP(E)\times\dots\times \PP(E))=d^*(\PP(E)^{\times k})
\] 
of the product vector bundle $\PP(E)^{\times k}$ via the diagonal embedding $d\colon B\longrightarrow B^{\times k}$, $x\longmapsto(x,\dots, x)$.
In other words, there is a pullback diagram
\[
\xymatrix{d^*(\PP(E)^{\times k})\ar[r]^-{D} \ar[d] & \PP(E)^{\times k}\ar[d] \\
B\ar[r]^-{d}&B^{\times k}.
}
\]

\medskip
Let us denote by  $\Pi_i\colon \PP(E)^{\times k}\longrightarrow \PP(E)$, $(b,(L_1,\dots,L_k))\longmapsto (b,L_i)$, the projection on the $i$-th factor, and by $\Theta_i$ the composition $\Pi_i\circ D\colon d^*(\PP(E)^{\times k})\longrightarrow \PP(E)$, where $1\leq i\leq k$.

\medskip
Now, the space of all arrangements of $k$ oriented linear hyperplanes which belong to one fibre of $E$ is the total space of the pullback 
\[
S(E)\times_{B}\cdots\times_{B}S(E):=
d^*(S(E)\times\dots\times S(E))=
d^*(S(E)^{\times k}).
\] 
The quotient map $d^*(S(E)^{\times k})\longrightarrow d^*(\PP(E)^{\times k})$, induced by taking orbits of the natural fibrewise free action of $(\Z/2)^k$ on $d^*(S(E)^{\times k})$, is  a $2^k$-fold cover map with a  fibre $S(L_1)\times\dots\times S(L_k)$ at $(L_1,\dots , L_k)\in\PP(E_b)^{\times k}$ for some $b\in\B$.
Recall that each sphere $S(L_1),\dots,S(L_k)$ is just a $0$-dimensional sphere.

\medskip
Like in the classical case, the covering $d^*(S(E)^{\times k})\longrightarrow d^*(\PP(E)^{\times k})$ induces a $2^k$-dimensional real vector bundle $A_k(E)$ over $d^*(\PP(E)^{\times k})$ with fibre at $(L_1,\dots,L_k)\in \PP(E_b)^k$, for some $b\in B$, defined to be the vector space  $\map \big(\prod_{i=1}^kS(L_i),\R\big)$ of all real valued functions on $\prod_{i=1}^kS(L_i)$.
Each fibre is equipped with the natural $(\Z/2)^k$-action given by antipodal actions on the $0$-dimensional spheres.
There is an isomorphism of vector bundles
\[
A_k(E)\cong
\Theta _1^*\big(H(E)\oplus\underline{\R}\big)
\otimes\dots\otimes
\Theta_k^*\big(H(E)\oplus\underline{\R}\big),
\]
where $\underline{\R}$  denotes the trivial line bundle over $\PP(E)$, and $\Theta_i^*\big(H(E)\oplus\underline{\R}\big)$ is the pullback vector bundle.
In particular, the vector bundle $A_k(E)$ has a trivial line bundle determined by all constant maps $\prod_{i=1}^kS(L_i)\longrightarrow \R$.

\medskip
Let us now consider a continuous function $\varphi\colon S(E)\longrightarrow\R$.
Such a map induces a section $s_{\varphi}\colon d^*(\PP(E)^{\times k})\longrightarrow A_k(E)$ of the vector bundle $A_k(E)$ by
\[
\xymatrix{
(b,(L_1,\dots,L_k)) \ar@{|->}[r]  & \ \big( s_{\varphi} (b,(L_1,\dots,L_k))\colon \prod_{i=1}^kS(L_i) \longrightarrow \R  \big)
}
\]
for  $b\in B$ and $(L_1,\dots,L_k)\in \PP(E_b)^{\times k}$, where
\[
s_{\varphi} (b,(L_1,\dots,L_k))(v_1,\dots,v_k):=\int_{\mathcal{O}_{b,v_1,\dots,v_k}\cap S(E_b)} \varphi
\]
for $(v_1,\dots,v_k)\in \prod_{i=1}^kS(L_i)$. 
Here, $\mathcal{O}_{b,v_1,\dots,v_k}$ denotes the subset of $E_b$ defined by
\[
\mathcal{O}_{b,v_1,\cdots,v_k}:=\{u\in E_b\,:\, \langle u, v_1\rangle> 0\}\cap\dots\cap\{u\in E_b\,:\, \langle u, v_k\rangle> 0\}.
\]
Once again, the integration is assumed to be with respect to the measure of the sphere $S(E_b)$ induced by the metric on $E_b$.

\medskip
Now we can state the CS\,/\,TM scheme theorem for the GHR problem for mass assignments, which is analogous to Theorem \ref{th : CSTM for classical problem}.

\begin{theorem}
\label{th : CSTM for assignement problem}	
Let $E$ be a Euclidean vector bundle over a compact and connected ENR base space $B$, and let $k\geq 1$ and $j\geq 1$ be integers. 

\smallskip\noindent
If the Euler class of the vector bundle $\big( A_k(E)/\underline{\R}\big)^{\oplus j}$ does not vanish, then for every collection of $j$ continuous functions $\varphi_1,\dots,\varphi_j\colon S(E)\longrightarrow\R$ there exists a point $b\in B$ and an arrangement of $k$ linear hyperplanes $H_1, \dots, H_k$ in the fibre $E_b$ with the property that for every pair of connected components $(\mathcal{O}',\mathcal{O}'')$ of the arrangement complement $E_b-(H_1\cup\dots\cup H_k)$ the following statement holds
\[
\int_{\mathcal{O}'\cap S(E_b)}\varphi_1=\int_{\mathcal{O}''\cap S(E_b)}\varphi_1
\quad , \dots , \quad 
\int_{\mathcal{O}'\cap S(E_b)}\varphi_j=\int_{\mathcal{O}''\cap S(E_b)}\varphi_j.
\]
In other words,
\[
 \e \big(\big( A_k(E)/\underline{\R}\big)^{\oplus j} \big)\neq 0
\quad\Longrightarrow\quad (j,k)\in \Delta_S(E).
\]
\end{theorem}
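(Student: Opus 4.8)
The plan is to carry out the argument of Theorem~\ref{th : CSTM for classical problem} fibrewise over $B$, with the base space $\PP(\E)^{\times k}$ replaced by the space $d^*(\PP(E)^{\times k})$ of hyperplane arrangements lying in a single fibre of $E$, and the vector bundle $A_k(\E)$ replaced by the bundle $A_k(E)$ together with its trivial line subbundle $\underline{\R}$ of constant maps, both constructed in Section~\ref{subsec : CSTM for mass assignements}. Since $B$ is a compact and connected ENR and $\PP(E)\to B$ is a fibre bundle with compact fibre $\RP{n-1}$, the $k$-fold fibre product $d^*(\PP(E)^{\times k})$ is again a compact ENR; hence for any real vector bundle over it the (mod $2$) Euler class is the primary obstruction to the existence of a nowhere-zero section, a nowhere-zero section splitting off a trivial line summand and thereby killing the top Stiefel--Whitney class. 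Thus the hypothesis $\e\big((A_k(E)/\underline{\R})^{\oplus j}\big)\neq 0$ forces every section of $(A_k(E)/\underline{\R})^{\oplus j}$ to have a zero.

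Given continuous functions $\varphi_1,\dots,\varphi_j\colon S(E)\longrightarrow\R$, I would then form the section
\[
s=(s_{\varphi_1},\dots,s_{\varphi_j})\colon d^*(\PP(E)^{\times k})\longrightarrow A_k(E)^{\oplus j},
\]
where each $s_{\varphi_r}$ is the section of $A_k(E)$ defined in Section~\ref{subsec : CSTM for mass assignements} by integrating $\varphi_r$ over the spherical orthants $\mathcal{O}_{b,v_1,\dots,v_k}\cap S(E_b)$. Composing with the bundle map $\Pi\colon A_k(E)^{\oplus j}\to (A_k(E)/\underline{\R})^{\oplus j}$ and using the previous paragraph, $\Pi\circ s$ vanishes at some point $(b,(L_1,\dots,L_k))\in d^*(\PP(E)^{\times k})$; equivalently, $s(b,(L_1,\dots,L_k))$ lies in the trivial subbundle $\underline{\R}^{\oplus j}\subseteq A_k(E)^{\oplus j}$, i.e. for every $r$ the map $(v_1,\dots,v_k)\mapsto \int_{\mathcal{O}_{b,v_1,\dots,v_k}\cap S(E_b)}\varphi_r$ is constant on $\prod_{i=1}^k S(L_i)$. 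As every connected component of $E_b-(L_1^{\perp}\cup\dots\cup L_k^{\perp})$ is of the form $\mathcal{O}_{b,v_1,\dots,v_k}$ for a suitable choice of unit normals, this yields $\int_{\mathcal{O}'\cap S(E_b)}\varphi_r=\int_{\mathcal{O}''\cap S(E_b)}\varphi_r$ for all $r$ and all pairs of components $\mathcal{O}',\mathcal{O}''$, so $H_i=L_i^{\perp}$ is the required arrangement and $(j,k)\in\Delta_S(E)$.

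The only place where this differs substantively from the vector-space case, and the step I expect to be the main obstacle, is the continuity of $s_{\varphi}$ with respect to the bundle topology on $A_k(E)$ coming from local trivialisations of $E$: working over a trivialising neighbourhood $U$, one must check that $(b,(L_1,\dots,L_k),(v_1,\dots,v_k))\mapsto \int_{\mathcal{O}_{b,v_1,\dots,v_k}\cap S(E_b)}\varphi$ is continuous, which reduces to the facts that $\varphi$ is bounded and continuous on the compact sphere bundle, that the Riemannian volume on $S(E_b)$ varies continuously in $b$, and that the bounding great subspheres of a spherical orthant are null sets moving continuously with the defining data, so that dominated convergence applies. Once this is in place the remainder is a verbatim fibrewise repetition of the proof of Theorem~\ref{th : CSTM for classical problem}.
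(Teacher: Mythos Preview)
Your proposal is correct and follows essentially the same approach as the paper: form the section $s=(s_{\varphi_1},\dots,s_{\varphi_j})$ of $A_k(E)^{\oplus j}$, project via $\Pi$ to $(A_k(E)/\underline{\R})^{\oplus j}$, and use the non-vanishing Euler class to force a zero, which translates into the equipartition. Your additional remarks on the continuity of $s_\varphi$ and on why the Euler class obstructs nowhere-zero sections are sound elaborations of steps that the paper leaves implicit.
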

\begin{proof}
Our follows in the footsteps of the proof of Theorem \ref{th : CSTM for classical problem}.
Assume that the Euler class of the vector bundle $\big( A_k(E)/\underline{\R}\big)^{\oplus j}$ does not vanish.
Consequently, every section of $\big( A_k(E)/\underline{\R}\big)^{\oplus j}$ has a zero.

\medskip
Consider a collection $\varphi_1,\dots,\varphi_j\colon S(E)\longrightarrow\R$ of continuous functions on the  sphere bundle $S(E)$, and the associated section $s=(s_{\varphi_1},\dots,s_{\varphi_j})$ of the vector bundle $\big( A_k(E)/\underline{\R}\big)^{\oplus j}$.

\medskip
Denote by $\Pi\colon A_k(E)^{\oplus j}\longrightarrow \big( A_k(E)/\underline{\R}\big)^{\oplus j}$ the canonical projection.
Then, from the assumption on the Euler class, the section $\Pi\circ s$ of the vectors bundle $\big( A_k(E)/\underline{\R}\big)^{\oplus j}$ has a zero.
In other words, there exists a point $(b,(L_1,\dots,L_k))\in d^*(\PP(E)^{\times k})$ with the property that $s(b,(L_1,\dots,L_k))$ is contained in the trivial vector subbundle $\underline{\R}^{\oplus j}$ of the vector bundle $A_k(E)^{\oplus j}$.
This means that for every pair of connected components $(\mathcal{O}',\mathcal{O}'')$ of the arrangement complement $E_b-(L_1^{\perp} \cup\dots\cup L_k^{\perp})$ the following equalities hold
\[
\int_{\mathcal{O}'\cap S(E_b)}\varphi_1=\int_{\mathcal{O}''\cap S(E_b)}\varphi_1
\quad , \dots , \quad 
\int_{\mathcal{O}'\cap S(E_b)}\varphi_j=\int_{\mathcal{O}''\cap S(E_b)}\varphi_j.
\]
Hence, we have proved the theorem.
\end{proof}

%--------------------------------------------------------------------------------------%
\subsection{The GHR problem for mass assignments plus constraints}
\label{subsec : CSTM for mass assignements + constraints}
%--------------------------------------------------------------------------------------%
In this section we extend the CS\,/\,TM schemes presented in  Section \ref{subsec : CSTM for mass assignements} to incorporate an additional constraint.
More precisely, we require for the normals of the hyperplanes to belong to the specific, not necessarily equal, vector subbundles.

\medskip
Fix an integer $k\geq 1$.
Let $E$ be an $n$-dimensional Euclidean vector bundle over a compact and connected ENR $B$, and let  $E(i)$ be a vector subbundles of $E$,
for $1\leq i\leq k$. 
Following the notation from  Section \ref{subsec : CSTM for mass assignements} we denote by $\PP(E(i))$ the projective bundle of $E(i)$, that is
\[
\PP(E(i))=\{(b;L) : b\in B,\, L\in \PP(E(i)_b)\}.
\]
In particular, $S(E(i))/(\Z/2)\cong P(E(i))$.
Furthermore, let $H(E(i))$ be the Hopf bundle associated to the vector bundle $E(i)$, or in other words
\[
H(E(i)):=\{(b,L,v) : b\in B,\, L\in \PP(E(i)_b),\,v\in L\}.
\]

\medskip
The space of all arrangements  of $k$ linear hyperplanes which belong to one fibre of $E$ and are determined by the collection of vector subbundles $E(1),\dots,E(k)$ can be seen as the total space of the pullback vector bundle
\[
\PP(E(1))\times_{B}\cdots\times_{B}\PP(E(k)):=d^*\big(\PP(E(1))\times\dots\times\PP(E(k))\big) 
\] 
via the diagonal embedding $d\colon B\longrightarrow B^k$.
We denote by
\[
D\colon d^*\big(\PP(E(1))\times\dots\times\PP(E(k))\longrightarrow \PP(E(1))\times\dots\times\PP(E(k))
\] 
the pullback map between the bundles.
Furthermore, let 
\[
\Pi_i\colon\PP(E(1))\times\dots\times\PP(E(k))\longrightarrow \PP(E(i))
\]
be the projection on the $i$-th factor $(b,(L_1,\dots,L_k))\longmapsto (b,L_i)$, and let $\Theta_i:=\Pi_i\circ D$. 
 
\medskip
The space of all arrangements of $k$ oriented linear hyperplanes which belong to one fibre of $E$ and are given by the collection of vector subbundles $E(1),\dots,E(k)$ is the total space of the pullback 
\[
S(E(1))\times_{B}\cdots\times_{B}S(E(k)):=d^*\big(S(E(1))\times\dots\times S(E(k))\big).
\] 
The quotient map 
\[
d^*\big(S(E(1))\times\dots\times S(E(k))\big) 
\longrightarrow
d^*\big(\PP(E(1))\times\dots\times \PP(E(k))\big),
\]
induced by taking orbits of the natural fibrewise free action of the group $(\Z/2)^k$, is a $2^k$-fold cover map with a typical fibre $S(L_1)\times\dots\times S(L_k)$ where $(L_1,\dots , L_k)\in\PP(E(1)_b)\times\dots\times\PP(E(k)_b)$ for some $b\in\B$.

\medskip
This covering induces a $2^k$-dimensional real vector bundle  $A_k(E(1),\dots, E(k))$ over $d^*\big(\PP(E(1))\times\dots\times \PP(E(k))\big)$  with fibre at $(L_1,\dots,L_k)\in \PP(E(1)_b)\times\dots\times\PP(E(k)_b)$, for some $b\in B$, defined to be the vector space $\map \big(\prod_{i=1}^kS(L_i),\R\big)$ of all real valued functions on $\prod_{i=1}^kS(L_i)$.
There is an isomorphism of vector bundles
\[
A_k(E(1),\dots, E(k))
\cong
\Theta _1^*\big(H(E(1))\oplus\underline{\R(1)}\big)
\otimes\dots\otimes
\Theta_k^*\big(H(E(k))\oplus\underline{\R(k)}\big),
\]
where $\underline{\R(i)}$  denotes the trivial line bundle over $\PP(E(i))$, and $\Theta_i^*\big(H(E(i))\oplus\underline{\R(i)}\big)$ is the pullback vector bundle.
In particular, the vector bundle $A_k(E(1),\dots, E(k))$ has a trivial line bundle determined by all constant maps $\prod_{i=1}^kS(L_i)\longrightarrow \R$, or in other words the vector subbundle $\underline{\R(1)}\otimes\dots\otimes\underline{\R(k)}$.
Clearly, $A_k(E)=A_k(\underbrace{ E,\dots, E}_{k\text{ times}})$.

\medskip
Now we consider a continuous function $\varphi\colon S(E)\longrightarrow\R$.
It induces a section $s_{\varphi}\colon d^*\big(\PP(E(1))\times\dots\times \PP(E(k))\big)\longrightarrow A_k(E(1),\dots, E(k))$ of the vector bundle $A_k(E(1),\dots, E(k))$ by
\[
\xymatrix{
(b,(L_1,\dots,L_k)) \ar@{|->}[r]  & \ \big( s_{\varphi} (b,(L_1,\dots,L_k))\colon \prod_{i=1}^kS(L_i) \longrightarrow \R  \big)
}
\]
for  $b\in B$ and $(L_1,\dots,L_k)\in \PP(E(1)_b)\times\dots\times \PP(E(k)_b)$, where
\[
s_{\varphi} (b,(L_1,\dots,L_k))(v_1,\dots,v_k):=\int_{\mathcal{O}_{b,v_1,\dots,v_k}\cap S(E_b)} \varphi
\]
for $(v_1,\dots,v_k)\in \prod_{i=1}^kS(L_i)$. 
Recall, $\mathcal{O}_{b,v_1,\dots,v_k}$ denotes the set:
\[
\mathcal{O}_{b,v_1,\cdots,v_k}:=\{u\in E_b\,:\, \langle u, v_1\rangle> 0\}\cap\dots\cap\{u\in E_b\,:\, \langle u, v_k\rangle> 0\}.
\]

\medskip
The CS\,/\,TM scheme theorem for the GHR problem for mass assignments with constraints is as follows.
\begin{theorem}
\label{th : CSTM for assignement problem with constraints}	
Let $E$ be a Euclidean vector bundle over a compact and connected ENR base space $B$, $k\geq 1$ and $j\geq 1$ integers, and let $E(1),\dots,E(k)$ be vector subbundles of $E$.

\smallskip\noindent
If the Euler class of the vector bundle $\big( A_k(E(1),\dots, E(k))/\underline{\R}\big)^{\oplus j}$ does not vanish, then for every collection of $j$ continuous functions $\varphi_1,\dots,\varphi_j\colon S(E)\longrightarrow\R$ there exists a point $b\in B$ and an arrangement of $k$ linear hyperplanes $H_1, \dots, H_k$ in the fibre $E_b$ determined by the collection of vector subbundles $E(1),\dots,E(k)$ with the property that for every pair of connected components $(\mathcal{O}',\mathcal{O}'')$ of the arrangement complement $E_b-(H_1\cup\dots\cup H_k)$ the following statement holds
\[
\int_{\mathcal{O}'\cap S(E_b)}\varphi_1=\int_{\mathcal{O}''\cap S(E_b)}\varphi_1
\quad , \dots , \quad 
\int_{\mathcal{O}'\cap S(E_b)}\varphi_j=\int_{\mathcal{O}''\cap S(E_b)}\varphi_j.
\]

\end{theorem}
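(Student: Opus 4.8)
The plan is to follow the proofs of Theorem~\ref{th : CSTM for classical problem} and Theorem~\ref{th : CSTM for assignement problem} essentially verbatim; the only additional work is the translation of the subbundle constraint, and that is already built into the choice of base space $d^*\big(\PP(E(1))\times\dots\times \PP(E(k))\big)$ over which the test bundle $A_k(E(1),\dots,E(k))$ is defined. First I would assume that $\e\big(\big(A_k(E(1),\dots,E(k))/\underline{\R}\big)^{\oplus j}\big)\neq 0$. Since a non-vanishing Euler class obstructs the existence of a nowhere-zero section, it follows that every section of $\big(A_k(E(1),\dots,E(k))/\underline{\R}\big)^{\oplus j}$ has a zero.

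Next, given a collection $\varphi_1,\dots,\varphi_j\colon S(E)\longrightarrow\R$, I would assemble the section $s:=(s_{\varphi_1},\dots,s_{\varphi_j})$ of $A_k(E(1),\dots,E(k))^{\oplus j}$ constructed above, whose continuity follows from the local triviality of $E$, the definition of the weak topology on the fibres, and the continuity of the assignment $(b,(L_1,\dots,L_k),(v_1,\dots,v_k))\longmapsto \int_{\mathcal{O}_{b,v_1,\dots,v_k}\cap S(E_b)}\varphi_r$. Post-composing $s$ with the canonical bundle epimorphism $\Pi\colon A_k(E(1),\dots,E(k))^{\oplus j}\longrightarrow \big(A_k(E(1),\dots,E(k))/\underline{\R}\big)^{\oplus j}$ produces a section $\Pi\circ s$ which, by the previous paragraph, vanishes at some point $(b,(L_1,\dots,L_k))\in d^*\big(\PP(E(1))\times\dots\times\PP(E(k))\big)$. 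By definition of the quotient bundle, this means that $s(b,(L_1,\dots,L_k))$ lies in the trivial subbundle $\underline{\R}^{\oplus j}=\big(\underline{\R(1)}\otimes\dots\otimes\underline{\R(k)}\big)^{\oplus j}$ of $A_k(E(1),\dots,E(k))^{\oplus j}$; equivalently, each function $s_{\varphi_r}(b,(L_1,\dots,L_k))\colon \prod_{i=1}^kS(L_i)\longrightarrow\R$ is constant.

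It then remains to read off the conclusion. As $(v_1,\dots,v_k)$ ranges over $\prod_{i=1}^k S(L_i)$, the open sets $\mathcal{O}_{b,v_1,\dots,v_k}$ run through exactly the connected components of the arrangement complement $E_b-(L_1^{\perp}\cup\dots\cup L_k^{\perp})$, so the constancy of $s_{\varphi_r}(b,(L_1,\dots,L_k))$ is precisely the family of equalities $\int_{\mathcal{O}'\cap S(E_b)}\varphi_r=\int_{\mathcal{O}''\cap S(E_b)}\varphi_r$ for every ordered pair $(\mathcal{O}',\mathcal{O}'')$ of components and every $1\leq r\leq j$. Finally, setting $H_i:=L_i^{\perp}$ inside $E_b$, the unit normal of $H_i$ spans the line $L_i\in\PP(E(i)_b)$ and hence belongs to $E(i)_b$; this is exactly the assertion that the arrangement $(H_1,\dots,H_k)$ is determined by the collection of vector subbundles $E(1),\dots,E(k)$, which completes the proof.

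The only step with genuine (non-formal) content is the continuity of the section $s_\varphi$, already needed in the unconstrained Theorem~\ref{th : CSTM for assignement problem}: one must verify that small perturbations of $b$ and of the lines $L_1,\dots,L_k$ change the integrals $\int_{\mathcal{O}_{b,v_1,\dots,v_k}\cap S(E_b)}\varphi_r$ only slightly, which rests on $\varphi_r$ being bounded and continuous together with the fact that the rotation-invariant measure on $S(E_b)$ assigns no mass to great subspheres. Everything else — the identification of $A_k(E(1),\dots,E(k))$ as a tensor product of pullbacks of $H(E(i))\oplus\underline{\R}$, the location of the trivial line subbundle, and the zero-finding via the Euler class — is purely formal bundle theory.
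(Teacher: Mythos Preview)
Your proposal is correct and follows exactly the approach the paper takes: the paper's own proof of this theorem is the single sentence ``A proof is a slight modification of the proof of Theorem~\ref{th : CSTM for assignement problem}, so we do not repeat it,'' and your argument is precisely that modification spelled out in full, including the observation that $L_i\in\PP(E(i)_b)$ guarantees the arrangement is determined by the subbundles $E(1),\dots,E(k)$.
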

\begin{proof}
A proof is a slight modification of the proof of Theorem \ref{th : CSTM for assignement problem}, so we do not repeat it.

\end{proof}

%--------------------------------------------------------------------------------------%
\subsection{The orthogonal GHR problem for mass assignments}
\label{subsec : CSTM for mass assignements orthogonal}
%--------------------------------------------------------------------------------------%

The scheme for the partitions with orthogonal arrangements is just a ``restriction'' of the scheme presented in Section \ref{subsec : CSTM for mass assignements}.

\medskip
For a Euclidean vector bundle over a compact and connected ENR base space $B$, and integers $k\geq 1$ and $j\geq 1$, we proved the following: 

\smallskip\noindent
If the Euler class of the vector bundle $\big( A_k(E)/\underline{\R}\big)^{\oplus j}$ over $d^*(\PP(E)^{\times k})$ does not vanish, then for every collection of $j$ continuous functions $\varphi_1,\dots,\varphi_j\colon S(E)\longrightarrow\R$ there exists a point $b\in B$ and an arrangement of $k$ linear hyperplanes $H_1, \dots, H_k$ in the fibre $E_b$ with the property that for every pair of connected components $(\mathcal{O}',\mathcal{O}'')$ of the arrangement complement $E_b-(H_1\cup\dots\cup H_k)$ holds
\[
\int_{\mathcal{O}'\cap S(E_b)}\varphi_1=\int_{\mathcal{O}''\cap S(E_b)}\varphi_1
\quad , \dots , \quad 
\int_{\mathcal{O}'\cap S(E_b)}\varphi_j=\int_{\mathcal{O}''\cap S(E_b)}\varphi_j.
\]

\medskip
Since we are interested in partitions by specifically orthogonal arrangements the space of all possible solutions becomes the following subspace of $X_k(E):=d^*(\PP(E)^{\times k})$:
\[
Y_k(E):=\{ (b,(L_1,\dots,L_k))\in X_k(E) : L_r \perp L_s \text{ for all }1\leq r< s\leq k\}.
\]
In addition, let us denote by $q_k$ the inclusion  $Y_k(E)\lhook\joinrel\longrightarrow X_k(E)$.
Thus, the vector bundle we are interested in is the restriction bundle  $B_k(E):=A_k(E)|_{Y_k(E)}$.
In particular, there is an isomorphism of vector bundles
\[
B_k(E)\cong
\Psi _1^*\big(H(E)\oplus\underline{\R}\big)
\otimes\dots\otimes
\Psi_k^*\big(H(E)\oplus\underline{\R}\big),
\]
where $\Psi_i=\Theta_i\circ q_k$ for $1\leq i\leq k$.
Recall $H(E)$ and $\underline{\R}$ are here the Hopf line and trivial line bundle over $\PP(E)$, respectively.

\medskip
Now, we get the CS\,/\,TM scheme theorem for the GHR problem for mass assignments by orthogonal arrangements directly from the proof of Theorem \ref{th : CSTM for assignement problem}.

\begin{theorem}
\label{th : CSTM for orthogonal assignement problem}	
Let $E$ be a Euclidean vector bundle over a compact and connected ENR base space $B$, and let $k\geq 1$ and $j\geq 1$ be integers. 

\smallskip\noindent
If the Euler class of the vector bundle $\big( B_k(E)/\underline{\R}\big)^{\oplus j}$ does not vanish, then for every collection of $j$ continuous functions $\varphi_1,\dots,\varphi_j\colon S(E)\longrightarrow\R$ there exists a point $b\in B$ and an orthogonal arrangement of $k$ linear hyperplanes $H_1, \dots, H_k$ in the fibre $E_b$ with the property that for every pair of connected components $(\mathcal{O}',\mathcal{O}'')$ of the arrangement complement $E_b-(H_1\cup\dots\cup H_k)$ the following statement holds
\[
\int_{\mathcal{O}'\cap S(E_b)}\varphi_1=\int_{\mathcal{O}''\cap S(E_b)}\varphi_1
\quad , \dots , \quad 
\int_{\mathcal{O}'\cap S(E_b)}\varphi_j=\int_{\mathcal{O}''\cap S(E_b)}\varphi_j.
\]
\end{theorem}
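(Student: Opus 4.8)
The plan is to mimic almost verbatim the argument of Theorem \ref{th : CSTM for assignement problem}, simply replacing the configuration space $X_k(E) = d^*(\PP(E)^{\times k})$ by its closed subspace $Y_k(E)$ of orthogonal $k$-tuples of lines and the bundle $A_k(E)$ by its restriction $B_k(E) = A_k(E)|_{Y_k(E)}$. First I would observe that $B_k(E)$ still contains the distinguished trivial line subbundle $\underline{\R}$ of fibrewise constant functions $\prod_{i=1}^k S(L_i)\to\R$, since that subbundle is defined fibrewise and restricts along $q_k\colon Y_k(E)\hookrightarrow X_k(E)$; hence the quotient bundle $\big(B_k(E)/\underline{\R}\big)^{\oplus j}$ is well defined, and by hypothesis its Euler class is non-zero, so every section of it has a zero. (Here $Y_k(E)$ is again a compact ENR — it is the total space of an iterated bundle of flag-type fibres over $B$ — so Euler-class obstruction theory applies as before.)

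Next I would take an arbitrary collection $\varphi_1,\dots,\varphi_j\colon S(E)\to\R$ and form the section $s = (s_{\varphi_1},\dots,s_{\varphi_j})$ of $A_k(E)^{\oplus j}$ exactly as in Section \ref{subsec : CSTM for mass assignements}, then restrict it along $q_k$ to obtain a section $s|_{Y_k(E)}$ of $B_k(E)^{\oplus j}$. Composing with the canonical projection $\Pi\colon B_k(E)^{\oplus j}\to \big(B_k(E)/\underline{\R}\big)^{\oplus j}$ and invoking the non-vanishing of the Euler class yields a point $(b,(L_1,\dots,L_k))\in Y_k(E)$ at which $\Pi\circ s|_{Y_k(E)}$ vanishes, i.e. $s(b,(L_1,\dots,L_k))$ lies in the trivial subbundle $\underline{\R}^{\oplus j}$. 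Unwinding the definition of $s_{\varphi_r}$ as in the earlier proofs, this says precisely that for every $r$ and every pair $(v_1,\dots,v_k)$, $(v_1',\dots,v_k')$ of sign choices one has $\int_{\mathcal{O}_{b,v_1,\dots,v_k}\cap S(E_b)}\varphi_r = \int_{\mathcal{O}_{b,v_1',\dots,v_k'}\cap S(E_b)}\varphi_r$, which is exactly the asserted equipartition over all pairs of connected components $(\mathcal{O}',\mathcal{O}'')$ of the complement $E_b-(L_1^{\perp}\cup\dots\cup L_k^{\perp})$. Since $(b,(L_1,\dots,L_k))\in Y_k(E)$, the lines $L_1,\dots,L_k$ are pairwise orthogonal, hence so are the hyperplanes $L_1^{\perp},\dots,L_k^{\perp}$, and the arrangement $\HH^b$ is orthogonal as required.

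The main point to check carefully — and the only thing that is not a word-for-word repetition of Theorem \ref{th : CSTM for assignement problem} — is that the restriction operation is compatible with everything: that the isomorphism $B_k(E)\cong \Psi_1^*(H(E)\oplus\underline{\R})\otimes\dots\otimes\Psi_k^*(H(E)\oplus\underline{\R})$ follows from the corresponding isomorphism for $A_k(E)$ by pulling back along $q_k$ (so $\Psi_i=\Theta_i\circ q_k$), that $\underline{\R}\subseteq B_k(E)$ is the restriction of $\underline{\R}\subseteq A_k(E)$, and that the section $s$ restricts to a genuine continuous section of $B_k(E)^{\oplus j}$. All of these are immediate from functoriality of pullback, so there is no real obstacle; the substance of the theorem is entirely carried by the Euler-class hypothesis, whose verification in concrete cases is deferred to Theorem \ref{th : main result 03} and the subsequent cohomological computations.
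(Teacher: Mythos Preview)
Your proposal is correct and takes exactly the same approach as the paper, which simply states that the proof is a copy of the proof of Theorem \ref{th : CSTM for assignement problem} with $Y_k(E)$ in place of $X_k(E)$ and $B_k(E)$ in place of $A_k(E)$. Your additional remarks about compatibility of restriction with the trivial subbundle, the section $s$, and the tensor-product description of $B_k(E)$ are accurate and spell out details the paper leaves implicit.
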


\medskip
The proof of this result is a copy of the proof of Theorem \ref{th : CSTM for assignement problem} with $Y_k(E)$ in place of $X_k(E)$ and the vector bundle $B_k(E)$ in place of the vector bundle $A_k(E)$.

%--------------------------------------------------------------------------------------%
\subsection{The Fairy Bread Sandwich theorem}
\label{subsec : CSTM for Fairy Bread Sandwich theorem}
%--------------------------------------------------------------------------------------%
Fix integers $d\geq 1$ and $k\geq 1$ with $d\geq k$, and  let $V=\R^{d+1}$.
Let $(j_k,\dots,j_d)$ be a permutation of the set  $\{k,\dots, d\}$, and let $\varphi_{a,b}\colon S(E_{a+1}^{d+1})\longrightarrow\R$, $k\leq a\leq d$, $1\leq b\leq j_a$, be a collection of functions from the sphere bundle of the tautological vector bundle  $E_{a+1}^{d+1}$ over the Grassmann manifold $G_{a+1}(V)$ to the real numbers.
 
\medskip
The space of all potential solutions of the partition problem considered in Theorem \ref{cor : F} is the following flag manifold
\begin{align*}
	\flag_{k,\dots,d} (V) &=\big\{(V_{k},\dots,V_{d})\in \prod_{i=k}^{d} G_{i}(V) :  0\subseteq V_{k}\subseteq \dots\subseteq V_{d}\subseteq V\big\}\\	
	 &\cong\big\{(W_{k},\dots,W_{d+1})\in G_{k}(V)\times  G_{1}(V)^{d-k+1} : \\
	 &\hspace{3.5cm}  W_{i'}\perp W_{i''}   \text{ for all } k\leq i'<i''\leq d+1\big\}.
\end{align*}
We used the homeomorphism between these two presentations
\begin{equation}
\label{eq : identification}	
(W_{k},\dots,W_{d+1}) \longmapsto \big(W_{k}, (W_{k}\oplus W_{k+1}),\dots,( W_{k}\oplus W_{k+1}\oplus\dots\oplus W_{d-1})\big)
\end{equation}
to identify the corresponding elements.
More detail on flag manifolds can be found in Section \ref{sec : proof flag}.

\medskip
For every $k+1\leq i\leq d+1$ we define a $2$-dimensional real vector bundle $K_i$ over $\flag_{k,\dots,d} (V)$ whose fiber over the point $(W_{k},\dots,W_{d+1})\overset{\eqref{eq : identification}}{=}(V_k,V_{k+1},\dots,V_d)\in \flag_{k,\dots,d} (V)$ is the real vector space $\map(S(W_{i}),\R)$.
The vector bundle $K_i$ decomposes into the direct sum 
\[
K_i\cong E_{i}\oplus\underline{\R},
\]
where $E_{i}$, as in Section \ref{sec : proof flag}, denotes the  canonical line bundle associated to the flag manifold  $\flag_{k,\dots,d} (V)$ and $\underline{\R}$ is the trivial line bundle which corresponds to constant functions.

\medskip
Take an integer $k+1\leq i\leq d+1$, and let $\varphi\colon S(E^{d+1}_i)\longrightarrow\R$ be a continuous real valued function. 
It induces a section $s_{i,\varphi}$ of $K_i$ defined by
\[
(W_{k},\dots,W_{d})\overset{\eqref{eq : identification}}{=}(V_k,V_{k+1},\dots,V_d)\longmapsto \big( s_{\varphi}(W_{k},\dots,W_{d})\colon S(W_{i})\longrightarrow\R \big),
\]
where 
\[
s_{i,\varphi}(W_{k},\dots,W_{d})(u):=\int_{\{ v\in V_{i} :\langle v,u\rangle\geq 0 \}\cap S(V_i)} \varphi.
\]
The section $s_{i,\varphi}$ induces additionally a section  $s_{i,\varphi}'$ of the vector bundle $E_i$ by 
\[
(W_{k},\dots,W_{d})\overset{\eqref{eq : identification}}{=}(V_k,V_{k+1},\dots,V_d)\longmapsto \big( s_{i,\varphi}'(W_{k},\dots,W_{d})\colon S(W_{i})\longrightarrow\R \big),
\]
where 
\[
s_{i,\varphi}'(W_{k},\dots,W_{d})(u):= s_{i,\varphi}(W_{k},\dots,W_{d})(u)-s_{i,\varphi}(W_{k},\dots,W_{d})(-u).
\]

\medskip
Now, the CS\,/\,TM scheme theorem for the Fairy Bread Sandwich theorem can be stated as follows.

\begin{theorem}
\label{th : CSTM for Fairy Bread SandwichFairy Bread Sandwich}	
Let $d\geq 1$ and $k\geq 1$ be integers with $d\geq k$, and let $V=\R^{d+1}$ be a real vector space.
Fix a permutation $(j_k,\dots,j_d)$ of the set  $\{k,\dots, d\}$, and take an arbitrary collections of functions $\varphi_{a,b}\colon S(E_{a+1}^{d+1})\longrightarrow\R$, $k\leq a\leq d$, $1\leq b\leq j_a$, from the sphere bundle of the tautological vector bundle  $E_{a+1}^{d+1}$ over the Grassmann manifold $G_{a+1}(V)$ to the real numbers.

\smallskip\noindent
If the Euler class of the vector bundle $E:=E_{k+1}^{\oplus j_k}\oplus E_{k+2}^{\oplus j_{k+1}}\oplus\cdots\oplus E_{d+1}^{\oplus j_d}$ does not vanish, then there exists a flag $(W_{k},\dots,W_{d})=(V_k,\dots,V_d)\in \flag_{k,\dots,d}(V)$ such that for every $k\leq a \leq d$ and every $1\leq b\leq j_a$ the following equality holds
\[
\int_{\{v\in V_{a+1} : \langle v,u_a\rangle\geq 0\}\cap S(V_{a+1})}\varphi_{a,b}
=
\int_{\{v\in V_{a+1} : \langle v,u_a\rangle\leq 0\}\cap S(V_{a+1})}\varphi_{a,b}.
\]	
Here the unit vectors $u_{k}, \dots, u_{d}$ are determined, up to a sign, by the equality $V_{r}=\{v\in V_{r+1} : \langle v,u_{r}\rangle=0\}$, $k\leq r\leq d$, and with $V_{d+1}=V$.
This means that $u_{r}$ is a unit normal vector to $V_{r}$, considered as a hyperplane inside $V_{r+1}$.
In other words, $u_{r}\in S(W_{r+1})$ for all $k\leq r\leq d$.
\end{theorem}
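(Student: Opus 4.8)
The plan is to recognise an admissible flag as a zero of a single section of the bundle $E$, so that the hypothesis on the Euler class forces the conclusion. First I would, for each pair $(a,b)$ with $k\le a\le d$ and $1\le b\le j_a$, pass from the function $\varphi_{a,b}\colon S(E^{d+1}_{a+1})\longrightarrow\R$ to the section $s_{a+1,\varphi_{a,b}}$ of the $2$-plane bundle $K_{a+1}\cong E_{a+1}\oplus\underline{\R}$ over $\flag_{k,\dots,d}(V)$ constructed just above, and then compose with the projection onto the $E_{a+1}$-summand to obtain the section $s'_{a+1,\varphi_{a,b}}$ of the canonical line bundle $E_{a+1}$. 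Continuity of these sections follows from continuity of $\varphi_{a,b}$ together with the continuous dependence on the flag of the round measure on $S(V_{a+1})$. Taking the direct sum of all of them, over $a=k,\dots,d$ and $b=1,\dots,j_a$, yields one continuous section $\sigma$ of $E=E_{k+1}^{\oplus j_k}\oplus E_{k+2}^{\oplus j_{k+1}}\oplus\cdots\oplus E_{d+1}^{\oplus j_d}$ over $\flag_{k,\dots,d}(V)$.

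Next I would invoke the hypothesis: since the Euler class of $E$ is nonzero, every section of $E$ vanishes somewhere, so $\sigma$ has a zero at some flag $(W_k,\dots,W_d)=(V_k,\dots,V_d)$, with unit normals $u_k,\dots,u_d$ determined up to sign by $V_r=\{v\in V_{r+1}:\langle v,u_r\rangle=0\}$ and $V_{d+1}=V$. Unwinding the definition of $\sigma$, its vanishing at this point means precisely that $s_{a+1,\varphi_{a,b}}(W_k,\dots,W_d)(u_a)=s_{a+1,\varphi_{a,b}}(W_k,\dots,W_d)(-u_a)$ for all $a$ and $b$; rewriting this via the definition of $s_{a+1,\varphi_{a,b}}$ and using the identity $\{v\in V_{a+1}:\langle v,-u_a\rangle\ge 0\}=\{v\in V_{a+1}:\langle v,u_a\rangle\le 0\}$, one obtains exactly the asserted bisection equalities. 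That finishes the argument.

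The step carrying the real content here is the decomposition $K_i\cong E_i\oplus\underline{\R}$ of the bundle of real-valued functions on the fibrewise $0$-sphere $S(W_i)$ into the canonical line bundle $E_i$ (the odd part) and the trivial line bundle of constants (the even part); it is exactly this splitting that makes a zero of the $E_i$-component of $\sigma$ record the equipartition of $\varphi_{a,b}$ by the hyperplane $V_r\subset V_{r+1}$. Granting the flag-manifold bundle bookkeeping of Section \ref{sec : proof flag}, this identification and the continuity of the induced sections are routine, and no genuine obstacle remains at the level of the present statement: the theorem is simply the reduction of the Fairy Bread Sandwich problem to a non-vanishing-Euler-class question. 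I would stress that the genuinely hard, and entirely separate, task — verifying that $\e(E)\ne 0$ under the numerical hypotheses, so as to deduce Theorem \ref{cor : F} from this theorem — is not needed here and will be handled elsewhere.
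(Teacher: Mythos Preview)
Your proposal is correct and follows essentially the same approach as the paper: assemble the sections $s'_{a+1,\varphi_{a,b}}$ of the line bundles $E_{a+1}$ into a single section of $E$, use the non-vanishing Euler class to force a zero, and read off the bisection equalities from the vanishing of each odd component. The paper's only cosmetic difference is that it writes $s'_{i,\varphi}(u)=s_{i,\varphi}(u)-s_{i,\varphi}(-u)$ explicitly rather than describing it as the projection onto the $E_i$-summand, but this is the same map up to the harmless factor of $2$.
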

\begin{proof}
Assume that 	the Euler class of the vector bundle $E=E_{k+1}^{\oplus j_k}\oplus\cdots\oplus E_{d+1}^{\oplus j_d}$ does not vanish.
Hence, every section of $E$ has a zero.

\medskip
The collections of functions $\varphi_{a,b}\colon S(E_{a+1}^{d+1})\longrightarrow\R$, $k\leq a\leq d$, $1\leq b\leq j_a$ induces a section of the vector bundle $E$ in the following way
\[
(W_{k},\dots,W_{d})
\longmapsto
\bigoplus_{1\leq b \leq j_k}s_{k+1,\varphi_{k,b}}'(W_{k},\dots,W_{d})
\oplus\cdots\oplus
\bigoplus_{1\leq b\leq j_d}s_{d+1,\varphi_{d,b}}'(W_{k},\dots,W_{d}).
\]
Thus, there exists   a flag $(W_{k},\dots,W_{d})=(V_k,\dots,V_d)\in \flag_{k,\dots,d}(V)$ such that for every $k\leq a \leq d$ and every $1\leq b\leq j_a$ the following statement holds
\begin{multline*}
s_{a+1,\varphi_{a,b}}'(W_{k},\dots,W_{d})(u)=\\ \int_{\{v\in V_{a+1} : \langle v,u\rangle\geq 0\}\cap S(V_{a+1})}\varphi_{a,b}
-
\int_{\{v\in V_{a+1} : \langle v,u\rangle\leq 0\}\cap S(V_{a+1})}\varphi_{a,b}
=0.	
\end{multline*}
for $u\in S(W_{a+1})$.
This concludes the proof. 
\end{proof}

%--------------------------------------------------------------------------------------%
%--------------------------------------------------------------------------------------%
%--------------------------------------------------------------------------------------%
\section{Proofs of Theorems \ref{th : main result 01} and \ref{th : main result 02} } 
\label{sec : Proofs}
%--------------------------------------------------------------------------------------%
%--------------------------------------------------------------------------------------%
%--------------------------------------------------------------------------------------%

For the proofs of the theorems we recall and show the following classical fact, see for example \cite[Satz und Def.\,VI.6.4]{BrockerTomDieck1970}, \cite[Thm.\,17.2.5 and Def.\,17.2.6]{Husemoller1994} and \cite[(1.13)]{Dold-1988}.

\begin{lemma}
\label{claim : 00}	
Let $E$ be a Euclidean vector bundle of dimension $n$ over a compact and connected ENR $B$, and let $\PP(E)$ denote the associated projective bundle of $E$. 
Then there is an isomorphism of $H^*(B;\F_2)$-algebras
\[
H^*(B;\F_2)[x]/\big( \sum_{s = 0}^{n}   w_{n-s}(E)\,x^{s} \big) \longrightarrow H^*(\PP(E);\F_2)
\]
which maps $x$ to the mod $2$ Euler class of the Hopf line bundle $H(E)$.
\end{lemma}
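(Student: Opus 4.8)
The plan is to establish the projective bundle theorem for $\PP(E)$ with $\F_2$-coefficients, following the standard Leray--Hirsch argument. First I would recall that $\PP(E) \to B$ is a fibre bundle with fibre $\RP{n-1}$, and that the restriction of the Hopf line bundle $H(E)$ to a fibre is the tautological line bundle over $\RP{n-1}$. Write $x \in H^1(\PP(E);\F_2)$ for the mod $2$ Euler class (first Stiefel--Whitney class) of $H(E)$. The crucial observation is that the classes $1, x, x^2, \ldots, x^{n-1}$ restrict, on each fibre $\RP{n-1}$, to an $\F_2$-basis of $H^*(\RP{n-1};\F_2) = \F_2[x]/(x^n)$. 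By the Leray--Hirsch theorem (valid here since $B$ is a compact ENR, hence has the homotopy type of a finite CW complex, so all relevant cohomology groups are finitely generated and the spectral sequence arguments go through), it follows that $H^*(\PP(E);\F_2)$ is a free $H^*(B;\F_2)$-module with basis $1, x, \ldots, x^{n-1}$.

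Next I would pin down the single relation. Since $H(E)$ is a subbundle of the pullback of $E$ to $\PP(E)$ — concretely $H(E) \hookrightarrow \pi^*E$ where $\pi \colon \PP(E) \to B$ — there is a quotient bundle $Q$ of dimension $n-1$ fitting in a short exact sequence $0 \to H(E) \to \pi^*E \to Q \to 0$. The Whitney sum formula gives $w(\pi^*E) = w(H(E)) \cdot w(Q) = (1+x)\, w(Q)$, so $w(Q) = \pi^*w(E) \cdot (1+x)^{-1} = \pi^*w(E) \cdot (1 + x + x^2 + \cdots)$ in $H^*(\PP(E);\F_2)$. Since $Q$ has rank $n-1$, its top-degree component $w_n(Q)$ vanishes, and reading off the degree-$n$ part of $\pi^*w(E)\cdot(1+x+x^2+\cdots)$ yields exactly $\sum_{s=0}^{n} w_{n-s}(E)\, x^{s} = 0$ in $H^*(\PP(E);\F_2)$. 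Hence the surjective $H^*(B;\F_2)$-algebra map $H^*(B;\F_2)[x] \to H^*(\PP(E);\F_2)$ sending $x$ to the Euler class of $H(E)$ factors through the quotient by the ideal $\big(\sum_{s=0}^{n} w_{n-s}(E)\, x^{s}\big)$.

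Finally I would check this induced map is an isomorphism. The source ring $H^*(B;\F_2)[x]/\big(\sum_{s=0}^{n} w_{n-s}(E)\,x^s\big)$ is, as an $H^*(B;\F_2)$-module, free with basis $1, x, \ldots, x^{n-1}$: the relation is monic of degree $n$ in $x$ (leading coefficient $w_0(E) = 1$), so it lets one rewrite any monomial $x^m$ with $m \geq n$ in terms of lower powers with coefficients in $H^*(B;\F_2)$, and no $\F_2[x]$-linear combination of $1,\ldots,x^{n-1}$ with coefficients in $H^*(B;\F_2)$ can be a multiple of the degree-$n$ relation. The map carries this basis to the Leray--Hirsch basis $1, x, \ldots, x^{n-1}$ of $H^*(\PP(E);\F_2)$, hence is an isomorphism of $H^*(B;\F_2)$-modules, and being a ring map as well, an isomorphism of $H^*(B;\F_2)$-algebras.

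The main obstacle is purely expository rather than mathematical: this is a classical result (it appears in Br\"ocker--tom Dieck, Husem\"oller, and Dold as cited), so the real task is to verify that the hypotheses — $B$ a compact, connected ENR — are exactly what is needed for Leray--Hirsch to apply with $\F_2$-coefficients, and that the bundle $H(E)$ used here (Hopf line bundle over $\PP(E)$) is normalized so that its $w_1$ is the element called $x$. Both are routine, so I would likely just cite the standard references for the bulk of the argument and only spell out the identification of the relation via the Whitney formula.
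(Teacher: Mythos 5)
Your proof is correct, and it uses the same Leray--Hirsch freeness argument as the paper, but it derives the degree-$n$ relation by a genuinely different route. The paper obtains $\sum_{s=0}^{n} w_{n-s}(E)\,x^{s}=0$ by showing that the Euler class of $g^*E\otimes H(E)$ vanishes: the inclusion $H(E)\hookrightarrow g^*E$ gives a nowhere-zero section of $\Hom(H(E),g^*E)\cong g^*E\otimes H(E)$, and the paper first develops the identity $\e(E\otimes L)=\sum_{i}w_i(E)\e(L)^{n-i}$ from the Euler-class definition of Stiefel--Whitney classes before applying it. You instead pass to the quotient bundle $Q=\pi^*E/H(E)$ of rank $n-1$, apply the Whitney product formula to invert $w(H(E))=1+x$, and read off the relation from the vanishing of $w_n(Q)$. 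The two are closely related --- both hinge on the same short exact sequence $0\to H(E)\to \pi^*E\to Q\to 0$ --- but your version is somewhat more self-contained: it needs only the Whitney sum formula and the rank constraint on $Q$, and avoids setting up the tensor-product Euler class machinery that the paper builds in \eqref{eq : def SW class 01}--\eqref{eq : def SW class 04}. The paper's route has the advantage that the formula $\e(E\otimes L)=\sum_i w_i(E)\e(L)^{n-i}$ is needed anyway elsewhere in the argument, so the extra development is not wasted; your route is the cleaner choice if one only wants this lemma.
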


\medskip
For the proof first recall that for $m\geq 2$
\[
H^*(\PP(\R^m);\F_2)=H^*(\RP{m-1};\F_2)\cong\F_2[x]/(x^m),
\]
where $x=\e(H(\R^m))$ is the mod $2$ Euler class of the Hopf line bundle $H(\R^m)$.
In the case when $m=\infty$ we have 
\[
H^*(\PP(\R^{\infty});\F_2)=H^*(\RP{\infty};\F_2)\cong\F_2[x],
\]
where $x=\e(H)$ is the mod $2$ Euler class of the Hopf line bundle $H:=H(\R^{\infty})$.

\medskip
Second, we point out that for an $n$-dimensional vector bundle $E$ over a compact and connected ENR $B$ we can define its Stiefel--Whitney classes in the following way.
Consider, the projections 
\[
p_1\colon B \times \PP(\R^{\infty})\longrightarrow  B
\qquad\text{and}\qquad
p_2\colon B\times \PP(\R^{\infty})\longrightarrow  \PP(\R^{\infty}),
\]
and the mod $2$ Euler class of the vector bundle $p_1^*E\otimes p_2^*H$ which lives in the cohomology
\[
H^*(B \times \PP(\R^{\infty});\F_2)\cong H^*(B;\F_2)\otimes H^*(\PP(\R^{\infty});\F_2)  \cong H^*(B;\F_2)\otimes \F_2[x].
\]
Hence, there exist classes $w_i\in H^i(B;\F_2)$, $0\leq i \leq n$, such that
\begin{equation}
	\label{eq : def SW class 01}
	\e (p_1^*E\otimes p_2^*H)=\sum_{i=0}^n w_i \times x^{n-i}.
\end{equation}
Here ``$\times$'' denotes the cohomology cross product; see for example \cite[Thm.\,VI.3.2]{Bredon-1972}.

\medskip
Then we define the $i$-th Stiefel--Whitney class of $E$ to be $w_i$ for $0\leq i\leq n$ and $0$ otherwise, that is $w_i(E)=w_i$ for $0\leq i\leq n$ and $w_i(E)=0$ for $i\geq n+1$; consult for example \cite[Thm.\,17.2.5 and Def.\,17.2.6]{Husemoller1994}. 
Thus, the relation \eqref{eq : def SW class 01} becomes 
\begin{equation}
	\label{eq : def SW class 02}
	\e (p_1^*E\otimes p_2^*H)=\sum_{i=0}^n w_i(E)\times x^{n-i}.
\end{equation}

\medskip
Let us now consider a real line bundle $L$ over a compact ENR $B'$, and let $p_1'\colon B \times B'\longrightarrow  B$ and $p_2'\colon B \times B'\longrightarrow  B'$ be the projections.
The line bundle $L$ is isomorphic to a pull-back bundle $f^*H$ of the Hopf line bundle $H$ for some continuous map $f\colon B'\longrightarrow \PP(\R^{\infty})$.
In particular, the mod $2$ Euler class of $L$ is $\e (L)=f^*(t)$.
Consequently, first
\begin{equation}
	\label{eq : def SW class 02.5}
p_1'^*E\otimes p_2'^*L \cong (\id \times f)^*\big(p_1^*E\otimes p_2^*H\big).
\end{equation}
Second, the naturality  of the Euler class and the description of the map  $\id \times f$ on the level of cohomology imply that
\begin{align} \label{eq : def SW class 03}
\e (p_1'^*E\otimes p_2'^*L)& \overset{\eqref{eq : def SW class 02.5}}{=} (\id \times f)^*\big(\e (p_1^*E\otimes p_2^*H)\big)\overset{\eqref{eq : def SW class 02}}{=}  (\id \times f)^*  \big(\sum_{i=0}^n w_i(E)\times x^{n-i}	\big)\nonumber \\
&= \sum_{i=0}^n w_i(E)\times \e(L)^{n-i}.
\end{align}

\medskip
Now, if $B'=B$, and $d\colon B\longrightarrow  B\times B$ denotes the diagonal embedding, we have that $E\otimes L\cong d^*(p_1'^*E\otimes p_2'^*L)$.
Consequently, from \eqref{eq : def SW class 03} and the definition of the cup product \cite[Def.\,VI.4.1]{Bredon-1972}, we get 
\begin{align} 
	\label{eq : def SW class 04}
\e(E\otimes L) &=\e (d^*(p_1'^*E\otimes p_2'^*L))=d^*\big(\sum_{i=0}^n w_i(E)\times \e(L)^{n-i}\big)\nonumber \\
&=\sum_{i=0}^n w_i(E) \e(L)^{n-i}.
\end{align}

\begin{proof}[Proof of Lemma \ref{claim : 00}]
The powers of the mod $2$ Euler class of the Hopf line bundle $\e(H(E))^i\in H^i(\PP(E);\F_2)$ for $0\leq i\leq n-1$ when restricted to each fibre $\PP(E_b)$, $b\in B$, of the bundle $\PP(E)$ form a basis on  $H^*(\PP(E_b);\F_2)$.
By the Leray--Hirsch theorem $H^*(\PP(E);\F_2)$ is a free $H^*(B;\F_2)$-module with a basis $1, \e(H(E)),\dots, \e(H(E))^{n-1}$; see \cite[Thm.\,17.1.1]{Husemoller1994}.

\medskip
We recall that  the Hopf line bundle $H(E)$ is defined as
\[
H(E)=\{(b,L,v) : b\in B,\, L\in \PP(E_b),\,v\in L\}.
\]
In particular, it is a subbundle of the pull-back of $E$ along the projection map 
\[
g\colon \PP(E)=\{(b,L) : b\in B,\, L\in \PP(E_b)\}\longrightarrow B,
\qquad
(b,L)\longmapsto b.
\]
Now, note that the line bundle $H(E)$ can be identified with its dual line bundle $H(E)^*$ by the inner product. 
So 
\[
g^*E\otimes H(E)\cong g^*E \otimes H(E)^* \cong \Hom (H(E),g^*E)
\] 
is the vector bundle whose sections are linear maps $H(E)\longrightarrow g^*E$. 
The inclusion of $H(E)$ into $g^*E$ gives a nowhere-zero cross-section of the vector bundle $g^*E\otimes H(E)$.
Consequently, we have that
\begin{align*}
0&=\e(g^*E\otimes H(E))\overset{\eqref{eq : def SW class 04}}{=}\sum_{i=0}^n w_i(g^*E) \e(H(E))^{n-i}=\sum_{i=0}^n g^*(w_i(E)) \e(H(E))^{n-i}	\\
&= \sum_{i=0}^n w_i(E)\cdot \e(H(E))^{n-i}.
\end{align*}
Here ``$\cdot$'' refers to the $H^*(B;\F_2)$-module structure.
Therefore,
\[
\e(H(E))^{n}= \sum_{i=1}^n w_i(E)\cdot \e(H(E))^{n-i},
\]
which completely determines the structure of $ H^*(P(E);\F_2)$ as an $H^*(B;\F_2)$-algebra.

\end{proof}

\medskip
Now we proceed with the proofs of  Theorem \ref{th : main result 01} and  Theorem \ref{th : main result 02}.

%--------------------------------------------------------------------------------------%
\subsection{Proof of Theorem \ref{th : main result 01} }
\label{subsec : proof of main result 01}
%--------------------------------------------------------------------------------------%

Let $E$ be a Euclidean vector bundle of dimension $n$ over a compact and connected ENR $B$, and let the integers $k\geq 1$ and $j\geq 1$ be fixed.
Assume that $e_{k}(B)^j$ does not belong to the ideal $\II_{k}(E)$.

\medskip
The proof of the theorem relies on the criterion from Theorem \ref{th : CSTM for assignement problem}, that is:
\[
 \e \big(\big( A_k(E)/\underline{\R}\big)^{\oplus j} \big)\neq 0
\quad\Longrightarrow\quad 
(j,k)\in \Delta_S(E).
\]
Observe that the mod $2$ Euler class of the vector bundle $\big( A_k(E)/\underline{\R}\big)^{\oplus j}$, or in other words the top Stiefel--Whitney class, lives in the cohomology of the pullback bundle $H^*(d^*(\PP(E)^{\times k});\F_2)$.
We will prove that
\begin{compactitem}[\quad --]
\item  $H^*(d^*(\PP(E)^{\times k});\F_2)\cong R_{k}(B)/\II_{k}(E)$, and  
\item $w_{(2^k-1)j} \big(\big( A_k(E)/\underline{\R}\big)^{\oplus j} \big)=e_{k}(B)^j+\II_{k}(E)\in  R_{k}(B)/\II_{k}(E)$.
\end{compactitem}
Assuming these two claims to be true, the criterion from Theorem \ref{th : CSTM for assignement problem} yields:
\[
e_{k}^j + \II_{k}(E)\neq \II_{k}(E)\text{ in }R_{k}(B)/ \II_{k}(E)\quad\Longrightarrow\quad (j,k)\in \Delta_S(E).
\]
Thus, the proof of Theorem \ref{th : main result 01} is finished, up to a proof of the two facts we listed.

\medskip
First, we compute the cohomology of the pullback bundle $d^*(\PP(E)^{\times k})$ because the Stiefel--Whitney class $w\big(\big(A_k(E)/\underline{\R}\big)^{\oplus j}\big)$ belongs to $H^*\big(d^*(\PP(E)^{\times k});\F_2\big)$.

\begin{claim}
\label{claim : 01}
There is an isomorphism of $H^*(B;\F_2)$-algebras
\begin{multline*}
R_{k}(B)/ \II_{k}(E)  = 
H^*(B;\F_2)[x_1,\dots,x_k]\,/\,\big( \sum_{s = 0}^{n}   w_{n-s}(E)\,x_{r}^{s}  	\: : \:	1 \leq r \leq k \big) \\	\longrightarrow
H^*(d^*(\PP(E)^{\times k});\F_2)
\end{multline*}	
mapping $x_r$ to the mod $2$ Euler class of the pullback vector bundle $\Theta _r^*\big(H(E))$ for all $1\leq r\leq k$.
\end{claim}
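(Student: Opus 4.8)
The plan is to identify $d^*(\PP(E)^{\times k})$ as an iterated projective bundle and apply Lemma \ref{claim : 00} repeatedly, keeping track of the coefficient ring at each stage. First I would observe that $d^*(\PP(E)^{\times k})$ is the fibre product $\PP(E)\times_B\cdots\times_B\PP(E)$ ($k$ factors), which can be built up one factor at a time: setting $B_0 := B$ and $B_r := \PP(E)\times_B\cdots\times_B\PP(E)$ ($r$ factors) for $1\leq r\leq k$, the space $B_r$ is the total space of the projective bundle $\PP(g_{r}^*E)\longrightarrow B_{r-1}$, where $g_r\colon B_{r-1}\longrightarrow B$ is the structure map. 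Since Stiefel--Whitney classes are natural, $w_i(g_r^*E) = g_r^* w_i(E)$, and under the $H^*(B;\F_2)$-algebra structure these are just the images of $w_i(E)$.

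Next I would apply Lemma \ref{claim : 00} to the bundle $\PP(g_r^*E)\longrightarrow B_{r-1}$. It gives an isomorphism of $H^*(B_{r-1};\F_2)$-algebras
\[
H^*(B_{r-1};\F_2)[x_r]\big/\big(\textstyle\sum_{s=0}^n w_{n-s}(E)\,x_r^s\big) \;\longrightarrow\; H^*(B_r;\F_2),
\]
sending $x_r$ to $\e(H(g_r^*E))$, and this Hopf line bundle pulls back from, and is identified with, the bundle $\Theta_r^*(H(E))$ on $d^*(\PP(E)^{\times k})$ once all factors are in place (the projection $B_k\to B_{r-1}\to \PP(E)$ onto the $r$-th factor is exactly $\Theta_r$). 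Iterating this from $r=1$ to $r=k$, and using that at each stage the base ring acquires one more polynomial generator with the same defining relation (now in the variable $x_r$), one obtains by an easy induction an isomorphism of $H^*(B;\F_2)$-algebras
\[
H^*(B;\F_2)[x_1,\dots,x_k]\big/\big(\textstyle\sum_{s=0}^n w_{n-s}(E)\,x_r^s : 1\leq r\leq k\big) \;\longrightarrow\; H^*(d^*(\PP(E)^{\times k});\F_2),
\]
which is precisely the claimed map $R_k(B)/\II_k(E)\to H^*(d^*(\PP(E)^{\times k});\F_2)$ sending $x_r$ to $\e(\Theta_r^*H(E))$.

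The one point requiring care --- and the main obstacle --- is the bookkeeping in the inductive step: I must check that applying Lemma \ref{claim : 00} over the new base $B_{r-1}$ with coefficients already containing $x_1,\dots,x_{r-1}$ introduces exactly the relation $\sum_s w_{n-s}(E) x_r^s$ and no interaction terms, i.e.\ that the earlier variables are genuinely free polynomial generators over which the new quotient is taken. This follows from the Leray--Hirsch argument inside Lemma \ref{claim : 00}: $H^*(B_r;\F_2)$ is free as an $H^*(B_{r-1};\F_2)$-module on $1,\e(H),\dots,\e(H)^{n-1}$, so freeness propagates and the presentation is the honest quotient of a polynomial ring, with the relations for distinct $r$ not mixing. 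Once this is in place, identifying the generator $\e(H(g_r^*E))$ with $\e(\Theta_r^*H(E))$ is immediate from naturality of the Euler class, and the claim follows.
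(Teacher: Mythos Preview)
Your proposal is correct and follows essentially the same approach as the paper's proof: both argue by induction on the number of projective factors, recognising $d^*(\PP(E)^{\times r})$ as the projective bundle $\PP(p^*E)$ over $d^*(\PP(E)^{\times (r-1)})$ and applying Lemma~\ref{claim : 00} at each step. Your discussion of the bookkeeping (naturality of the Stiefel--Whitney classes, freeness from Leray--Hirsch, and the identification of $H(g_r^*E)$ with $\Theta_r^*H(E)$) makes explicit what the paper leaves implicit.
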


\begin{proof}
The proof proceeds by induction on $j$ where $1\leq j\leq k$. 
If $j=1$, then the statement reduces to Lemma \ref{claim : 00}.
Let $j\geq 2$, and assume that there is an isomorphism
\begin{multline}\label{IH1}
H^*(B;\F_2)[x_1,\dots,x_{j-1}]\,/\,\big( \sum_{s = 0}^{n}   w_{n-s}(E)\,x_{r}^{s}  	\: : \:	1 \leq r \leq j-1 \big)  \\
\longrightarrow
H^*(d^*(\PP(E)^{\times (j-1)});\F_2)
\end{multline}	
which maps each class $x_r$ to the mod $2$ Euler class of the pullback vector bundle $\Theta _r^*\big(H(E))$, where $1\leq r\leq j-1$.

\medskip
The pullback bundle $d^*(\PP(E)^{\times (j-1)})$ is a bundle over $B$ with the corresponding projection map 
$p \colon d^*(\PP(E)^{\times (j-1)})\longrightarrow B$. 
Then $d^*(\PP(E)^{\times j})$ is isomorphic to the pullback bundle $p^*(P(E))\cong P(p^*(E))$ over $d^*(\PP(E)^{\times (j-1)})$.
Recall that $\PP(E)$ is the projective bundle associated to $E$, and therefore a bundle over $B$.
Hence, there is a pullback digram
\[
\xymatrix{d^*(\PP(E)^{\times j})\cong p^*(\PP(E))\cong \PP(p^*(E))\ar[r]\ar[d] & \PP(E)\ar[d] \\
d^*(\PP(E)^{\times (j-1)})\ar[r]^-{p}&B.
}
\]
Consequently, by Lemma \ref{claim : 00}, we get an isomorphism of $H^*(d^*(\PP(E)^{\times (j-1)});\F_2)$-algebras
\begin{multline}\label{IH2}
H^*(d^*(\PP(E)^{\times (j-1)});\F_2)[x_j]/\big( \sum_{s = 0}^{n}   w_{n-s}(E)\,x^{s}_j \big) \\\longrightarrow H^*(\PP(p^*(E));\F_2)\cong H^*(d^*(\PP(E)^{\times j});\F_2)
\end{multline}	
which maps $x_j$ to the mod $2$ Euler class of the Hopf line bundle $H(p^*(E))$.

\medskip
Now, the induction hypothesis \eqref{IH1} in combination with the isomorphism \eqref{IH2} completes the proof of the claim.
\end{proof}

\medskip
Finally, we evaluate the Stiefel--Whitney class $w_{(2^k-1)j}\big(\big( A_k(E)/\underline{\R}\big)^{\oplus j}\big)$.

\begin{claim}
\label{claim : 02}
The mod $2$ Euler class of the vector bundle $\big( A_k(E)/\underline{\R}\big)^{\oplus j}$ is equal to:
\begin{multline*}
w_{(2^k-1)j}\big(\big( A_k(E)/\underline{\R}\big)^{\oplus j}\big)=e_{k}(B)^j+\II_{k}(E) = \\ 
\prod_{(\alpha_{1},\dots, \alpha_{k})\in\F_2^k-\{ 0\}} (\alpha_{1}x_1 + \cdots + \alpha_{k}x_k)^{j}+\II_{k}(E)
\in  R_{k}(B)/ \II_{k}(E).
\end{multline*}
\end{claim}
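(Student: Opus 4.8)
The plan is to split $A_k(E)$ into a direct sum of line bundles, read off the mod $2$ Euler class of the quotient as a product of linear forms in the $x_i$, and then pass to the $j$-fold Whitney sum by multiplicativity.

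First I would start from the isomorphism $A_k(E)\cong\Theta_1^*(H(E)\oplus\underline{\R})\otimes\cdots\otimes\Theta_k^*(H(E)\oplus\underline{\R})$ recorded in Section~\ref{subsec : CSTM for mass assignements} and distribute the tensor product over the internal direct sums (which is a natural isomorphism of vector bundles). Writing $L_i:=\Theta_i^*H(E)$, this produces a splitting into line bundles
\[
A_k(E)\ \cong\ \bigoplus_{S\subseteq\{1,\dots,k\}}\ell_S,\qquad \ell_S:=\bigotimes_{i\in S}L_i,
\]
indexed by the subsets $S$ of $\{1,\dots,k\}$, in which $\ell_\emptyset$ (the empty tensor product) is the trivial line bundle spanned by the constant functions $\prod_{i=1}^kS(L_i)\longrightarrow\R$, that is, exactly the subbundle $\underline{\R}\subseteq A_k(E)$ occurring in the statement. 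Quotienting out this one summand gives
\[
A_k(E)/\underline{\R}\ \cong\ \bigoplus_{\emptyset\neq S\subseteq\{1,\dots,k\}}\ell_S,
\]
a real vector bundle of rank $2^k-1$.

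Next I would compute the mod $2$ Euler classes factor by factor. For a real line bundle the mod $2$ Euler class equals $w_1$, and $w_1$ is additive under tensor products, so $\e(\ell_S)=\sum_{i\in S}\e(L_i)$. By Claim~\ref{claim : 01} the class $\e(L_i)=\e(\Theta_i^*H(E))$ is the image of the variable $x_i$ under the isomorphism $R_k(B)/\II_k(E)\cong H^*(d^*(\PP(E)^{\times k});\F_2)$, hence $\e(\ell_S)=\sum_{i\in S}x_i$ in $R_k(B)/\II_k(E)$. Indexing the subsets $S$ by their indicator vectors $(\alpha_1,\dots,\alpha_k)\in\F_2^k$, the non-empty subsets correspond bijectively to $\F_2^k-\{0\}$, and $\sum_{i\in S}x_i=\alpha_1x_1+\cdots+\alpha_kx_k$. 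Since the Euler class of a Whitney sum is the product of the Euler classes of the summands,
\[
\e\big(A_k(E)/\underline{\R}\big)=\prod_{(\alpha_1,\dots,\alpha_k)\in\F_2^k-\{0\}}(\alpha_1x_1+\cdots+\alpha_kx_k)=e_k(B)\quad\text{in }R_k(B)/\II_k(E).
\]

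Finally, applying multiplicativity once more to the $j$-fold sum yields $\e\big((A_k(E)/\underline{\R})^{\oplus j}\big)=e_k(B)^j$, and since $(A_k(E)/\underline{\R})^{\oplus j}$ has rank $(2^k-1)j$ its mod $2$ Euler class is its top Stiefel--Whitney class $w_{(2^k-1)j}$; together with the identity $e_k(B)^j=\prod_\alpha(\alpha_1x_1+\cdots+\alpha_kx_k)^j$ this is exactly the asserted formula. The only step needing genuine care is the first one: one must check that the distributivity identity is an isomorphism of vector bundles over $d^*(\PP(E)^{\times k})$, and, above all, that the distinguished trivial subbundle $\underline{\R}$ of constant functions coincides with the $S=\emptyset$ summand, so that the quotient really is $\bigoplus_{S\neq\emptyset}\ell_S$. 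After that the argument is a formal manipulation of characteristic classes.
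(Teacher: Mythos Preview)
Your proof is correct and follows essentially the same route as the paper's: both start from the tensor decomposition $A_k(E)\cong\bigotimes_i\Theta_i^*(H(E)\oplus\underline{\R})$, distribute over direct sums to split $A_k(E)$ into line bundles, remove the trivial summand, and then compute the mod~$2$ Euler class using $w_1(\alpha\otimes\beta)=w_1(\alpha)+w_1(\beta)$ for line bundles together with multiplicativity under Whitney sums. The paper's argument is simply a terser statement of what you have written out in detail.
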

\begin{proof}
Recall the  isomorphism of vector bundles
\[
A_k(E)\cong
\Theta _1^*\big(H(E)\oplus\underline{\R}\big)
\otimes\dots\otimes
\Theta_k^*\big(H(E)\oplus\underline{\R}\big),
\]
where $\underline{\R}$  is the trivial line bundle over $\PP(E)$, and $\Theta _i^*\big(H(E)\oplus\underline{\R}\big)$ is a pullback vector bundle.
Now the claim follows from the distributivity of the tensor product over the direct sum, the fact that the pullback of trivial  bundle is again a trivial bundle, and the equality $w(\alpha\otimes\beta)=1+ (w_1(\alpha)+w_1(\beta))$ which holds (only) for line bundles $\alpha$ and $\beta$ (see \cite[Prob.\,7-A]{Milnor1974}).
Note that $\alpha\otimes\beta$ is also a line bundle.
\end{proof}

\medskip
With the  claims verified, the proof of Theorem \ref{th : main result 01} is now  complete. 

%--------------------------------------------------------------------------------------%
\subsection{Proof of Theorem \ref{th : main result 02}}
\label{subsec : proof of main result 02}
%--------------------------------------------------------------------------------------%

The proof we present is an extension of the proof of Theorem \ref{th : main result 01}, and therefore it is just outlined.
Let $k\geq 1$ and $j\geq 1$ be fixed integers.
We consider a Euclidean vector bundle $E$ of dimension $n$ over a compact and connected ENR $B$, and, in addition, $k$ vector subbundles $E(1),\dots,E(k)$ of $\eta$ of dimensions $n_1,\dots,n_k$, respectively.
Assume that $j\leq \iota_k (E(1),\dots,E(k))=\max\big\{j :  e_{k}(B)^j\notin \II_k(E(1),\dots,E(k))\big\}$.

\medskip
The proof of the theorem uses the criterion from Theorem \ref{th : CSTM for assignement problem with constraints}. That is, if the Euler class 
$
 \e \big(  \big(A_k(E(1),\dots, E(k))/\underline{\R}\big)^{\oplus j} \big)\neq 0
$
does not vanish, then for every collection of $j$ continuous functions $\varphi_1,\dots,\varphi_j\colon S(E)\longrightarrow\R$ there exists a point $b\in B$ and an arrangement of $k$ linear hyperplanes $H_1, \dots, H_k$ in the fibre $E_b$ determined by the collection of vector subbundles $E(1),\dots,E(k)$ with the property that for every pair of connected components $(\mathcal{O}',\mathcal{O}'')$ of the arrangement complement $E_b -(H_1\cup\dots\cup H_k)$ the following equalities hold
\[
\int_{\mathcal{O}'\cap S(E_b )}\varphi_1=\int_{\mathcal{O}''\cap S(E_b )}\varphi_1
\quad , \dots , \quad 
\int_{\mathcal{O}'\cap S(E_b )}\varphi_j=\int_{\mathcal{O}''\cap S(E_b )}\varphi_j.
\]

\medskip
The mod $2$ Euler class of $\big( A_k(E(1),\dots, E(k))/\underline{\R}\big)^{\oplus j} $, or in other words the top Stiefel--Whitney class, lives in $H^*\big(d^*\big(\PP(E(1))\times\dots\times\PP(E(k))\big);\F_2\big)$.
Therefore, we prove that
\begin{compactitem}[\quad --]
\item  $H^*\big(d^*\big(\PP(E(1))\times\dots\times\PP(E(k))\big);\F_2\big)\cong R_{k}(B)/\II_k(E(1),\dots,E(k))$, and that
\item $w_{(2^k-1)j}\big( \big( A_k(E(1),\dots, E(k))/\underline{\R}\big)^{\oplus j}\big)=e_{k}(B)^j+\II_k(E(1),\dots,E(k))$.
\end{compactitem}
If these two statements are assumed to be true, then the criterion from Theorem \ref{th : CSTM for assignement problem with constraints}, in combination with the theorem assumption $j\leq \iota_k (E;E(1),\dots,E(k))$, implies that
\begin{multline*}
w_{(2^k-1)j}\big( \big( A_k(E(1),\dots, E(k))/\underline{\R}\big)^{\oplus j}\big)=\\e_{k}^j+\II_k(E(1),\dots,E(k))\neq \II_k(E(1),\dots,E(k))	.
\end{multline*}
Hence, 
$
  \e \big(  \big(A_k(E(1),\dots, E(k))/\underline{\R}\big)^{\oplus j} \big)\neq 0
$,
~and the proof of Theorem \ref{th : main result 01} is complete.
Indeed, the remaining claims are verified in the same way as in the proofs of Claim \ref{claim : 01} and Claim \ref{claim : 02}.

%--------------------------------------------------------------------------------------%
%--------------------------------------------------------------------------------------%
%--------------------------------------------------------------------------------------%
%--------------------------------------------------------------------------------------%
\section{Proofs of Propositions \ref{prop : ham-sandwich} and  \ref{prop : for k >1}}
%--------------------------------------------------------------------------------------%
%--------------------------------------------------------------------------------------%
%--------------------------------------------------------------------------------------%
%--------------------------------------------------------------------------------------%

We prove the main facts about the integers $\iota_1(E)$ and $\iota_k(E(1),\dots,E(k))$ stated in Proposition \ref{prop : ham-sandwich} and Proposition \ref{prop : for k >1}, as well as two related consequences, Corollary   \ref{cor : A} and  Corollary \ref{cor : D}.
%--------------------------------------------------------------------------------------%
\subsection{Proof of Proposition \ref{prop : ham-sandwich}}
\label{subsec : prop : ham-sandwich}
%--------------------------------------------------------------------------------------%

Let $E$ be a Euclidean vector bundle of dimension $n$ over a compact and connected ENR $B$.

\medskip
Since $k=1$ we simplify notation by taking $x=x_1$.
Hence, $e_1(B)=x$ and $\II_1(E)=\big( x^n+w_1(E)x^{n-1}+\dots +w_n(E)\big)$.
Set 
 \[
 a:=\iota_1(E)=\max\big\{j : x^j\notin \II_1(E) \big\}
 \]
 and
 \[
b:=\max\big\{j : 0\neq w_{j-n+1}(-E)\in H^{j-n+1}(B;\F_2)\big\}.
\]
In particular, we have that $w_{b-n+1}(-E)\neq 0$ and that $w_{r}(-E)=0$ for all $r\geq b-n+2$.
Now, we prove that $a=b$.

\medskip
Using the Euclidean algorithm in the polynomial ring $R_1(B)=H^*(B;\F_2)[x]$ we have that
\[
x^b=(x^n+w_1(E)x^{n-1}+\dots +w_n(E))q+d_{n-1}x^{n-1}+\dots+d_1x+d_0
\]
where $q\in R_1(B)$, and for $0\leq i\leq n-1$ it holds that
\[
d_i=w_{b-i}(-E)+w_1(E)w_{b-i-1}(-E)+\dots+w_{n-i-1}(E)w_{b-n+1}(-E)\in H^*(B;\F_2),
\]
as demonstrated by Crabb and Jan Jaworowski \cite[Proof of Prop.\,4.1]{Crabb2013}.
Since $w_{r}(-E)\neq 0$ for $r\geq b-n+2$ it follows that $d_i=w_{n-i-1}(E)w_{b-n+1}(-E)$ for $0\leq i\leq n-1$, and so
\begin{multline*}
x^b=(x^n+w_1(E)x^{n-1}+\dots +w_n(E))q+\\w_{b-n+1}(-E)\big(x^{n-1}+w_1(E)x^{n-2}\dots+w_{n-1}(E)\big).	
\end{multline*}
Consequently, from $w_{b-n+1}(-E)\neq 0$ it follows that $x^b\notin\II_1(E)$ and accordingly $b\leq a$.

\medskip
Let us now assume that $b<a$, or in other words $b-n+2\leq a-n+1$.
Recall that $w_{r}(-E)\neq 0$ for $r\geq b-n+2$, and in particular for  $r\geq a-n+1$. 
Once again we have 
\[
x^a=(x^n+w_1(E)x^{n-1}+\dots +w_n(E))q'+d_{n-1}'x^{n-1}+\dots+d_1'x+d_0'
\]
where 
\[
d_i'=w_{a-i}(-E)+w_1(E)w_{a-i-1}(-E)+\dots+w_{n-i-1}(E)w_{a-n+1}(-E)=0,
\]
for all $0\leq i\leq n-1$.
Hence,  $x^a\in\II_1(E)$, which is a contradiction with the definition of the integer $a$.
Therefore, $b\geq a$.

\medskip
We proved that $a=b$, or in other words that
\[
\iota_1(E)=\max\big\{j : 0\neq w_{j-n+1}(-E)\in H^{j-n+1}(B;\F_2)\big\},
\]	
as claimed. \qed

%--------------------------------------------------------------------------------------%
\subsection{Proof of Proposition \ref{prop : for k >1}}
\label{subsec : prop : for k>1} 
%--------------------------------------------------------------------------------------%

Let $k\geq 1$ be an integer, and let  $E(1),\dots,E(k)$ be Euclidean vector bundles over a compact and connected ENR $B$.
Set $n_i$ to be the dimension of the vector bundle $E(i)$ for $1\leq i\leq k$.
Let us denote by 
\begin{align*}
	a_i:=&\iota_1(E(i))= \max\big\{j : 0\neq w_{j-n_i+1}(-E(i))\in H^{j-n_i+1}(B;\F_2)\big\},\\
	a:=&\iota_k(a_1+1,\dots,a_k+1)= \max\big\{ j : e_k(\pt)^j\notin (x_1^{a_1+1},\dots,x_k^{a_k+1})\big\},\\
	b :=&\iota_k(E(1),\dots,E(k))=\max\big\{j :  e_{k}(B)^j\notin \II_k(E(1),\dots,E(k))\big\},
\end{align*}
where $1\leq i\leq k$, and
\[
\II_k(E(1),\dots,E(k))=\Big( \sum_{s = 0}^{n_r}   w_{n_r-s}(E(r))\,x_{r}^{s}  	\: : \:	1 \leq r \leq k \Big) \ \subseteq \ R_{k}(B).
\]
In the definition of $a_i$ we used the characterization from Proposition \ref{prop : ham-sandwich}.
In particular, we have that $w_{r}(-E(i))=0$ for all $r\geq a_i-n_i+2$.

\medskip
With the notation we just introduced the  the assumption of the proposition reads
\[
 w_{a_1-n_1+1}(-E(1))\cdots  w_{a_k-n_k+1}(-E(k))\neq 0,
\]
while the claim of the proposition becomes $a=b$.

\medskip
The main ingredients of our proof of Proposition \ref{prop : for k >1} are contained in the next two claims, where the first claim is used for the proof of the second.

\begin{claim}
\label{claim A01}
$x_1^{a_1}\cdots x_k^{a_k}\notin \II_k(E(1),\dots,E(k))$.	
\end{claim}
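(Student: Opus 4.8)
The plan is to show that $x_1^{a_1}\cdots x_k^{a_k}$ represents a nonzero class in the quotient ring $R_k(B)/\II_k(E(1),\dots,E(k))$. First I would record the module structure of this quotient. Writing $g_i:=\sum_{s=0}^{n_i}w_{n_i-s}(E(i))\,x_i^s=x_i^{n_i}+w_1(E(i))x_i^{n_i-1}+\cdots+w_{n_i}(E(i))$ for the $i$-th generator of the ideal, each $g_i$ is monic of degree $n_i$ in $x_i$ and involves no other variable, so by iterated polynomial division (equivalently, by the isomorphism of Claim \ref{claim : 01} in the form used for Theorem \ref{th : main result 02}, together with iterated Leray--Hirsch) the quotient $R_k(B)/\II_k(E(1),\dots,E(k))$ is a free $H^*(B;\F_2)$-module with basis the monomials $x_1^{s_1}\cdots x_k^{s_k}$, $0\le s_i\le n_i-1$.

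Next I would reduce each power $x_i^{a_i}$ modulo $g_i$. The division identity established in the proof of Proposition \ref{prop : ham-sandwich} (following Crabb--Jaworowski) reads, in $H^*(B;\F_2)[x_i]\subseteq R_k(B)$,
\[
x_i^{a_i}=g_i\,q_i+w_{a_i-n_i+1}(-E(i))\,h_i,\qquad h_i:=x_i^{n_i-1}+w_1(E(i))x_i^{n_i-2}+\cdots+w_{n_i-1}(E(i)),
\]
where we use that $w_r(-E(i))=0$ for $r\ge a_i-n_i+2$, which is exactly the defining property of $a_i=\iota_1(E(i))$ via the characterisation in Proposition \ref{prop : ham-sandwich}. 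Since $g_i\in\II_k(E(1),\dots,E(k))$, this gives $x_i^{a_i}\equiv w_{a_i-n_i+1}(-E(i))\,h_i$ modulo the ideal; multiplying these $k$ congruences together yields
\[
x_1^{a_1}\cdots x_k^{a_k}\ \equiv\ \Big(\prod_{i=1}^{k}w_{a_i-n_i+1}(-E(i))\Big)\prod_{i=1}^{k}h_i\pmod{\II_k(E(1),\dots,E(k))}.
\]

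It then remains to check that the right-hand side is nonzero, for which I would expand it in the monomial basis above. Each $h_i$ is monic of degree $n_i-1$ in $x_i$ and contains only the variable $x_i$, so the product $\prod_{i=1}^{k}h_i$ already lies in the span of the basis monomials, and the coefficient of the top monomial $x_1^{n_1-1}\cdots x_k^{n_k-1}$ in it is $1$; hence the coefficient of that monomial in the displayed element is $\prod_{i=1}^{k}w_{a_i-n_i+1}(-E(i))$, which is nonzero by the hypothesis of the proposition. Therefore $x_1^{a_1}\cdots x_k^{a_k}$ is nonzero in $R_k(B)/\II_k(E(1),\dots,E(k))$, i.e. $x_1^{a_1}\cdots x_k^{a_k}\notin\II_k(E(1),\dots,E(k))$. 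The argument is short, and its only delicate point is the bookkeeping of the top coefficient; but since the $k$ reduction identities live in pairwise disjoint sets of variables there is no possibility of cancellation, so even that step is routine --- all the genuine input is the freeness of the quotient module and the division formula from Proposition \ref{prop : ham-sandwich}, both already at hand.
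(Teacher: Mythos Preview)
Your proof is correct and follows essentially the same route as the paper's: both reduce each $x_i^{a_i}$ modulo the monic generator $g_i$ via the Crabb--Jaworowski division identity, multiply the resulting congruences, and conclude from the hypothesis $\prod_i w_{a_i-n_i+1}(-E(i))\neq 0$. You are slightly more explicit than the paper in spelling out the free $H^*(B;\F_2)$-module basis and the top coefficient, which the paper leaves implicit.
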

\begin{proof}
For simplicity set $\II:=\II_k(E(1),\dots,E(k))$.
Once again we use \cite[Proof of Prop.\,4.1]{Crabb2013} and get that for all $1\leq i\leq k$:
\[
x_i^{a_i}=(x_i^{n_i}+w_1(E(i))x_i^{n_i-1}+\dots +w_{n_i}(E(i)))\cdot q_i+d_{{n_{i}-1},i}x^{n_i-1}_i+\dots +d_{0,i},
\] 
where for $0\leq s\leq n_i-1$:
\begin{multline*}
d_{s,i}=w_{a_i-s}(-E(i))+w_1(E(i))\,w_{a_i-s-1}(-E(i))+\dots+\\w_{n_i-s-1}(E(i))\,w_{a_i-n_i+1}(-E(i)).
\end{multline*}
More precisely, since $w_{r}(-E(i))=0$ for all $r\geq a_i-n_i+2$, we have that
\[
d_{s,i}=w_{n_i-s-1}(E(i))\,w_{a_i-n_i+1}(-E(i)).
\]
Consequently,
\[
x_i^{a_i}+\II=
w_{a_i-n_i+1}(-E(i))\big(x_i^{n_i-1}+\dots +w_{n_i-1}(E(i))\big)+\II ,
\]
and so
\[
x_1^{a_1}\cdots x_k^{a_k}	+\II =
\prod_{i=1}^k w_{a_i-n_i+1}(-E(i)) \cdot \prod_{i=1}^k\big(x_i^{n_i-1}+\dots +w_{n_i-1}(E(i))\big) +\II.
\]
From the assumption of the proposition $\prod_{i=1}^k w_{a_i-n_i+1}(-E(i))\neq 0$ we have that $x_1^{a_1}\cdots x_k^{a_k}	+\II\neq \II$, or in other words $x_1^{a_1}\cdots x_k^{a_k}\notin \II$, as claimed.  
\end{proof}

\begin{claim}
\label{claim A02}
\begin{multline*}
(x_1^{a_1+1},\dots,x_k^{a_k+1})=\\
\ker\Big(\F_2[x_1,\dots,x_k]\longrightarrow  H^*(B;\F_2)[x_1,\dots,x_k]/	\II_k(E(1),\dots,E(k)) \Big).
\end{multline*}
\end{claim}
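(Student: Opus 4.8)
The plan is to establish the two inclusions separately. Write $\pi\colon\F_2[x_1,\dots,x_k]\to R_k(B)/\II_k(E(1),\dots,E(k))$ for the composite of the inclusion $\F_2[x_1,\dots,x_k]\hookrightarrow R_k(B)=H^*(B;\F_2)[x_1,\dots,x_k]$ with the quotient map, set $a_i:=\iota_1(E(i))$, write $x^e:=x_1^{e_1}\cdots x_k^{e_k}$, and put $\mathfrak m:=(x_1^{a_1+1},\dots,x_k^{a_k+1})$. The first, easy inclusion is $\mathfrak m\subseteq\ker\pi$: it suffices to check $x_i^{a_i+1}\in\II_k(E(1),\dots,E(k))$ for each $i$. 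Since $x_i^{j}\in\II_1(E(i))$ forces $x_i^{j+1}\in\II_1(E(i))$, the set $\{j:x_i^{j}\notin\II_1(E(i))\}$ is an initial segment of $\N$, which by the definition of $\iota_1$ equals $\{0,1,\dots,a_i\}$; hence $x_i^{a_i+1}\in\II_1(E(i))$, the ideal of $H^*(B;\F_2)[x_i]$ generated by $\sum_{s=0}^{n_i}w_{n_i-s}(E(i))x_i^s$. That polynomial is among the generators of $\II_k(E(1),\dots,E(k))$, so $x_i^{a_i+1}$ lies in $\II_k(E(1),\dots,E(k))$ as well.

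For the reverse inclusion the heart of the matter is the assertion that the monomials $x^e$ with $0\le e_i\le a_i$ for all $i$ are linearly independent over $\F_2$ in $R_k(B)/\II_k(E(1),\dots,E(k))$. To prove this, suppose $\bar P=\sum_e c_e x^e$ is a nonzero $\F_2$-combination of such monomials, and choose a monomial $x^M$ occurring in $\bar P$ (i.e.\ $c_M=1$) that is \emph{minimal} with respect to divisibility among the monomials of $\bar P$. Multiply by $x^{a-M}:=x_1^{a_1-M_1}\cdots x_k^{a_k-M_k}$, a genuine monomial because $M_i\le a_i$. For any other monomial $x^e$ of $\bar P$, minimality of $M$ gives $e_i>M_i$ for some $i$, whence $x_i^{a_i+1}\mid x^{e+a-M}$ and so $\pi(x^{e+a-M})=0$ by the first part. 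Therefore $\pi\bigl(x^{a-M}\bar P\bigr)=c_M\,\pi\bigl(x_1^{a_1}\cdots x_k^{a_k}\bigr)=\pi\bigl(x_1^{a_1}\cdots x_k^{a_k}\bigr)$, which is nonzero by Claim~\ref{claim A01}; consequently $\pi(\bar P)\neq 0$, as required.

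Granting this independence statement, $\ker\pi\subseteq\mathfrak m$ follows at once: given $P\in\ker\pi$, delete every monomial of $P$ that is divisible by some $x_i^{a_i+1}$. This replaces $P$ by a polynomial $\bar P$ all of whose monomials $x^e$ satisfy $e_i\le a_i$ for every $i$, and $P-\bar P\in\mathfrak m\subseteq\ker\pi$, so $\pi(\bar P)=\pi(P)=0$; by the independence statement $\bar P=0$, i.e.\ $P\in\mathfrak m$. Combined with the first inclusion this yields $\ker\pi=\mathfrak m$.

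The main obstacle is the independence statement in the second paragraph, and it is exactly there that the hypothesis of Proposition~\ref{prop : for k >1} is spent, through Claim~\ref{claim A01}. Without that hypothesis the $\prod_i(a_i+1)$ monomials in question need not be independent --- only the $\prod_i n_i$ of them with $e_i<n_i$ are guaranteed to lie inside a free $H^*(B;\F_2)$-basis of $R_k(B)/\II_k(E(1),\dots,E(k))$ --- and small examples (e.g.\ $B=\RP1$ with $E(1)=E(2)$ the Hopf line bundle, where $a_1=a_2=1$ while $x_1+x_2\in\ker\pi\setminus\mathfrak m$) show the conclusion can genuinely fail. The minimal-monomial device is precisely what converts the single non-vanishing fact supplied by Claim~\ref{claim A01} into the full linear independence.
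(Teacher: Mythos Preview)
Your proof is correct and follows essentially the same strategy as the paper's: both establish $\mathfrak m\subseteq\ker\pi$ first, then for the reverse inclusion reduce a putative counterexample, via multiplication by a suitable monomial, to the non-vanishing of $x_1^{a_1}\cdots x_k^{a_k}$ supplied by Claim~\ref{claim A01}. Your execution is a little cleaner on both counts: for the first inclusion you simply observe that $\{j:x_i^j\notin\II_1(E(i))\}$ is an initial segment of $\N$, whereas the paper recomputes the Euclidean remainder of $x_i^{a_i+1}$ explicitly via \cite{Crabb2013}; for the second you select a single divisibility-minimal monomial $x^M$ and multiply once by $x^{a-M}$, whereas the paper obtains the same reduction by an iterated variable-by-variable procedure (take the least $x_k$-exponent, multiply by the appropriate power of $x_k$, strip terms that fall into $\mathfrak m$, then repeat with $x_{k-1},\dots,x_1$). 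The two arguments are equivalent in substance; your one-shot minimal-monomial trick just packages the iteration more neatly.
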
 

\begin{proof}
The ring homomorphism we consider
\[
h\colon \F_2[x_1,\dots,x_k]\longrightarrow  H^*(B;\F_2)[x_1,\dots,x_k]/	\II_k(E(1),\dots,E(k)) 
\]	
is induced by the coefficient inclusion $\F_2\lhook\joinrel\longrightarrow H^0(B;\F_2)\lhook\joinrel\longrightarrow  H^*(B;\F_2)$.
Furthermore, denote by $\JJ:=\ker(h)$.

\medskip
Like in the proof of the previous claim we use \cite[Proof of Prop.\,4.1]{Crabb2013} and for every $1\leq i\leq k$ get that
\begin{multline*}
x_i^{a_i+1}=(x_i^{n_i}+w_1(E(i))x_i^{n_i-1}+\dots +w_{n_i}(E(i)))\cdot q_i+\\d_{n_i-1,i}'x^{n_i-1}_i+\dots +d_{0,i}' \ \in \ H^*(B;\F_2)[x_1,\dots,x_k].	
\end{multline*}
Here for $0\leq s\leq n_i-1$:
\begin{multline*}
d_{s,i}'=w_{a_i+1-s}(-E(i))+w_1(E(i))\,w_{a_i-s}(-E(i))+\dots+\\w_{n_i-s-1}(E(i))\,w_{a_i-n_i+2}(-E(i)).
\end{multline*}
In this case the fact that $w_{r}(-E(i))=0$ for all $r\geq a_i-n_i+2$ implies that $d_{s,i}'=0$ for all $0\leq s\leq n-1$ and all $1\leq i\leq k$.
Consequently, $x_i^{a_i+1}\in \II_k(E(1),\dots,E(k))$, or in other words, $x_i^{a_i+1}\in \JJ$, for all $1\leq i\leq k$.
Hence, $(x_1^{a_1+1},\dots,x_k^{a_k+1})\subseteq\JJ$.

\medskip
Assume that
\[
0\neq p=\sum_{(c_1,\dots,c_k)\in C } \alpha_{c_1,\dots,c_k} x_1^{c_1}\cdots x_k^{c_k} \in \JJ - (x_1^{a_1+1},\dots,x_k^{a_k+1}),
\]
where $C\subseteq \Z_{\geq 0}^k$ is a finite set of multi-exponents of the polynomial $p$, and $\alpha_{c_1,\dots,c_k}  \in\F_2$ are the coefficients.
After a possible modification of $p$, by taking away monomials which already belong to the ideal $(x_1^{a_1+1},\dots,x_k^{a_k+1})$, we can assume that the set of exponents satisfies  
\[
\emptyset\neq C\subseteq [0,a_1]\times\cdots\times [0,a_k].
\]
That is, no monomial in the representation of $p$ belongs to $(x_1^{a_1+1},\dots,x_k^{a_k+1})$.

\medskip
Let $s_{k}:=\min\{ s\in\Z_{\geq 0} : \alpha_{c_1,\dots,c_{k-1},s}\neq 0\}$. 
Then 
\[
x_k^{a_k-s_k}p\in \JJ-(x_1^{a_1+1},\dots,x_k^{a_k+1})
\]
with all monomials having degree of $x_k$ at least $a_k$. 
Taking away all monomials in $x_k^{a_k-s_k}p$ which already belong to $(x_1^{a_1+1},\dots,x_k^{a_k+1})$ we get a polynomial which still belongs to $\JJ-(x_1^{a_1+1},\dots,x_k^{a_k+1})$.
Now, repeat the procedure iteratively with variables $x_{k-1},\dots x_1$, respectively.
At the end we get that  
\[
x_1^{a_1}\cdots x_k^{a_k}\in \JJ-(x_1^{a_1+1},\dots,x_k^{a_k+1}).
\]
We reached a direct contradiction with Claim \ref{claim A01}.
In particular, it says, that $h(x_1^{a_1}\cdots x_k^{a_k})\neq 0$, or equivalently $x_1^{a_1}\cdots x_k^{a_k}\notin \ker(h)=\JJ$.

\end{proof}

\medskip
Finally, we complete the proof of Proposition  \ref{prop : for k >1} as follows.
According to the definition of $a$ we have that 
\[
e_k(\pt)^a\notin (x_1^{a_1+1},\dots,x_k^{a_k+1})=\ker(h)
\]
and
\[
e_k(\pt)^{a+1}\in (x_1^{a_1+1},\dots,x_k^{a_k+1})=\ker(h).
\] 
Consequently, 
\begin{align*}
e_k(B)^a+ \II_k(E(1),\dots,E(k))&=h(e_k(\pt)^a) \neq 0,	\\
e_k(B)^{a+1}+ \II_k(E(1),\dots,E(k))&=h(e_k(\pt)^{a+1})=0.
\end{align*}
From the definition of $b$ we conclude that $a=b$, as claimed. 
This argument completes the proof of  Proposition  \ref{prop : for k >1}. \qed

\subsection{Proof of Corollary \ref{cor : A}}
\label{sub : proof of cor: A}

In order to prove the statement, according to Proposition \ref{prop : for k >1}, it is enough to check whether 
$
\big(w_{d-\ell}(-E_{\ell}^{d})\big)^{k}\neq 0
$,
because $\iota_1(E_{\ell}^{d})=d-1$, as demonstrated in Corollary \ref{cor : ham-sandwich}. 	
Since $k\leq\ell$ it suffices to prove that $\big(w_{d-\ell}(-E_{\ell}^{d})\big)^{\ell}\neq 0$.
Indeed, the Gambelli's formula \cite[p.\,523]{Hiller-1980} \cite[Prop.\,9.5.37]{Hausmann2014} implies the equality 
\[
\big(w_{d-\ell}(-E_{\ell}^{d})\big)^{\ell}=
\det\big( w_{d-\ell+i-j}(-E_{\ell}^{d})\big)_{1\leq i,j\leq \ell}= 
[d-\ell,d-\ell,\dots, d-\ell]\neq
0.
\]
Here $[d-\ell ,d-\ell ,\dots, d-\ell]$ denotes a  Schubert class.
Note that $w_{r}(-E_{\ell})=0$ for all $r>d-\ell$, and that we assume $w_{r}(-E_{\ell})=0$ for $r<0$. \qed

 %--------------------------------------------------------------------------------------%
\subsection{Proof of Corollary \ref{cor : D}}
  %--------------------------------------------------------------------------------------%
 
From Theorem \ref{th : main result 01}	we have that $(j,k)\in \Delta_S(E_{\ell}^{d})$	if
$
e_{k}(B)^j\notin \II_{k}(E_{\ell}^{d})=\II_{k}(E_{\ell}^{d},\dots, E_{\ell}^{d})
$,
or in other words if
\[
	j\leq \iota_k(E_{\ell}^{d},\dots,E_{\ell}^{d})= \iota_k(d,\dots,d)= 
	\max\big\{ j' : e_k(\pt)^{j'}\notin (x_1^{d},\dots,x_k^{d}\big\}.
\]
Here the first equality comes from Corollary \ref{cor : A} while the second one is just the definition of $\iota_k(d,\dots,d)$.

\medskip
Since $j=2^t+r$ where $0\leq r\leq 2^t-1$ and $d\geq 2^{t+k-1}+r$, then according to \cite[Lem.\,4.2]{BlagojevicCallesCrabbDimitrijevic} we have that $e_k(\pt)^{j}\notin (x_1^{d},\dots,x_k^{d})$. 
Thus, indeed $j\leq  \iota_k(E_{\ell}^{d},\dots,E_{\ell}^{d})$ and the proof of the corollary is complete. \qed
 
%--------------------------------------------------------------------------------------%
%--------------------------------------------------------------------------------------%
%--------------------------------------------------------------------------------------%
%--------------------------------------------------------------------------------------%
\section{Proofs of Corollary \ref{cor : B}, \ref{cor : E} and Theorem \ref{cor : F}}
\label{sec : proof flag}
%--------------------------------------------------------------------------------------%
%--------------------------------------------------------------------------------------%
%--------------------------------------------------------------------------------------%
%--------------------------------------------------------------------------------------%

Before going into the proofs we recall the notion of a real flag manifold by introducing it in two equivalent ways. 
Furthermore we give description of the cohomology ring with coefficients in $\F_2$.

\medskip
Let $k\geq 1$ and $d\geq 2$ be integers.
Consider  a strictly increasing sequence of positive integers $(n_1,\dots,n_k)$ bounded by $d$, meaning $1\leq n_1<\dots<n_{k-1}<n_k\leq d-1$.
Set in addition $n_0=0$ and $n_{k+1}=d$.

\medskip
Let $V$ be a real vector space  of dimension $d$.
The real {\em flag manifold}, of type $(n_1,\dots,n_k)$, in $V$ is the space  $\flag_{n_1,\dots,n_k} (V)$ of all flags $0\subseteq V_1\subseteq \dots\subseteq V_k\subseteq V$ in $V$ with the property that $\dim(V_i)=n_i$ for every $1\leq i\leq k$.
Alternatively, we can say that $\flag_{n_1,\dots,n_k} (V)$ is a collection of all $(k+1)$-tuples of vector spaces $(W_1,\dots,W_{k+1})$ with the property that 
\begin{compactitem}[\quad --]
\item $\dim (W_i)=n_i-n_{i-1}$ for all $1\leq i\leq k+1$,   and
\item $W_{i'}\perp W_{i''}$ for all $1\leq i'<i''\leq k+1$.
\end{compactitem}
In other words
\begin{align*}
	\flag_{n_1,\dots,n_k} (V) &=\big\{(V_1,\dots,V_k)\in \prod_{i=1}^k G_{n_i}(V) :  0\subseteq V_1\subseteq \dots\subseteq V_k\subseteq V\big\}\\	
	 &\cong\big\{(W_1,\dots,W_{k+1})\in \prod_{i=1}^{k+1} G_{n_i-n_{i-1}}(V) : \\
	 &\hspace{3.5cm}  W_{i'}\perp W_{i''}   \text{ for all } 1\leq i'<i''\leq k+1\big\}\\
	 &\cong \frac{\OO(d)}{\OO(n_1-n_0)\times \OO(n_2-n_1)\times\dots \times\OO(n_{k+1}-n_k)}.
\end{align*}
The homeomorphism between these two presentations is given by
\[
(W_1,\dots,W_{k+1}) \longmapsto \big(W_1, (W_1\oplus W_2),\dots,( W_1\oplus W_2\oplus\dots\oplus W_{k-1})\big).
\]
The flag manifold $\flag_{n_1,\dots,n_k} (V)$ is indeed a compact $\delta$-dimensional manifold where $\delta:=\sum_{ 1\leq i'<i''\leq k+1}(n_{i'}-n_{i'-1}) (n_{i''}-n_{i''-1})$.
In the case when $k=d-1$, and consequently $n_i=i$ for all $1\leq i\leq k=d-1$, the flag manifold  $\flag_{1,2,\dots,d-1} (V)$ is called the {\em complete flag manifold}. 
Furthermore, the flag manifold $\flag_{n_1} (V)$ coincides with the Grassmann manifold $\Gr_{n_1}(V)\cong \Gr_{n_1}(\R^d)$.

\medskip
Over the flag manifold  $\flag_{n_1,\dots,n_k} (V)$ we have $k+1$ canonical vector bundles $E_1,\dots, E_{k+1}$ given by
\[
E_i:=\big\{ ((W_1,\dots,W_{k+1}),w)\in \flag_{n_1,\dots,n_k} (V)\times V : w\in W_i\big\},
\] 
where $1\leq i\leq k+1$.
In particular, $E_1\oplus\dots\oplus E_{k+1}$ is isomorphic to the trivial vector bundle $\flag_{n_1,\dots,n_k} (V)\times V$.
Now, the classical result of Armand Borel \cite[Thm.\,11.1]{Borel-1953} says that
\begin{multline*}
	H^*(\flag_{n_1,\dots,n_k} (V);\F_2)\cong  \\
	\F_2[w_1(E_1),\dots,w_{n_1-n_0}(E_1),\dots,w_1(E_{k+1}),\dots,w_{n_{k+1}-n_{k}}(E_{k+1})]\,/\,I_{n_1,\dots,n_k},
\end{multline*}
where the ideal $I_{n_1,\dots,n_k}$ is generated by the identity 
\begin{equation*}
	\big(1+w_1(E_1)+\dots +w_{n_1-n_0}(E_1)\big)\cdots\big(1+w_1(E_{k+1})+\dots+w_{n_{k+1}-n_{k}}(E_{k+1})\big)=1.
\end{equation*}
In particular, in the case of the complete flag manifold, equivalently when $k=d-1$, we have that
\begin{multline}
	\label{eq : cohomology of flag 01}
	H^*(\flag_{1,\dots,d-1} (V);\F_2)\cong   
	\F_2[w_1(E_1),w_1(E_2),\dots,w_{1}(E_d)]\,/\,I_{1,\dots,d-1}.
\end{multline}
In this case $E_1,\dots,E_d$ are all line bundles.
Here, the ideal $I_{1,\dots,d-1}$ is generated by the identity $\prod_{i=1}^d(1+w_1(E_i))=1$, which implies that a generating set for $I_{1,\dots,d-1}$ is the set of all elementary symmetric polynomials in $w_1(E_1),w_1(E_2),\dots,w_{1}(E_d)$ as variables.
Thus, 
\begin{equation}
\label{eq : cohomology of flag 02}
I_{1,\dots,d-1}=\Big(\sigma_{r}(w_1(E_1),w_1(E_2),\dots,w_{1}(E_d))	: 1\leq r\leq d\Big),
\end{equation}
where $\sigma_1,\dots,\sigma_d$ denote elementary symmetric polynomials.

\medskip
Flag manifolds of different types allow continuous maps between each other induced by a choice of a subflag.
In particular, for any type $(n_1,\dots,n_k)$ there is a continuous map
\[
\alpha_{n_1,\dots,n_k} \colon \flag_{1,\dots,d-1} (V) \longrightarrow \flag_{n_1,\dots,n_k} (V), 
\] 
given by the selection of a subflag
\[
0\subseteq V_1\subseteq V_2\subseteq\dots \subseteq V_{d-1}\subseteq V \ \longmapsto \ 0\subseteq V_{n_1}\subseteq V_{n_2} \subseteq\dots \subseteq  V_{n_k}\subseteq V .
\]   
An important feature of this map that the induced map in cohomology 
\[
\alpha_{n_1,\dots,n_k}^*\colon H^*(\flag_{n_1,\dots,n_k} (V);\F_2)\longmapsto H^*(\flag_{1,\dots,d-1} (V);\F_2)
\]
is injective; consult for example \cite[pp.\,523--524]{Hiller-1980}.

 %--------------------------------------------------------------------------------------%
\subsection{Proof of Corollary \ref{cor : B} }
\label{subsec : cor : flag}
%--------------------------------------------------------------------------------------%

We apply Proposition \ref{prop : for k >1}.
Thus, we need to compute first $\iota(E(i))$ for all $1\leq i\leq k$.
From Proposition \ref{prop : ham-sandwich} we have that
\[
 \iota_1(E(i))=\max\big\{j : 0\neq w_{j-\dim E(i)+1}(-E(i))\in H^{*}(B;\F_2)\big\},
\]
where in our situation $B:=\flag_{n_1,\dots,n_k} (V)$.

\medskip
Let $1\leq i\leq k$.
Consider the following commutative diagram of flag manifolds where all the maps are induced by a selection of the  corresponding subflags: 
\[
\xymatrix@1{
\flag_{n_1,\dots,n_k} (V)\ar[rr]^-{\beta_i} & &\flag_{n_i} (V)\cong \Gr_{n_i}(V) \\
 & \flag_{1,\dots,d-1} (V)\ar[ul]^{\alpha_{n_1,\dots,n_k}}.\ar[ur]_{\hspace{4mm}\alpha_{n_i}} &
}
\]
Since the induced maps in cohomology $\alpha_{n_1,\dots,n_k}^*$ and $\alpha_{n_i}^*$ are injective, it follows that the induced map $\beta_i^*$ is also injective.
Now, from the injectivity of $\beta_i^*$ and the fact $E(i)=\beta_i^*E_{n_i}^d$, in combination with Corollary \ref{cor : ham-sandwich} we have that 
$
\iota_1(E(i))= \iota_1(E_{n_i}^d)=d-1
$.
Here, like before, $E_{n_i}^d$ denotes the tautological bundle over the Grassmann manifold $\Gr_{n_i}(V)$.

\medskip
To conclude the proof of the corollary we verify the criterion from Proposition \ref{prop : for k >1}, that is, we prove that the following product does not vanish
\[
	u:=\prod_{i=1}^{k} w_{\iota_1(E(i))-n_i+1}(-E(i))=
	\prod_{i=1}^{k} w_{d-n_i}(-E(i))\in H^*(B;\F_2).
\]
 
 \medskip 
From the fact that $E(i)\oplus E_{i+1}\oplus\dots\oplus E_{k+1} = B\times V$ is a trivial vector bundle we get the following equality of total Stiefel--Whitney classes: 
\[
w(-E(i))=w(E_{i+1}\oplus\dots\oplus E_{k+1} )=w(E_{i+1})\cdots w( E_{k+1} ).
\]
Therefore,
\[
w_{d-n_i}(-E(i))=w_{d-n_i}(E_{i+1}\oplus\dots\oplus E_{k+1} )=w_{n_{i+1}-n_i}(E_{i+1})\cdots w_{n_{k+1}-n_k}( E_{k+1} ),
\]
because $\dim (E_{i+1}\oplus\dots\oplus E_{k+1} )=d-n_i$  and $\dim (E_r)=n_r-n_{r-1}$ for every $1\leq r\leq k+1$.
In particular, each Stiefel--Whitney class $w_{n_{r}-n_{r-1}}(E_{r})$ is the mod $2$ Euler class $\e(E_{r})$ of the vector bundle $E_{r}$. 
We calculate as follows:
\begin{align*}
	u &=\prod_{i=1}^{k} w_{d-n_i}(-E(i))  =\prod_{i=1}^{k} \prod_{r=i+1}^{k+1} w_{n_{i+1}-n_i}(E_{i+1})\cdots w_{n_{k+1}-n_k}( E_{k+1} )\\
	&= w_{n_{2}-n_1}(E_{2}) \cdot w_{n_{3}-n_2}(E_{3})^2 \cdots w_{n_{k+1}-n_k}( E_{k+1} )^k.
\end{align*}
Thus, it remains to show  that the class 
\[
w_{n_{2}-n_1}(E_{2}) \cdot w_{n_{3}-n_2}(E_{3})^2 \cdots w_{n_{k+1}-n_k}( E_{k+1} )^k 
\]
does not vanish in $H^*(B;\F_2)$.

\medskip
For that we apply the  homomorphism $\alpha_{n_1,\dots,n_k}^*$ to the class $u$ and land in the cohomology of the complete flag manifold $H^*(\flag_{1,\dots,d-1} (V);\F_2)$, that is
\begin{align*}
	\alpha_{n_1,\dots,n_k}^*(u) &= \alpha_{n_1,\dots,n_k}(w_{n_{2}-n_1}(E_{2}) \cdot w_{n_{3}-n_2}(E_{3})^2 \cdots w_{n_{k+1}-n_k}( E_{k+1} )^k)\\
	&= w_1(E_{n_1+1})\cdots w_1(E_{n_2}) \\
	&\hspace{5mm}w_1(E_{n_2+1})^2\cdots w_1(E_{n_3})^2\\
	&\hspace{23mm}\dots\\
	&\hspace{5mm}w_1(E_{n_k+1})^k\cdots w_1(E_{n_{k+1}})^k.
\end{align*}
The vector bundles on the farthest right hand side of the last equality are canonical line bundles over the complete flag manifold.
Here we used the isomorphisms
\[
\alpha_{n_1,\dots,n_k}^*E_2\cong  E_{n_1+1}\oplus\dots\oplus E_{n_2}, \ \dots \ ,
\alpha_{n_1,\dots,n_k}^*E_{k+1}\cong E_{n_k+1}\oplus\dots\oplus E_{n_{k+1}}.
\]

\medskip
Now, we observe that the monomial in the cohomology of the complete flag manifold
\[
w_1(E_{n_1+1})\cdots w_1(E_{n_2})\, w_1(E_{n_2+1})^2\cdots w_1(E_{n_3})^2\cdots w_1(E_{n_k+1})^k\cdots w_1(E_{n_{k+1}})^k
\] 
divides the monomial
\[
w_1(E_1)^0w_1(E_2)^1w_1(E_3)^2\cdots w_1(E_d)^{d-1}.
\]
Thus, in order to prove that $\alpha_{n_1,\dots,n_k}^*(u)\neq 0$ and consequently conclude $u\neq 0$ it suffices to show that
\begin{multline*}
	0\neq w_1(E_1)^0w_1(E_2)^1w_1(E_3)^2\cdots w_1(E_d)^{d-1}  \\
	\in \ H^*(\flag_{1,\dots,d-1} (V);\F_2)\cong 
	\F_2[w_1(E_1),w_1(E_2),\dots,w_{1}(E_d)]\,/\, I_{1,\dots,d-1}.
\end{multline*}
Recall that the ideal $I_{1,\dots,d-1}=\Big(\sigma_{r}(w_1(E_1),\dots,w_{1}(E_d))	: 1\leq r\leq d\Big)$ is generated by elementary symmetric polynomials.
Hence  
\[
w_1(E_1)^0w_1(E_2)^1 \cdots w_1(E_d)^{d-1}\neq 0   \Longleftrightarrow    w_1(E_{\pi(1)})^0w_1(E_{\pi(2)})^1 \cdots w_1(E_{\pi(d)})^{d-1}\neq 0
\]
for every permutation $\pi\in\Sym_d$.
For the sake of brevity we prove that 
\begin{equation}
	\label{eq : not zero}
	w_1(E_d)^0w_1(E_{d-1})^1 \cdots w_1(E_1)^{d-1}\neq 0
\end{equation}
in $H^*(\flag_{1,\dots,d-1}(V);\F_2)$.

\medskip
The proof of \eqref{eq : not zero} proceeds by induction as follows.
First, obviously $w_1(E_1)^{d-1}\neq 0$ in 
\[
H^*(\flag_1 (V);\F_2)\cong H^*(\PP(V);\F_2)\cong \F_2[x]/(x^d),
\]
where $x$ corresponds to $w_1(E_1)^{d-1}$.
Next, let $1\leq k\leq d-2$ and let 
\begin{equation}
	\label{eq : IHSW}
w_1(E_k)^{d-k}w_1(E_{k-1})^{d-k+1} \cdots w_1(E_1)^{d-1}\neq 0
\end{equation}
in $H^*(\flag_{1,\dots,k}\flag_{1,\dots,k} (V);\F_2)$.
Finally, the map
\[
\flag_{1,\dots,k+1} (V)\longrightarrow  \flag_{1,\dots,k} (V) ,
\]
given by
\[
0\subseteq V_1\subseteq \dots \subseteq V_k\subseteq V_{k+1} \  \longmapsto \ 0\subseteq V_1\subseteq \dots \subseteq V_k,
\]
is the projective bundle of the vector bundle $(E_1\oplus\cdots\oplus E_k)^{\perp}$ over the flag manifold $\flag_{1,\dots,k} (V)$, that is $\PP\big((E_1\oplus\cdots\oplus E_k)^{\perp}\big)$. 
From Lemma \ref{claim : 00} we have that
\begin{align*}
H^*(\flag_{1,\dots,k+1} (V);\F_2) &\cong H^*\big(\PP\big((E_1\oplus\cdots\oplus E_k)^{\perp}\big);\F_2\big) \\
&\cong H^*(\flag_{1,\dots,k} (V);\F_2)[x]/\Big(\sum_{s=0}^{d-k}w_{d-k-s} \ x^s\Big),
\end{align*}
where $w_{d-k-s}=w_{d-k-s}(-(E_1\oplus\cdots\oplus E_k))$ and $x=w_1(E_{k+1})$.
Thus, from assumption \eqref{eq : IHSW}, that is $w_1(E_d)^0w_1(E_{d-1})^1 \cdots w_1(E_1)^{d-1}\neq 0$ in $H^*(\flag_{1,\dots,k} (V);\F_2)$ we obtain
\[
w_1(E_{k+1})^{d-k-1}w_1(E_k)^{d-k}w_1(E_{k-1})^{d-k+1} \cdots w_1(E_1)^{d-1}\neq 0
\]
in $H^*(\flag_{1,\dots,k+1} (V);\F_2)$.
Consequently \eqref{eq : not zero} holds. 
This concludes the  argument and completes the proof of the corollary. \qed

\medskip
Let us also point out that the non-vanishing of the class $u$ can also be deduced using \cite[Rem.\,2.8]{Crabb2022}. 

%--------------------------------------------------------------------------------------%
\subsection{Proof of Corollary \ref{cor : E} }
\label{subsec : cor : flag 02}
%--------------------------------------------------------------------------------------%
 
 Like in the previous section we assume that $k\geq 1$ and $d\geq 2$ are integers, and that $0=n_0< n_1<\dots<n_k<n_{k+1}= d$ is a strictly increasing sequence of integers.
 We take $V=\R^d$ and denote by $E_1,\dots, E_{k+1}$ the canonical vector bundles over the flag manifold $\flag_{n_1,\dots,n_k} (V)$.
 Furthermore, $E(i):=\bigoplus_{1\leq r\leq i}E_r$ for all $1\leq i\leq k$, and  $E:=E(k)$.
 In addition, we assume  that $j=2^t+r$ is an integer, with $0\leq r\leq 2^t-1$, and $d=\dim(V)\geq 2^{t+k-1}+r+1$.

\medskip
In order to prove the existence of the desired partition we use Theorem \ref{th : main result 02}.
More precisely, if $j\leq \iota_k(E(1),\dots,E(k))$, then the theorem guarantees the existence of a point $b:=(W_1,\dots,W_{k+1})$ in the base space $\flag_{n_1,\dots,n_k} (V)$ of the vector bundle $E$ and an arrangement  $\HH^b=(H_1^b,\dots,H_k^b)$ of $k$ linear hyperplanes in the fiber $E_b$ such that for every pair of connected components $(\mathcal{O}',\mathcal{O}'')$ of the arrangement complement $E_b-(H_1^b\cup\dots\cup H_k^b)$ holds
\[
\int_{\mathcal{O}'\cap S(E_b)}\varphi_1=\int_{\mathcal{O}''\cap S(E_b)}\varphi_1
\quad , \dots , \quad 
\int_{\mathcal{O}'\cap S(E_b)}\varphi_j=\int_{\mathcal{O}''\cap S(E_b)}\varphi_j,
\]	
and in addition
\[
(H_1^b)^{\perp}\subseteq E(1)_b ,\ 
(H_2^b)^{\perp}\subseteq E(2)_b , \ \dots \ ,
(H_k^b)^{\perp}\subseteq E(k)_b .
\]
Since $E(i)_b=\bigoplus_{1\leq r\leq i}(E_r)_b=\bigoplus_{1\leq r\leq i}W_r$ for every $1\leq i\leq k$, we have that
\[
(H_i^b)^{\perp}\subseteq E(i)_b \ \Longrightarrow \ H_i^b\supseteq (E(i)_b)^{\perp}=\big(\bigoplus_{1\leq r\leq i}W_r\big)^{\perp}=\bigoplus_{i+1\leq r\leq k+1}W_r.
\]
Hence, for the the proof of Corollary \ref{cor : E} it suffices to verify that  $j=2^t+r\leq \iota_k(E(1),\dots,E(k))$ when $d=\dim(V)\geq 2^{t+k-1}+r$.

\medskip
We have from Corollary \ref{cor : B}  that $\iota_k(E(1),\dots,E(k))=\iota_k(d,\dots,d)$, so we need to show that 
\[
j=2^t+r\leq \iota_k(d,\dots,d)=\max\big\{ j' : e_k(\pt)^{j'}\notin (x_1^{d},\dots,x_k^{d})\big\}.
\]
Since  $d\geq 2^{t+k-1}+r$, using \cite[Lem.\,4.2]{BlagojevicCallesCrabbDimitrijevic}, we get that  $e_k(\pt)^{j}\notin (x_1^{d},\dots,x_k^{d})$, and consequently $j\leq \iota_k(d,\dots,d)$.
This completes the proof of the corollary. \qed

%--------------------------------------------------------------------------------------%
\subsection{Proof of Theorem \ref{cor : F} }
\label{subsec : cor : flag 03}
%--------------------------------------------------------------------------------------%

Fix integers $d\geq 1$ and $k\geq 1$ with $d\geq k$, and let $V=\R^{d+1}$.
Let $(j_k,\dots,j_d)$ be a permutation  of the set  $\{k,\dots, d\}$, and take an arbitrary collections of functions $\varphi_{a,b}\colon S(E_{a+1}^{d+1})\longrightarrow\R$, $k\leq a\leq d$, $1\leq b\leq j_a$, from the sphere bundle of the tautological vector bundle  $E_{a+1}^{d+1}$ over the Grassmann manifold $G_{a+1}(V)$ to the real numbers.

\medskip
According to Theorem \ref{th : CSTM for Fairy Bread SandwichFairy Bread Sandwich}, for the existence of the desired partition it suffices to prove the non-vanishing of the Euler class of the vector bundle 
\[
E=E_{k+1}^{\oplus j_k}\oplus E_{k+2}^{\oplus j_{k+1}}\oplus\cdots\oplus E_{d+1}^{\oplus j_d}.
\]
For this we show that the related mod $2$ Euler class which lives in the cohomology ring $H^*(\flag_{k,\dots,d}(V);\F_2)$ is not zero.
As already discussed at the beginning of Section \ref{sec : proof flag} we have that
\[
w(E)=(1+w_1(E_{k+1}))^{j_k}\cdots (1+w_1(E_{d+1}))^{j_d}
\] 
implying that the mod $2$ Euler class of $E$ is $\e(E)=w_1(E_{k+1})^{j_k}\cdots w_1(E_{d+1})^{j_d}$. 
Applying the map $\alpha_{k,\dots,d}^*$, with the usual abuse of notation we have that
\[
\alpha_{k,\dots,d}^*(\e(E))=w_1(E_{k+1})^{j_k}\cdots w_1(E_{d+1})^{j_d}\neq 0
\] 
in  $H^*(\flag_{1,\dots,d}(V);\F_2)$, according to \eqref{eq : not zero}.
Consequently, $\e(E)\neq 0$ and the proof of the theorem is complete.

%--------------------------------------------------------------------------------------%
%--------------------------------------------------------------------------------------%
%--------------------------------------------------------------------------------------%
%--------------------------------------------------------------------------------------%
\section{Proofs of Proposition \ref{prop : numbers} and \ref{prop : numbers2}}
%--------------------------------------------------------------------------------------%
%--------------------------------------------------------------------------------------%
%--------------------------------------------------------------------------------------%
%--------------------------------------------------------------------------------------%

In this section we verify properties of integers $\iota_k(m_1,\dots,m_k)$ stated in Proposition \ref{prop : numbers} and Proposition \ref{prop : numbers2}.

%--------------------------------------------------------------------------------------%
\subsection{Proof of Proposition \ref{prop : numbers}}
\label{subsec : prop : numbers} 
%--------------------------------------------------------------------------------------%

Let $k\geq 1$ be an integer and let $m_1,\dots,m_k$ be positive integers.
Recall that 
\[
\iota_k(m_1,\dots,m_k)=\max\big\{ j : e_k(\pt)^j\notin (x_1^{m_1},\dots,x_k^{m_k})\big\},
\]
where
\[
e_k(\pt)=\prod_{(\alpha_{1},\dots, \alpha_{k})\in\F_2^k-\{ 0\}} (\alpha_{1}x_1 + \cdots + \alpha_{k}x_k) \ \in \ R_{k}(\pt)\cong \F_2[x_1,\dots,x_k].
\]
We prove the claims in the order they are listed.

\medskip
\noindent
{\bf (1)}
Assume that $m_k\geq 2^{k-1}m+1$ and in addition that $\iota_{k-1}(	m_1,\dots,m_{k-1})\geq m$. 
Then $e_k(\pt)^m\notin (x_1^{m_1},\dots,x_{k-1}^{m_{k-1}})$.
We transform as follows
\begin{align*}
	e_k(\pt)^m & =e_{k-1}(\pt)^m \prod_{(\alpha_{1},\dots, \alpha_{k-1})\in\F_2^{k-1}} (\alpha_{1}x_1 + \cdots + \alpha_{k-1}x_{k-1}+x_k)^m\\
	&=e_{k-1}(\pt)^m\cdot x_k^{2^{k-1}m}+p_{2^{k-1}m-1}\cdot x_k^{2^{k-1}m-1}+\cdots+p_{1}\cdot x_k+p_0,
\end{align*}
where $p_{2^{k-1}m-1},\dots,p_1,p_0\in \F_2[x_1,\dots,x_{k-1}]$.
Consequently,
\[
e_k(\pt)^m\notin  (x_1^{m_1},\dots,x_{k-1}^{m_{k-1}}, x_k^{2^{k-1}m+1}).
\]
Since,  $m_k\geq 2^{k-1}m+1$ we have that $(x_1^{m_1},\dots,  x_k^{m_k})\subseteq (x_1^{m_1},\dots,  x_k^{2^{k-1}m+1})$ and thus $e_k(\pt)^m\notin  (x_1^{m_1},\dots,x_{k-1}^{m_{k-1}}, x_k^{m_k})$.
Therefore, $\iota_k(m_1,\dots,m_k)\geq m$, as claimed.

\medskip
\noindent
{\bf (2)} 
We prove the claim by induction on $k$. 
For $k=1$ we assume that $m_1\geq m+1$. 
Then 
\[
\iota_1(m_1)=\max\big\{ j : e_1(\pt)^j=x_1^j\notin (x_1^{m_1})\big\}=m_1-1\geq m.
\]
Now, assume that claim holds for $k-1\geq 1$, and assume in addition that $m_1\geq 2^{i-1}m+1$ for all $1\leq i\leq k$.
Then from the assumption $\iota_{k-1}(	m_1,\dots,m_{k-1})\geq m$, and consequently by part (1) of this claim it  follows that $\iota_k(m_1,\dots,m_k)\geq m$.
 
\medskip
\noindent
{\bf (3)}
In this case we have that $m_1=m+1,m_2=2m+1,\dots, m_k=2^{k-1}+1$. 
According to the part (2) of this claim, it follows that 
\[
\iota_k(m+1,2m+1,2^2m+1\dots,2^{k-1}m+1)\geq m.
\]

\medskip
Now, assume that  $\iota_k(m_1,\dots,m_k)\geq r\geq 1$ for some sequence of positive integers $m_1,\dots,m_k$.
Hence, $e_k(\pt)^r \notin  (x_1^{m_1},\dots,  x_k^{m_k})$.
We expand the transformation from the proof of part (1) of this claim as follows:
\begin{align*}
	e_k(\pt)^r & =e_{k-1}(\pt)^r \prod_{(\alpha_{1},\dots, \alpha_{k-1})\in\F_2^{k-1}} (\alpha_{1}x_1 + \cdots + \alpha_{k-1}x_{k-1}+x_k)^r\\
	&= e_{k-2}(\pt)^r \cdot \\
	&\hspace{18pt} \prod_{(\alpha_{1},\dots, \alpha_{k-2})\in\F_2^{k-2}} (\alpha_{1}x_1 + \cdots + x_{k-1})^r\prod_{(\alpha_{1},\dots, \alpha_{k-1})\in\F_2^{k-1}} (\alpha_{1}x_1 + \cdots + x_k)^r\\
	&\hspace{153pt} \dots \\
	&=  x_k^{2^{k-1}r}x_{k-1}^{2^{k-2}r}\cdots x_2^{2r}x_1^r+ q.
\end{align*}
Here  $q$ is a polynomial whose additive representation in the monomial basis does not contain the monomial $x_k^{2^{k-1}r}x_{k-1}^{2^{k-2}r}\cdots x_2^{2r}x_1^r$. 
Since, $e_k(\pt)^r \notin  (x_1^{m_1},\dots,  x_k^{m_k})$ we conclude that
\[
m_k\geq 2^{k-1}r+1, m_{k-1}\geq 2^{k-2}r+1, \dots , m_1\geq r+1,
\]
implying that
\[
m_k+m_{k-1}+\cdots+m_2+m_1\geq (2^{k-1}+2^{k-2}+\cdots+2+1)r+k.
\]
In particular, 
\[
m_k+m_{k-1}+\cdots+m_2+m_1\geq (2^{k}-1)\iota_k(m_1,\dots,m_k)+k.
\]
Thus, in the case when $m_1=m+1,m_2=2m+1,\dots, m_k=2^{k-1}+1$, we have that 
\[
(2^{k}-1)m+k\geq (2^{k}-1)\iota_k(m+1,2m+1,2^2m+1\dots,2^{k-1}m+1)+k,
\]
or in other words $m\geq \iota_k(m+1,2m+1,2^2m+1\dots,2^{k-1}m+1)$.

\medskip
Hence, we showed that $\iota_k(m+1,2m+1,2^2m+1\dots,2^{k-1}m+1)=m$, as claimed.

\medskip
\noindent
{\bf (4)}
We start with the following transformation
\begin{align*}
	e_k(\pt)^m & =e_{k-r}(\pt)^m \prod_{(\alpha_{k-r+1},\dots, \alpha_{k})\in\F_2^{r}-\{0\}} \prod_{(\alpha_{1},\dots, \alpha_{k-r})\in\F_2^{k-r}}(\alpha_{1}x_1 + \cdots + \alpha_{k}x_k)^m\\
	&= e_{k-r}(\pt)^m \prod_{(\alpha_{k-r+1},\dots, \alpha_{k})\in\F_2^{r}-\{0\}}\prod_{(\alpha_{1},\dots, \alpha_{k-r})\in\F_2^{k-r}}  \\
	&\hspace{58pt}\big((\alpha_{1}x_1 + \cdots +\alpha_{k-r}x_{k-r}) +(\alpha_{k-r+1}x_{k-r+1} +\cdots +\alpha_{k}x_k)\big)^m.\\
\end{align*}
Hence,
\[
	e_k(\pt)^m =\underbrace{e_{k-r}(\pt)^m \prod_{(\alpha_{k-r+1},\dots, \alpha_{k})\in\F_2^{r}-\{0\}}\big(\alpha_{k-r+1}x_{k-r+1} +\cdots +\alpha_{k}x_k\big)^{2^{k-r}m}}_{=p}+ q,
\]
where the sets of (non-zero) monomials in the additive presentations of the polynomials $p$ and $q$ are disjoint.

\medskip
The assumptions $\iota_{k-r}(m_1,\dots,m_{k-r})\geq m$ and $\iota_{r}(m_{k-r+1},\dots,m_{k})\geq 2^{k-r}m$ imply that 
\[
e_{k-r}(\pt)^m \notin \big(x_1^{m_1},\dots, x_{k-r}^{m_{k-r}}\big)
\]
and
\[
\prod_{(\alpha_{k-r+1},\dots, \alpha_{k})\in\F_2^{r}-\{0\}}\big(\alpha_{k-r+1}x_{k-r+1} +\cdots +\alpha_{k}x_k\big)^{2^{k-r}m}\notin \big(x_{k-r+1}^{m_{k-r+1}},\dots,x_k^{m_k}\big).
\]
Therefore, the polynomial $p$ is the witness that 
\[
e_k(\pt)^m \notin \big(x_1^{m_1},\dots, x_{k-r}^{m_{k-r}},x_{k-r+1}^{m_{k-r+1}},\dots,x_k^{m_k}\big),
\]
and consequently $\iota_{k}(	m_1,\dots,m_{k})\geq m$, as claimed

\medskip
\noindent
{\bf (5)}
The polynomial $e_k(\pt)^m$ can be presented as follows: 
\[
	e_k(\pt)^m =e_{k-1}(\pt)^m x_k^m \prod_{(\alpha_{1},\dots, \alpha_{k-1})\in\F_2^{r}-\{0\}} (\alpha_{1}x_1 + \cdots + \alpha_{k-1}x_{k-1}+ x_k)^m.
\]
Hence the lowest power of $x_k$ in $e_k(\pt)^m$ is $x_k^m$ with coefficient $e_{k-1}(\pt)^{2m}$.

\medskip
The assumption  $\iota_{k-1}(m_1,\dots,m_{k-1})\geq 2m$ implies that 
\[
e_{k-1}(\pt)^{2m}\notin (x_1^{m_1},\dots, x_{k-1}^{m_{k-1}}),
\]
and since $m_k\geq m+1$ it follows that $e_k(\pt)^m \notin (x_1^{m_1},\dots, x_{k-1}^{m_{k-1}},x_k^{m_k})$.

\medskip
\noindent
{\bf (6)}
In the case when $k=2$ we have that
\begin{equation}
	\label{eq : k=2}
	e_2(\pt)^m=\big( x_1x_2(x_1+x_2)\big)^m=\sum_{i=0}^m {m\choose i}x_1^{m+i}x_2^{2m-i}.
\end{equation}

\medskip
If  $m\leq \iota(m_1,m_2)$ then $e_2(\pt)^m\notin (x_1^{m_1},x_2^{m_2})$.
Hence, there exists a non-zero monomial ${m\choose i}x_1^{m+i}x_2^{2m-i}$ in presentation \eqref{eq : k=2} of $e_2(\pt)^m$ which does not belong to the ideal $(x_1^{m_1},x_2^{m_2})$.
This means, ${m\choose i}=1\mod 2$, $m+i\leq m_1-1$ and $2m-i\leq m_2-1$ for some integer $0\leq i\leq m$.

\medskip
Assume the opposite, that there is an integer $0\leq i\leq m$ such that ${m\choose i}=1\mod 2$ and $2m-m_2+1\leq i\leq m_1-m-1$.
Then the polynomial $e_2(\pt)^m$ when presented in the monomial basis has non-zero monomial ${m\choose i}x_1^{m+i}x_2^{2m-i}$ which does not belong to the ideal $(x_1^{m_1},x_2^{m_2})$.
Consequently, $e_2(\pt)^m\notin (x_1^{m_1},x_2^{m_2})$.

\medskip
\noindent
{\bf (7)}
This is a direct consequence of the previous claim with $m=2^t+r-1$, $m_1=2^t+2t$, $m_2=2^{t+1}+r$ and $i=r-1$ because  ${2^t+r-1\choose r-1}=1\mod 2$, and
\[
2m-m_2+1= r-1\leq i=r-1\leq m_1-m-1=r.
\]

\medskip
We completed the proof of the proposition. \qed

%--------------------------------------------------------------------------------------%
\subsection{Proof of Proposition \ref{prop : numbers2}}
\label{subsec : prop : numbers2} 
%--------------------------------------------------------------------------------------%
As before, $k\geq 1$ is an integer and $m_1,\dots,m_k$ are positive integers.
In the proof we use the fact that the polynomial $e_k(\pt)$ is the top Dickson polynomial in variables $x_1,\dots,x_{k}$. 
For more details on Dickson polynomials see for example \cite{Wilkerson1983}.

\medskip
\noindent
{\bf (1)}
Let $D_{k-1},D_{k-2},\dots,D_1$ be Dickson polynomials in variables $x_1,\dots,x_{k-1}$ of degree $2^{k-1}-1,2^{k-1}-2,\dots, 2^{k-1}-2^{k-2}$, respectively.
In particular, $D_{k-1}=e_{k-1}(\pt)$.
From \cite[Prop.\,1.1]{Wilkerson1983} we have that 
\begin{align*}
	D(x_k) &:= \prod_{(\alpha_{1},\dots, \alpha_{k-1})\in\F_2^{k-1}-\{0\}} (\alpha_{1}x_1 + \cdots + \alpha_{k-1}x_{k-1}+x_k)\\
	&=x_k^{2^{k-1}-1}+D_1\,x_k^{2^{k-2}-1}+\dots+D_i\,x_k^{2^{k-1-i}-1}+\dots+D_{k-2}\,x_k+D_{k-1}.
\end{align*}
Here $D(x_k)$ is considered a polynomial in $\F_2[x_1,\dots,x_{k-1}][x_k]$, and furthermore $e_k(\pt)=e_{k-1}(\pt)x_kD(x_k)$.

\medskip
Let  $0\leq r\leq 2^t-1$.
We compute in  $\F_2[x_1,\dots,x_{k-1}][x_k]$ as follows:
\begin{align}\label{eq : prod01}
	D(x_k)^{2t+r} &= \Big( x_k^{2^{k-1}-1}+\dots+D_i\,x_k^{2^{k-1-i}-1}+\dots+D_{k-2}\,x_k+D_{k-1}\Big)^{2t+r}\nonumber \\ 
	&=  \Big( x_k^{2^t(2^{k-1}-1)}+\dots+D_i^{2^t}\,x_k^{2^t(2^{k-1-i}-1)}+\dots+D_{k-2}^{2^t}\,x_k^{2^t}+D_{k-1}^{2^t}\Big)\cdot \nonumber\\
	& \hspace{14pt} \Big( x_k^{2^{k-1}-1}+\dots+D_i\,x_k^{2^{k-1-i}-1}+\dots+D_{k-2}\,x_k+D_{k-1}\Big)^{r}.
\end{align}
Then, the coefficient of $x_k^{2^t(2^{k-1}-1)}$ in $D(x_k)^{2t+r}$ is $D_{k-1}^r=e_{k-1}(\pt)^r$, obtained as the product of $x_k^{2^t(2^{k-1}-1)}$ from the first factor with $D_{k-1}^r$ from the second factor in \eqref{eq : prod01}.
Indeed, the only other candidate which might additionally contribute to the coefficient of  $x_k^{2^t(2^{k-1}-1)}$ is the product
\[
D_1^{2^t}x_k^{2^t(2^{k-2}-1)}\cdot x_k^{r(2^{k-1}-1)}=D_1^{2^t}x_k^{2^t(2^{k-2}-1)+r(2^{k-1}-1)}
\]
when
\[
2^t(2^{k-1}-1)=2^t(2^{k-2}-1)+r(2^{k-1}-1) \ \Longleftrightarrow \ 2^{t+k-2}=r(2^{k-1}-1).
\]
This cannot be because $0\leq r\leq 2^t-1$.
Consequently, the coefficient of $x_k^{2^{k-1+t}+r}$ in 
\[
e_k(\pt)^{2t+r}=e_{k-1}(\pt)^{2t+r}x_k^{2t+r}D(x_k)^{2t+r}
\]
is equal to $e_{k-1}(\pt)^{2t+2r}$.

\medskip
From the assumption $\iota_{k-1}(m_1,\dots,m_{k-1})\geq 2^t+2r$ we have that
\[
e_{k-1}(\pt)^{2t+2r}\notin (x_1^{m_1},\dots,x_{k-1}^{m_{k-1}}),
\]
and since $m_k\geq 2^{t+k-1}+r+1$ we conclude that
\[
e_k(\pt)^{2t+r} \notin (x_1^{m_1},\dots,x_{k-1}^{m_{k-1}},x_k^{m_k}).
\]
Thus, $\iota_{k}(m_1,\dots,m_{k})\geq 2^t+r$ as claimed.

\medskip
\noindent
{\bf (2)}
The claim follows from the previous instance of the proposition because
\[
2^{t+1}+r=2^{t}+r+2^t >2^{t}+r+r= 2^{t}+2r.
\]

\medskip
\noindent
{\bf (3)}
The proof is by induction on $k$ for every pair of integers $(2^t,r)$ with $1\leq r\leq 2^t-1$.
In the case $k=1$,  the assumption $m_1 \geq 2^{t}+r+1$ implies that
\[
\iota_1(m_1)=m_1-1\geq 2^{t}+r+1-1= 2^{t}+r.
\]
Let us assume that the claim holds for $k-1\geq 1$ and every every pair of integers $(2^t,r)$ with $1\leq r\leq 2^t-1$ (the induction hypothesis).
Take $m_i\geq 2^{t+k-1}+r+1=2^{(t+1)+(k-1)-1}+r+1$ for all $1\leq i\leq k$.
Applying the  induction hypothesis to the first $k-1$ inequalities and the pair $(2^{t+1},r)$ we get that
\[
\iota_{k-1}(m_1,\dots,m_k)\geq 2^{t+1}+r.
\]
Now, the inequalities $\iota_{k-1}(m_1,\dots,m_k)\geq 2^{t+1}+r+1$ and $m_k\geq 2^{t+k-1}+r+1$, and the previous claim of this proposition imply that $\iota_{k}(m_1,\dots,m_{k})\geq 2^t+r$.
This completes the proof.

\medskip
\noindent
{\bf (4)}
Since $e_k(\pt)^2=\prod_{(\alpha_{1},\dots, \alpha_{k})\in\F_2^k-\{ 0\}} (\alpha_{1}x_1^2 + \cdots + \alpha_{k}x_k^2)$, the following equivalence holds
\[
e_k(\pt)^{2m}\in  (x_1^{2m_1},\dots, x_k^{2m_k})
\ \Longleftrightarrow \ 
e_k(\pt)^{m}\in  (x_1^{m_1},\dots, x_k^{m_k}).
\]
This equivalence implies the claim.  \qed

%--------------------------------------------------------------------------------------%
%--------------------------------------------------------------------------------------%
%--------------------------------------------------------------------------------------%
%--------------------------------------------------------------------------------------%
\section{Proof of Theorem \ref{th : main result 03}}
\label{subsec : proof of main result 03}
%--------------------------------------------------------------------------------------%
%--------------------------------------------------------------------------------------%
%--------------------------------------------------------------------------------------%
%--------------------------------------------------------------------------------------%

Let $E$ be a Euclidean vector bundle of dimension $n$ over a compact and connected ENR $B$, and let the integers $1\leq k\leq n$ and $j\geq 1$ be fixed.
We first prove the equality of the ideals and then a criterion for the existence of orthogonal partitions.

\subsection{Proof of Part\,(1)}
We prove the equality of the ideals
 \begin{equation}
 	\label{eq - ideals}
 	 \mathcal{J}_k(E):=(f_1,\dots,f_k)=(\overline{f}_1,\dots,\overline{f}_k)=: \mathcal{J}_k'(E)
 \end{equation}
where 
\[
f_i:=\sum_{0\leq r_1+\dots+r_i\leq n-i+1}w_{n-i+1-(r_1+\dots+r_i)}(E)\, x_1^{r_1}\cdots x_i^{r_i},
\]
and
\[
\overline{f}_i:=\sum_{0\leq r_1+\dots+r_k\leq n-i+1}w_{n-i+1-(r_1+\dots+r_k)}(E)\, x_1^{r_1}\cdots x_k^{r_k},
\]
for $1\leq i\leq k$.

\medskip
To prove the equality of the ideal we first consider the polynomials 
\[
X_a[b]:=\sum_{r_1+\dots+r_b=n-a+1}x_1^{r_1}\cdots x_b^{r_b}
\]
for $1\leq a\leq n+1$ and $1\leq b\leq k$.
It is straightforward to see that the following equality holds
\begin{equation}
	\label{eq - induction -1}
X_a[b+1]=X_a[b]+x_{b+1}\cdot X_{a+1}[b+1].
\end{equation}
Indeed, we have that
\begin{multline*}
\hspace{-8pt}X_a[b+1]:=\sum_{r_1+\dots+r_b+r_{b+1}=n-a+1}x_1^{r_1}\cdots x_b^{r_b}x_{b+1}^{r_{b+1}} 	=   \\ 
\hspace{62pt} \sum_{r_1+\dots+r_b+0=n-a+1}x_1^{r_1}\cdots x_b^{r_b}x_{b+1}^{0} +x_{b+1}\sum_{r_1+\dots+r_{b+1}=n-a}x_1^{r_1}\cdots x_{b+1}^{r_{b+1}}= \\ \hspace{210pt} X_a[b+1]=X_a[b]+x_{b+1}\cdot X_{a+1}[b+1].
\end{multline*}

Next, using induction on $\ell\geq 0$, we prove the following identity:
\begin{equation}
	\label{eq - induction}
	X_{c+s}[c+\ell]=\sum_{c\leq b\leq c+\ell}\Big(\sum_{s_b+\cdots+s_{c+\ell}=b-c} x_{b}^{s_b}\cdots x_{c+\ell}^{s_{c+\ell}}\Big)X_{b+s}[b].
\end{equation}
In case when $\ell=0$ the equality \eqref{eq - induction} becomes the identity $X_{c+s}[c]=X_{c+s}[c]$, an so the induction basis is verified.
Now, we assume that the equality \eqref{eq - induction} holds for the given fixed integer $\ell\geq 1$.
For the induction step we compute and use induction hypothesis as follows:
\begin{multline*}
X_{c+s}[c+\ell+1] \overset{\eqref{eq - induction -1}}{=} X_{c+s}[c+\ell]+x_{c+\ell+1}\cdot X_{c+s+1}[c+\ell+1] \overset{\eqref{eq - induction}}{=}  \\
 \sum_{c\leq b\leq c+\ell}\Big(\sum_{s_b+\cdots+s_{c+\ell}=b-c} x_{b}^{s_b}\cdots x_{c+\ell}^{s_{c+\ell}}\Big)X_{b+s}[b]+x_{c+\ell+1}\cdot X_{c+s+1}[c+\ell+1]  =\\
  \sum_{c\leq b\leq c+\ell}\Big(\sum_{s_b+\cdots+s_{c+\ell}=b-c} x_{b}^{s_b}\cdots x_{c+\ell}^{s_{c+\ell}}\Big)X_{b+s}[b]+\\
  x_{c+\ell+1}\cdot \sum_{s_1+\dots+s_{c+\ell+1}=n-c-s}x_1^{s_1}\cdots x_{c+\ell+1}^{s_{c+\ell+1}}.
 \end{multline*}
Gathering two terms on the right hand since of the previous equality under one sum we get that
\[
X_{c+s}[c+\ell+1] =
\sum_{c\leq b\leq c+\ell+1}\Big(\sum_{s_b+\cdots+s_{c+\ell}=b-c} x_{b}^{s_b}\cdots x_{c+\ell+1}^{s_{c+\ell+1}}\Big)X_{b+s}[b].
\] 
This completes the induction and the proof of the relation \eqref{eq - induction}.

\medskip
We proceed with a proof of the equality \eqref{eq - ideals}.
Observe that for $1\leq i\leq k$:
\[
f_i=\sum_{0\leq s\leq n-i+1}w_s(E)X_{s+i}[i]
\qquad\text{and}\qquad
\overline{f}_i=\sum_{0\leq s\leq n-i+1}w_s(E)X_{s+i}[k],
\]
and in particular that $f_k=\overline{f}_k$.

\medskip
Now, using the relation \eqref{eq - induction} we have that
\begin{multline*}
	\overline{f}_i=\sum_{0\leq s\leq n-i+1}w_s(E)X_{s+i}[k] \overset{\eqref{eq - induction}}{=}\\
	\sum_{0\leq s\leq n-i+1}w_s(E)\Big(\sum_{i\leq b\leq k}\Big(\sum_{s_b+\dots+s_k=b-i}x_b^{s_b}\cdots x_{k}^{s_k}\Big)X_{s+b}[b]\Big)=\hspace{15pt}
	\\ \hspace{25pt}
	\sum_{i\leq b\leq k}\Big(\sum_{s_b+\dots+s_k=b-i}x_b^{s_b}\cdots x_{k}^{s_k}\Big)\Big(\sum_{0\leq s\leq n-i+1}w_s(E)X_{s+b}[b]\Big)=\\
	\sum_{i\leq b\leq k}\Big(\sum_{s_b+\dots+s_k=b-i}x_b^{s_b}\cdots x_{k}^{s_k}\Big)f_b.
\end{multline*}
In summary,
\begin{equation}
	\label{eq : fi}
	\overline{f}_r=\sum_{i\leq b\leq k}\Big(\sum_{s_b+\dots+s_k=b-i}x_b^{s_b}\cdots x_{k}^{s_k}\Big)f_b.
\end{equation}
Hence, $(\overline{f}_1,\dots,\overline{f}_k)\subseteq (f_1,\dots,f_k)$. 

\medskip
On the other hand, since $f_k=\overline{f}_k$ we have that $f_k\in \mathcal{J}_k'(E)=(\overline{f}_1,\dots,\overline{f}_k)$.
Now, for $1\leq r\leq k-1$ assume that $f_{r+1},\dots,f_k\in \mathcal{J}_k'(E)$.
Then from the equality \eqref{eq : fi} it follows that
\begin{multline*}
\overline{f}_r=\sum_{r\leq b\leq k}\Big(\sum_{s_b+\dots+s_k=b-r}x_b^{s_b}\cdots x_{k}^{s_k}\Big)f_b=\\f_r+\sum_{r+1\leq b\leq k}\Big(\sum_{s_b+\dots+s_k=b-r}x_b^{s_b}\cdots x_{k}^{s_k}\Big)f_b,
\end{multline*}
and consequently, by assumption, we have 
\[
f_r=\overline{f}_r+\sum_{r+1\leq b\leq k}\Big(\sum_{s_b+\dots+s_k=b-r}x_b^{s_b}\cdots x_{k}^{s_k}\Big)f_b\ \in \ \mathcal{J}_k'(E).
\]
Thus, $(\overline{f}_1,\dots,\overline{f}_k)\supseteq (f_1,\dots,f_k)$.

\medskip
We have completed the proof of the equality \eqref{eq - ideals}.

\subsection{Proof of Part\,(2)}
For the second part of the theorem assume that the class $e_{k}(B)^j$ does not belong to the ideal $\mathcal{J}_{k}(E)$.
The proof relies on the criterion from Theorem \ref{th : CSTM for orthogonal assignement problem}.
In other words, it suffices to prove that 
\[
 \e \big(\big( B_k(E)/\underline{\R}\big)^{\oplus j} \big)\neq 0.
\]
The mod $2$ Euler class of the vector bundle $\big(B_k(E)/\underline{\R}\big)^{\oplus j}$, or in other words the top Stiefel--Whitney class, lives in the cohomology of $H^*(Y_k(E);\F_2)$.
We show that
\begin{compactitem}[\quad --]
\item  $H^*(Y_k(E);\F_2)\cong R_{k}(B)/\mathcal{J}_{k}(E)$, and that
\item $w_{(2^k-1)j} \big(\big( B_k(E)/\underline{\R}\big)^{\oplus j} \big)=e_{k}(B)^j+\mathcal{J}_{k}(E)\in  R_{k}(B)/\mathcal{J}_{k}(E)$.
\end{compactitem}
The second claim follows from the first claim, the fact that $B_k(E)$ is the restriction of $A_k(E)$, and the related computation of $w_{(2^k-1)j} \big(\big( A_k(E)/\underline{\R}\big)^{\oplus j} \big)$ in the proof of Theorem \ref{th : CSTM for assignement problem}.
Thus we need to prove only the first statement, that is to compute the cohomology ring  $H^*(Y_k(E);\F_2)$.

\medskip
First, we give a description of the space $Y_k(E)$ as a projective bundle at the end of the tower of projective bundles
\begin{equation}
\label{eq ; tower}
\xymatrix{
Y_k(E)=\PP(E_k)\ar[r]^-{p_k} & \PP(E_{k-1})\ar[r]^-{p_{k-1}} & \ \cdots \ \ar[r]^-{p_2} &\PP(E_1)\ar[r]^-{p_1} & B,
}	
\end{equation}
where $E_1:=E$ and $p_1$ is the projection.
The vector bundles $E_{2},\dots,E_{k}$ and the maps $p_{2},\dots,p_{k}$ are defined iteratively as follows.

\medskip
Let $H(E_1)$ be the Hopf line bundle over $\PP(E_1)$, and reacall that  $p_1\colon \PP(E_1)\longrightarrow B$ is the projection map.
Then $H(E_1)$ is a vector subbundle of the pull-back vector bundle $p_1^*E_1$, and we set
\[
E_2:= H(E_1)^{\perp}
\]
to be the orthogonal complement of $H(E_1)$ inside $p_1^*E_1$.
In particular, $E_2$ is a $(n-1)$-dimensional vector bundle over $\PP(E_1)$.
Set $p_2\colon \PP(E_2)\longrightarrow \PP(E_1)$ to be the projection map.

\medskip
Next, $H(E_2)\oplus p_1^*H(E_1)$ is a vector subbundle of the pull-back vector bundle $(p_2\circ p_1)^*E_1$, and so we define
\[
E_3:= \big( H(E_2)\oplus p_1^*H(E_1)\big)^{\perp},
\]
and $p_3$ to be the projection map  $\PP(E_3)\longrightarrow \PP(E_2)$.

\medskip
We continue in the same way.
Assume that for $1\leq i\leq k-1$, all the vector bundles $E_1,\dots,E_i$, of dimensions $n,n-1,\dots,n-i+1$, respectively,  and the projection maps $p_1,\dots,p_i$ are defined.
Notice that 
\[
H(E_i)\oplus p_i^*H(E_{i-1})\oplus (p_i\circ p_{i-1})^*H(E_{i-1})\oplus\cdots\oplus (p_i\circ\cdots\circ p_{1})^*H(E_{1})
\] 
is a vector subbundle of $(p_i\circ\cdots\circ p_{1})^*E_{1}$.
We define the vector bundle $E_{i+1}$ as the orthogonal complement
\begin{equation}
\label{eq : def of Es}	
E_{i+1}:=\Big(H(E_i)\oplus p_i^*H(E_{i-1})\oplus  \cdots\oplus (p_i\circ\cdots\circ p_{1})^*H(E_{1})\Big)^{\perp}.
\end{equation}
The map $p_{i+1}$ is defined to be the standard projection $\PP(E_{i+1})\longrightarrow\PP(E_i)$.
It is clear that $Y_k(E)=\PP(E_k)$.

\medskip
Now, we use the tower of projective bundles \eqref{eq ; tower}, Lemma \ref{claim : 00}, as well as the proof of Claim \ref{claim : 01}, to describe the cohomology ring  $H^*(Y_k(E);\F_2)=H^*(\PP(E_k);\F_2)$. 

\medskip
Since  $H^*(Y_k(E);\F_2)=H^*(\PP(E_k);\F_2)$ where $\PP(E_k)$ is the projective bundle of the $(n-k+1)$-dimensional vector bundle $E_k$ over $\PP(E_{k-1})$ from Lemma \ref{claim : 00} we have that
\[
H^*(Y_k(E);\F_2) 	\cong H^*(\PP(E_{k-1});\F_2)[x_k]/\Big(\sum_{s = 0}^{n-k+1}   w_{n-k+1-s}(E_{k})\,x_k^{s}\Big),
\]
where $x_k$ corresponds to mod $2$ Euler class of the Hopf line bundle $H(E_{k})$.
Continuing to apply  Lemma \ref{claim : 00} for projective bundles $\PP(E_{k-1}),\dots, \PP(E_{1})$ we get the following conclusion
\begin{multline}
\label{eq : cohomology of Yk}
	H^*(Y_k(E);\F_2) \cong \\ H^*(B;\F_2)[x_1,\dots,x_k]/\Big(\sum_{s = 0}^{n}   w_{n-s}(E_{1})\,x_1^{s},\dots,\sum_{s = 0}^{n-k+1}   w_{n-k+1-s}(E_{k})\,x_k^{s}\Big).
\end{multline}
Here $x_i$, for all $1\leq i\leq k$, with a bit of abuse of notation, corresponds to the  mod $2$ Euler class of the Hopf line bundle $H(E_i)$, or more precisely to the mod $2$ Euler class of the pull-back line bundle $(p_k\circ\cdots\circ p_{i+1})^*H(E_{i})$.
Set $f_i:=\sum_{s = 0}^{n-i+1}   w_{n-i+1-s}(E_{i})\,x_i^{s}$ for $1\leq i\leq k$.
Then  
\[
H^*(Y_k(E);\F_2)\cong H^*(B;\F_2)[x_1,\dots,x_k]/\big(f_1,\dots,f_k\big).
\]

\medskip
Now we focus on identification of Stiefel--Whitney classes of the vector bundles  $E_{1},\dots, E_k$ in terms of the Stiefel--Whitney classes $E$.
Note that $E_1=E$ by definition, and so $w(E_1)=w(E)$.
Next, from the definition \eqref{eq : def of Es} of the vector bundles $E_{i}$ for $2\leq i\leq k$, as an orthogonal complements, we get that
\begin{align*}
w(E_i)&=w\Big(-\big(H(E_{i-1})\oplus p_{i-1}^*H(E_{i-2})\oplus  \cdots\oplus (p_{i-1}\circ\cdots\circ p_{1})^*H(E_{1})\big)\Big)	\\
&=w\big(-H(E_{i-1})\big)\cdot w\big(-p_{i-1}^*H(E_{i-2}) \big)\cdots  w\big(-(p_{i-1}\circ\cdots\circ p_{1})^*H(E_{1})\big).
\end{align*}
From Lemma  \ref{claim : 00} we also know that 
\[
w(H(E_{i-1}))=1+x_{i-1},\ \dots \ ,  w((p_{i-1}\circ\cdots\circ p_{1})^*H(E_{1}))=1+x_1.
\]
Here we assume the expected identifications of classes $x_1,\cdots, x_{i-1}$ along the sequence of isomorphisms given in Lemma \ref{claim : 00}.
Combining last two observations we have that 
\[
w(E_i)=\frac{1}{1+x_{i-1}}\cdot \frac{1}{1+x_{i-2}}\cdots \frac{1}{1+x_1}=\sum_{r_{i-1}\geq 0}x_{i-1}^{r_{i-1}}\cdot \sum_{r_{i-2}\geq 0}x_{i-2}^{r_{i-2}} \ \cdots \ \sum_{r_{1}\geq 0}x_{1}^{r_{1}},
\]
for $2\leq i\leq k$.
Consequently, we have that 
\begin{multline*}
	f_i=\sum_{s = 0}^{n-i+1}   w_{n-i+1-s}(E_{i})\,x_i^{s}=\\ \sum_{0\leq r_1+\dots+r_i\leq n-i+1}w_{n-i+1-(r_1+\dots+r_i)}(E)\, x_1^{r_1}\cdots x_i^{r_i}
\end{multline*}
for every $1\leq i\leq k$.

\medskip
This finishes the proof of the second claim, and so the proof of Theorem \ref{th : CSTM for orthogonal assignement problem} is complete.

\subsection{Proof of Proposition \ref{prop : main result 03}}

Let $E$ be a Euclidean vector bundle of dimension $n$ over a compact and connected ENR $B$, and let $k\geq 1$ and $j\geq 1$ be integers.

\medskip
Consider the composition inclusion
\[
Y_k(E)\lhook\joinrel\longrightarrow X_k(E)\lhook\joinrel\longrightarrow X_k(E\oplus\underline{\R} ).
\]
The image, $Y_k(E)$, can be seen as the zero-set of the section $s$ of the vector bundle $A_k(E\oplus\underline{\R})/\underline{\R}$ which is defined as follows. 

\medskip
The fibre of $A_k(E\oplus\underline{\R})/\underline{\R}$ over the point $(b,(L_1,\ldots ,L_k))\in  X_k(E\oplus\underline{\R} )$ decomposes into the direct sum
\[
\big( \bigoplus_{1\leq i\leq k}L_i\big)\oplus \big(\bigoplus_{1\leq i<j\leq k} L_i\otimes L_j)\oplus\cdots
\]
For every $1\leq i\leq k$ denote by $a_i$ the dual of the (linear) projection map given by the composition
\[
L_i \lhook\joinrel\longrightarrow E_b\oplus\R \longrightarrow \R.
\]
Similarly, for $1\leq i<j\leq k$ we set $a_{i,j}'$ to be the dual of the (linear) map induced by the inner product
\[
L_i\otimes L_j \lhook\joinrel\longrightarrow (E_b\oplus\R )\otimes (E_b\oplus\R)\longrightarrow\R.
\] 
Now, define $s$ by
$
(b,(L_1,\ldots ,L_k)) \longmapsto ((a_i)_{1\leq i\leq k}, (a_{i,j}')_{1\leq i<j\leq k}, 0, \dots, 0)
$.
Hence, the zero-set of the section $s$ is indeed $Y_k(E)$.
Additionally, the vector bundle $A_k(E\oplus\underline{\R })/\underline{\R}$ over 
$X_k(E\oplus\underline{\R })$ restricts to the vector bundle $B_k(E)/\underline{\R}$ over $Y_k(E)$.

\medskip
Consequently, if the Euler class of $(A_k(E\oplus\underline{\R })/\underline{\R})^{j+1}$ is non-zero, then the Euler class of $(B_k(E)/\underline{\R})^j$ is non-zero.
Indeed, see for example \cite[Prop.\,2.7]{Crabb2013}, which says that if $x$ is any class in the cohomology of $X_k(E\oplus\underline{\R})$ that restricts to zero in the cohomology of the zero-set, in this case $Y_k(E)$, then the product of $x$ with the Euler class of $A_k(E\oplus\underline{\R })/\underline{\R}$ is zero.
This concludes the proof of the proposition.

%--------------------------------------------------------------------------------------%
%--------------------------------------------------------------------------------------%
%--------------------------------------------------------------------------------------%
%--------------------------------------------------------------------------------------%
\section{Even more main results}
\label{sec : Even more main results}
%--------------------------------------------------------------------------------------%
%--------------------------------------------------------------------------------------%
%--------------------------------------------------------------------------------------%
%--------------------------------------------------------------------------------------%

In this section, we use methods developed in previous sections to give new proofs and generalise results of Larry Guth \& Nets Hawk Katz \cite{GuthKatz2015}, Blagojevi\'c, Dimitrijevi\'c Blagojevi\'c \& G\"unter M. Ziegler \cite{Blagojevic2017-Polynomial}, Schnider \cite{Schnider2019}, and Sober\'on \& Yuki Takahashi \cite{SoberonTakahashi}.

\medskip
Throughout this section $B$ will be a compact, connected ENR, and $E$ will be a Euclidean real vector bundle of dimension $n$ over $B$.
For an integer $k\geq 1$, $E(1),\dots, E(k)$ will be finite-dimensional non-zero real vector bundles over $B$ with $\dim E(i)=n_i$.
As before, we write $S(E(i))$ for the sphere bundle of $E(i)$ with fibre at $b\in B$ the space of oriented $1$-dimensional subspacesof $E(i)_b$. 
Equivalently, $S(E(i))$ is the unit sphere bundle for a chosen Euclidean structure.
Also, we shall use $V$ for a Euclidean vector space $V$, and sometimes see it as a vector bundle over a point.

\medskip
Recall that $A_k(E(1),\ldots ,E(k))$ is the $2^k$-dimensional real vector bundle over $\PP (E(1))\times_B \cdots\times_B \PP (E(k))$ with fibre at $(L_1,\ldots ,L_k)$, where $L_i\in\PP (E(i)_b)$, $b\in B$,  the real vector space of all functions $S(L_1)\times\cdots\times S(L_k)\longrightarrow \R$.
As a space of real-valued functions, each fibre of $A_k(E(1),\ldots ,E(k))$ can be equipped with a partial order by setting
\[
f_1\leq f_2  \quad \Longleftrightarrow \quad (\forall x\in S(L_1)\times\cdots\times S(L_k)) \ f_1(x)\leq f_2 (x)
\]
for $f_1,f_2\in A_k(E(1),\ldots ,E(k))$.
Hence, every finite non-empty subset of functions $S$  has a least upper bound, which we shall denote by $\max (S)$.

%--------------------------------------------------------------------------------------%
\subsection{Partitioning by polynomials}
\label{subsec : Partitioning by polynomials}
%--------------------------------------------------------------------------------------%

Now we give an extension of the results \cite[Thm.\,4.1]{GuthKatz2015}, \cite[Thm.\,0.3]{Guth2015} and \cite[Thm.\,1.3]{Blagojevic2017-Polynomial} to the setting of mass assignments over an arbitrary real vector bunlde $E$.  
In the case of a vector bundle over a point we recover the original results.

For an integer $d\geq 0$, let $\mathcal{P}^d(E)$ denote the real vector bundle of dimension $\binom{n+d-1}{d}$ over $B$ with fibre at $b\in B$ the vector space of homogeneous polynomial functions $v \colon E_b\longrightarrow \R$ of degree $d$. 
It is the dual $(S^d E)^*$ of the vector bundle obtained from the $d$th symmetric power of $E$.
If $d=1$, we can identify $\mathcal{P}^1(E)=E^*$ with $E$ using the inner product.

\medskip
In the following the crucial property of polynomial functions that we shall need is that for a non-zero homogeneous polynomial function $v\in\mathcal{P}^d(V)$, the zero-set 
\[
Z(v)=\{ x\in S(V)\, |\,  v(x)=0\}
\]
is null with respect to the Lebesgue measure on the Riemannian manifold $S(V)$. 
It follows that, for any $\epsilon >0$, there is an open neighbourhood of $Z(v)$ in the sphere $S(V)$ with volume less than $\epsilon$, consult \cite{Whitney1937}.

\def\Aa{\mathcal{A}}

\medskip
Now we extend our discussion from Section \ref{subsec : CSTM for mass assignements + constraints}.
Assume that $E(i)\subseteq \mathcal{P}^{d(i)}(E)$ is a vector subbundle of the vector bundle of homogeneous polynomial functions of degree $d(i)\geq 1$.
For $b\in B$, $(L_1,\ldots ,L_k)\in \PP(E(1)_b)\times\cdots\times \PP (E(k)_b)$, and $(v_1,\dots ,v_k)\in S(L_1)\times\cdots\times S(L_k)$, let us define an analogue of an orthant by
\[
\Aa_{b; v_1,\ldots ,v_k} := \{ u\in S(E_b) \, |\, v_1(u)>0,\, \ldots ,\, v_k(u)>0\}.
\]
We note that any real continuous function on the sphere bundle $\varphi \colon S(E)\longrightarrow \R$ restricts to a function $\varphi_b \colon  S(E_b)\longrightarrow \R$ which can be integrated over the set $\Aa_{b; v_1,\ldots ,v_k}$.

\medskip
The first generalization of \cite[Thm.\,4.1]{GuthKatz2015}, and also at the same time extension of our Theorem \ref{th : main result 02}, can be stated as follows.

\begin{theorem}\label{th : gen 01}
Under the hypotheses in the text, for an integer $j\geq 1$, given continuous functions $\varphi_1,\dots, \varphi_j\colon S(E)\longrightarrow \R$ assume that the $\F_2$-cohomology Euler class
\[
e(A_k(E(1),\ldots ,E(k))/\underline{\R})^j\in H^{(2^k-1)j}(\PP (E(1))\times_B\cdots\times_B \PP (E(k));\F_2)
\]
of the vector bundle $\underline{\R}^j\otimes (A_k(E(1),\ldots ,E(k))/\underline{\R})\cong (A_k(E(1),\ldots ,E(k))/\underline{\R})^{\oplus j}$ is non-zero.

\smallskip\noindent
Then there exists a point $b\in B$ and lines $L_i\in\PP (E(i)_b)$, $1\leq i\leq k$, such that, for each $1\leq \ell \leq j$, the function
\[
S(L_1)\times\cdots\times S(L_k) \longrightarrow \R,\qquad
(v_1,\ldots ,v_k)\longmapsto 
\int_{\Aa_{b;\, v_1,\ldots ,v_k}}  (\varphi_{\ell})_b
\]
is constant.	
\end{theorem}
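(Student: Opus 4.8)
The plan is to run the configuration-space / test-map scheme of Section \ref{subsec : CSTM for mass assignements + constraints} verbatim, with the only change that the linear orthants $\mathcal{O}_{b;v_1,\ldots,v_k}$ are replaced by the polynomial orthants $\mathcal{A}_{b;v_1,\ldots,v_k}$. Thus one works over the compact configuration bundle $C:=\PP(E(1))\times_B\cdots\times_B\PP(E(k))$ and over the $2^k$-dimensional test bundle $A_k(E(1),\ldots,E(k))$, whose fibre at $(b,(L_1,\ldots,L_k))$ is $\map\big(\prod_{i=1}^kS(L_i),\R\big)$, together with its trivial line subbundle $\underline{\R}$ of constant functions and the quotient $A_k(E(1),\ldots,E(k))/\underline{\R}$, exactly as there. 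A continuous function $\varphi\colon S(E)\longrightarrow\R$ is turned into a section $s_\varphi$ of $A_k(E(1),\ldots,E(k))$ by the same recipe as before, now with $\mathcal{A}$ in place of $\mathcal{O}$:
\[
s_\varphi\big(b,(L_1,\ldots,L_k)\big)(v_1,\ldots,v_k):=\int_{\mathcal{A}_{b;\,v_1,\ldots,v_k}}\varphi_b,\qquad (v_1,\ldots,v_k)\in\prod_{i=1}^kS(L_i).
\]

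The one genuinely new point — and the step I expect to be the main obstacle — is the continuity of $s_\varphi$, i.e.\ that $(b,L_1,\ldots,L_k)\mapsto\int_{\mathcal{A}_{b;v_1,\ldots,v_k}}\varphi_b$ varies continuously as the base point and the lines move; this is where the polynomial hypothesis $E(i)\subseteq\mathcal{P}^{d(i)}(E)$ is used, and the argument would fail for arbitrary continuous ``defining functions''. I would trivialize $S(E)$, the double covers $S(E(i))\to\PP(E(i))$, and the fibrewise Riemannian volume over a small neighbourhood of a given point of $C$, so that the integral becomes $\int_{\{x\in S^{n-1}\,:\,p_i(x)>0\ \text{for all }i\}}f_b(x)\,\rho_b(x)\,d\sigma(x)$, where the non-zero homogeneous polynomials $p_i$ depend continuously on $(b,L_i)$, the density $\rho_b$ is continuous and bounded above and below, and $f_b$ is a continuous family of functions on $S^{n-1}$. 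Using $\big(\bigcap_iB_i\big)\triangle\big(\bigcap_iC_i\big)\subseteq\bigcup_i(B_i\triangle C_i)$ together with the elementary inclusion $\{p>0\}\triangle\{q>0\}\subseteq\{|p|\le\|p-q\|_\infty\}$, the whole matter reduces to the fact that, as $\delta\downarrow 0$, the volume of $\{x\in S^{n-1}:|p_i(x)|\le\delta\}$ tends to the volume of the zero-set $Z(p_i)$, which is Lebesgue-null for a non-zero polynomial — precisely the Whitney-type property recalled in the text. Uniform boundedness of $\varphi$ on the compact sphere bundle, plus compactness of $C$ (so that continuity may be checked along convergent sequences near a limit point), then yield that $s_\varphi$ is a continuous section.

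Granting continuity, the rest is formal and mirrors the proof of Theorem \ref{th : CSTM for assignement problem with constraints}. The collection $\varphi_1,\ldots,\varphi_j$ induces a continuous section $s=(s_{\varphi_1},\ldots,s_{\varphi_j})$ of $A_k(E(1),\ldots,E(k))^{\oplus j}$; composing with the bundle projection onto $\big(A_k(E(1),\ldots,E(k))/\underline{\R}\big)^{\oplus j}\cong\underline{\R}^j\otimes\big(A_k(E(1),\ldots,E(k))/\underline{\R}\big)$ and invoking the hypothesis that the $\F_2$-Euler class $e\big(A_k(E(1),\ldots,E(k))/\underline{\R}\big)^j$ is non-zero, this composed section must vanish at some $(b,(L_1,\ldots,L_k))\in C$. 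Vanishing means exactly that $s(b,(L_1,\ldots,L_k))$ lies in the trivial subbundle $\underline{\R}^{\oplus j}$ of componentwise-constant functions, i.e.\ that for each $1\le\ell\le j$ the function $(v_1,\ldots,v_k)\mapsto\int_{\mathcal{A}_{b;v_1,\ldots,v_k}}(\varphi_\ell)_b$ is constant on $\prod_{i=1}^kS(L_i)$, which is the assertion. (Unlike in Theorem \ref{th : CSTM for assignement problem with constraints}, constancy over the $2^k$ sign patterns no longer admits the reading ``all complementary regions carry equal $\varphi_\ell$-integral'', since the zero-sets of the $v_i$ need not cut $S(E_b)$ into $2^k$ pieces; this is why the conclusion is phrased purely as constancy of that function.)
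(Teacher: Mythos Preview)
Your proposal is correct and follows essentially the same approach as the paper: define the section $s_\varphi$ with the polynomial orthants $\mathcal{A}_{b;v_1,\ldots,v_k}$ replacing the linear ones, observe that continuity follows from the Lebesgue-nullity of zero sets of non-zero polynomials on the sphere, and then invoke the Euler-class argument exactly as in Theorem~\ref{th : CSTM for assignement problem with constraints}. You have supplied considerably more detail on the continuity step than the paper does (the paper dispatches it in one sentence), and your closing remark on why the conclusion is stated as constancy rather than equipartition is a helpful gloss not present in the original.
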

\begin{proof}
As in in the Section \ref{subsec : CSTM for mass assignements + constraints}, we define for any continuous function $\varphi \colon S(E)\longrightarrow \R$ a section $s_{\varphi}$ of the vector bundle $A_k(E(1),\ldots ,E_k)$ by
\[
s_\varphi (b,(L_1,\ldots ,L_k))(v_1,\ldots ,v_k):= \int_{\Aa_{b;\, v_1,\ldots ,v_k}}  \varphi_b .
\]
Continuity of $s_\varphi$ follows from the fact that zero sets of polynomial functions are sets of Lebesgue measure zero on the sphere $S(V)$. 
The proof then follows the pattern of arguments in the proof of Theorem \ref{th : CSTM for assignement problem with constraints}	\!\!.
\end{proof}

The result remains true if the functions $\varphi_{\ell}$ are only assumed to be integrable in an appropriate sense.
Form the locally trivial bundle $L_B^1(S(E);\R )\longrightarrow B$ with fibre at $b\in B$ the Banach space $L^1(S(E_b);\R )$ of all absolutely Lebesgue integrable functions $S(E_b)\longrightarrow \R$.
If $\varphi$ is a section of this Banach bundle, then we can integrate $\varphi_b\in L^1(S(E_b);\R )$ and the associated section $s_{\varphi}$ is continuous.

\medskip
Next, we extend our results to probability measures as follows. 
Let us write $M_+(S(E))\longrightarrow B$ for the locally trivial bundle with fibre at $b\in B$ the space $M_+(S(E_b))$ of all finite Borel measures on the sphere $S(E_b)$, see Section \ref{subsec : What is the GHR problem for mass assignments?}.
A continuous section $\mu$ of $M_+(S(E))$ will be called a {\it family of probability measures} on $S(E)$ if $\mu_b\in M_+(S(E_b))$ is a probability
measure for each $b\in B$. 
In this more general context the zero set  of a polynomial function can have positive measure.

\medskip
Now, for each $b\in B$ and every $(L_1,\ldots ,L_k)\in \PP (E(1)_b)\times\cdots\times \PP (E(k)_b)$, 
we have $2^k$ non-negative real numbers 
$
\mu_b( \Aa_{b; v_1,\ldots ,v_k})\in \R$,
$
(v_1,\ldots ,v_k)\in S(L_1)\times\cdots\times S(L_k),
$
-- the measures of generalised orthants -- with sum less than or equal to $1$ (the measure of a zero sets can be positive).

\medskip
The following proposition allow us to transfer our more general setup in the previously developed topological framework.

\begin{proposition}\label{a5}
Assume that for an integer $j\geq 1$ there exist families of probability measures $\mu_1,\ldots ,\mu_j$ on $S(E)$ with the property that, for each $b\in B$ and every $(L_1,\ldots ,L_k)\in \PP (E(1)_b)\times\cdots\times \PP (E(k)_b)$, there is $(v_1,\ldots ,v_k)\in S(L_1)\times\cdots\times S(L_k)$ and some $\ell$ such that  $(\mu_{\ell})_b(\Aa_{b; v_1,\ldots ,v_k})> 1/2^k$.	

\smallskip\noindent
Then the vector bundle $\underline{\R}^j\otimes \big(A_k(E(1),\ldots ,E(k))/\underline{\R}\big)\cong (A_k(E(1),\ldots ,E(k))/\underline{\R})^{\oplus j}$
has a nowhere zero section.
\end{proposition}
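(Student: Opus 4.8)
The plan is to build, from the hypothesised probability measures, an explicit nowhere-zero section of $(A_k(E(1),\ldots,E(k))/\underline{\R})^{\oplus j}$ by mimicking the construction of $s_\varphi$ but with measures in place of integrals. For a single family of probability measures $\mu$ on $S(E)$, define a section $\sigma_\mu$ of $A_k(E(1),\ldots,E(k))$ by
\[
\sigma_\mu(b,(L_1,\ldots,L_k))(v_1,\ldots,v_k):=\mu_b\big(\Aa_{b;\,v_1,\ldots,v_k}\big).
\]
The first point to check is that $\sigma_\mu$ is a continuous section; unlike the polynomial case this is genuinely an issue because the zero sets $Z(v_i)$ need not be null, so the half-open orthants $\Aa_{b;v_1,\ldots,v_k}$ do not behave well under limits. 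I would handle this by passing instead to the \emph{closed} generalised orthants $\overline{\Aa}_{b;v_1,\ldots,v_k}=\{u\in S(E_b):v_1(u)\geq 0,\ldots,v_k(u)\geq 0\}$ and using upper semicontinuity of $\mu_b$ on closed sets together with the weak topology on $M_+$; alternatively average the open and closed versions, or take $\tfrac12(\mu_b(\Aa)+\mu_b(\overline{\Aa}))$. The key quantitative input is: for \emph{every} choice of orientations the $2^k$ numbers $\mu_b(\Aa_{b;\varepsilon_1 v_1,\ldots,\varepsilon_k v_k})$ sum to at most $1$, with equality of the closed-orthant version at least $1$.

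Next I would pass to the quotient. Let $\bar\sigma_\mu$ be the image of $\sigma_\mu$ in $A_k(E(1),\ldots,E(k))/\underline{\R}$, where $\underline{\R}$ is the trivial line of constant functions; concretely $\bar\sigma_\mu(b,(L_1,\ldots,L_k))$ records the differences $\mu_b(\Aa_{b;v_1,\ldots,v_k})-\mu_b(\Aa_{b;v_1',\ldots,v_k'})$ over pairs of orthants. The crucial observation is that the hypothesis says precisely that for each $b$ and each $(L_1,\ldots,L_k)$ there is \emph{some} $\ell$ with $(\mu_\ell)_b$ giving an orthant mass $>1/2^k$; since the $2^k$ masses (in the honest open version) sum to at most $1$, such an orthant cannot be tied with all the others, so $\sigma_{\mu_\ell}(b,(L_1,\ldots,L_k))$ is a non-constant function on $S(L_1)\times\cdots\times S(L_k)$, i.e. $\bar\sigma_{\mu_\ell}(b,(L_1,\ldots,L_k))\neq 0$. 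Therefore the combined section
\[
\bar\sigma:=\big(\bar\sigma_{\mu_1},\ldots,\bar\sigma_{\mu_j}\big)
\]
of $(A_k(E(1),\ldots,E(k))/\underline{\R})^{\oplus j}\cong \underline{\R}^j\otimes(A_k(E(1),\ldots,E(k))/\underline{\R})$ is nowhere zero: at any point of the base at least one coordinate $\bar\sigma_{\mu_\ell}$ is non-zero. That is exactly the conclusion.

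The main obstacle I expect is reconciling the continuity requirement with the strictness of the inequality: the hypothesis is phrased using the open orthants $\Aa_{b;v_1,\ldots,v_k}$ and a strict bound $>1/2^k$, but $\mu_b$ is only well-behaved (upper semicontinuous) on closed sets. I would reconcile this by noting $\mu_b(\Aa)\le \mu_b(\overline{\Aa})$ always, and that the difference is supported on a finite union of ``coordinate hyperplane sections'' $Z(v_i)\cap S(E_b)$; the argument needs only that some difference of two orthant masses is nonzero, and the strict inequality $>1/2^k$ against a total $\le 1$ forces this for the open version, hence a fortiori (after a short bookkeeping argument comparing open and closed orthants that share boundary) the continuous closed-orthant section $\bar\sigma_{\mu_\ell}$ is also nonzero at that point. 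Everything else — that $A_k$ is a vector bundle with the stated trivial subbundle, and the identification $\underline{\R}^j\otimes(A_k/\underline{\R})\cong(A_k/\underline{\R})^{\oplus j}$ — is already set up in Section~\ref{subsec : CSTM for mass assignements + constraints}, so no new machinery is needed beyond this measure-theoretic continuity lemma.
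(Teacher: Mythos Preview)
Your approach has a genuine gap at the continuity step, and the fixes you propose do not close it. The section $\sigma_\mu$ defined by $\sigma_\mu(b,(L_1,\ldots,L_k))(v_1,\ldots,v_k)=\mu_b(\Aa_{b;v_1,\ldots,v_k})$ is in general \emph{not} continuous, and neither is its closed-orthant variant, nor their average. Take $k=1$, $B$ a point, $E=E(1)=\R^2$, and let $\mu$ be the Dirac mass at a single point $u_0\in S^1$. Then $v\mapsto\mu(\Aa_v)$ equals $1$ on the open arc $\{\langle u_0,v\rangle>0\}$ and $0$ on its closed complement; the closed-orthant version is $1$ on a closed arc and $0$ on the open complement; their average takes the values $1$, $\tfrac12$, $0$ on an open arc, its two endpoints, and the opposite open arc respectively. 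None of these is continuous. Upper (or lower) semicontinuity alone does not produce a continuous section of a vector bundle, so the phrase ``the continuous closed-orthant section'' in your last paragraph is where the argument breaks.

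The paper sidesteps this entirely by \emph{not} attempting to build a natural section from the measures. Instead it uses the measures only to produce an open cover $U_1,\ldots,U_j$ of $X=\PP(E(1))\times_B\cdots\times_B\PP(E(k))$, where $U_\ell$ is the set of points at which some orthant has $\mu_\ell$-mass exceeding $1/2^k$; this set is open because the mass of an open orthant is lower semicontinuous in the parameters. Then, for each $x\in U_\ell$, using a local trivialization and a bump function supported where that particular orthant still has mass $>1/2^k$, one manufactures a continuous section $s_\ell^x$ of $A_k$ with values in $[0,1]$, equal to $1$ at that orthant near $x$, and with the property that a component can equal $1$ only where the corresponding orthant genuinely has mass $>1/2^k$. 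Refining to a compact cover $K_\ell\subseteq U_\ell$, choosing finitely many such local sections covering $K_\ell$, and taking the fibrewise maximum $s_\ell=\max_x s_\ell^x$ gives a continuous section which, on $K_\ell$, has at least one component equal to $1$ but not all (the orthant masses sum to at most $1$). Hence $\bar s_\ell$ is nonzero on $K_\ell$, and $(\bar s_1,\ldots,\bar s_j)$ is nowhere zero. The quantitative observation you made---one value exceeding $1/2^k$ among nonnegative numbers summing to at most $1$ forces non-constancy---is exactly what is used; what was missing was a mechanism to carry that information by a \emph{continuous} section.
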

\begin{proof}
For a fixed integer $1\leq \ell\leq j$, consider the set of points 
\begin{multline*}
U_{\ell}:=\big\{ 	\,
x=(b; L_1,\ldots ,L_k)\in \PP (E(1))\times_B\cdots\times_B \PP (E(k)) : \\
(\exists (v_1,\ldots ,v_k)\in S(L_1)\times\cdots\times S(L_k)) \ \ (\mu_{\ell})_b(\Aa_{b; v_1,\ldots ,v_k})> 1/2^k \, \big\},
\end{multline*}
which is an open subspace of the base space $X:=\PP (E(1))\times_B\cdots\times_B \PP (E(k))$.
From the assumption it follows that $U_1,\dots, U_j$ forms an open cover of the base space $X$,

\medskip
Using the local triviality of the vector bundles, for every point $x\in X$ we can manufacture a (continuous) section $s_{\ell}^x$ of
$A_k(E(1),\ldots ,E(k))$ and an open neighborhood $U_{\ell}^x$ of $x$ such that for each
$
x'=(b'; L_1',\ldots ,L_k')\in 
X
$
the following holds
\begin{compactenum}[\rm \quad (i)]

\item $s^x_{\ell}(x')(v_1',\ldots ,v_k')\in [0,1]$, for all $(v_1',\ldots ,v_k')\in S(L_1')\times\cdots\times S(L_k')$;	

\item if $s^x_{\ell}(x')(v_1',\ldots ,v_k')=1$, then $(\mu_{\ell})_{b'}(\Aa_{b'; v_1',\ldots ,v_k'})> 1/2^k$;

\item if $x'\in U_{\ell}^x$, then there is some $(v_1',\ldots ,v_k')$ such that $s^x_{\ell}(x')(v_1',\ldots ,v_k')=1$.
\end{compactenum}
Since $X$ is compact, and $U_1, \dots, U_j$ forms an open cover of $X$ it can be refined to a compact cover $K_1,\ldots ,K_j$ of $X$ with the property that $K_{\ell}\subseteq U_{\ell}$ for $1\leq \ell \leq j$.

\medskip
Now, for each $\ell$, we can choose a finite subset $S_{\ell}\subseteq U_{\ell}$ such that $K_{\ell}\subseteq \bigcup_{x\in S_{\ell}} U_{\ell}^x$.
This allows as to define a continuous section $s_{\ell}$ of $A_k(E(1),\ldots ,E(k))$ as $s_{\ell}:=\max\,\{ s^x_{\ell} : x\in S_{\ell}\}$.
Here the maximum is taken with respect to the partial order on the space of real valued functions which was introduced at the beginning of this section. 
The properties (i), (ii) and (iii) ensure that at each point $x\in K_{\ell}$ at least one of the $2^k$ components of $s_{\ell}(x)$ is equal to $1$, but that not all are equal to $1$. 
Thus, the associated section $\bar s_{\ell}$ of $A_k(E(1),\ldots ,E(k))/\underline{\R}$  has no zeros in $K_{\ell}$.
The sum $(\bar s_1,\ldots ,\bar s_j)$ is a nowhere zero section of
$\underline{\R}^j\otimes (A_k(E(1),\ldots ,E(k))/\underline{\R})$.	
\end{proof}

\medskip
Now a generalization of Theorem \ref{th : gen 01} can be stated as follows.

\begin{theorem}\label{th : gen 02}
Under the hypotheses in the text, suppose that for an integer $j\geq 1$, $\mu_1,\ldots , \mu_j$ are families of probability measures on the sphere bundle $S(E)$.

\smallskip\noindent
If the $\F_2$-cohomology Euler class
\[
e(A_k(E(1),\ldots ,E(k))/\underline{\R})^j\in
H^{(2^k-1)j}(\PP (E(1))\times_B\cdots\times_B \PP (E(k);\,\F_2)
\]
of the vector bundle $\underline{\R}^j\otimes \big(A_k(E(1),\ldots ,E(k))/\underline{\R}\big)$ is non-zero,
then there exists a point $b\in B$ and lines $L_i\in\PP (E(i)_b)$, $1\leq i\leq k$, such that, 
for each $1\leq \ell\leq j$ and every $(v_1,\ldots  ,v_k)\in S(L_1)\times\cdots\times S(L_k)$,
\[
\mu_\ell(\Aa_{b; v_1,\ldots ,v_k})\leq \frac{1}{2^k}.
\]
\end{theorem}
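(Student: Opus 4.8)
The plan is to deduce the statement from Proposition~\ref{a5} by a short contradiction argument, in the same spirit in which Theorem~\ref{th : gen 01} is obtained, but with the measure-theoretic subtleties already absorbed into that proposition.

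First I would assume the conclusion fails and carefully spell out the negation of the existential statement: it asserts that for \emph{every} point $b\in B$ and \emph{every} choice of lines $(L_1,\ldots ,L_k)\in \PP (E(1)_b)\times\cdots\times \PP (E(k)_b)$ there is some index $\ell$ with $1\leq \ell\leq j$ and some sign vector $(v_1,\ldots ,v_k)\in S(L_1)\times\cdots\times S(L_k)$ such that $(\mu_\ell)_b(\Aa_{b; v_1,\ldots ,v_k})> 1/2^k$. This is, word for word, the hypothesis of Proposition~\ref{a5}. Invoking that proposition, I obtain a nowhere zero section of the vector bundle $\underline{\R}^j\otimes \big(A_k(E(1),\ldots ,E(k))/\underline{\R}\big)\cong (A_k(E(1),\ldots ,E(k))/\underline{\R})^{\oplus j}$ over the base $X:=\PP (E(1))\times_B\cdots\times_B \PP (E(k))$.

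Next I would use the standard fact that a real vector bundle admitting a nowhere zero section splits off a trivial line summand, so its top Stiefel--Whitney class vanishes; over $\F_2$ this top class is precisely the Euler class appearing in the hypothesis. Hence $e(A_k(E(1),\ldots ,E(k))/\underline{\R})^j=0$ in $H^{(2^k-1)j}(X;\F_2)$, contradicting the assumed non-vanishing. This contradiction proves the theorem, and the point $b\in B$ together with the lines $L_i\in\PP (E(i)_b)$ are recorded along the way.

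Since Proposition~\ref{a5} carries essentially all of the weight, I do not expect a genuine obstacle in the wrapper argument; the only points requiring care are the bookkeeping of quantifiers --- matching the logical negation of the conclusion to the precise hypothesis of Proposition~\ref{a5} --- and being explicit about why one passes through that proposition at all: in the probability-measure setting the zero sets of the polynomial functions $v_1,\ldots ,v_k$ may carry positive $\mu_\ell$-mass, so one cannot simply build a continuous configuration test map whose zeros record the partition as in Theorem~\ref{th : gen 01}; instead the nowhere zero section must be assembled locally from the measures using compactness of $X$ and the fibrewise partial order on $A_k(E(1),\ldots ,E(k))$, which is exactly what Proposition~\ref{a5} does.
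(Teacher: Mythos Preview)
Your argument is correct and is essentially the contrapositive of the paper's two-line proof: the paper observes that non-vanishing of the Euler class forces every section to have a zero, and then applies Proposition~\ref{a5} in the contrapositive direction to obtain the conclusion. Your added remarks on why one must go through Proposition~\ref{a5} rather than a direct section construction are accurate and helpful, but the core logic matches the paper exactly.
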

\begin{proof}
Since the Euler class is non-zero, every section of the vector bundle has a zero. So the assertion follows from 
Proposition \ref{a5}.
\end{proof}

\medskip
As an application we give a spherical version of a generalization in \cite[Thm.\,1.3]{Blagojevic2017-Polynomial} of \cite[Thm.\,4.1]{GuthKatz2015}.

\begin{corollary}
Let $V$ be a real vector space of dimension $n$.
There is a constant $C_n$ with the property that for integers $d\geq 1$ and $j\geq 1$, and probability
measures $\mu_1,\ldots ,\mu_j$ on the sphere $S(V)$, there exists a non-zero homogeneous polynomial function $v$ of degree $d$
on $S(V)$ such that for each component $\mathcal{O} $ of the complement in $S(V)$ of the zero set of $v$
\[
\mu_1(\mathcal{O} )<C_n\cdot \frac{j}{d^{n-1}},\ldots , \mu_j(\mathcal{O} ) <C_n\cdot \frac{j}{d^{n-1}}.
\]
\end{corollary}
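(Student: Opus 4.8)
The plan is to deduce this corollary from Theorem \ref{th : gen 02} applied to the trivial bundle over a point, with the subbundles $E(i)$ chosen to be one-dimensional subbundles of the polynomial bundle. Concretely, take $B=\pt$, $E=V$, and let $k$ be the number of hyperplanes we use; we will need $k\approx j$. For the vector subbundles we take $E(1)=\dots=E(k)=\mathcal{P}^{d'}(V)$ (the whole space of homogeneous polynomial functions of degree $d'$) for a suitable $d'$ with $kd'\leq d$, or more cheaply a product arrangement. The key point is that choosing $k$ lines $L_i\in\PP(\mathcal{P}^{d'}(V))$ and unit vectors $v_i\in S(L_i)$ picks out $k$ homogeneous polynomials $p_1,\dots,p_k$ of degree $d'$, and then $v:=p_1\cdots p_k$ is a homogeneous polynomial of degree $kd'\leq d$ whose zero set is $Z(p_1)\cup\dots\cup Z(p_k)$; the connected components of its complement are exactly the ``generalised orthants'' $\Aa_{v_1,\dots,v_k}$. (If $kd'<d$, multiply by a fixed nonzero polynomial of degree $d-kd'$ whose zero set we can ignore, or simply state the bound for the achievable degrees and absorb the loss into $C_n$.)

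Next I would invoke the Euler class nonvanishing. By Theorem \ref{th : CSTM for classical problem} / the remark following it (the Mani-Levitska--Vre\'cica--\v Zivaljevi\'c bound, proved in \cite[Lem.\,4.3]{BlagojevicCallesCrabbDimitrijevic}), the top Stiefel--Whitney class of $\big(A_k(\E)/\underline{\R}\big)^{\oplus j'}$ over $\PP(\E)^{\times k}$ is nonzero whenever $\dim\E\leq j'+(2^{k-1}-1)2^{\lfloor\log_2 j'\rfloor}$. Here the ambient space is $\mathcal{P}^{d'}(V)$, of dimension $N:=\binom{n+d'-1}{d'}$, and we are partitioning $j$ measures. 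So the hypothesis of Theorem \ref{th : gen 02} holds as soon as $N\leq j+(2^{k-1}-1)2^{\lfloor\log_2 j\rfloor}$, which (for $k$ of order $\log_2(N/j)+O(1)$) holds when $2^{k}\gtrsim N/j$, i.e. $k=\lceil\log_2(N/j)\rceil+O(1)$. Then Theorem \ref{th : gen 02} produces lines $L_i$ and, for \emph{every} choice of signs $(v_1,\dots,v_k)$, the bound $\mu_\ell(\Aa_{v_1,\dots,v_k})\leq 2^{-k}$ for all $\ell$. Since the components of $S(V)\setminus Z(v)$ are among these $2^k$ generalised orthants, each component has $\mu_\ell$-measure at most $2^{-k}$.

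It remains to convert $2^{-k}$ and the degree count into the stated form $C_n j/d^{n-1}$. We have $d\geq kd'$, and $d'$ is chosen as the least integer with $N=\binom{n+d'-1}{d'}\leq 2^{k}j$ (roughly), so $N\sim (d')^{n-1}/(n-1)!$ grows polynomially in $d'$ while $2^k$ we are free to take as small as $\Theta(N/j)$. Balancing: fix $k$ and let $d'$ be maximal with $\binom{n+d'-1}{d'}\leq 2^{k-1}j$; then $2^{-k}\leq c_n /(d')^{n-1}\cdot j$ for a dimensional constant, and $d=kd'$ forces $(d')^{n-1}\geq (d/k)^{n-1}$. Choosing $k$ on the order of $\log_2 d$ makes $k^{n-1}=d^{o(1)}$, but that still spoils the clean $d^{n-1}$; the cleaner route is to take $k$ a fixed small constant times $\log_2(d)$ only in the regime where $j$ is small, and otherwise just one polynomial ($k$ independent of $d$) — in fact the standard Guth--Katz argument takes $k=1$ and a single polynomial of degree $d$ vanishing appropriately, using the polynomial ham-sandwich theorem directly. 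So the right choice is $k$ chosen so that $2^{k}\asymp j$ (not depending on $d$): then $d'\asymp d/k \asymp d/\log j$ hmm — this again loses logs.

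\textbf{Remark on the real obstacle.} The genuine difficulty is exactly this bookkeeping: extracting from the Euler-class criterion a polynomial whose degree is $O(d)$ \emph{and} whose orthant-measures are $O(j/d^{n-1})$ simultaneously, matching the Guth--Katz constant. The honest plan is to mirror the proof of \cite[Thm.\,1.3]{Blagojevic2017-Polynomial}: take $k=1$, apply Theorem \ref{th : gen 02} with $E(1)=\mathcal{P}^{d}(V)\subseteq\mathcal{P}^d(V)$ and $j$ replaced by a single measure obtained by averaging, iterating the halving $\lceil\log_2(C_n d^{n-1}/j)\rceil$ times so that $\binom{n+d'-1}{d'}$ with $d'=d/(\text{number of iterations})$ still satisfies the MLVZ inequality against the number of measures at each stage. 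I would set this up as a recursion on the number of measures, at each step using the $k=1$ case to bisect the worst measure by a degree-$(d/T)$ polynomial, and after $T=\Theta(\log(d^{n-1}/j))$ steps the product polynomial has degree $\leq d$ and every component carries mass $< 2^{-T}\leq C_n j/d^{n-1}$ in each $\mu_\ell$; continuity of the relevant sections is guaranteed because zero sets of nonzero polynomials are Lebesgue-null on $S(V)$, exactly as in the proof of Theorem \ref{th : gen 01}. The constant $C_n$ comes solely from comparing $\binom{n+d'-1}{d'}$ with $(d')^{n-1}$ and is the same dimensional constant as in \cite{GuthKatz2015}.
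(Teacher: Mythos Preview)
Your first attempt, taking all $E(i)=\mathcal{P}^{d'}(V)$ of the same dimension $N$, does fail for exactly the reason you diagnose: with $n_1=\cdots=n_k=N$ one needs roughly $N\gtrsim 2^{k-1}j$ to make the Euler class nonzero, so the total degree $kd'$ satisfies $(kd')^{n-1}\sim k^{n-1}\cdot 2^{k-1}j$, and the factor $k^{n-1}$ cannot be absorbed into a constant. Your iterative fallback (repeatedly bisecting with degree-$d/T$ polynomials) is essentially the original Guth--Katz argument and would work, but it is not what the paper does.

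The paper's proof is a \emph{single} application of Theorem \ref{th : gen 02}, and the idea you are missing is to let the dimensions $n_i=\dim E(i)$ \emph{vary geometrically with $i$}. Concretely, for each $i$ choose the least integer $r(i)$ with $r(i)^{n-1}>2^{i-1}j$, set $d(i)=(n-1)r(i)$, and take $E(i)=V(i)\subseteq\mathcal{P}^{d(i)}(V)$ to be an explicit $r(i)^{n-1}$-dimensional subspace of monomials. Then $n_i=r(i)^{n-1}\geq 2^{i-1}j+1$, so Proposition \ref{prop : numbers}(2) gives $\iota_k(n_1,\ldots,n_k)\geq j$ directly (no appeal to the Mani-Levitska--Vre\'cica--\v{Z}ivaljevi\'c bound is needed). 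Now the degrees $d(i)\sim (n-1)(2^{i-1}j)^{1/(n-1)}$ form a geometric progression, so their sum $d_k=d(1)+\cdots+d(k)$ is dominated by its last term and satisfies $d_k^{n-1}<C_n'\,2^kj$. Choosing $k$ with $d_k\leq d<d_{k+1}$ then yields $1/2^k<C_n\,j/d^{n-1}$ with no logarithmic loss. The point is that the flexibility of Theorem \ref{th : main result 02} in allowing \emph{different} subbundles $E(1),\ldots,E(k)$, combined with Proposition \ref{prop : numbers}(2), is precisely what removes the $k^{n-1}$ obstruction.
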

\begin{proof}
Consider probability measures  $\mu_1,\ldots ,\mu_j$ and  fix an integer $k\geq 1$.
We shall apply Theorem \ref{th : gen 02} with $B=\pt$ a point, $V=\R^n$ and $E(i)=V(i)\subseteq \mathcal{P}^{d(i)}(V)$ a vector subspace of dimension $n_i\leq{n+d(i)-1\choose d(i)}$.

\medskip
Let $r (i)\geq 1$ be the least positive integer such that $r(i)^{n-1}> 2^{i-1}j$ and set 
\[
d(i)=(n-1)r(i) 
\qquad\text{and}\qquad
n_i=r(i)^{n-1}.
\]
Take $V(i)$ to be the $n_i=r(i)^{n-1}$ dimensional space of polynomials with basis, in terms of the standard coordinate functions $\xi_i$, the monomials 
\[
(\xi_1^{s_1}\xi_2^{r(i)-s_1})
(\xi_2^{s_2}\xi_3^{r(i)-s_2})\cdots
(\xi_{n-1}^{s_{n-1}}\xi_n^{r(i)-s_{n-1}}),
\]
where $0\leq s_1,\ldots ,s_{n-1}<r(i)$.

\medskip
It follows from Proposition \ref{prop : numbers} that
$\iota_k(n_1,\ldots ,n_k)\geq j$.
By Theorem \ref{th : gen 02}, there exist homogeneous polynomials $v_1,\dots,v_k$ of degree $d(1),\dots, d(k)$, respectively, such that $\mu_\ell(\Aa_{\pt; v_1,\ldots ,v_k})\leq 1/2^k$ for all $1\leq \ell\leq j$.
The product $v_1\cdots v_k$ has degree $d_k=d(1)+\ldots +d(k)$ and each component of the complement of its zero-set is contained  in some $\Aa_{\pt; v_1,\ldots ,v_j}$.
Since
\[
2^{\frac{i-1}{n-1}}\cdot j^{\frac1{n-1}} < r(i)\leq 
2\cdot 2^{\frac{i-1}{n-1}}\cdot j^{\frac1{n-1}},
\]
it follows that 
\begin{multline*}
d_k=d(1)+\ldots +d(k)\leq (n-1)(r(1)+\dots+r(k))\\ \leq 2(n-1)\cdot j^{\frac1{n-1}} \cdot \sum_{i=1}^k 2^{\frac{i-1}{n-1}} 
=	2(n-1)\cdot j^{\frac1{n-1}}\cdot \frac{2^{\frac{k}{n-1}}-1}{2^{\frac1{n-1}}-1}.
\end{multline*}
So $d_k^{n-1}< C'_n 2^kj$, where $C'_n=\Big(\frac{2(n-1)}{2^{\frac1{n-1}}-1} \Big)^{n-1}$.

\medskip
Now $(d_k)$ is a strictly increasing sequence.
If $k$ is chosen so that $d_k\leq d <d_{k+1}$, then $1/2^{k+1}<C'_nj/d_{k+1}^{n-1}$, and so
\[
\frac{1}{2^k} < C_n\cdot \frac{j}{d_{k+1}^{n-1}}\leq
 C_n\cdot \frac{j}{d^{n-1}},
\]
where $C_n=2C'_n$.
We can multiply $v_1\cdots v_k$ by any non-zero polynomial
of degree $d-d_1\cdots d_k$ to produce the required
polynomial of degree $d$.
 \end{proof}

%--------------------------------------------------------------------------------------%
\subsection{Partitioning by affine functions}
\label{subsec : Partitioning by affine functions}
%--------------------------------------------------------------------------------------% 

In this section we give an extension of our results on the spherical GHR problem for the mass assignments to the broader class of partitions by caps which are not necessarily hemispheres.

\def\Cc{\mathcal{C}}
\def\Ww{\mathcal{W}}

\medskip
Let $V$ be an $n$-dimensional real vector space with $n \geq 2$.
Using the inner product we can identify the vector space $\R\oplus V$ with the $(n+1)$-dimensional vector space of affine functions $V\longrightarrow \R$ where the pair $(t,w)\in\R\oplus V$ determines the function
$u\longmapsto t+\langle u,w\rangle$.
A unit vector $v=(t,w)\in S(\R\oplus V)$ decomposes the sphere $S(V)$ as the union  $S(V)=C(v)\cup C(-v)$ of 
two {\it caps}:
\[
C(v)=\{ u\in S(V) : \langle u,w\rangle \geq -t\}
\qquad\text{and}\qquad
C(-v)= \{ u\in S(V) : \langle u,w\rangle \leq -t\}
\]
with intersection $\{ u\in S(V) : \langle u,w\rangle =-t\}$.
For an illustration see Figure \ref{fig:caps}.
If $t=0$, the caps are hemispheres.
If $t>\| w\|$, then $C(-v)=\emptyset$; if $t<-\| w\|$, then $C(v)=\emptyset$.
If $t=\| w\|$, then $C(-v)$ is the single point $-w/\| w\|$, and, if $t=-\| w\|$, $C(v)=\{ w/\| w\|\}$.
The intersection $C(v)\cap C(-v)$, if $|t| <\| w\|$ is a sphere of dimension $n-2$ (if $n>1$, which we now assume).

\begin{figure}
\includegraphics[scale=01.0]{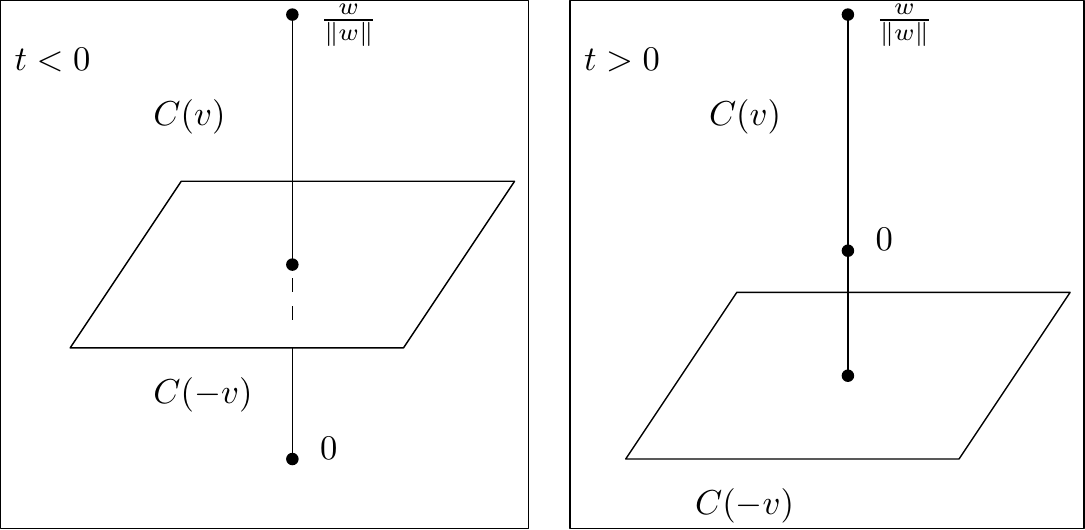}
\caption{\small The halfspaces defining the caps.}	
\label{fig:caps}
\end{figure}

\medskip
Now suppose that each vector bundle $E(i)$ is a subbundle of $\underline{\R}\oplus E$, regarding a vector $v\in E(i)_b\subseteq \R\oplus E_b$ in the fibre at $b\in B$ as an affine linear function $E_b\longrightarrow\R$. 
For a point $b\in B$, a collection of lines 
$(L_1,\ldots ,L_k)\in \PP (E(1)_b)\times\cdots\times \PP (E(k)_b)$, and a collection of vectors $(v_1,\ldots ,v_k)\in S(L_1)\times\cdots \times S(L_k)$,
we write another analogue of an orthant by
\[
\Cc_{b; v_1,\ldots ,v_k} :=
\{ u\in S(E_b) : v_1(u)>0,\, \ldots ,\, v_k(u)>0\}.
\]
The corresponding equipartition theorem is proved in the usual way
by constructing a section of the vector bundle
$\underline{\R}^j\otimes \big( A_k(E(1),\ldots ,E(k))/\underline{\R}\big)$.
\begin{theorem}
Under the hypotheses in the text, suppose that
for an integer $j\geq 1$, $\varphi_1,\dots,\varphi_j\colon S(E)\longrightarrow\R$ are continuous functions.

\smallskip\noindent
If the $\F_2$-cohomology Euler class
\[
e(A_k(E(1),\ldots ,E(k))/\underline{\R})^j\in
H^{(2^k-1)j}(\PP (E(1))\times_B\cdots\times_B \PP (E(k);\F_2)
\]
of the vector bundle $\underline{\R}^j\otimes\big( A_k(E(1),\ldots ,E(k))/\underline{\R}\big)$ is non-zero, then there exists a point $b\in B$ and lines $L_i\in\PP (E(i)_b)$, $1\leq i\leq k$ such that, 
for each $1\leq\ell\leq j$, the function
\[
S(L_1)\times\cdots\times S(L_k) \longrightarrow \R,\quad
(v_1,\ldots ,v_k)\longmapsto 
\int_{\Cc_{b;\, v_1,\ldots ,v_k}} \, (\varphi_\ell)_b
\]
is constant.
\end{theorem}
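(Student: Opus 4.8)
The plan is to run, almost verbatim, the configuration–space / test–map argument already used in the proof of Theorem~\ref{th : CSTM for assignement problem with constraints} and reused in Theorem~\ref{th : gen 01}, with the affine functions parametrised by the subbundles $E(i)\subseteq\underline{\R}\oplus E$ playing the role of the linear (resp.\ polynomial) functions there. Concretely, to each continuous $\varphi\colon S(E)\longrightarrow\R$ one associates a section $s_\varphi$ of $A_k(E(1),\ldots,E(k))$ over $d^*\big(\PP(E(1))\times\cdots\times\PP(E(k))\big)$ by
\[
s_\varphi(b,(L_1,\ldots,L_k))(v_1,\ldots,v_k):=\int_{\Cc_{b;\,v_1,\ldots,v_k}}\varphi_b,
\]
for $(v_1,\ldots,v_k)\in S(L_1)\times\cdots\times S(L_k)$. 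Granting that $s_\varphi$ is continuous, the section $s:=(s_{\varphi_1},\ldots,s_{\varphi_j})$ of $A_k(E(1),\ldots,E(k))^{\oplus j}$, followed by the projection onto $\big(A_k(E(1),\ldots,E(k))/\underline{\R}\big)^{\oplus j}$, must vanish somewhere, because the Euler class of that quotient bundle is assumed non-zero. At such a zero $(b,(L_1,\ldots,L_k))$ the value $s(b,(L_1,\ldots,L_k))$ lies in the trivial subbundle $\underline{\R}^{\oplus j}$ spanned by constant maps, which says exactly that each of the $j$ functions $(v_1,\ldots,v_k)\mapsto\int_{\Cc_{b;\,v_1,\ldots,v_k}}(\varphi_\ell)_b$ is constant.

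The only step requiring genuine argument — and the expected main obstacle — is the continuity of $s_\varphi$, i.e.\ that the integral over $\Cc_{b;v_1,\ldots,v_k}$ depends continuously on the base point and on the choice of unit vectors $v_i\in S(L_i)$. As in Theorem~\ref{th : gen 01}, this follows once one knows that the boundary set $\{u\in S(E_b):v_i(u)=0\}$ is Lebesgue-null in $S(E_b)$ for every admissible $v_i$. Writing $v_i=(t,w)\in S(\R\oplus E_b)$, this set is $\{u\in S(E_b):\langle u,w\rangle=-t\}$, which is empty when $w=0$ or $|t|>\|w\|$, a single point when $0\neq|t|=\|w\|$, and (since $n\geq 2$) an $(n-2)$-dimensional sphere when $|t|<\|w\|$; in all cases it has measure zero, so for each $\epsilon>0$ it has an open neighbourhood in $S(E_b)$ of volume less than $\epsilon$. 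A routine dominated-convergence estimate then yields continuity of $s_\varphi$, exactly as in the polynomial case; here one also uses that $\varphi$ is bounded, which holds because $S(E)$ is compact.

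Everything else is formal and identical to the earlier proofs: the isomorphism $A_k(E(1),\ldots,E(k))\cong\Theta_1^*(H(E(1))\oplus\underline{\R})\otimes\cdots\otimes\Theta_k^*(H(E(k))\oplus\underline{\R})$ together with its trivial line subbundle $\underline{\R}$ of constant maps, the principle that a vector bundle with non-vanishing $\F_2$-Euler class admits no nowhere-zero section, and the identification of the locus $s(b,(L_1,\ldots,L_k))\in\underline{\R}^{\oplus j}$ with the set of partitions where the integrals over the $2^k$ generalised orthants $\Cc_{b;v_1,\ldots,v_k}$ all agree. Thus no topological input beyond Theorem~\ref{th : CSTM for assignement problem with constraints} is needed, and the proof is complete once the continuity of $s_\varphi$ has been recorded.
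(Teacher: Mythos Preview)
Your proposal is correct and follows exactly the approach the paper indicates: the paper merely states that ``the corresponding equipartition theorem is proved in the usual way by constructing a section of the vector bundle $\underline{\R}^j\otimes \big( A_k(E(1),\ldots ,E(k))/\underline{\R}\big)$'', and your argument is precisely that construction, with the continuity of $s_\varphi$ justified (as in Theorem~\ref{th : gen 01}) by observing that the zero-sets of the affine functions on $S(E_b)$ are Lebesgue-null.
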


%--------------------------------------------------------------------------------------%
\subsection{Partitioning by spherical weges}
\label{subsec : Partitioning by spherical wedges}
%--------------------------------------------------------------------------------------% 

Next we describe an extension of the results of Schnider \cite{Schnider2019} and Sober\'on \& Takahashi \cite{SoberonTakahashi}.

\medskip
Let $V$ be a vector space of dimension $n\geq 3$, and let $U\subseteq V$ be a vector subspace of dimension $m\geq 2$. 
Then $V=U\oplus U^\perp$ is the direct sum of $U$ and its orthogonal complement $U^\perp$ and the unit sphere $S(V)=S(U\oplus U^\perp)$ is the join $S(U)*S(U^\perp)$.
To be precise, we also think of the join as the space
\[
S(V)=
\{ \cos (\theta )x+\sin (\theta )y \, |\, x\in S(U),\, y\in
S(U^\perp),\, 0\leq \theta\leq \pi /2\}.
\]
Like in the previous section, given $v=(t,w)\in S(\R\oplus U)$, we have the decomposition of the sphere $S(U)$ as $C(v)\cup C(-v)$, where
\[
C(v)=\{ u\in S(U) : \langle u,w\rangle \geq -t\}
\qquad\text{and}\qquad
C(v)=\{ u\in S(U) : \langle u,w\rangle \geq -t\}.
\]
This leads to a decomposition of the bigger sphere $S(V)$ as the union $W(v,U)\cup W(-v,U)$ of two {\it wedges} 
\begin{align*}
W(v,U)&=C(v)*S(U^\perp )\\
&=\{ \cos (\theta )u+\sin(\theta )y: u\in S(U),\, y\in S(U^\perp ), \, \langle u,w\rangle \geq -t,\, 0\leq\theta\leq\pi /2 \}	
\end{align*}
and
\begin{align*}
W(-v,U)&=C(-v)*S(U^\perp)\\
&=\{ \cos (\theta )u+\sin(\theta )y: u\in S(U),\, y\in S(U^\perp ), \, \langle u,w\rangle \leq -t,\, 0\leq\theta\leq\pi /2 \}.
\end{align*}
The intersection $W(v,U)\cap W(-v,U)$ is $S(U^\perp)$ if $|t|>\| w\|$, a disc of dimension $n-m$ if $|t|=\| w\|$, and a sphere of dimension $n-2$ if $|t| <\| w\|$.
(The subspace $\{ rx : r\geq 0,\, x\in W(v,U)\}$ of $V$ is an {\it $m$-cone} in the sense of \cite{Schnider2019}.)

\medskip
For example, take $U=\R^2$, $U^\perp =\R$, $V=\R^2\oplus\R$, so that $m=2$, $n=3$.
The wedges $W(v,U)$, where $v=(t,w)\in S(\R^2\oplus\R)$ with $|t|<\| w\|$, are the subsets
\[
\{ (\cos (\theta )\cos (\phi ),\, \cos (\theta )\sin (\phi),\,
\sin (\theta ))\in S(\R^2\oplus\R)  :  \alpha\leq\phi\leq\beta ,\, -\pi /2\leq 
\theta\leq\pi /2\}  
\]
where $0\leq \alpha <\beta <2\pi$.

\medskip
Now suppose that $F(i)\subseteq E$ is a vector subbundle of dimension $m_i\geq 2$, for every $1\leq i \leq k$, and that $E(i)$ is a subbundle of $\underline{\R}\oplus F(i)$ of dimension $n_i\leq m_i+1$.
For a point $b\in B$, lines $(L_1,\ldots ,L_k)\in \PP (E(1)_b)\times\cdots\times \PP (E(k)_b)$, and vectors $(v_1,\ldots ,v_k)\in S(L_1)\times\cdots\times S(L_k)$, we write
\[
\Ww_{b; v_1,\ldots ,v_k} :=\bigcap_{i=1}^k \big(S(E_b) -W(-v_i,F(i)_b)\big)
\]
as the intersection of the open subsets $S(E_b)-W(-v_i,F(i)_b)\subseteq W(v_i,E(i)_b)$.

\medskip
As in all the previous partition problems for mass assignments we derive the following result in almost identical manner.

\begin{theorem}\label{b3}
Under the hypotheses in the text, suppose that $j\geq 1$ is an integer, $\varphi_1,\dots , \varphi_j \colon 
S(E)\longrightarrow \R$ are continuous functions, and $j\leq \iota_k(E(1),\ldots ,E(k))$.

\smallskip\noindent
Then there exists a point $b\in B$ and lines $L_i\in\PP (E(i)_b)$, $1\leq i\leq k$, such that, for each $1\leq \ell \leq j$, the function
\[
S(L_1)\times\cdots\times S(L_k) \longrightarrow \R,\qquad
(v_1,\ldots ,v_k)\longmapsto 
\int_{\Ww_{b;\, v_1,\ldots ,v_k}} \, (\varphi_\ell)_b
\]
is constant.	
\end{theorem}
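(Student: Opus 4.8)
The plan is to follow the now-familiar configuration-space / test-map pattern that has been used for all the previous partition results in the paper (Theorems \ref{th : main result 01}, \ref{th : main result 02}, \ref{th : gen 01}, \ref{th : gen 02} and the affine-functions theorem), specialising it to the wedge decomposition. First I would recall from Section \ref{subsec : CSTM for mass assignements + constraints} the $2^k$-dimensional vector bundle $A_k(E(1),\ldots ,E(k))$ over $\PP (E(1))\times_B\cdots\times_B\PP (E(k))$, whose fibre at $(b;L_1,\ldots ,L_k)$ is $\map\big(\prod_{i=1}^kS(L_i),\R\big)$, and its trivial line subbundle $\underline{\R}$ of constant functions. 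The key observation is that the hypothesis $j\leq \iota_k(E(1),\ldots ,E(k))$ means precisely that $e_k(B)^j\notin\II_k(E(1),\ldots ,E(k))$, and, exactly as in the proof of Theorem \ref{th : main result 02} (via Claims \ref{claim : 01} and \ref{claim : 02}), this forces the mod $2$ Euler class
\[
e\big(A_k(E(1),\ldots ,E(k))/\underline{\R}\big)^j\in H^{(2^k-1)j}\big(\PP (E(1))\times_B\cdots\times_B\PP (E(k));\F_2\big)
\]
to be non-zero. Hence every section of $\big(A_k(E(1),\ldots ,E(k))/\underline{\R}\big)^{\oplus j}$ has a zero.

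The next step is to build, out of the data of the theorem, a section of $A_k(E(1),\ldots ,E(k))^{\oplus j}$ whose zeros (after passing to the quotient by $\underline{\R}^{\oplus j}$) give the desired conclusion. Given a continuous $\varphi\colon S(E)\longrightarrow\R$, define
\[
s_\varphi(b;L_1,\ldots ,L_k)(v_1,\ldots ,v_k):=\int_{\Ww_{b;\,v_1,\ldots ,v_k}}\varphi_b,
\]
where $\Ww_{b;\,v_1,\ldots ,v_k}=\bigcap_{i=1}^k\big(S(E_b)-W(-v_i,F(i)_b)\big)$ as in the statement, and set $s:=(s_{\varphi_1},\ldots ,s_{\varphi_j})$. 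A zero of the image of $s$ in $\big(A_k/\underline{\R}\big)^{\oplus j}$ is a point $(b;L_1,\ldots ,L_k)$ at which, for each $\ell$, the function $(v_1,\ldots ,v_k)\mapsto\int_{\Ww_{b;v_1,\ldots ,v_k}}(\varphi_\ell)_b$ is constant on $\prod_iS(L_i)$; combined with the non-vanishing Euler class from the previous paragraph, this is exactly the assertion of Theorem \ref{b3}. So modulo continuity of $s_\varphi$ the proof is a verbatim copy of the proof of Theorem \ref{th : CSTM for assignement problem with constraints} together with the Euler-class computation of Theorem \ref{th : main result 02}; I would say explicitly that it follows the pattern of those proofs rather than rewrite them.

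The one point genuinely requiring attention --- and the main obstacle --- is the continuity of $s_\varphi$, equivalently of the map $(b;L_1,\ldots ,L_k;v_1,\ldots ,v_k)\mapsto\int_{\Ww_{b;v_1,\ldots ,v_k}}\varphi_b$. Here the relevant fact is that the boundary of each wedge $W(v_i,F(i)_b)$ inside $S(E_b)$ is cut out by the affine condition $v_i(u)=0$ on the sub-sphere $S(F(i)_b)$ joined with $S(F(i)_b^\perp)$, so it is contained in the zero-set of a single non-zero polynomial (indeed affine-linear) function on $S(E_b)$; by the same Lebesgue-null argument invoked in Section \ref{subsec : Partitioning by polynomials} and Section \ref{subsec : Partitioning by affine functions} (cf.\ \cite{Whitney1937}), this boundary has measure zero and its measure varies continuously, which yields continuity of the integral in all parameters. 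I would spell this out briefly, noting that the degenerate cases ($|t|\geq\|w\|$, where a wedge is empty, a disc, or all of $S(E_b)$) are handled by the same estimate since the offending set is still contained in a polynomial zero-set, and then conclude the proof by citing the CS\,/\,TM scheme. As in Theorem \ref{th : gen 02}, one could remark that $\varphi_\ell$ need only be absolutely integrable fibrewise, or even that families of probability measures work provided one passes through the analogue of Proposition \ref{a5}; but for the stated theorem the continuous case suffices.
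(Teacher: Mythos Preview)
Your proposal is correct and is exactly the argument the paper has in mind: the paper does not spell out a proof of Theorem~\ref{b3} at all but merely says that ``as in all the previous partition problems for mass assignments we derive the following result in almost identical manner,'' and what you have written is precisely that derivation via the section $s_\varphi$ of $A_k(E(1),\ldots ,E(k))$ and the Euler-class computation from Theorem~\ref{th : main result 02}. One small wording point: the boundary of a wedge $W(v_i,F(i)_b)$ in $S(E_b)$ is not literally the zero-set of an affine-linear function on $S(E_b)$ (the defining condition involves the normalised $F(i)_b$-component), but it \emph{is} contained in the zero-set of a non-zero polynomial on $E_b$ --- for $v_i=(t_i,w_i)$ one can take $\langle \pi_{F(i)_b}u,\,w_i\rangle^2 - t_i^2\,\|\pi_{F(i)_b}u\|^2$ --- so your measure-zero and continuity conclusions stand unchanged.
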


\medskip
In the special case of a vector bundle over a point we get the following corollary.

\begin{corollary}\label{b4}
Suppose that $j\geq 1$ is an integer, $\varphi_1, \ldots ,\phi_j : S(V)\longrightarrow \R$ are continuous functions and that $j\leq \iota_k (n+1,\ldots ,n+1)$.
Let $m_1,\ldots ,m_k$ be integers in the range $2\leq m_i \leq n$.

\smallskip\noindent

Then there exist vector subspaces $U_1,\ldots ,U_k\subseteq V$ with $\dim (U_i)=m_i$ and lines $L_i\in\PP (\R\oplus U_i)$, $1\leq i\leq k$, such that for each $1\leq \ell \leq j$, the function
\[
S(L_1)\times\cdots\times S(L_k) \longrightarrow \R,\quad
(v_1,\ldots ,v_k)\longmapsto 
\int_{W(v_1,U_1)\,\cap\,\cdots\,\, \cap\, W(v_k,U_k)} \, \varphi_\ell
\]
is constant.	 
\end{corollary}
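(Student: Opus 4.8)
The plan is to deduce Corollary \ref{b4} from Theorem \ref{b3} by specialising to the case $B=\pt$, with an appropriate choice of vector bundles $E$, $F(i)$ and $E(i)$. First I would take $B=\pt$ and $E=V$, viewed as a vector bundle over a point. For each $1\leq i\leq k$ I want to let the subspace $U_i$ vary over all $m_i$-dimensional subspaces of $V$, so the natural choice is to replace the fixed base point by the product of Grassmannians $B':=\Gr_{m_1}(V)\times\cdots\times\Gr_{m_k}(V)$, pull $V$ back to the trivial bundle $\underline{V}$ over $B'$, and take $F(i)$ to be the pullback of the tautological $m_i$-plane bundle $E_{m_i}^{n}$ under the $i$-th projection $B'\to\Gr_{m_i}(V)$. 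Then set $E(i):=\underline{\R}\oplus F(i)$, which has dimension $n_i=m_i+1$, and $E:=\underline{V}$. The wedge decompositions $S((\underline V)_b)=W(v,F(i)_b)\cup W(-v,F(i)_b)$ are exactly those in Corollary \ref{b4} once we identify the fibre $F(i)_b$ with $U_i$.

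The key step is then to verify the hypothesis $j\leq \iota_k(E(1),\ldots,E(k))$ of Theorem \ref{b3} in this setup. Since $E(i)=\underline{\R}\oplus E_{m_i}^n$ (pulled back), we have $w(E(i))=w(E_{m_i}^n)$, so $\iota_1(E(i))=\iota_1(E_{m_i}^n)=n-1$ by Corollary \ref{cor : ham-sandwich}; here I use that the pullback along the projection $B'\to\Gr_{m_i}(V)$ is cohomologically injective (the projection has a section, or one uses the Künneth formula), so the relevant Stiefel--Whitney class survives. Next I would invoke Proposition \ref{prop : for k >1}: its numerical hypothesis is the non-vanishing of the product $w_{\iota_1(E(1))-n_1+1}(-E(1))\cdots w_{\iota_1(E(k))-n_k+1}(-E(k)) = w_{n-m_i-1+\cdots}$ — more precisely $w_{(n-1)-(m_i+1)+1}(-E(i))=w_{n-m_i-1}(-E_{m_i}^n)=w_{n-m_i-1}((E_{m_i}^n)^\perp)$ for each $i$; these live in the cohomology of distinct Grassmann factors of $B'$ and each is non-zero (it is the top Stiefel--Whitney class of the complementary bundle when restricted appropriately, or one argues exactly as in the proof of Corollary \ref{cor : B} via the complete flag manifold), so by the Künneth theorem their external-product combination is non-zero in $H^*(B';\F_2)$. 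Proposition \ref{prop : for k >1} then gives $\iota_k(E(1),\ldots,E(k))=\iota_k(\iota_1(E(1))+1,\ldots,\iota_1(E(k))+1)=\iota_k(n,\ldots,n)$. Hmm — but the corollary assumes $j\leq\iota_k(n+1,\ldots,n+1)$, so I would instead want to arrange $n_i$ and the topology so that the bound becomes $\iota_k(n+1,\ldots,n+1)$; this is achieved by noting $\iota_1(E(i))=n$ is not right — rather one should observe $\dim E(i)=n_i=m_i+1$ and $\iota_1(\underline{\R}\oplus E_{m_i}^n)$ via Proposition \ref{prop : ham-sandwich} equals $\max\{j:w_{j-n_i+1}(-E(i))\neq0\}$, and since $-E(i)\cong (E_{m_i}^n)^\perp$ has top class in degree $n-m_i$, this maximum is $n_i-1+(n-m_i)=n$. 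So in fact $\iota_1(E(i))+1=n+1$ and the bound is indeed $\iota_k(n+1,\ldots,n+1)$, matching the statement.

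With the hypothesis $j\leq\iota_k(n+1,\ldots,n+1)=\iota_k(E(1),\ldots,E(k))$ established, Theorem \ref{b3} directly produces a point $b=(U_1,\ldots,U_k)\in B'$ and lines $L_i\in\PP(E(i)_b)=\PP(\R\oplus U_i)$ such that each map $(v_1,\ldots,v_k)\mapsto \int_{\Ww_{b;v_1,\ldots,v_k}}(\varphi_\ell)_b$ is constant, and unwinding the definition of $\Ww_{b;v_1,\ldots,v_k}$ gives exactly $\int_{W(v_1,U_1)\cap\cdots\cap W(v_k,U_k)}\varphi_\ell$, completing the proof. The main obstacle I anticipate is the bookkeeping in the cohomological non-vanishing step: one must carefully check that the external products of the classes $w_{n-m_i}((E_{m_i}^n)^\perp)$ over the different Grassmann factors are non-zero — this is where Künneth over $\F_2$ and the known structure of $H^*(\Gr_{m_i}(V);\F_2)$ (e.g. the argument via the complete flag manifold used in the proof of Corollary \ref{cor : B}, or \cite[Rem.\,2.8]{Crabb2022}) do the work — and that the degree arithmetic in Proposition \ref{prop : ham-sandwich} correctly yields $\iota_1(E(i))+1=n+1$ rather than $n$.
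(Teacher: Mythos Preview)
Your approach is essentially identical to the paper's: take $B=\Gr_{m_1}(V)\times\cdots\times\Gr_{m_k}(V)$, let $F(i)$ be the tautological $m_i$-bundle pulled back from the $i$th factor, set $E(i)=\underline{\R}\oplus F(i)$, compute $\iota_1(E(i))=n$ via Proposition~\ref{prop : ham-sandwich}, and invoke Proposition~\ref{prop : for k >1} (with the K\"unneth/factorwise non-vanishing of $w_{n-m_i}((E_{m_i}^n)^\perp)$) to get $\iota_k(E(1),\ldots,E(k))=\iota_k(n+1,\ldots,n+1)$ before applying Theorem~\ref{b3}. The paper's proof is terser but follows exactly this line; your initial misstep $\iota_1(E(i))=n-1$ (from wrongly equating $\iota_1$ for bundles with equal Stiefel--Whitney classes but different dimensions) is correctly self-diagnosed and repaired.
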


\begin{proof}
Take $B$ to be the product $G_{m_1}(V)\times\cdots\times G_{m_k}(V)$ of Grassmann manifolds  and $F(i)$ to be the canonical $m_i$-dimensional bundle over the $i$th factor. 
Apply Theorem \ref{b3} with $n_i=m_i+1$ and $E(i)=\underline{\R}\oplus F(i)$. 
Indeed, since $\iota_1(\underline{\R}\oplus F(i))=n$, we have that $\iota_k(E(1),\ldots ,E(k))=\iota_k(n+1,\ldots ,n+1)$ by
Proposition \ref{prop : for k >1}.
\end{proof}

\begin{remark}\label{b5}
The previous Corollary \ref{b4} can be sharpened by restricting the base space in the following way.
Replace the Grassmann manifolds $G_{m_i}(V)$, where $V=\R^n$, by the its subspace $\PP (\R^{n-m_i+1})$, embedded by taking the direct sum of a line in $\R^{n-m_i+1}$ with $\R^{m_i-1}$ to get a subspace of $\R^n=\R^{n-m_i+1}\oplus\R^{m_i-1}$ of dimension $m_i$.
Then the vector bundle $E(i)$ restricts to $\underline{\R}^{m_i}\oplus H_i$ where $H_i$ is the Hopf line bundle $H(\R^{n-m_i+1})$.
We have $\iota_1(\underline{\R}^{m_i}\oplus H_i)=n$, because $w_{n-m_i}(-H_i)\not=0$.	
\end{remark}

\medskip
To illustrate the  given in Corollary \ref{b4} we spell out the special case $n=3$, $j=3$, $k=1$, $m_1=2$, for which $\iota_1(3+1)=3$.
Suppose that $\varphi_1, \varphi_2, \varphi_3\colon S^2=S(\R^3)\longrightarrow \R$ are continuous functions.
Then there is a wedge $W\subseteq \R^3$, specified by a plane $U$ through the origin in $\R^3$ and $(t,w)\in S(\R\oplus U)$, such that
\[
\int_{W} \phi_1=\frac{1}{2}\int_{S^2} \phi_1\, ,\qquad
\int_{W} \phi_2=\frac{1}{2}\int_{S^2} \phi_2\, ,\qquad
\int_{W} \phi_3=\frac{1}{2}\int_{S^2} \phi_3\, .
\]

\medskip
Furthermore, the $k=1$ case of Corollary \ref{b4} gives \cite[Thm.\,8]{Schnider2019}, and also the spherical version of \cite[Thm.\,1.2 and Thm.\,3.2]{SoberonTakahashi}.

\begin{corollary}
 \label{b7}
Suppose that $\varphi_1, \ldots ,\varphi_n\colon S(\R^n)\longrightarrow \R$ are continuous functions and $m$ is an integer, $2\leq m\leq n$.
Write $V=\R^n$ and $V'=\R^{m-1}\subseteq \R^{n-m+1}\oplus\R^{m-1}=V$.

\smallskip\noindent
Then there exists a vector subspace $U\subseteq V$ of dimension $m$ containing the subspace $V'$ and a vector $v\in S(\R\oplus U)$ such that
\[
\int_{W(v,U)}\varphi_\ell =
\frac{1}{2}\int_{S(V)}\phi_l =\int_{W(-v,U)}\varphi_\ell
\]
for $\ell=1,\ldots ,n$.
\end{corollary}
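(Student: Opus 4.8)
The plan is to derive Corollary \ref{b7} as the special case $k=1$ of Corollary \ref{b4}, augmented by the base-space restriction described in Remark \ref{b5}. First I would set $n_1 := n$, $m_1 := m$, $j := n$, and take the base space to be $\PP(\R^{n-m+1})$ rather than the full Grassmannian $G_m(\R^n)$, embedded via $L \longmapsto L \oplus \R^{m-1}$ so that each such $m$-plane contains the fixed subspace $V' = \R^{m-1}$. Over this base I would work with $F(1) := \underline{\R}^{m-1} \oplus H_1$, where $H_1 = H(\R^{n-m+1})$ is the Hopf line bundle, and $E(1) := \underline{\R} \oplus F(1) = \underline{\R}^{m} \oplus H_1$, so that the fibre of $E(1)$ over a line $L$ is exactly $\R \oplus (L \oplus \R^{m-1}) = \R \oplus U$ for the corresponding $m$-plane $U \supseteq V'$.

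The key computational step is the hypothesis of Corollary \ref{b4}, namely $j \le \iota_1(E(1))$. Here $k=1$, so by Remark \ref{b5} (or directly by Proposition \ref{prop : ham-sandwich}) one has $\iota_1(\underline{\R}^m \oplus H_1) = \max\{ r : 0 \neq w_{r - \dim E(1) + 1}(-E(1))\}$ with $\dim E(1) = m+1$; since $w(-E(1)) = w(-H_1) = 1 + w_1(H_1) + \dots + w_{n-m}(H_1)$ and $w_{n-m}(H_1) \neq 0$ in $H^*(\PP(\R^{n-m+1});\F_2)$, we get $\iota_1(E(1)) = (n-m) + (m+1) - 1 = n = j$, so the hypothesis is met with equality. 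Then Theorem \ref{b3} (applied with $k=1$, $F(1)$, $E(1)$ as above) yields a point $b \in \PP(\R^{n-m+1})$ — i.e. a line $L$, hence an $m$-plane $U = L \oplus \R^{m-1} \supseteq V'$ — and a line $L_1 \in \PP(E(1)_b) = \PP(\R \oplus U)$ such that the function $S(L_1) \to \R$, $v \longmapsto \int_{\Ww_{b;v}} (\varphi_\ell)_b$ is constant for each $1 \le \ell \le n$. Since $S(L_1) = \{v, -v\}$ is two points and $\Ww_{b;v} = S(V) - W(-v,U) = W(v,U)$ up to a null set (their symmetric difference is the equator $C(v)\cap C(-v)$, a sphere of dimension $\le n-2$, of measure zero), constancy translates into $\int_{W(v,U)}\varphi_\ell = \int_{W(-v,U)}\varphi_\ell$; and since $W(v,U) \cup W(-v,U) = S(V)$ with the overlap null, each side equals $\tfrac12\int_{S(V)}\varphi_\ell$, which is the claimed conclusion.

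The only genuine subtlety — and the place I would be most careful — is the bookkeeping of measure-zero overlaps: one must check that $\Ww_{b;v}$, defined as the open set $S(E_b) - W(-v,F(1)_b)$, differs from the closed wedge $W(v,U)$ only on the codimension-$\ge 2$ sphere $W(v,U)\cap W(-v,U)$, so that the integrals agree, and that the same argument producing the section in Theorem \ref{b3} (whose continuity rests on these overlaps being null with respect to the round measure, exactly as in the polynomial case of Section \ref{subsec : Partitioning by polynomials}) goes through here. Everything else is a direct specialization; in particular the identification $U \supseteq V'$ comes for free from the chosen embedding $\PP(\R^{n-m+1}) \hookrightarrow G_m(V)$, and the reference to \cite[Thm.\,8]{Schnider2019} and the spherical form of \cite[Thm.\,1.2 and Thm.\,3.2]{SoberonTakahashi} is then immediate. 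I do not expect any topological obstruction beyond what is already packaged in Theorem \ref{b3} and Proposition \ref{prop : for k >1}.
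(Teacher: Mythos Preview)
Your proposal is correct and follows essentially the same route as the paper: apply the $k=1$ case of Corollary~\ref{b4} (via Theorem~\ref{b3}) together with the base-space restriction of Remark~\ref{b5}, and verify the numerical hypothesis $\iota_1(E(1))=n$. The paper's own proof is just the two-line observation that $\iota_1(n+1)=n$ and that Remark~\ref{b5} supplies the containment $U\supseteq V'$; you have simply unpacked these steps in greater detail (the Stiefel--Whitney computation and the null-overlap bookkeeping), with only minor notational slips---e.g.\ the classes $w_r(-H_1)$ are powers $w_1(H_1)^r$ rather than $w_r(H_1)$, and the overlap $W(v,U)\cap W(-v,U)$ is $(C(v)\cap C(-v))*S(U^\perp)$ rather than $C(v)\cap C(-v)$ itself---neither of which affects the argument.
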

\begin{proof}
We just need to recall that $\iota_1(n+1)=n$.
The sharpening, to give the restriction that
 $U$ should contain $V'$, is given by Remark \ref{b5}.
\end{proof}

\medskip 
The connection between the affine and spherical cases was discussed in Section \ref{Sec 1.3}. 
We explain how \cite[Thm.\,1.2]{SoberonTakahashi} can be deduced from the case $m=2$ of our Corollary \ref{b7}.

\begin{corollary}
For an integer $n\geq 2$, suppose that $\psi_1,\ldots ,\psi_n\colon\R^{n-1}\longrightarrow \R$ are continuous
functions with compact support with the $n$ integrals $\int_{\R^{n-1}}\psi_{\ell}$, $1\leq \ell\leq n$, not all equal to zero.

\smallskip\noindent
Then there exist two distinct parallel hyperplanes in $\R^{n-1}$ such that the closed region $S$ sandwiched between them satisfies 
\[
\int_S \psi_l =\frac{1}{2}\int_{\R^{n-1}} \psi_\ell\, ,
\]
for all $1\leq \ell \leq n$.	
\end{corollary}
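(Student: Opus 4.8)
The plan is to push the $m=2$ case of Corollary \ref{b7} back to $\R^{n-1}$ through the affine–spherical dictionary of Section \ref{Sec 1.3}, and then to upgrade a possibly degenerate output to a genuine slab by a genericity reduction together with a compactness argument. First I would fix the dictionary: embed $\R^{n-1}$ into $\R^n$ by $x\mapsto(x,-1)$, set $y_0=(0,\dots,0,-1)$, and let $p\colon\R^{n-1}\to\Lambda$ be the homeomorphism of Section \ref{Sec 1.3} onto the open lower hemisphere $\Lambda$ of $S(\R^n)$. Since each $\psi_\ell$ has compact support, transporting it to $\Lambda$ along $p$ (dividing by the smooth, strictly positive Jacobian of $p$) and extending by $0$ yields a continuous function $\varphi_\ell\colon S(\R^n)\to\R$, supported in a compact subset of $\Lambda$, with $\int_{S(\R^n)}\varphi_\ell=\int_{\R^{n-1}}\psi_\ell$ and, for every Borel set $\Omega\subseteq\R^{n-1}$, with $\int_{p(\Omega)}\varphi_\ell=\int_{\Omega}\psi_\ell$. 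In particular $\int_{W}\varphi_\ell=\int_{p^{-1}(W\cap\Lambda)}\psi_\ell$ for any Borel set $W\subseteq S(\R^n)$, because $\varphi_\ell$ vanishes off $\Lambda$.

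Next, assume temporarily that no hyperplane of $\R^{n-1}$ bisects all of $\psi_1,\dots,\psi_n$ at once. Applying Corollary \ref{b7} with $m=2$, $V=\R^n$, $V'=\R y_0$ (using $\iota_1(n+1)=n$, and here $j=n$) to $\varphi_1,\dots,\varphi_n$ gives a $2$-plane $U=\R y_0\oplus\R f\subseteq\R^n$, with $f$ a unit vector of $\R^{n-1}$, and $v=(t,w)\in S(\R\oplus U)$, $w\in U$, such that $\int_{W(v,U)}\varphi_\ell=\tfrac12\int\varphi_\ell=\int_{W(-v,U)}\varphi_\ell$ for all $\ell$. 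A direct computation shows that $p^{-1}(W(v,U)\cap\Lambda)=\{x\in\R^{n-1}:\langle x,f\rangle\in I^{+}\}$ and $p^{-1}(W(-v,U)\cap\Lambda)=\{x:\langle x,f\rangle\in I^{-}\}$, where $I^{+}=\{\rho\in\R:h(\rho)\geq0\}$, $I^{-}=\{\rho:h(\rho)\leq0\}$ and $h(\rho)=\alpha\rho+\beta+t\sqrt{1+\rho^{2}}$ with $\alpha,\beta$ linear in $w$; hence $h$ is strictly concave, affine, or strictly convex according to the sign of $t$, so in every case at least one of $I^{+},I^{-}$ is an interval $J$. Translating back, $\int_{\{x:\langle x,f\rangle\in J\}}\psi_\ell=\tfrac12\int\psi_\ell$ for all $\ell$. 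If $J$ were empty, a single point, or all of $\R$, all integrals $\int\psi_\ell$ would vanish, against the hypothesis; if $J$ were a half-line, the single hyperplane bounding it would bisect every $\psi_\ell$, against our temporary assumption. Thus $J=[\rho_1,\rho_2]$ with $\rho_1<\rho_2$, and the closed slab between $\{\langle x,f\rangle=\rho_1\}$ and $\{\langle x,f\rangle=\rho_2\}$ is the desired region.

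Finally I would remove the temporary assumption by approximation. Pick continuous functions $\psi_\ell^{(k)}$ with supports in one fixed ball $B_R$, with $\psi_\ell^{(k)}\to\psi_\ell$ uniformly, $\int\psi_\ell^{(k)}=\int\psi_\ell$, and — for a generic such choice, by a standard transversality argument — with no common bisecting hyperplane. The previous step produces, for each $k$, a genuine bounded slab $S^{(k)}=\{x:\rho_1^{(k)}\leq\langle x,f^{(k)}\rangle\leq\rho_2^{(k)}\}$ with $\int_{S^{(k)}}\psi_\ell^{(k)}=\tfrac12\int\psi_\ell$. The widths $\rho_2^{(k)}-\rho_1^{(k)}$ stay bounded: along a subsequence on which the width tends to infinity, while each $S^{(k)}$ still meets $B_R$ (it carries half of some nonzero $\int\psi_\ell$), the trace of $S^{(k)}$ on $B_R$ would be cut out by a single hyperplane, which would then bisect all of $\psi_1^{(k)},\dots,\psi_n^{(k)}$ — a contradiction; and the widths are bounded below, since otherwise $\int_{S^{(k)}}\psi_\ell^{(k)}\to0$ would force all $\int\psi_\ell=0$. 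As the $S^{(k)}$ all meet $B_R$, a convergent subsequence $f^{(k)}\to f^{*}$, $\rho_i^{(k)}\to\rho_i^{*}$ with $\rho_1^{*}<\rho_2^{*}$ gives a genuine bounded slab $S^{*}$, and passing to the limit (the two boundary hyperplanes are Lebesgue-null and $\psi_\ell^{(k)}\to\psi_\ell$ uniformly with supports in $B_R$) yields $\int_{S^{*}}\psi_\ell=\tfrac12\int\psi_\ell$ for every $\ell$.

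The main obstacle is precisely this degenerate configuration in which Corollary \ref{b7} returns a half-space instead of a bounded slab: the topological input gives no control over which zero of the relevant section is found, so the existence of two \emph{distinct} parallel hyperplanes must be forced indirectly, via the genericity reduction and the compactness bound on the width, and it is exactly there that the hypothesis that the $\psi_\ell$ do not all integrate to zero is used.
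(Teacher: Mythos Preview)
Your core argument---lift the $\psi_\ell$ to $S(\R^n)$, apply Corollary~\ref{b7} with $m=2$ and $V'=\R y_0$, and translate the wedge bisection back to parallel hyperplanes in $\R^{n-1}$---is exactly the paper's proof. Where you go further is in the borderline case $t=0$ (your half-line $J$): there the two boundary points $a,b\in S(U)$ of the cap are antipodal, the two discs $\{a\}*S(U^\perp)$ and $\{b\}*S(U^\perp)$ span the same linear hyperplane, and the output degenerates to a single bisecting half-space rather than a genuine slab; the paper's proof passes over this in silence. Your perturbation-plus-compactness remedy is correct in outline and makes the distinctness claim honest, though the transversality step (that $n$ generic densities in $\R^{n-1}$ admit no common bisecting hyperplane, the space of hyperplanes being $(n-1)$-dimensional) and the two-sided width bound (noting separately the case where both boundary hyperplanes leave $B_R$, so the slab swallows the supports and all integrals vanish) each deserve one more explicit sentence.
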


\noindent
(Note that if all the integrals $\int_{\R^{n-1}}\phi_\ell$ are zero, then there is a trivial statement for any two coinciding hyperplanes.)
\begin{proof}
Consider  the diffeomorphism
\[
\pi \colon \Lambda =
\{ (x,y)\in S(\R^{n-1}\oplus \R ) : y>0\} \longrightarrow \R^{n-1}\,,\qquad\qquad
 (x,y)\longmapsto \frac{x}{y}\,,
\]
which maps intersections of linear subspaces of $\R^{n-1}\oplus\R$ with $\Lambda$ to affine subspaces of $\R^{n-1}$.
Each density $\psi_\ell$ lifts to a density $\varphi_\ell$ on $S(\R^{n-1}\oplus\R)$ with support in the open upper hemisphere $\Lambda$.
(To be precise, $\varphi(x,y)=y^n\psi (x/y)$.)

\medskip
Let $U\subseteq \R^{n-1}\oplus\R =V$ be a $2$-dimensional vector subspace and $v\in S(\R\oplus U)$ a vector as provided by Corollary \ref{b7} when $m=2$.
Since some $\int_{\R^{n-1}}\psi_\ell$ is non-zero, both $S(V)-W(-v,U)$ and $S(V)-W(v,U)$ have to be non-empty.
The intersection $W(v,U)\cap W(-v,U)$ is, therefore, the union of two discs 
\[
\{ a\} * S(U^\perp )\subseteq S(\R\cdot a\oplus U^\perp)
\qquad\text{and}\qquad
\{ b\} *S(U^\perp )\subseteq S(\R\cdot b\oplus U^\perp)
\]
meeting in $S(U^\perp)$.
Here $a, b\in S(U)$.

\medskip
The image of the intersection $\pi (W(v,U)\cap W(-v,U)\cap\Lambda )$ is the union of two affine hyperplanes meeting in $\pi (S(U^\perp )\cap\Lambda )$.

\medskip
We can prescribe the subspace $V'$ in Corollary \ref{b7} to be the line $0\oplus \R \subseteq \R^{n-1}\oplus\R$.
In that case, $S(U^\perp )\cap\Lambda$ is empty, and the two hyperplanes are parallel.	
\end{proof}

%--------------------------------------------------------------------------------------%
%--------------------------------------------------------------------------------------%
%--------------------------------------------------------------------------------------%
%--------------------------------------------------------------------------------------%
\section{Concluding remarks: Real flag manifolds}
\label{sec : Concluding remarks}
%--------------------------------------------------------------------------------------%
%--------------------------------------------------------------------------------------%
%--------------------------------------------------------------------------------------%
%--------------------------------------------------------------------------------------%

In the final section we make further remarks on particular arguments used in the proofs of our results.

\medskip
For a Euclidean vector space $V$ of dimension $n$ and integers $0=n_0 <n_1 < \cdots <n_k<n$, let $B:=\flag_{n_1,\dots,n_k} (V)$ be the manifold of flags $(V_*):0=V_0\subseteq V_1\subseteq \cdots \subseteq V_k\subseteq V$ with $\dim V_i=n_i$. 
The canonical bundles of dimension $n_i$ over $B$ are denoted by $E(i)$, as in the statement of Corollary \ref{cor : B}.
Write $E$ for the trivial bundle over $B$ with fibre $V$.

\begin{proposition}
\label{c1}
The $\Ff_2$-Euler classes satisfy
$$
\prod_{i=1}^k \e(E/E(i))^{n_i-n_{i-1}}\not=0
\in H^d(B;\,\Ff_2)=\Ff_2,
$$
where $n_0=0$ and the dimension $d$ is equal to $\sum_{i=1}^k (n-n_i)(n_i-n_{i-1})$.	
\end{proposition}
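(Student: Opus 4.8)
The plan is to pull the asserted identity back along the projection to the complete flag manifold, where everything becomes a monomial computation in one-dimensional classes, and then to verify an elementary combinatorial non-vanishing statement.

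Concretely, set $n_0=0$, $n_{k+1}=n$, and $m_i=n_i-n_{i-1}$, and recall from Section~\ref{sec : proof flag} that the subflag map $\alpha:=\alpha_{n_1,\dots,n_k}\colon \flag_{1,\dots,n-1}(V)\longrightarrow B=\flag_{n_1,\dots,n_k}(V)$ is injective on $\F_2$-cohomology, and that $H^*(\flag_{1,\dots,n-1}(V);\F_2)\cong \F_2[x_1,\dots,x_n]/(\sigma_1,\dots,\sigma_n)$, with $x_j$ the first Stiefel--Whitney class of the $j$-th canonical line bundle $\mathcal{L}_j$ on the complete flag manifold and $\sigma_r$ the elementary symmetric polynomials. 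First I would observe that $\alpha^*E(i)\cong \mathcal{L}_1\oplus\dots\oplus\mathcal{L}_{n_i}$, so $\alpha^*(E/E(i))\cong \mathcal{L}_{n_i+1}\oplus\dots\oplus\mathcal{L}_n$ and $\alpha^*\e(E/E(i))=x_{n_i+1}\cdots x_n$. Since $x_j$ then occurs in the pulled-back product with exponent $\sum_{i:\,n_i<j}m_i$, which equals $n_p$ whenever $n_p<j\le n_{p+1}$, a short computation gives
\[
\alpha^*\!\Bigl(\prod_{i=1}^k\e(E/E(i))^{m_i}\Bigr)=\prod_{p=0}^{k}\ \prod_{\,n_p<j\le n_{p+1}} x_j^{\,n_p}\ \in\ \F_2[x_1,\dots,x_n]/(\sigma_1,\dots,\sigma_n).
\]
Denote this monomial by $M$; it has degree $\sum_{i=1}^k(n-n_i)m_i=d$, and by injectivity of $\alpha^*$ it remains to show $M\neq 0$.

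For this I would invoke the classical fact that the staircase monomials $x_1^{a_1}\cdots x_n^{a_n}$ with $0\le a_j\le n-j$ form an $\F_2$-basis of $\F_2[x_1,\dots,x_n]/(\sigma_1,\dots,\sigma_n)$ (as already used in the proof of Corollary~\ref{cor : B}). Because the $\sigma_r$ are $\Sym_n$-symmetric, permuting the variables is an algebra automorphism of the quotient, so $M\neq 0$ if and only if the monomial obtained from $M$ by rearranging its exponents into weakly decreasing order $b_1\ge\dots\ge b_n$ is nonzero. In $M$ the $n_{p+1}-n_p$ variables with index in $(n_p,n_{p+1}]$ all carry exponent $n_p$; listing these exponents in decreasing order places the value $n_p$ in positions $n-n_{p+1}<j\le n-n_p$, and there $b_j=n_p\le n-j$ (with equality exactly at $j=n-n_p$) simply because $j\le n-n_p$. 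Hence $x_1^{b_1}\cdots x_n^{b_n}$ divides the top staircase monomial $x_1^{n-1}x_2^{n-2}\cdots x_{n-1}$ and is itself one of the basis monomials, so $M\neq 0$, which is the proposition.

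The argument is almost entirely bookkeeping; the only point requiring thought is the inequality $b_j\le n-j$, and it reduces inside each block to $j\le n-n_p$, so it is in fact immediate. (As noted after the proof of Corollary~\ref{cor : B}, the non-vanishing could alternatively be deduced from \cite[Rem.\,2.8]{Crabb2022}.) I expect the main pitfall to be keeping the successive index shifts between $E(i)$, the quotient $E/E(i)$, the consecutive-quotient canonical bundles on $B$, and the line bundles on the complete flag manifold mutually consistent.
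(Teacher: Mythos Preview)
Your proof is correct, but it takes a genuinely different route from the paper's own argument. The paper proves Proposition~\ref{c1} geometrically: it realises $B$ as the homogeneous space $\GL(V)/H$, identifies the product $\prod_i \e(E/E(i))^{n_i-n_{i-1}}$ as the $\F_2$-Euler class of an explicit $d$-dimensional bundle $E'$ over $B$, and then writes down a smooth section of $E'$ (coming from the identity endomorphism of $V$) which has a single zero, at the chosen reference flag, of local degree~$1$. Your argument, by contrast, is purely algebraic: you pull back along the injective map from the complete flag manifold and reduce to showing that a specific monomial in the coinvariant ring $\F_2[x_1,\dots,x_n]/(\sigma_1,\dots,\sigma_n)$ is nonzero, which you verify by permuting exponents into a staircase shape.

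In effect you are reusing the technique from the paper's proof of Corollary~\ref{cor : B} (Section~\ref{subsec : cor : flag}) rather than proving Proposition~\ref{c1} afresh; this is economical and self-contained within the paper's existing toolkit. One small remark: the proof of Corollary~\ref{cor : B} does not literally invoke the staircase \emph{basis} --- it shows only that the top staircase monomial is nonzero and then uses the ``divides'' argument (implicitly via Poincar\'e duality) --- so your parenthetical reference is a slight overstatement, though the basis fact is classical and either justification suffices. The paper's geometric proof has the advantage of explaining \emph{why} the class is nonzero (a transverse section with one zero), and it generalises readily to other coefficients; your algebraic proof is shorter and ties Proposition~\ref{c1} more visibly to the earlier Schubert-calculus computations.
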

\begin{proof}
Let $(U_*):0=U_0\subseteq U_1\subseteq \cdots \subseteq U_k$ be a fixed flag in $V$.
The general linear group $G=\GL (V)$ acts transitively on $B$. 
If $H\leq G$ denotes the stabilizer of $(U_*)$, we have a map $\pi\colon G\longrightarrow B$ defined  given $\pi (g) =(gU_*)$ which describes $B$ as the homogeneous space $G/H$.

\medskip
The derivative of $\pi$ at $1\in G$ is a map from the Lie algebra $\gg =\End (V)$ onto the tangent space of $B$ at $(U_*)$ with kernel the Lie algebra $\hh$ of $H$, that is, the space of endomorphisms $a$ of $V$ such that $a(U_i)\subseteq U_i$ for all $i$.
The tangent bundle of $B$ is the quotient of the trivial Lie algebra bundle $B\times \GL (V)=\GL(F)$ by the subbundle with fibre at $(V_*)\in B$ the quotient of $\End (V)$ by the Lie subalgebra, $\hh (V_*)$ of endomorphisms that preserve the flag.
Using the inner product, we can express $\hh (V_*)$ as $\bigoplus_{i=1}^k \Hom(V_i^\perp, V_i\cap (V_{i-1}^\perp))$,
which has dimension $\sum_{i=1}^k (n-n_i)(n_i-n_{i-1})$.

\medskip
Now consider the vector bundle $E'$, defined as a quotient of $B\times \End(V)$, with the fibre at $(V_*)\in B$ the quotient of $\gg =\End(V)$ by the vector subspace $\hh (V_*,U_*)$ of maps $a\colon V\longrightarrow V$ such that $a(V_i)\subseteq U_i$ for 
$i=1,\ldots ,k$.
In metric terms, 
\[
E'=\bigoplus_{i=1}^k{\rm Hom}(F(i)^\perp, U_i\cap (U_{i-1}^\perp ))\, ,
\]
and its Euler class $\e(E')$ is equal to $\prod_{i=1}^k e(E(i)^\perp )^{n_i-n_{i-1}} \in H^d(B;\, \Ff_2)$.

\medskip
The vector bundle $E'$ over the closed connected $d$-dimensional manifold $B$ has the same dimension $d$.
We shall prove that $\e(E')$ is non-zero by writing down a smooth section $s$ of $E'$ with exactly one zero and checking that the (mod $2$) degree of that zero is equal to $1$.

\medskip
The section $s$ is defined to have the value at $(V_*)$ given, modulo $\hh (V_*,U_*)$, by the
identity endomorphism $1\in \End (V)$. 
At a zero of $s$, $V_i\subseteq U_i$ for all $i$, that is, $V_*=U_*$.
At this zero, the tangent space of $B$ coincides with the fibre of $E'$, and we shall show that the derivative of $s$ is the identity endomorphism of $\gg /\hh$.

\medskip
To do this, we lift from $B=G/H$ to $G$ by the projection $\pi$.
The pullback $\pi^*E'$ is trivialized by the isomorphism
\[
G\times( \gg /\hh) \longrightarrow E'
\]
taking $(g,a+\hh )$, where $g\in\GL (V)$, $a\in\End (V)$, to $((gU_*),ag^{-1}+\hh (gU_*))$.
And the section $s$ lifts to the map
\[
G\to \gg /\hh \, : \, g\longmapsto g+\hh ,
\]
for which the derivative at $1$ is, transparently, the projection
$\gg \to \gg /\hh$.
This completes the proof.
\end{proof}

\medskip
Writing the quotient $E/E(i)=E(i)^\perp$ as the direct sum $\bigoplus_{j=i}^k(E(j+1)/E(j))$,
where $E(k+1)=E$, we can reformulate Proposition \ref{c1} as follows.

\begin{corollary}\label{c2}
The product of Euler classes
\[
\prod_{i=1}^k \e(E(i+1)/E(i))^{n_i}\in H^d(B;\,\Ff_2)
\]
is non-zero.	
\end{corollary}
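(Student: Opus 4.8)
The plan is to deduce Corollary \ref{c2} from Proposition \ref{c1} by the formal manipulation of Euler classes sketched in the paragraph preceding the statement. First I would record the relevant orthogonal splitting. Since $B=\flag_{n_1,\dots,n_k}(V)$ carries a fibrewise Euclidean structure, the chain of subbundles $E(1)\subseteq E(2)\subseteq\cdots\subseteq E(k)\subseteq E(k+1)=E$ splits orthogonally: writing $W_j$ for the orthogonal complement of $E(j-1)$ in $E(j)$ for $1\le j\le k$ and $W_{k+1}$ for the orthogonal complement of $E(k)$ in $E$, one has $E=W_1\oplus\cdots\oplus W_{k+1}$, $E(i)=W_1\oplus\cdots\oplus W_i$, and $W_{j+1}\cong E(j+1)/E(j)$ for $1\le j\le k$. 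Hence $E/E(i)\cong E(i)^\perp\cong\bigoplus_{j=i}^{k}\bigl(E(j+1)/E(j)\bigr)$, a bundle of dimension $n-n_i$, as it must be.

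Second, I would pass to $\Ff_2$-Euler classes (equivalently, top Stiefel--Whitney classes) and invoke their multiplicativity over direct sums to obtain, for each $1\le i\le k$,
\[
\e(E/E(i))=\prod_{j=i}^{k}\e\bigl(E(j+1)/E(j)\bigr)\in H^{\,n-n_i}(B;\Ff_2).
\]
Third, I would substitute this into the product in Proposition \ref{c1} and collect powers: for a fixed $j$ the class $\e(E(j+1)/E(j))$ occurs in the factor indexed by $i$ exactly when $i\le j$, there with exponent $n_i-n_{i-1}$, so its total exponent is $\sum_{i=1}^{j}(n_i-n_{i-1})=n_j$ (using $n_0=0$). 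Therefore
\[
\prod_{i=1}^{k}\e(E/E(i))^{\,n_i-n_{i-1}}=\prod_{j=1}^{k}\e\bigl(E(j+1)/E(j)\bigr)^{\,n_j},
\]
and the left-hand side is nonzero in $H^d(B;\Ff_2)=\Ff_2$ by Proposition \ref{c1}; since both sides visibly lie in the same degree $d=\sum_{i=1}^{k}(n-n_i)(n_i-n_{i-1})=\sum_{j=1}^{k}n_j(n_{j+1}-n_j)$ (with $n_{k+1}=n$, the two expressions being equal after the same reindexing), this is precisely the assertion of Corollary \ref{c2}.

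I do not expect a genuine obstacle here: the orthogonal splitting of the quotient bundles is standard once the metric is fixed, and multiplicativity of the top Stiefel--Whitney class is classical. The only point requiring care is the reindexing of the double product and the verification that the exponents telescope to $n_j$; everything else is bookkeeping, and the degree check above guards against an off-by-one error in the indices.
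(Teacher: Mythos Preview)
Your proof is correct and follows exactly the approach indicated in the paper: the corollary is obtained from Proposition~\ref{c1} by writing $E/E(i)\cong\bigoplus_{j=i}^{k}(E(j+1)/E(j))$, using multiplicativity of the $\Ff_2$-Euler class, and telescoping the exponents $\sum_{i\le j}(n_i-n_{i-1})=n_j$. There is nothing to add.
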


\medskip
The previous Corollary \ref{c2} connects with previously given arguments in the following way:
\begin{compactitem}[\qquad --]
\item The case $k=1$, shows that $\e(E(1)^\perp)^{n-n_1}\not=0$, and in particular $\e(E(1)^\perp )\not=0$, as used in the proof of Corollary \ref{cor : ham-sandwich}.
\item The statement $\e(E(1)^\perp)^{n-n_1}\not=0$ is the result needed in Section \ref{sub : proof of cor: A} for the proof of Corollary \ref{cor : A}.
\item For general $k$, we have in particular that $\prod_{i=1}^k \e(E(i)^\perp )\not=0$. This is what is required in Section \ref{subsec : cor : flag}  to prove Corollary \ref{cor : B}.
\item If $n_i=n-k +i-1$ (that is, $n_1=n-k$, $n_2=n-k+1$, $\ldots$, $n_k=n-1$), then $\e(E(1)^\perp)^k\e(E(2)^\perp)^{k-1}\cdots \e(E(k)^\perp)^1\not=0$. This is what is needed in Section \ref{subsec : cor : flag 03} to prove Theorem \ref{cor : F}. It shows directly that $\e(E_{k+1})^k\cdots \e(E_{d+1})^d\not=0$. The permutation symmetry of the cohomology then gives
\[\e(E_{k+1})^{j_k}\cdots \e(E_{d+1})^{j_d}\not=0.\]
\end{compactitem}

\medskip
Thus, different arguments we offered in the proofs can be seen as a direct consequences of Corollary \ref{c2}. 
To best of our knowledge these implications were not known until now, and we believe it was worth explaining these connections.

%--------------------------------------------------------------------------------------%
\subsection*{Acknowledgements.}
%--------------------------------------------------------------------------------------%
The authors are grateful to the referee of the paper for careful reading and many useful suggestions which guided our revision of the original manuscript.
Additionally, we would like to thank Aleksandra Dimitrijevi\'c Blagojevi\'c and Matija Blagojevi\'c for useful discussions and many improvements of the manuscript.

%%--------------------------------------------------------------------------------------%
%\small
%\bibliography{references-mass-assignments}{}
%\bibliographystyle{amsplain}
%%--------------------------------------------------------------------------------------%

\end{document}